\newtheorem{thm}{Theorem}[section]
\newtheorem{lem}[thm]{Lemma}
\newtheorem{cor}[thm]{Corollary}
\theoremstyle{remark}
\theoremstyle{definition}
\newtheorem*{defn}{Definition}
\numberwithin{equation}{section}
\newcommand{\ka}{\varkappa}
\newcommand{\ep}{\varepsilon}
\newcommand{\p}{\partial}
\newcommand{\Om}{\Omega}
\newcommand{\B}{\mathcal{B}}
\def\sfint{\mathop{\int\mkern-16mu \raise.15ex\hbox{$\scriptstyle\diagup$}}\nolimits}
\def\fint{\mathop{\int\mkern-19mu {\diagup}}\nolimits}
\def\ds{\displaystyle}
\def\chix{{\raise.4ex\hbox{$\chi$}}}
\def\ep{\varepsilon}
\def\supp{\operatorname{supp}}
\def\real{{\mathbb R}}
\def\B{{\mathcal B}}
\def\C{{\mathcal C}}
\newcommand{\finedim}{{\unskip\nobreak\hfil\penalty50
   \hskip2em\hbox{}\nobreak\hfil\mbox{$\Box$ \qquad}
   \parfillskip=0pt \finalhyphendemerits=0\par\medskip}}
\begin{document}
\title[On a long range segregation model]
{On a long range segregation model}
\author{L.  Caffarelli} 
\address{The University of Texas at Austin\\
Department of Mathematics -- RLM 8.100\\
2515 Speedway -- Stop C1200\\
Austin, TX~78712-1202}
\email{caffarel@math.utexas.edu}
\author{S. Patrizi}
\address{The University of Texas at Austin\\
Department of Mathematics -- RLM 8.100\\
2515 Speedway -- Stop C1200\\
Austin, TX~78712-1202}
\email{spatrizi@math.utexas.edu}
\author{V. Quitalo}
\address{Purdue University\\
Department of Mathematics\\150 N. University Street\\
West Lafayette\\ IN 47907-2067}
\email{vquitalo@math.purdue.edu}

\thanks{S. Patrizi was supported by the
ERC grant 277749 ``EPSILON Elliptic
Pde's and Symmetry of Interfaces and Layers for Odd Nonlinearities''}


\begin{abstract}
In this work we study the properties of  segregation processes modeled by a family of equations
$$
L(u_i) (x) = u_i(x)\: F_i (u_1, \ldots, u_K)(x)\qquad i=1,\ldots, K
$$
where $F_i (u_1, \ldots, u_K)(x)$ is a non-local factor that takes into consideration the values of the functions $u_j$'s in a full neighborhood of $x.$   We consider as a model problem
$$\Delta u_i^\ep (x) = \frac1{\ep^2} u_i^\ep (x)\sum_{i\neq j} H(u_j^\ep)(x)$$
where $\ep$ is a small parameter and $H(u_j^\ep)(x)$ is  for instance
$$H(u_j^\ep)(x)= \int_{\mathcal{B}_1 (x)} u_j^\ep (y)\, \text{d}y$$
or
$$H(u_j^\ep)(x)= \sup_{y\in \mathcal{B}_1(x)} u_j^\ep (y).$$
Here the set $\mathcal{B}_1(x)$ is the unit ball centered at $x$ with respect to a smooth, uniformly convex norm $\rho$ of $\real^n$. 
Heuristically,  this will force the populations to stay at $\rho$-distance 1, one from each other, as $\ep\to0$.
\end{abstract}

\date{\today }
\subjclass{Primary: 35J60; Secondary: 35R35, 35B65, 35Q92}
\keywords{ Regularity for viscosity solutions, Segregation of populations.}
\maketitle
\pagestyle{plain}
\baselineskip=24pt

\section{Introduction}
Segregation phenomena occur in many areas of mathematics and science: from equipartition problems in geometry, to social and biological processes (cells, bacteria, ants, mammals), to finance (sellers and buyers). 
There is a large body of literature in connection to our work and we would like to refer to \cite{caffarelli_geometry_2009, caffarelli_singularly_2008, conti_coexistence_2008, conti_minimal_2009, conti_global_2011, conti_variational_2003, conti_neharis_2002, conti_asymptotic_2005, conti_regularity_2005, dancer_positive_1995-2, dancer_positive_1995-1, dancer_free_2003,dancer_spatial_1999,  dancer_competing_1995, mimura_effect_1991, shigesada_effects_1984, cushman_density_1988, paine_ecological_1984, ohsawa_how_2003, noris_uniform_2010, dancer_limit_2012, soave_uniform_2015,wei_asymptotic_2008 }  and the references therein. We particularly  would like to point out the articles \cite{ohsawa_how_2003, cushman_density_1988, shigesada_effects_1984, paine_ecological_1984,mimura_effect_1991 } where spatial separation due to competition for resources is discussed among ant nests, mussels and sessile animals.

They study a family of models arising from different applications whose main two ingredients are: in the absence of competition species follow a ``propagation" equation involving diffusion, transport, birth-death, etc, but when two species overlap, their growth is mutually inhibited by competition, consumption of resources, etc. 
The simplest form of such models consists, for species $\sigma_i$  with spatial density $u_i,$   on a system of equations
$$
L (u_i)=u_i \: F_i (u_1, \ldots, u_K).
$$
The operator $L$ quantifies diffusion, transport, etc, while the term $u_i\: F_i$ does attrition of $u_i$ from competition with the remaining species.

In these models, the interaction is punctual, i.e. $u_i(x)$ interacts with the remaining densities also at position $x$. There are many processes, though where the growth of $\sigma_i$  at $x$ is inhibited by the populations $\sigma_j$ in a full area surrounding $x.$

The purpose of this work is a first attempt to study the properties of such a segregation process. 
Basically, we consider a family of equations,
$$
L(u_i )(x) = u_i(x)\: F_i (u_1, \ldots, u_K)(x)
$$
where $F_i (u_1, \ldots, u_K)(x)$ is now a non-local factor that takes into consideration the values of $u_j$ in a full neighborhood of $x.$   
%
%
%
%
Given the previous discussion a possible model problem would be the system
$$\Delta u_i^\ep (x) = \frac1{\ep^2} u_i^\ep (x)\sum_{i\neq j} H(u_j^\ep)(x),\quad i=1,\ldots, K$$
where $\ep$ is a small parameter and $H(u_j^\ep)(x)$ is a non-local operator, for instance 
$$H(u_j^\ep)(x)= \int_{B_1 (x)} u_j^\ep (y)\, \text{d}y$$
or 
$$H(u_j^\ep)(x)= \sup_{y\in B_1(x)} u_j^\ep (y)\ .$$

 To study the limit configuration when  the competition for resources is  very high, we consider the limit when $\ep $ tends to 0. Heuristically, the non-local term  forces the populations to stay at distance 1, one from each other. As an example, as we will prove, in the case of two populations in dimension two,  we will have strips of length precisely one between the regions where the populations live. At ``edge" points, that we will define as singular points,  the angles of the asymptotic cones have to be the same,  see Figure \ref{fig:example limit configuration}. Here $S_i=S_i^1\cup S_i^2$, $i=1,2$, represents the region where the 
 the population $\sigma_i$ with density $u_i$ exists. 
Moreover, the ratio between the normal derivatives at regular points across the free boundary,  depends on the ratio of the respective curvature $\ka$. For example, if  $Z_1 \in \partial S_1^1$ and $Z_2 \in \partial S_2^1$,  $Z_1$ and $Z_2$ are not  ``edge'' points, and $d(Z_1,Z_2)=1$ then 
 $$
\frac{u_\nu^1(Z_1)}{u^2_\nu (Z_2)}= \frac{\ka (Z_1) }{\ka (Z_2)}\quad\text{if }\ka(Z_2)\neq 0, \quad \mbox{and} \quad u_\nu^1(Z_1)=u^2_\nu (Z_2) \quad\text{if }\ka(Z_2)= 0.
 $$

 \begin{figure}[h]
\begin{center}
\includegraphics[scale=0.4]{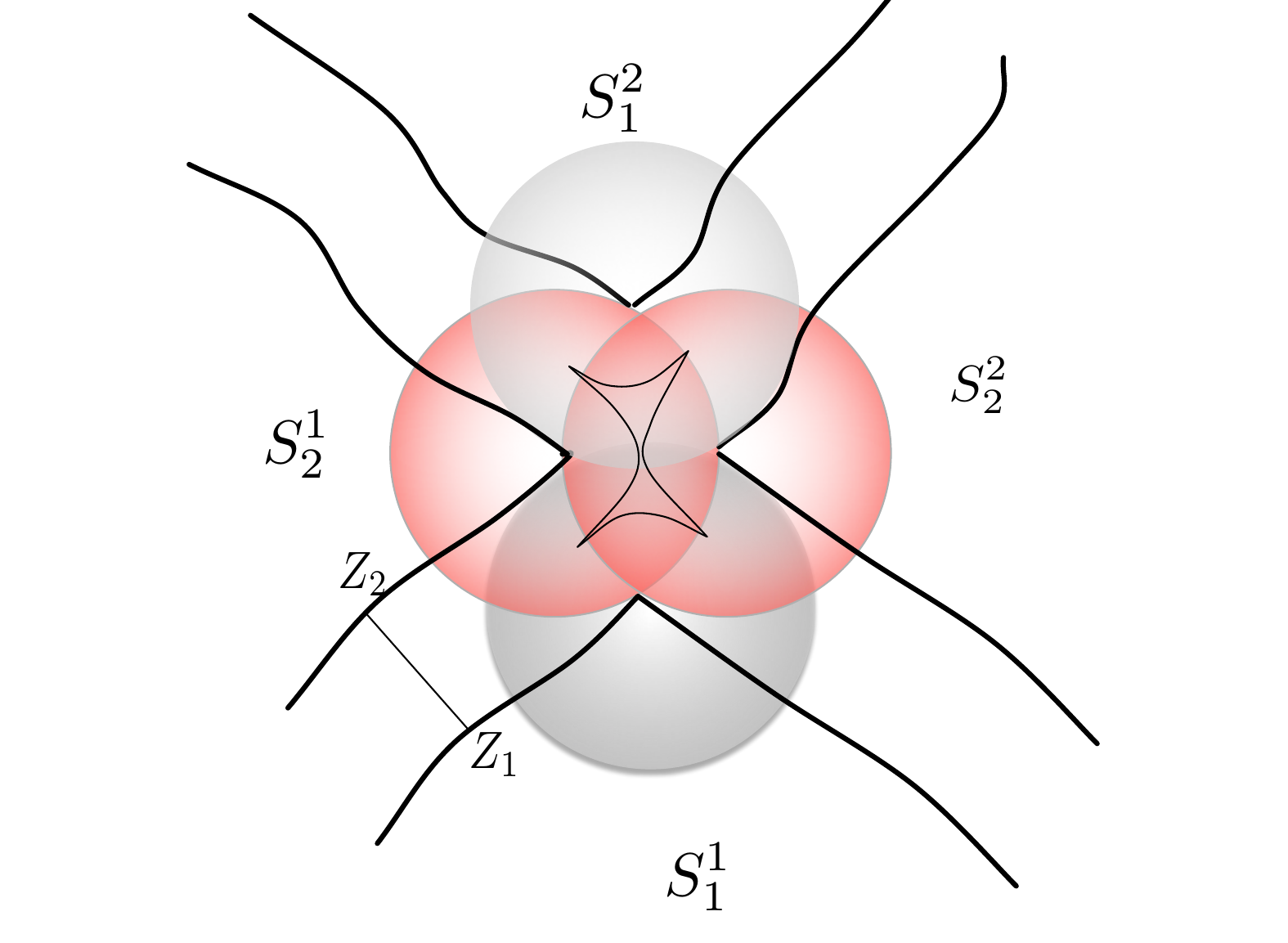}
\caption{Example of a limit configuration for $ K=2$, $n=2$}
\label{fig:example limit configuration}
\end{center}
\end{figure}

We will consider instead of the unit ball in the Euclidean norm  $B_1(x)$, 
the translation at $x$ of a general smooth set $\mathcal{B}$ that is also uniformly convex, bounded and symmetric with respect to the origin. 
The set $\mathcal{B}$ defines a smooth, uniformly convex norm $\rho$ in $\real^n$. 

Let us note that there is some similarity also with the Lasry-Lions model of price formation 
(see \cite{caffarelli_price_2011,lasry_mean_2007}) where selling and buying prices are separated by a gap due to transaction cost.
\medskip
\section{Notation and statement of the problem}

Let $\B$ be an open bounded domain of $\real^n$, convex, symmetric with respect to the origin and 
with smooth boundary. 
Then $\B$ can be represented as the unit ball of a norm $\rho : \real^n\to \real$, $\rho \in C^\infty 
(\real^n\setminus \{0\})$, called the defining function of $\B$, i.e., 
$$\B = \left\{ x\in \real^n \mid \rho (x) < 1\right\}.$$
We assume that $\B$ is uniformly convex, i.e., there exists $0<a\le A$ such that in $\real^n\setminus \{0\}$
\begin{equation}\label{eq:stmt1}
aI_n \le D^2 \left( \frac12 \rho^2\right) \le AI_n\ ,
\end{equation}
where $I_n$ is the $n\times n$ identity matrix.
In what follows we denote 
\begin{align*}
\B_r & := \left\{ y\in \real^n \mid \rho (y) < r\right\},\\
\B_r (x) & := \left\{ y\in \real^n \mid \rho (x-y) < r\right\}.
\end{align*}
So through the paper we will always refer to the Euclidean ball as $B$ and to the $\rho$-ball as $\B$. 
For a given closed set $K$, let 
$$d_\rho (\cdot, K)=\inf_{y \in K} \rho(\cdot -y)$$
be the distance function from $K$ associated to $\rho$. 
Then there exist $c_1,c_2 >0$ such that 
\begin{equation}\label{eq:stmt2}
c_1 d(\cdot, K) \le d_\rho (\cdot,K) \le c_2 d(\cdot, K)\ ,
\end{equation}
where $d(\cdot,K)$ is the distance function associated to the Euclidian norm  $|\cdot|$ of $\real^n$.

Let $\Omega \subset \real^n$ be a bounded Lipschitz  domain.  We will denote by $(\partial\Om)_1$ the $\rho$-strip of size~1 around $\partial\Om$
in the complement of $\Omega$ defined by 
$$(\partial\Om)_1 := \left\{ x\in \Omega^c : d_\rho (x,\partial\Om) \le 1\right\}\ .$$
For $i = 1,\ldots,K$, let $f_i$ be non-negative  functions defined on $(\partial\Omega)_1$ 
with  supports at $\rho$-distance equal or greater  than 1, one from each other:
\begin{equation}\label{eq:stmt4}
 d_\rho(\supp f_i,\supp f_j)\ge1\ ,\quad \text{for }\ i\ne j\ .
\end{equation}
We will  consider the following system of equations: for $i = 1,\ldots,K$
\begin{equation}\label{eq:stmt3}
\begin{cases}
\ds \Delta u_i^\ep (x) = \frac1{\ep^2} u_i^\ep (x) \sum_{j\ne i} H(u_j^\ep) (x)&\text{in }\Omega,\\
\noalign{\vskip6pt}
\ds u_i^\ep = f_i &\text{on }(\partial\Omega)_1.\\
\end{cases}
\end{equation}
The functional $H(u_j)(x)$ depends only on the restriction of $u_j$ to $\B_1(x)$. 

We will consider, for simplicity, 
\begin{equation}\label{H1}
H(w)(x)  = \int_{\B_1(x)} w^p(y) \varphi \big(\rho (x-y)\big)\mbox{d}y,\qquad 1\le p<\infty
\end{equation}
or 
\begin{equation}\label{H2}
H(w) \big(x\big) = \sup_{\B_1(x)} w
\end{equation}
with $\varphi$ a strictly positive smooth function of $\rho$, with at most polynomial decay at 
$\partial\B_1$:
\begin{equation}\label{varphidecay}
\varphi (\rho) \ge C(1-\rho)^q,\quad q\ge 0.
\end{equation}

  In rest of the paper, when we refer to consider $u_1^\ep,\ldots,u_K^\ep$,
viscosity solutions of the problem \eqref{eq:stmt3}, we mean that $u_1^\ep,\ldots,u_K^\ep$ are continuous functions that satisfy in the viscosity sense the system of equations
 \eqref{eq:stmt3}.  
Moreover, we make  the following assumptions: for $i=1,\ldots,K$,
\begin{equation}\label{mainassumpts}\begin{cases}
 \ep>0,\,  \Omega \text{ is a bounded Lipschitz domain of }\real^n,\\ 
 f_i: (\partial\Omega)_1\to\real,\,f_i\ge 0,\,f_i\not\equiv0,\, f_i\text { is  H\"older continuous, }\\
 \exists\text{ $c>0$ s. t.   }\forall x\in\partial\Om\cap\supp\,f_i,\, |\B_r(x)\cap \supp\,f_i|\ge c|\B_r(x)|,\\
 \text{(\ref{eq:stmt4}) holds true},\\ H \text{ is either of the form (\ref{H1}) or (\ref{H2}) and 
(\ref{varphidecay}) holds.}
\end{cases}
\end{equation}

\section{Main results}
For the reader's convenience  we present  our main results below. Assume that \eqref{mainassumpts} holds true, then:
\begin{enumerate}
\item[] {\bf Existence} {\bf (Theorem \ref{thm:existence}):} \\{\em There exist continuous functions $u_1^\ep,\ldots,u_K^\ep$, depending on the parameter $\ep$, 
viscosity solutions of the problem \eqref{eq:stmt3}.}
\item[] {\bf Limit problem (Corollary \ref{convergencecor}):} \\{\em There exists a subsequence 
$(\vec u)^{\ep_m}$ converging locally uniformly, as $\ep\to0$, to a function
$\vec u = (u_1,\ldots, u_K)$, satisfying the following  properties:

\begin{itemize}
\item[i)] the $u_i$'s  are  locally Lipschitz continuous in $\Om$ and have supports at distance at least 1, one from each other, i.e. 
$$u_i\equiv 0\quad\text{in the set }\quad  \{x\in\Om\,|\,d_{\rho}(x,\text{supp } u_j)\le1\}\quad\text{for any }j\neq i.$$
\item[ii)] $\Delta u_i =0$ when $u_i >0$.
\end{itemize}}
\item[] {\bf Semiconvexity of the free boundary (Corollary \ref{tangebtballcor}):}\\
{\em If $x_0 \in \partial \{u_i>0\}$ there is an exterior tangent $\rho$-ball of radius 1 at $x_0$.}
\item[] {\bf  The supports of $u_i$  are  sets of finite perimeter (Corollary \ref{hausmescor}):}\\
{\em The set $\{u_i>0\}$ has finite perimeter.
\item[] {\bf Sharp characterization of the interfaces (Theorem \ref{lem:6.1}):} \\
Under the additional assumption that  $p=1$ in (\ref{H1}), 
the supports of the limit functions are at distance exactly 1, one from each other, i.e, 
if $x_0 \in \partial \{u_i >0\}\cap\Om$, 
then there exists $j\ne i$ such that 
\begin{equation*}\overline{\B_1 (x_0)} \cap \partial \{u_j >0\} \ne \emptyset\ .\end{equation*}
\item[]{\bf Classification of singular points in dimension 2 (Lemma \ref{singisolated}, Theorem \ref{equalanglethm},  Corollary \ref{corc1reg}, Corollary \ref{sigulapoints2dthm}):}\\
For $n=2$, under the additional assumption that  $p=1$ in (\ref{H1}), for $i\neq j$, let $x_0\in\partial \{u_i>0\}\cap\Om$ and $y_0\in \partial \{u_j>0\}\cap\Om$ be points such that
 $\{u_i>0\}$ has an angle $\theta_i$ at  $x_0$,  
$\{u_j>0\}$ has an angle $\theta_j$ at  $y_0$ and $\rho(x_0-y_0)=1$. Then we have
$$\theta_i=\theta_j.$$
If $x_0\in\partial \{u_i>0\}\cap\partial \Om$ and $y_0\in \partial \{u_j>0\}\cap\Om$, then
\begin{equation*}\theta_i\leq \theta_j.\end{equation*}}

\medskip

{\em \noindent  Moreover, singular points, i.e. points where the free boundaries have corners, are isolated and finite.}
{\em \noindent  If the domain is a strip and there are only two populations,  under additional monotonicity assumptions   on  the boundary data, the free boundary sets $\partial \{u_i>0\}$, $i=1,2$, are of class $C^1$.}
\item[] {\bf Lipschitz regularity for free boundary for the obstacle problem associated in dimension 2  (Theorem \ref{lipfreeboundary}):}\\
{\em \noindent  For $n=2$, under the additional assumption that  $p=1$ in (\ref{H1}), $f_i \equiv 1$ and additional conditions about the regularity of $\p \Omega$,  if $(u_1^\ep,\ldots,u_K^\ep)$ is a particular  solution of  \eqref{eq:stmt3} which satisfies the associated obstacle problem \eqref{obstaclepbfree} with  $(u_1,\ldots,u_K)$  the limit as $\ep\to 0$, then the free boundaries $\partial\{u_i>0\}$,  $i=1,\ldots,K$, are Lipschitz curves of the plane.}

\item[] {\bf Free boundary condition (Theorem \ref{thm: fbcond}):}\\
 {\em In  any dimension,  assume that we have  2 populations,  $H$ is defined as in  \eqref{H1}  with $\varphi\equiv 1$, $p=1$ and $\B_1(x)=B_1(x)$ is the Euclidian ball, $0\in\partial\{u_1>0\}$, $e_n\in \partial\{u_2>0\}$, and   $\partial\{u_1>0\}$ and $\partial\{u_2>0\}$ are of class $C^2$ in a neighborhood of 0 and $e_n$ respectively. 
 Let  $\ka_i(0)$ denote  the principal curvatures of $\partial\{u_1>0\}$ at 0, where outward is the positive direction and  let  $\ka_i(e_n)$ denote  the principal curvatures of $\partial\{u_2>0\}$ at $e_n$ where now inward is the positive direction.  Then, we have the following relation on the exterior normal derivatives of $u_1$ and $u_2$:
$$
\frac{u_\nu^1(0)}{u^2_\nu (e_n)}=\prod_{i=1\atop \ka_i(0)\neq 0 }^{n-1} \frac{\ka_i(0)}{\ka_i (e_n)}\quad\text{if }\ka_i(0)\neq 0\text{ for some }i=1,\ldots,n-1,
$$ and
\begin{equation*}u_\nu^1(0)=u^2_\nu (e_n)\quad\text{if }\ka_i(0)= 0\text{ for any }i=1,\ldots,n-1.\end{equation*}}

\end{enumerate}

\section{Existence of solutions}

This proof  follows the same steps as in \cite{quitalo_free_2013} and it is written below for the reader's convenience.   
\begin{thm}\label{thm:existence} Assume \eqref{mainassumpts}. Then there exist continuous positive functions $u_1^\ep,\ldots,u_K^\ep$, depending on the pa\-ra\-meter $\ep$, 
viscosity solutions of the problem \eqref{eq:stmt3}.\end{thm}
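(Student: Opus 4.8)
The plan is to realize $\vec u^\ep=(u_1^\ep,\dots,u_K^\ep)$ as a fixed point of a solution operator and invoke Schauder's theorem. Fix $\ep>0$ and put $M_i:=\sup f_i<\infty$. Work in the closed, convex, bounded set
\[
\mathcal K:=\Big\{\vec v=(v_1,\dots,v_K)\in C\big(\overline{\Om\cup(\p\Om)_1}\big)^K\ :\ 0\le v_i\le M_i,\ \ v_i=f_i\ \text{on}\ (\p\Om)_1\Big\}.
\]
Given $\vec v\in\mathcal K$, note first that for $x\in\Om$ any point of $\B_1(x)\cap\Om^c$ lies on a segment from $x$ to that point which crosses $\p\Om$, hence is at $\rho$-distance $<1$ from $\p\Om$; thus $\B_1(x)\cap\Om^c\subset(\p\Om)_1$, and so $H(v_j)(x)$ is determined by the values of $\vec v$ on $\overline{\Om\cup(\p\Om)_1}$. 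Consequently the coefficients $c_i^{\vec v}(x):=\frac1{\ep^2}\sum_{j\ne i}H(v_j)(x)$ are well defined for $x\in\Om$, nonnegative, bounded by a constant $C(\ep,M_j,\varphi,|\B_1|)$, and continuous on $\overline\Om$ (by dominated convergence when $H$ is of the form \eqref{H1}, and by an elementary argument on sups when $H$ is of the form \eqref{H2}). Define $T\vec v=(u_1,\dots,u_K)$, where $u_i$ solves the \emph{linear} Dirichlet problem $\Delta u_i=c_i^{\vec v}(x)\,u_i$ in $\Om$, $u_i=f_i$ on $\p\Om$, extended by $f_i$ on $(\p\Om)_1$.

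Next I would check that $T$ is a well-defined self-map of $\mathcal K$. Since $c_i^{\vec v}\ge0$ is bounded and continuous and every boundary point of the Lipschitz domain $\Om$ admits an exterior cone, hence a barrier, the linear problem has a unique solution $u_i\in C(\overline\Om)\cap W^{2,q}_{\mathrm{loc}}(\Om)$ for all $q<\infty$, which is in particular a viscosity solution. Comparing with the constant sub/supersolutions $0$ and $M_i$ (using $f_i\ge0$ and $c_i^{\vec v}\ge0$, so $\Delta M_i=0\le c_i^{\vec v}M_i$) gives $0\le u_i\le M_i$; since $u_i=f_i$ on $(\p\Om)_1$ by construction, $T\vec v\in\mathcal K$.

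Then I would establish compactness and continuity of $T$. For precompactness of $T(\mathcal K)$ in $C(\overline{\Om\cup(\p\Om)_1})^K$: interior $W^{2,q}$ (hence $C^{1,\alpha}$) estimates bound $u_i$ on each $\Om'\Subset\Om$ by a constant depending only on $\ep$, the data and $\Om'$, uniformly over $\vec v\in\mathcal K$; near $\p\Om$ the exterior-cone barriers yield a modulus of continuity for $u_i$ at $\p\Om$ depending only on the modulus of continuity of $f_i$ and on $\sup c_i^{\vec v}\,M_i$, again uniform over $\mathcal K$; and $u_i=f_i$ is fixed on $(\p\Om)_1$. Arzel\`a--Ascoli then gives precompactness. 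For continuity of $T$: if $\vec v^k\to\vec v$ in $\mathcal K$ then $c_i^{\vec v^k}\to c_i^{\vec v}$ uniformly on $\overline\Om$ (continuity of $H$ with respect to uniform convergence of its argument), and by the uniform estimates together with uniqueness the corresponding solutions converge uniformly. Schauder's fixed point theorem then produces $\vec u^\ep\in\mathcal K$ with $T\vec u^\ep=\vec u^\ep$, i.e. $\Delta u_i^\ep=\frac1{\ep^2}u_i^\ep\sum_{j\ne i}H(u_j^\ep)$ with $u_i^\ep=f_i$ on $(\p\Om)_1$, a continuous viscosity solution of \eqref{eq:stmt3}. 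Positivity follows from the strong maximum principle: $u_i^\ep\ge0$ solves $\Delta u_i^\ep-c_i u_i^\ep=0$ with $c_i\ge0$ bounded, and $u_i^\ep\not\equiv0$ because the third condition in \eqref{mainassumpts} forces $\supp f_i$ to meet $\p\Om$, so $f_i|_{\p\Om}\not\equiv0$; hence $u_i^\ep>0$ in $\Om$.

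The genuinely delicate point, which I expect to be the main obstacle, is the boundary analysis on the merely Lipschitz domain: one needs a modulus of continuity up to $\p\Om$ that is uniform over the whole family $T(\mathcal K)$, which is precisely what turns $T$ into a compact operator on $C(\overline{\Om\cup(\p\Om)_1})^K$. This rests on Wiener regularity of Lipschitz domains and on a careful exterior-cone barrier construction (quantifying the decay at $\p\Om$ in terms of the Hölder data of $f_i$ and the bound on the right-hand side). Everything else — the maximum-principle bounds, the continuity of the nonlocal operators $H$ under uniform convergence, and the linear Schauder/$L^q$ theory for a frozen $\vec v$ — is standard, and indeed the argument parallels the one in \cite{quitalo_free_2013}.
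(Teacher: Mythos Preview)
Your proposal is correct and follows essentially the same Schauder fixed-point scheme as the paper: freeze $\vec v$, solve the linear problem, and check self-mapping, continuity, and precompactness. The only cosmetic differences are that the paper uses the harmonic extensions $\phi_i$ (rather than the constants $M_i$) as upper barriers and proves continuity of $T$ via a direct quantitative barrier estimate $\|T\vec v_m-T\vec v\|\le C_\ep\|\vec v_m-\vec v\|$ rather than through stability-plus-uniqueness.
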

\begin{proof} 
The proof uses a fixed point result.
Let $B$ be the Banach space of bounded continuous vector-valued functions defined on the domain 
$\Omega$ with the norm 
$$\| (u_1 , u_2,\ldots, u_K)\|_B 
:= \max_i \Big( \sup_{x\in\Omega} |u_i(x)|\Big)\ .
$$
For $i=1,\ldots,K$, let $\phi_i$ be the solutions of  
\begin{equation}\label{phibarrier}
\begin{cases} \Delta \phi_i = 0&\text{ in }\ \Omega,\\
\phi_i = f_i&\text{ on }\ \partial\Omega.
\end{cases}
\end{equation}
Let $\Theta$ be the  subset of bounded continuous functions in $\Om$, that satisfy prescribed boundary data, 
and are bounded from above and from below as stated below:
\begin{equation*}\begin{split}
\Theta = \Big\{ &(u_1, u_2,\ldots, u_K)\,|\,u_i:\Om\rightarrow\real \text{ is continuous, }\, 0\le u_i \le \phi_i\ \text{ in }\ \Omega,\,
u_i = f_i\ \text{ on }\ (\partial\Omega)_1\Big\}.
\end{split}
\end{equation*}
Notice that $\Theta$ is a closed and convex subset of $B$. 
Let $T^\ep$ be the operator that is defined on $\Theta$ in the following way: 
$T^\ep\big(( u_1, u_2, \ldots, u_K)\big) := (v_1^\ep, v_2^\ep,\ldots, v_K^\ep)$ 
if for any $ i=1,\ldots,K$, $v_i^\ep$ is solution to the following  problem:
\begin{equation}\label{Tepproblem}
\begin{cases} 
\ds \Delta (v_i^\ep)(x) = \frac1{\ep^2} v_i^\ep(x) \sum_{j\ne i} H(u_j)(x)&\text{in }\Om\\
\noalign{\vskip6pt}
\ds v_i^\ep = f_i&\text{on } (\partial\Omega)_1,\\
\end{cases}
\end{equation}
where $u_j$, $j\ne i$ are given. 
Observe that if $T^\ep$ has a fixed point
\begin{equation*}
T^\ep\big( (u_1^\ep, u_2^\ep,\ldots, u_K^\ep)\big) = (u_1^\ep, u_2^\ep,\ldots, u_K^\ep)
\end{equation*}
then $(u_1^\ep, u_2^\ep,\ldots, u_K^\ep)$ is a solution of problem \eqref{eq:stmt3}.

In order for $T^\ep$ to have a fixed point, we need to prove that it satisfies the hypothesis of the 
Schauder fixed point Theorem, see \cite{gilbarg_elliptic_2001}:
\begin{itemize}
\item[(1)] $T^\ep(\Theta) \subset \Theta$ :
\item[{}] Classical existence results guarantee 
the existence of  a viscosity solution  $(v_1^\ep, v_2^\ep,\ldots, v_K^\ep)$ of  problem  \eqref{Tepproblem} which is smooth in $\Om$. 
Since   $f_i\ge0$ and $f_i\not \equiv0$,  the strong maximum principle implies  
\begin{equation*}v_i^\ep > 0\quad\text{in }\Omega.\end{equation*}
This implies that 
\begin{equation}\label{visubarmobstacle}\Delta v_i^\ep\ge 0\quad \text{ in }\Om,\end{equation}
and, again from the  comparison principle, we have  
$$ v_i^\ep \le \phi_i\quad\text{in }\Omega.$$ 

We have proved  that $T^\ep\big( (u_1, u_2,\ldots, u_K)\big)\in\Theta$.
\medskip 
\item[(2)] {\em $T^\ep$ is continuous}\,:
\newline  Let us assume that  $((u_1)_m,\ldots, (u_K)_m) \to 
(u_1,\ldots, u_K)$ in $B$ meaning that when $m$ tends to $+\infty$,
$$\max_{1\le i\le K} \|(u_i)_m - u_i \|_{L^\infty} \to 0\ .$$
We need to prove that for each fixed $\ep>0$
$$\|T^\ep\big( (u_1)_m,\ldots, (u_K)_m\big) - T^\ep(u_1,\ldots, u_K)\|_{B} \to 0$$
when $m\to +\infty$. 
Let
$$T^\ep\big( (u_1)_m,\ldots, (u_K)_m\big) = \big( (v_1^\ep)_m ,\ldots, (v_K^\ep)_m\big),$$ then
if we prove that there exists a constant $C_\ep$ independent of $m$, so that we have the estimate, for $i=1,\dots,K$ 
$$\|(v_i^\ep)_m - v_i^\ep\|_{L^\infty} \le C_\ep \max_j \|(u_j)_m - u_j\|_{L^\infty},$$
the result follows.
For all $x\in \Omega$ and for fixed $i$, let $\omega_m$ be the function 
$$\omega_m (x) = (v_i^\ep)_m (x) - v_i^\ep(x)\ ,$$
and suppose  for instance that there exists $y\in \Omega$ such that 
\begin{equation}\label{eq:sols1}
\omega_m (y) > r^2 D\max_j \| (u_j)_m - u_j \|_{L^\infty}\ ,
\end{equation}
for some large $D>0$, where $r$ is such that $\Omega \subset B_r $, and $ B_r $ is the  ball centered at 0 of radius $r$ in the Euclidean norm.
We want to prove that this is impossible if $D$ is sufficiently large. 
Let $h_m$ be the concave radially symmetric function
$$h_m (x) = \gamma_m \big( r^2 - |x|^2\big)\ ,$$
with $\gamma_m = D\max_j \|(u_j)_m - u_j\|_{L^\infty}$. 
Observe that: 
\begin{itemize}
\item[(a)] $h_m(x) =0$ on $\partial B_r $; 
\item[(b)] $h_m(x) \le r^2 D \max_j \|(u_j)_m - u_j\|_{L^\infty}$ for all $x$ in $B_r$;
\item[(c)] $0= \omega_m(x) \le h_m(x)$ on $\partial\Omega$, since $(v_i^\ep)_m$ and $v_i^\ep$ 
are solutions with the same boundary data.
\end{itemize}
Since we are assuming \eqref{eq:sols1}, there exists a negative minimum of $h_m- \omega_m$ in $\Om$. 
Let $x_0\in\Om$ be a point where the minimum value of $h_m -\omega_m$ is attained. Then 
$$h_m(x_0) - \omega_m (x_0) < 0\quad\text{and}\quad\Delta  (h_m - \omega_m)(x_0) \ge 0 .$$ Then, we have
\begin{equation*}
\begin{split} 
\Delta \omega_m(x_0)& = \Delta \big( (v_i^\ep)_m\big)(x_0)  - \Delta v_i^\ep(x_0)  \\
& = \frac1{\ep^2} \bigg( ((v_i^\ep)_m(x_0) - v_i^\ep(x_0)) \sum_{j\ne i} H((u_j)_m)(x_0)\\&  - v_i^\ep(x_0) \sum_{j\ne i}\left(H ( u_j)(x_0) - H((u_j)_m)(x_0)\right ) \bigg)\\
& \ge \frac1{\ep^2}\bigg(\left( (v_i^\ep)_m(x_0) - v_i^\ep(x_0) \right)\sum_{j\ne i}H((u_j)_m)(x_0)\\& - v_i^\ep(x_0) (K-1) 
C\max_j\left\|  (u_j)_m-u_j \right\|_{L^\infty (\Omega)}\bigg)
\end{split}
\end{equation*}
adding and subtracting $\frac1{\ep^2} v_i^\ep(x_0) \sum_{j\ne i} H ((u_j)_m)(x_0)$, where $C$ depends on the  $f_j$'s and $\varphi$. 
Then 
\begin{equation*}
\begin{split}
 0 & \le \Delta (h_m  -\omega_m)(x_0)\\
& \le -2\gamma_m n - \frac1{\ep^2}\bigg(( (v_i^\ep)_m - v_i^\ep) (x_0) 
\sum_{j\ne i} H(( u_j)_m)(x_0) \\&
- v_i^\ep(x_0) (K-1) C\max_j\left\| (u_j)_m- u_j\right\|_{L^\infty}
\bigg) \\
& \le - 2n D\max_j \|(u_j)_m - u_j \|_{L^\infty} 
+ \frac1{\ep^2} v_i^\ep(x_0) (K-1) C\max_j\left\| (u_j)_m- u_j\right\|_{L^\infty}\\&
\le - 2n D\max_j \|(u_j)_m - u_j \|_{L^\infty} +\frac{ \widetilde{C}}{\ep^2}\max_j\left\| (u_j)_m- u_j\right\|_{L^\infty}
\end{split}
\end{equation*}
because $0< h_m(x_0) < \omega_m(x_0)=\big( (v_i^\ep)_m - v_i^\ep\big) (x_0)$ and $\sum_{j\ne i} H((u_j )_m)(x_0)\ge 0$ 
and so 
$$- \frac1{\ep^2} \big( (v_i^\ep)_m - v_i^\ep\big) (x_0) \sum_{j\ne i}  H((u_j )_m)(x_0)\le 0\ .$$
Taking $D=D_\ep > \frac{\widetilde{C}}{2n\ep^2}$, we obtain that 
$$0 \le \Delta (h_m - \omega_m) (x_0) <0$$
which is a contradiction. 
\medskip
\item[(3)] {\em $T^\ep(\Theta)$ is precompact}\,: 
\newline
Let $ \big((u_1)_m,\ldots, (u_K)_m\big) $ be a bounded sequence in $B$ and let
$$ \big( (v_1^\ep)_m ,\ldots, (v_K^\ep)_m\big)=T^\ep\big( (u_1)_m,\ldots, (u_K)_m\big). $$  
Then by standard H\"older estimates for viscosity solutions, $ \big( (v_1^\ep)_m ,\ldots, (v_K^\ep)_m\big)$ is bounded in the space of H\"older continuous functions on $\overline{ \Om}$.
Since the subset of $\Theta$  of H\"older continuous functions on $\overline{ \Om}$ is precompact in $\Theta$, we can extract from $\big( (v_1^\ep)_m ,\ldots, (v_K^\ep)_m\big)$ a subsequence which is converging in $B$.
\end{itemize}

We have proven the existence of   a solution $(u_1^\ep,\ldots,u_K^\ep)$  of  \eqref{eq:stmt3}. The same argument as in (1) shows that $u_i^\ep>0$ in $\Om$.
This concludes the proof of the theorem.
\end{proof}

\section{Uniform in $\ep$ Lipschitz estimates}\label{Lipestsection}
In this section we will prove uniform in $\ep$ Lipschitz estimates that will imply the convergence, up to subsequences, of  the solution $(u_1^\ep, \ldots, u_K^\ep)$ of \eqref{eq:stmt3} to a limit function $(u_1,\ldots,u_K)$ as $\ep\to 0$. We will show that the functions $u_i$'s are locally Lipschitz continuous in $\Om$ and harmonic inside their support. Moreover, 
$u_i\equiv 0$ in the $\rho$-strip of size 1 of the support of $u_j$ for any $j\ne i$, i.e., the supports of the limit functions are  at distance at least 1, one from each other.
We start by proving general properties of subsolutions of uniform elliptic equations.  
\begin{lem}\label{lem:gen1}
Let: 
\begin{itemize}
\item[a)] $\omega$ be a subharmonic function in $\B_1$, such that 
\begin{itemize}
\item[a$_1$)] $\omega \le 1$ in $\B_1 $;
\item[a$_2$)] $\omega (0) = m>0$.
\end{itemize}
\item[b)]  $D_0$ be a smooth convex set with bounded curvatures 
$$|\ka_i (\partial D_0)|  \le C_0,\quad i=1,\ldots,n-1$$
(like $\B_1$ above). 
\end{itemize}
Then, there exists a universal $\tau_0 = \tau_0 (C_0,n,\rho)$ such that, if the distance $d_\rho(D_0,0) \le \tau_0 m$,  
then $$\sup_{\partial D_0 \cap \B_1} \omega \ge \frac{m}{2}.$$
\end{lem}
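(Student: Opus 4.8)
\emph{Proof proposal.} The plan is to argue by contradiction: assume $s:=\sup_{\partial D_0\cap\B_1}\omega<m/2$ and derive $\omega(0)<m$. Two reductions are free. If $0\in\overline{D_0}$, then $0\in\partial D_0\cap\B_1$ and $\sup_{\partial D_0\cap\B_1}\omega\ge\omega(0)=m$, so there is nothing to prove; hence we may assume $\dis(D_0,0)>0$. Let $p\in\partial D_0$ realize $\rho(p)=\dis(D_0,0)\le\tau_0 m\le\tau_0$; by \eqref{eq:stmt2}, $|p|\le\dis(D_0,0)/c_1$, so choosing $\tau_0<1$ forces $p$ to lie deep inside $\B_1$, and in particular $\partial D_0\cap\B_1\ne\emptyset$, so the statement is non-vacuous.

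Next I would reduce everything to an estimate on harmonic measure. Let $U$ be the connected component of $\B_1\setminus\overline{D_0}$ containing $0$; on the part of $\partial U$ lying in $\partial D_0\cap\B_1$ we have $\omega\le s<m/2$, and on the rest of $\partial U$ we only know $\omega\le1$. Since $\omega$ is subharmonic on $U$, the maximum principle gives $\omega\le\tfrac m2+\big(1-\tfrac m2\big)g$ in $U$, where $g$ is the harmonic measure in $U$ of the ``outer'' boundary piece (the bounded harmonic function on $U$ with boundary values $0$ on the part of $\partial U$ in $\partial D_0$ and $1$ on the part in $\partial\B_1$). Evaluating at $0$ and using $m=\omega(0)\le1$ yields $g(0)\ge m/2$. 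So it suffices to prove a Hopf-type bound $g(0)\le C(C_0,n,\rho)\,\dis(D_0,0)$: then picking $\tau_0=\tau_0(C_0,n,\rho)$ with $C\tau_0<1/2$ gives $g(0)\le C\tau_0 m<m/2$, a contradiction.

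This last bound is the heart of the matter, and the only place I expect real difficulty. Because $D_0$ is convex with $|\ka_i(\partial D_0)|\le C_0$, it contains at $p$ an interior tangent ball $B_{r_0}(z)\subset D_0$ with $r_0=r_0(C_0,n)>0$ (a classical rolling-ball property of convex bodies with bounded curvature); shrinking $r_0$ to a small fixed multiple of $1$ and using that $p$ is close to $0$, I would also arrange $B_{r_0}(z)\subset\B_{1/2}$, so that $\mathrm{dist}\big(B_{r_0}(z),\partial\B_1\big)\ge c(\rho)>0$. Now use the Newtonian potential $\psi(x)=A\big(r_0^{2-n}-|x-z|^{2-n}\big)$ for $n\ge3$ (and $\psi(x)=A\log(|x-z|/r_0)$ for $n=2$; $n=1$ is trivial): it is harmonic in $U$ since $z\notin U$; it is $\ge0$ on $\partial U\cap\partial D_0$ because $|x-z|\ge r_0$ there (the ball is interior to $D_0$); and, after fixing $A=A(C_0,n,\rho)$, it is $\ge1$ on $\partial U\cap\partial\B_1$ because $|x-z|\ge r_0+c(\rho)$ there. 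Hence $g\le\psi$ in $U$ by comparison. Finally $r_0\le|0-z|\le|0-p|+r_0\le\dis(D_0,0)/c_1+r_0$ (the lower bound because $0\notin\overline{D_0}\supset B_{r_0}(z)$), and applying the mean value theorem to $t\mapsto t^{2-n}$ (resp.\ $t\mapsto\log t$) on $[\,r_0,\;r_0+\dis(D_0,0)/c_1\,]$ gives $\psi(0)\le C(C_0,n,\rho)\,\dis(D_0,0)$, which is the bound required in the previous paragraph.

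I want to stress that the argument must produce the \emph{linear} decay $g(0)=O(\dis(D_0,0))$. A cruder comparison — for instance with an affine function on the supporting half-space of $D_0$ at $p$ — would only yield $g(0)=O(1)$, which cannot beat $m/2$; the curvature bound on $\partial D_0$ is used precisely to manufacture the interior tangent ball and its Newtonian potential, and getting that barrier to dominate on the far boundary $\partial\B_1$ (where one only has the weak bound $\omega\le1$) is the technical crux.
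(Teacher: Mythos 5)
Your proposal is correct and follows essentially the same route as the paper's proof: compare $\omega$ on $\B_1\setminus D_0$ (in the paper) or on the component $U\ni 0$ (in yours) against a harmonic function with boundary data $m/2$ on $\partial D_0\cap\B_1$ and $1$ on $\partial\B_1$, and derive the contradiction from the \emph{linear} growth of that comparison function away from $\partial D_0$. The only difference is one of detail: where the paper invokes ``a standard barrier argument'' to get $h(x)\le k_1 d(x,\partial D_0)+m/2$ in $\B_{1/2}\setminus D_0$, you actually build the barrier — interior tangent ball $B_{r_0}(z)\subset D_0$ at the nearest boundary point $p$ (available from the curvature bound and convexity), then the shifted Newtonian potential of $B_{r_0}(z)$, then a mean-value estimate of $\psi(0)$ — and you phrase the comparison in terms of harmonic measure, which is just an affine normalization of the paper's $h$.

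One small slip worth flagging: the opening reduction ``if $0\in\overline{D_0}$ then $0\in\partial D_0\cap\B_1$'' is not right, since $0$ could lie in the interior of $D_0$ (where $\dis(D_0,0)=0$, so the hypothesis is still in force). In that case one should instead use subharmonicity of $\omega$ on $D_0\cap\B_1$ together with the exterior ball/convexity of $D_0$ to reach the boundary; the paper simply declares ``w.l.o.g.\ $0\notin D_0$'' without further comment. This does not affect the main argument, which in the interesting case $0\notin\overline{D_0}$ is correct as you wrote it.
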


\begin{proof} Assume w.l.o.g. that $0 \notin D_0$ and let $h$ be harmonic in $\B_1 \setminus D_0$ and such that
$$\begin{cases} h = 1\ \text{ on }\ (\partial \B_1)\setminus D_0\\
\noalign{\vskip6pt}
h = \frac{m}{2}\ \text{ on }(\partial D_0) \cap \B_1\ .
\end{cases}$$
By assumption (b), the set  $\B_1 \setminus D_0$ satisfies an exterior uniform ball condition at any point of  $\partial D_0\cap\B_1$, therefore, by a standard barrier argument, $h$ grows no more than  linearly away from $\partial D_0$ in $\B_\frac{1}{2}$, i.e., there exist $k_1,k_2>0$ depending on $C_0$ and $n$ such that, if $x\in \B_\frac{1}{2}\setminus D_0$ and $d(x,\partial D_0)\leq k_2$, then $h(x)\leq k_1d(x,\partial D_0)+\frac{m}{2}$. To prove that $h(0)<m$ observe that if  $\tau_0\leq k_2c_1$, where $c_1$ is given by  \eqref{eq:stmt2},  then $d(0,\partial D_0)\leq\tau_0 m/c_1\leq k_2 m\leq k_2$ and  therefore, if in addition  $\tau_0$ is so  small that $\frac{k_1}{c_1}\tau_0\le\frac12$, we have
$$h(0)\leq k_1d(0,\partial D_0)+\frac{m}{2}\leq \frac{k_1}{c_1}d_\rho(0,\partial D_0)+\frac{m}{2}\leq\frac{k_1}{c_1} \tau_0 m+\frac{m}{2} < m.$$
Hence,  we must have $\sup_{(\partial D_0 \cap \B_1)} \omega \ge \frac{m}{2}$, otherwise the
comparison principle would imply $\omega(x)\leq h(x)$ in $\B_1 \setminus D_0$, which is a contradiction at $x=0$. 
\end{proof}


\begin{lem}\label{lem:gen2}
 Let $\omega$ be a positive subsolution of a uniformly elliptic equation, $(\lambda^2 I \le a_{ij} \le \Lambda^2 I)$
$$a_{ij} D_{ij} \omega \ge \theta^2 \omega\quad\text{ in }\B_r.$$
Then there exist $c,C>0$ such that
$$\frac{\omega (0)}{\sup\limits_{\B_r} \omega}\le C\mbox{\rm e}^{-c\theta r}.$$
\end{lem}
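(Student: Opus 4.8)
The plan is a single barrier and comparison argument. First I would reduce the $\rho$-ball to a Euclidean one: by the norm equivalence \eqref{eq:stmt2} the Euclidean ball $B_{r'}$ with $r':=r/(2c_2)$ satisfies $\overline{B_{r'}}\subset\B_r$ (indeed $|x|\le r'\Rightarrow\rho(x)\le c_2|x|\le r/2<r$), so, writing $M:=\sup_{\B_r}\omega$ (which we may assume finite, else there is nothing to prove), it suffices to exhibit a positive $\phi$ on $B_{r'}$ with $a_{ij}D_{ij}\phi\le\theta^2\phi$ in $B_{r'}$, $\phi\ge M$ on $\partial B_{r'}$, and $\phi(0)\le Ce^{-c\theta r}M$. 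Indeed, $v:=\omega-\phi$ then satisfies $a_{ij}D_{ij}v-\theta^2 v\ge 0$, an operator whose zeroth order coefficient is $\le 0$, and $v\le 0$ on $\partial B_{r'}$ (since $\partial B_{r'}\subset\B_r$ and $\phi\equiv M$ there), so the comparison principle gives $\omega\le\phi$ in $B_{r'}$, hence $\omega(0)\le\phi(0)$.

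For the barrier I would take the Euclidean–radial function built from $\Psi(s):=e^{\beta\sqrt{1+s^2}}$, namely $\phi(x):=M\,\Psi(\theta|x|)/\Psi(\theta r')$, with $\beta=\beta(n,\Lambda)>0$ to be fixed. The function $\Psi$ is smooth on $[0,\infty)$, positive, increasing and convex, with $\Psi'(s)=\Psi(s)\,\beta s/\sqrt{1+s^2}$ and $\Psi''(s)=\Psi(s)\bigl(\beta^2 s^2/(1+s^2)+\beta/(1+s^2)^{3/2}\bigr)$, so that $\Psi''+\tfrac{n-1}{s}\Psi'\le(\beta^2+n\beta)\Psi$ pointwise. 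On the other hand, for a radial function and a matrix with $\lambda^2 I\le a\le\Lambda^2 I$, working in an orthonormal frame whose first vector is $x/|x|$ one has $a_{ij}D_{ij}[\Psi(|x|)]=a_{11}\Psi''+\tfrac{\Psi'}{|x|}\sum_{k\ge 2}a_{kk}\le\Lambda^2\bigl(\Psi''+\tfrac{n-1}{|x|}\Psi'\bigr)$, since $\Psi',\Psi''\ge 0$ and each $a_{kk}\in[\lambda^2,\Lambda^2]$. Combining these and rescaling by $\theta$ gives $a_{ij}D_{ij}\phi\le\Lambda^2(\beta^2+n\beta)\,\theta^2\phi$ in $B_{r'}$, so choosing $\beta$ small enough that $\Lambda^2(\beta^2+n\beta)\le 1$ makes $\phi$ the desired supersolution. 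Moreover $\phi\equiv M$ on $\partial B_{r'}$, and since $\sqrt{1+(\theta r')^2}\ge\theta r'$ we get $\phi(0)=M e^{\beta}/e^{\beta\sqrt{1+(\theta r')^2}}\le e^{\beta}e^{-\beta\theta r'}M=e^{\beta}e^{-c\theta r}M$ with $c:=\beta/(2c_2)$.

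Putting the two pieces together yields $\omega(0)\le\phi(0)\le Ce^{-c\theta r}\sup_{\B_r}\omega$ with $C=e^{\beta}$ and $c=\beta/(2c_2)$, which is the assertion. I do not expect a genuine obstacle here: the content is just the observation that $e^{\beta\sqrt{1+|x|^2}}$ is an explicit, globally defined supersolution of $a_{ij}D_{ij}w\le w$ whose ratio of boundary to center values is exponentially large. The only points demanding a little care are the elementary verification of the differential inequality for $\Psi$ — in particular identifying the coefficient of $\Psi'$ in $a_{ij}D_{ij}[\Psi(|x|)]$ as $\sum_{k\ge 2}a_{kk}$, which is exactly what makes it controllable — and justifying the comparison principle at the regularity at which $\omega$ is given (immediate if $\omega\in C^2$; otherwise by using the smooth $\phi$ as a test function, or via the Alexandrov–Bakelman–Pucci estimate when the $a_{ij}$ are merely measurable). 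If one prefers to stay on $\B_r$ throughout, the same computation with $D^2(\tfrac12\rho^2)$ replacing the Euclidean Hessian, using \eqref{eq:stmt1}, produces a $\rho$-radial barrier that works directly on $\B_r$.
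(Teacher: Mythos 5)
Your proof is correct and follows essentially the same route as the paper's: build an explicit supersolution of $a_{ij}D_{ij}u\le\theta^2 u$ whose boundary-to-center ratio grows exponentially in $\theta r$, then conclude by the comparison principle (the paper uses the barrier $g(x)=\sum_{i=1}^n\cosh(\theta x_i/\Lambda)$, you use the radially symmetric $e^{\beta\sqrt{1+(\theta|x|)^2}}$ after shrinking $\B_r$ to a Euclidean ball, but these are interchangeable standard choices). Your computation of $a_{ij}D_{ij}[\Psi(|x|)]$ in the worst-direction frame and the final bookkeeping of constants are both sound.
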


\begin{proof}
The function 
$$g(x) = \sum_{i=1}^n \cosh \left(\frac{\theta}{\Lambda} x_i\right)$$
is a supersolution of the equation $a_{ij} D_{ij} u = \theta^2u$. Moreover, using the convexity of the exponential function, it is easy to check that it satisfies 
$$g(x)\ge C_1 e^{c\theta r}\quad\text{for any }x\in\partial\B_r.$$ Then, the comparison principle implies 
$$\frac{\omega (x)}{\sup\limits_{\B_r} \omega}\le \frac{g(x)}{C_1 e^{c\theta r}}\quad\text{for any  }x\in\B_r.$$ 
The result follows taking $x=0$.
\end{proof}

The next lemma says that if $u_i^\ep$ attains a positive value $\sigma$ at some interior point, then all the other functions $u_j^\ep$, $j\neq i$, go to zero exponentially in a 
$\rho$-ball of radius $1+c\sigma$ around that point. 

\begin{lem}\label{uj=0closui} Assume \eqref{mainassumpts}.  Let $(u_1^\ep,\ldots,u_K^\ep)$ be a
viscosity solution of the problem \eqref{eq:stmt3}. For $i=1,\ldots,K$, $\sigma>0$, and $0<r<1$ let 
$$\Gamma_i^{\sigma,r}:=\{y\in\Omega\,:\, d_\rho(y,\supp\, f_i)\geq 2r,\,u_i^\ep=\sigma\}$$  
and 
$$m:=\frac{\sigma}{\sup_{\partial\Om} f_i}.$$ Then, there exists a universal constant $0<\tau<1$ such that,  in the sets 
$$\Sigma_{i,j}^{\sigma,r}:=\left\{x\in\Om\,:\,d_\rho(x,\Gamma_i^{\sigma,r})\leq 1+\frac{\tau m r}{2},\,d_\rho(x,\supp\,f_j)\geq \frac{\tau m r}{4}\right\}$$
we have 
$$u_j^\ep\leq Ce^{-\frac{c\sigma^\alpha r^\beta}{\ep}},\quad\text{for }j\neq i,$$
for some positive $\alpha$ and $\beta$ depending on the structure of $H$ ($p$ and $q$).
\end{lem}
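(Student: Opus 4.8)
The plan is to chain together the three preceding lemmas. The rough strategy: if $u_i^\ep=\sigma$ at a point $x_0\in\Gamma_i^{\sigma,r}$, then $u_i^\ep/\sup_{\partial\Om}f_i$ is a subharmonic function bounded by $1$ (since $u_i^\ep\le\phi_i\le\sup f_i$) taking value $m>0$ at $x_0$, so Lemma~\ref{lem:gen1} forces $u_i^\ep$ to still be of size $\ge m/2$ on the boundary of any convex set $D_0$ (e.g.\ a small $\rho$-ball) whose $\rho$-distance to $x_0$ is at most $\tau_0 m$. Iterating this along a chain of overlapping $\rho$-balls marching out to $\rho$-distance roughly $\tau_0 m\cdot(\text{number of steps})$ — here is where a factor like $r$ enters if we only march a controlled number of steps, or where we must be careful about staying inside $\Om$ and away from the boundary supports — we conclude that $u_i^\ep$ is bounded below by a small positive constant (a power of $m$, hence a power of $\sigma$ and $r$) on a whole $\rho$-ball of radius slightly larger than $1$ centered near $x_0$, say on $\B_{1+\tau m r/2}(x_0)$ minus a thin neighborhood of $\supp f_j$.

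Next I would feed this lower bound on $u_i^\ep$ into the equation for $u_j^\ep$. Wherever $u_i^\ep\ge\delta$ (with $\delta$ a power of $\sigma r$), the nonlocal term $H(u_i^\ep)(x)$ is bounded below: for $H$ of the form \eqref{H1}, $H(u_i^\ep)(x)=\int_{\B_1(x)}(u_i^\ep)^p\varphi\ge \delta^p\int_{\{u_i^\ep\ge\delta\}\cap\B_1(x)}\varphi$, and using \eqref{varphidecay} together with the fact that a definite-measure portion of $\B_1(x)$ lies inside the region where $u_i^\ep\ge\delta$, this is $\ge c\,\delta^p\cdot(\text{const})\ge c\,(\sigma r)^{\alpha'}$ for suitable exponents; for $H$ of the form \eqref{H2} it is even simpler, $H(u_i^\ep)(x)=\sup_{\B_1(x)}u_i^\ep\ge\delta$. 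Hence in the region $\Sigma_{i,j}^{\sigma,r}$, $u_j^\ep$ satisfies $\Delta u_j^\ep\ge \frac{1}{\ep^2}u_j^\ep\sum_{k\ne j}H(u_k^\ep)\ge \frac{c(\sigma r)^{\alpha'}}{\ep^2}u_j^\ep$, i.e.\ it is a positive subsolution of a uniformly elliptic equation with coefficient $\theta^2=c(\sigma r)^{\alpha'}/\ep^2$.

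Finally, apply Lemma~\ref{lem:gen2}: around any point $y\in\Gamma_i^{\sigma,r}$, $u_j^\ep$ satisfies $\Delta u_j^\ep\ge\theta^2 u_j^\ep$ on a $\rho$-ball of radius $\sim\tau m r$ about a point at $\rho$-distance $\sim 1$ from $y$ where $u_j^\ep$ could a priori be of unit size, so
$$u_j^\ep(\text{center})\le C\,e^{-c\theta\cdot(\tau m r)}\sup u_j^\ep\le C\,e^{-c(\sigma r)^{\alpha'/2}\cdot(\sigma r)/\ep}=C\,e^{-c\sigma^{\alpha}r^{\beta}/\ep},$$
after relabeling exponents; covering $\Sigma_{i,j}^{\sigma,r}$ by such balls gives the claim everywhere on that set. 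The main obstacle, I expect, is the bookkeeping in the first step: propagating the lower bound on $u_i^\ep$ from the single point value $\sigma$ out to a $\rho$-ball of radius exceeding $1$ while (a) keeping track of how the lower bound degrades at each iteration of Lemma~\ref{lem:gen1} (which costs a factor $1/2$ per step, so one can afford only $O(\log)$ steps unless the step length itself is comparable to the radius), (b) ensuring the chain of balls stays inside $\Om$ and avoids $\supp f_j$ — this is exactly why the definitions of $\Gamma_i^{\sigma,r}$ and $\Sigma_{i,j}^{\sigma,r}$ carry the $2r$ and $\tau m r/4$ buffers — and (c) correctly tracking how the exponents $\alpha,\beta$ depend on $p$ and $q$ through the estimate on $H$. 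The uniform convexity of $\rho$ and \eqref{eq:stmt2} are used throughout to pass between $\rho$-balls and Euclidean balls and to guarantee the convex sets $D_0$ have the bounded-curvature property required by Lemma~\ref{lem:gen1}.
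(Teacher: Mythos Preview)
Your overall architecture is right: Lemma~\ref{lem:gen1} to exploit the value $u_i^\ep=\sigma$, then a lower bound on $H(u_i^\ep)$ near the target point, then Lemma~\ref{lem:gen2} for the exponential decay of $u_j^\ep$. The gap is in the first step, and it is not just bookkeeping.

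Your plan to iterate Lemma~\ref{lem:gen1} along a chain of small balls, marching from $y\in\Gamma_i^{\sigma,r}$ out to $\rho$-distance exceeding~$1$, cannot work: each step gains only $\rho$-distance $\sim\tau_0 m$ (after the rescaling by $r$, $\sim\tau_0 m r$ in original coordinates) at the cost of a factor $1/2$, so reaching distance $\sim 1$ costs a factor $2^{-c/(mr)}$, which is not polynomial in $\sigma,r$. More importantly, the goal you set yourself --- a pointwise lower bound for $u_i^\ep$ on a whole $\rho$-ball of radius $>1$ around $y$ --- is simply false: on that ball $u_i^\ep$ will typically be exponentially small (that is precisely what the lemma is about, with the roles of $i,j$ reversed). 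You do not need, and cannot get, such a lower bound.

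The paper's device is to apply Lemma~\ref{lem:gen1} \emph{once}, with a large $D_0$. Fix $\overline{x}\in\Sigma_{i,j}^{\sigma,r}$ and $y\in\Gamma_i^{\sigma,r}$ with $d_\rho(\overline{x},y)=1+\tfrac{\tau m r}{2}$. Rescale $v(x):=u_i^\ep(y+rx)/\sup_{\partial\Om}f_i$, subharmonic on $\B_1$ with $v(0)=m$, and take $D_0$ to be the rescaled ball $\B_{\frac{1}{r}-\frac{\tau m}{2}}\bigl(\tfrac{\overline{x}-y}{r}\bigr)$; this has bounded curvature and $d_\rho(D_0,0)=\tau m$. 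Lemma~\ref{lem:gen1} then yields a \emph{single point} $z\in\partial\B_{1-\frac{\tau m r}{2}}(\overline{x})\cap\B_r(y)$ with $u_i^\ep(z)\ge\sigma/2$. The geometry is arranged so that $\B_{\frac{\tau m r}{4}}(z)\subset\B_1(x)$ for every $x\in\B_{\frac{\tau m r}{4}}(\overline{x})$. For $H$ of the form \eqref{H2} the single value $u_i^\ep(z)\ge\sigma/2$ already gives $H(u_i^\ep)(x)\ge\sigma/2$. For $H$ of the form \eqref{H1}, since $z\in\B_r(y)$ and $d_\rho(y,\supp f_i)\ge 2r$, $(u_i^\ep)^p$ is subharmonic on $\B_r(z)$, so the mean value inequality gives $\fint_{B_s(z)}(u_i^\ep)^p\ge(\sigma/2)^p$ for $s\sim\tau m r$; combined with \eqref{varphidecay} this produces $H(u_i^\ep)(x)\ge C\sigma^{\overline{\alpha}}r^{\overline{\beta}}$. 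From here your Steps~2 and~3 (the differential inequality for $u_j^\ep$ on $\B_{\frac{\tau m r}{4}}(\overline{x})$ and Lemma~\ref{lem:gen2}) go through essentially as you wrote them.
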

\begin{proof}Let $0<\tau<1$ to be determined. For  $0<r<1$, let us consider the set $ \Sigma_{i,j}^{\sigma,r}$ defined above and let 
$\overline{x}\in \Sigma_{i,j}^{\sigma,r}$. We want to show that for $j\neq i$, we have
\begin{equation}\label{lemma4.3ineq1}
\Delta u^\ep_j\geq \frac{C\sigma^{\overline{\alpha}}  r^{\overline{\beta}}}{\ep^2}u^\ep_j\quad\text{in }\B_\frac{\tau m r}{4}(\overline{x})
\end{equation}
for some $\overline{\alpha}, \overline{\beta}>0$. 
Let us prove it for $\overline{x}$ such that $d_\rho(\overline{x},\Gamma_i^{\sigma,r})=1+\frac{\tau m r}{2}$, which is the hardest case.
First of all, remark that since  $d_\rho(\overline{x},\supp\,f_j)\geq \frac{\tau m r}{4}$,
the ball $\B_\frac{\tau m r}{4}(\overline{x})$ does not intersect $\supp\,f_j$. Therefore, $u_j^\ep$ (which is eventually zero in $\B_\frac{\tau m r}{4}(\overline{x})\cap \Om^c$) satisfies 
\begin{equation}\label{usubsoluiinBoverlinex}\Delta u_j^\ep \geq \frac1{\ep^2} u_j^\ep  \sum_{k\ne j} H(u_k^\ep) \quad\text{in }\B_\frac{\tau m r}{4}(\overline{x}).\end{equation}
Next, the ball $\B_{1-\frac{\tau m r}{2}}(\overline{x})$ is at distance  $\tau m r$ from a point $y\in\Gamma_i^{\sigma,r}$. Remark that 
since  $\B_{2r}(y)\cap \supp\, f_i=\emptyset$, 
 the function $u_i^\ep$ (which is eventually equal to zero  in $\B_{2r}(y)\cap\Omega^c$)
satisfies
$\Delta u_i^\ep\geq 0$ in $\B_{2r}(y)$. Moreover, since $u_i^\ep$ is subharmonic in $\Om$, it attains its maximum at the boundary of $\Om$, so that 
$u_i^\ep/\sup_{\partial\Om} f_i\leq 1$ in $\Om$. In particular $m=\frac{\sigma}{\sup_{\partial\Om} f_i}\leq 1$. Set 
\begin{equation}\label{vlemma5.3}v(x):=\frac{u_i^\ep(y+rx)}{\sup_{\partial\Om} f_i},\end{equation} then 
$v\leq 1$ and $v(0)=u_i^\ep(y)/\sup_{\partial\Om} f_i=\sigma/\sup_{\partial\Om} f_i=m$ and $\Delta v\geq 0$ in $\B_1$.
Let $$D_0:=\B_{\frac{1}{r}-\frac{\tau m}{2}}\left(\frac{\overline{x}-y}{r}\right),$$ then the principal curvatures of $D_0$ satisfy
$$|\ka_i(\partial D_0)|\leq\frac{C_\rho}{\frac{1}{r}-\frac{\tau m}{2}}=\frac{2rC_\rho}{2-r\tau m}<2rC_\rho<2C_\rho.$$ Moreover $D_0$ is at distance $\tau m$ from 0. 
Hence, from Lemma \ref{lem:gen1} applied to the function $v$ given  by \eqref{vlemma5.3} with $D_0$ defined as above, if $\tau=\min\{1,\tau_0\}$, where $\tau_0$ is the universal constant given by the lemma, then 
there is a point $z$ in $\partial\B_{1-\frac{\tau m r}{2}}(\overline{x})\cap\B_{r}(y)$,  such that $u_i^\ep(z)\ge\sigma/2$. Next, remark that 
if $x\in \B_\frac{\tau m r}{4}(\overline{x})$ then 
$$\B_1(x)\supset  \B_\frac{\tau m r}{4}(z)$$
(since $d_\rho(x,z)\leq d_\rho(x,\overline{x})+d_\rho(\overline{x},z)\leq\frac{\tau m r}{4}+ 1-\frac{\tau m r}{2}=1-\frac{\tau m r}{4}$).

Let us first consider the case $H$ defined as in \eqref{H2}. Then for any $x\in \B_\frac{\tau m r}{4}(\overline{x})$ we have 
$$H(u_i^\ep)(x)=\sup_{\B_1(x)}u_i^\ep\geq u_i^\ep(z)\geq \frac{\sigma}{2},$$ which, with together \eqref{usubsoluiinBoverlinex}, implies
 \eqref{lemma4.3ineq1} with $\overline{\alpha}=1$ and  $\overline{\beta}=0$. 

Next, let us turn to the case $H$ defined as in \eqref{H1}.
Remark that since $z\in\B_{r}(y)$ and 
$d_\rho(y,\supp\, f_i)\geq 2r$,
 we have that 
$\B_{r}(z)\cap\supp\, f_i=\emptyset$ 
and  therefore 
the function $u_i^\ep$ (which is eventually equal to zero in $\B_{r}(z)\cap\Omega^c$) 
satisfies $\Delta u_i^\ep\geq 0$ in $\B_{r}(z)$. This implies that $(u_i^\ep)^p$, $p\ge 1$,  is subharmonic in  $\B_{r}(z)$ and by the mean value inequality 

\begin{equation}\label{lemma4.3ineq2}\fint_{B_s(z)} (u_i^\ep )^pdx\geq \left(\frac{\sigma}{2}\right)^p\end{equation} in any Euclidian ball $B_s(z)\subset\B_r(z)$,  for any $p\geq 1$.  
Since $d_\rho$ and the Euclidian distance are equivalent, there is an $s\sim \tau m r$ such that 
\begin{equation}\label{ballsubsetslem4.3}B_s(z)\subset  \B_\frac{\tau m r}{8}(z)\subset \B_\frac{\tau m r}{4}(z)\subset \B_1(x).\end{equation}
Moreover, if $y\in B_s(z)$ and $x\in \B_\frac{\tau m r}{4}(\overline{x})$, then 
$$\rho(y-x)\leq \rho(y-z)+\rho(z-\overline{x})+\rho(\overline{x}-x)\leq  \frac{\tau m r}{8}+\left(1-\frac{\tau m r}{2}\right)+\frac{\tau m r}{4}=1-\frac{\tau m r}{8},$$ that is
\begin{equation}\label{1-rholem4.3}1-\rho(y-x)\geq \frac{\tau m r}{8}.\end{equation}
Hence, using \eqref{ballsubsetslem4.3},  \eqref{varphidecay}, \eqref{1-rholem4.3} and \eqref{lemma4.3ineq2}, for all $x\in \B_\frac{\tau m r}{4}(\overline{x})$  we get
\begin{equation*}\begin{split}H(u_i^\ep)(x)&=\int_{\B_1(x)}(u_i^\ep)^p(y)\varphi(\rho(y-x))dy\\&
\geq \int_{B_s(z)}(u_i^\ep)^p(y)C(1-\rho(y-x))^qdy\\&
\geq\int_{B_s(z)}(u_i^\ep)^p(y)C\left(\frac{\tau m r}{8}\right)^qdy\\&\geq C\sigma^{\overline{\alpha}} r^{\overline{\beta}}
\end{split}\end{equation*} where $\overline{\alpha}$ and
 $\overline{\beta}$ depend on $p$, $q$ and on the dimension $n$. This and  \eqref{usubsoluiinBoverlinex} imply \eqref{lemma4.3ineq1}.
 
 Now, by Lemma \ref{lem:gen2} we get
$$u^\ep_j(\overline{x})\leq Ce^{-\frac{c\sigma^\alpha r^\beta}{\ep}}$$ for $\alpha=\frac{{\overline{\alpha}}}{2}+1$ and $\beta=\frac{{\overline{\beta}}}{2}+1$,
and the lemma is proven. 

\end{proof}

\begin{cor}\label{convedisjointcor}  Assume \eqref{mainassumpts}.   Let $(u_1^\ep,\ldots,u_K^\ep)$ be a
viscosity solution of the problem \eqref{eq:stmt3}.  Let $y$ be a point in $\Om$ such that  
$$u_i^\ep(y)=\sigma, \quad d_\rho(y,\supp\,f_j)\geq 1+\tau m r ,\quad i\neq j \quad\text{and}\quad d_\rho(y,\partial\Om)\geq2r,$$
where $m=\frac{\sigma}{\sup_{\partial\Om} f_i}$,  $0<r<1$,  $\ep\leq \sigma^{2\alpha}r^{2\beta}$ and 
$\tau,\, \alpha$  and $\beta$ are given by Lemma  \ref{uj=0closui}. 
Then there exists a  constant $C_0>0$ such that in $\B_{\frac{\tau m r}{4}}(y)$ we have  
\begin{equation}\label{lipcor5.4}|\nabla u_i^\ep|\leq \frac{C_0}{r}\end{equation} and
\begin{equation}\label{harmoniccor5.4}\Delta  u_i^\ep\rightarrow 0 \text{ as }\ep\rightarrow 0 \text{ uniformly.}\end{equation}
\end{cor}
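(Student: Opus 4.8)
The plan is to use Lemma~\ref{uj=0closui} to make every $u_j^\ep$, $j\ne i$, hence every $H(u_j^\ep)$, exponentially small in a whole $\rho$-ball about $y$ of radius comparable to $\tau m r$ — large enough to contain all the unit balls entering the definition of $H$ — which gives \eqref{harmoniccor5.4} at once, and then to combine a Harnack inequality with the interior gradient estimate to get \eqref{lipcor5.4}. \emph{Geometric set-up.} Since $\supp f_i\subset(\partial\Om)_1\subset\Om^c$ and $d_\rho(y,\Om^c)=d_\rho(y,\partial\Om)\ge2r$ for $y\in\Om$, the hypotheses give $d_\rho(y,\supp f_i)\ge2r$ together with $u_i^\ep(y)=\sigma$, so $y\in\Gamma_i^{\sigma,r}$. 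Put $B:=\B_{\tau m r/2}(y)$. If $x\in B$ and $w\in\B_1(x)$ then $d_\rho(w,y)<1+\tfrac{\tau m r}{2}$, so $d_\rho(w,\Gamma_i^{\sigma,r})\le1+\tfrac{\tau m r}{2}$, while the buffer hypothesis $d_\rho(y,\supp f_j)\ge1+\tau m r$ forces $d_\rho(w,\supp f_j)>\tfrac{\tau m r}{2}\ge\tfrac{\tau m r}{4}$; hence $\bigcup_{x\in B}\B_1(x)\subset\Sigma_{i,j}^{\sigma,r}$ for every $j\ne i$.

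\emph{$\Delta u_i^\ep$ is small.} Lemma~\ref{uj=0closui} then gives $u_j^\ep\le Ce^{-c\sigma^\alpha r^\beta/\ep}$ on $\bigcup_{x\in B}\B_1(x)$ for $j\ne i$, and since $\varphi$ is integrable over $\B_1$ (trivially for $H$ of the form \eqref{H2}) this yields $H(u_j^\ep)(x)\le Ce^{-c\sigma^\alpha r^\beta/\ep}$ for all $x\in B$. Because $u_i^\ep$ is subharmonic in $\Om$ with boundary data $f_i$, the maximum principle gives $0\le u_i^\ep\le M:=\sup_{\partial\Om}f_i$ in $\Om$, whence
$$
0\ \le\ \Delta u_i^\ep(x)\ =\ \frac1{\ep^2}\,u_i^\ep(x)\sum_{j\ne i}H(u_j^\ep)(x)\ \le\ \frac{M(K-1)C}{\ep^2}\,e^{-c\sigma^\alpha r^\beta/\ep}\ =:\ \delta(\ep)\qquad\text{in }B.
$$
For fixed $\sigma,r$ one has $\delta(\ep)\to0$ as $\ep\to0$, which is \eqref{harmoniccor5.4}. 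Moreover, since $t\mapsto\sup_{0<s\le t^2}\big(s^{-2}e^{-ct/s}\big)$ is bounded on $(0,\infty)$, the standing assumption $\ep\le\sigma^{2\alpha}r^{2\beta}$ forces $\delta(\ep)\le C_1 M$ with $C_1$ independent of $\ep,\sigma,r,y$.

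\emph{Harnack and the gradient estimate.} Write $u_i^\ep=h+\psi$ in $B$ with $h$ harmonic, $h=u_i^\ep$ on $\partial B$, and $\Delta\psi=\Delta u_i^\ep$, $\psi=0$ on $\partial B$. Then $\psi\le0$ and $|\psi|\le C(\tau m r)^2\delta(\ep)$ in $B$; using $m=\sigma/M\le1$, $r<1$ and the previous step, $(\tau m r)^2\delta(\ep)\le C_1\tau^2\sigma^2/M\le C_1\tau^2\sigma$, so $h\ge u_i^\ep\ge0$ and $h(y)\le(1+C_1\tau^2)\sigma$. Harnack's inequality for the nonnegative harmonic $h$ on $B$ gives $\sup_{\B_{3\tau m r/8}(y)}u_i^\ep\le\sup_{\B_{3\tau m r/8}(y)}h\le C\,h(y)\le C\sigma$. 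Finally, for $x\in\B_{\tau m r/4}(y)$ one has $\B_{\tau m r/8}(x)\subset\B_{3\tau m r/8}(y)\subset B$, and the interior gradient estimate for $\Delta u_i^\ep=g$ with $|g|\le\delta(\ep)$ on $\B_{\tau m r/8}(x)$ gives
$$
|\nabla u_i^\ep(x)|\ \le\ C\Big(\frac{\sup_{\B_{\tau m r/8}(x)}u_i^\ep}{\tau m r}+\tau m r\,\delta(\ep)\Big)\ \le\ C\Big(\frac{\sigma}{\tau m r}+\tau m r\,\delta(\ep)\Big)\ =\ C\Big(\frac{M}{\tau r}+\tau m r\,\delta(\ep)\Big)\ \le\ \frac{C_0}{r},
$$
since $\sigma/(\tau m r)=M/(\tau r)$ and $\tau m r\,\delta(\ep)\le C_1\tau\sigma\le C_1\tau M$; here $C_0$ depends only on $n$, the structure of $H$ and $\sup_{\partial\Om}f_i$.

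\emph{Expected main difficulty.} The delicate point is the correct $\sigma$-dependence: the bare interior estimate only yields $|\nabla u_i^\ep|\lesssim M/(\tau m r)=M^2/(\tau\sigma r)$, which blows up as $\sigma\to0$, whereas the claimed bound $C_0/r$ carries no $\sigma$. The Harnack step is precisely what upgrades the numerator from $M$ to $C\sigma$; to run it one needs the geometric bookkeeping of the first step — so that $\Delta u_i^\ep$ is small on a full ball of radius $\sim\tau m r$ around $y$, not merely at $y$, using only the buffer $d_\rho(y,\supp f_j)\ge1+\tau m r$ — together with the elementary fact that $\ep\le\sigma^{2\alpha}r^{2\beta}$ makes $\delta(\ep)$ uniformly bounded, so that all the estimates are genuinely uniform in $\ep$ rather than merely asymptotic.
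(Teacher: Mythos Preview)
Your proof is correct and follows essentially the same approach as the paper: verify the geometric hypotheses so that Lemma~\ref{uj=0closui} applies on $\bigcup_{x\in\B_{\tau m r/2}(y)}\B_1(x)$, deduce that $\Delta u_i^\ep$ is exponentially small on $\B_{\tau m r/2}(y)$, then use Harnack to control $\sup u_i^\ep$ by $C\sigma$ and finish with the interior gradient estimate. The only cosmetic difference is that the paper rescales $\overline u_i^\ep(\overline z):=\tfrac{2}{\tau m r}\,u_i^\ep\big(\tfrac{\tau m r}{2}\overline z+y\big)$ and applies the Harnack inequality with right-hand side (Caffarelli--Cabr\'e, Theorem~4.3) directly to $\overline u_i^\ep$, whereas you split $u_i^\ep=h+\psi$ with $h$ harmonic and apply the classical Harnack to $h$; both routes yield the same $C\sigma$ bound and hence the same $C_0/r$ gradient estimate.
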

\begin{proof}
First of all, remark that  $m\leq 1$, as $u_i$ attains its maximum at the boundary of $\Om$. Since in addition $\tau<1$, we have that $\B_{\frac{\tau m r}{2}}(y)\subset \B_{2r}(y)\subset\Om$.
Therefore, we use \eqref{eq:stmt3} to estimate $\Delta u_i^\ep(z)$, for $z\in \B_{\frac{\tau m r}{2}}(y)$. In order to do that, we need to estimate $H(u_j^\ep)(z)$ for $j\neq i$. But  $H(u_j^\ep)(z)$ involves points $x$ at $\rho$-distance 1 from $z$. Let $x$ be such that $d_\rho(x,z)\leq 1$, then   $d_\rho(x,y)\leq 1+\frac{\tau m r}{2}.$ Moreover,  since 
$d_\rho(y,\supp\,f_j)\geq 1+\tau m r$, we have 
$d_\rho(x,\supp\,f_j)\geq \frac{\tau m r}{2}.$  Hence, by Lemma \ref{uj=0closui}, for any $j\neq i$ $$u_j^\ep(x)\leq Ce^{-\frac{c\sigma^\alpha r^\beta}{\ep}}\quad \text{for }x\in\B_1(z).$$
 From the previous estimate and \eqref{eq:stmt3}, it follows that for $z\in \B_{\frac{\tau m r}{2}}(y)$ we have

\begin{equation}\label{deltauepcor5.4}0\leq\Delta  u_i^\ep(z)\leq  u_i^\ep(z) \frac{Ce^{-\frac{c\sigma^\alpha r^\beta}{\ep}}}{\ep^2}
\leq  u_i^\ep(z) \frac{Ce^{-c\ep^{-\frac{1}{2}}}}{\ep^2}=o(1)\quad\text{as }\ep\to0,\end{equation}
for $\ep\leq \sigma^{2\alpha}r^{2\beta}$.
If we normalize the ball $\B_{\frac{\tau m r}{2}}(y)$ in a Lipschitz fashion:
$$\overline{u}_i^\ep(\overline{z}):=2\frac{ u_i^\ep\left(\frac{\tau m r}{2}\overline{z}+y\right)}{\tau m r},$$ then we have 
$$\overline{u}_i^\ep(0)=2\frac{ u_i^\ep\left(y\right)}{\tau m r}=\frac{2\sup_{\partial\Om} f_i}{\tau r},$$ and 
$$0\leq \Delta \overline{u}_i^\ep(\overline{z})\leq\frac{\tau m r}{2} \overline{u}_i^\ep(\overline{z})\sum_{j\neq i}\frac{1}{\ep^2}H( u_j^\ep)\left(\frac{\tau m r}{2}\overline{z}+y\right)\quad\text{for }
\overline{z}\in \B_1(0),$$ where 
$$\frac{\tau m r}{2} \overline{u}_i^\ep(\overline{z})\sum_{j\neq i}\frac{1}{\ep^2}H( u_j^\ep)\left(\frac{\tau m r}{2}\overline{z}+y\right) \leq \frac{Ce^{-c\ep^{-\frac{1}{2}}}}{\ep^2}.\quad$$
Then, by the Harnack inequality (see e.g. Theorem 4.3 in \cite{caffarelli_cabre_1995}), we get
$$\sup_{\B_\frac{1}{2}(0)} \overline{u}_i^\ep\leq C_n\left(\inf_{\B_\frac{1}{2}(0)} \overline{u}_i^\ep+\frac{Ce^{-c\ep^{-\frac{1}{2}}}}{\ep^2}\right)\leq 
C_n\left(\frac{2\sup_{\partial\Om} f_i}{\tau r}+\frac{Ce^{-c\ep^{-\frac{1}{2}}}}{\ep^2}\right)\leq \frac{C}{r}.$$
Lipschitz estimates then imply that 
$|\nabla  \overline{u}_i^\ep|\leq C/r$  in $\B_\frac{1}{2}(0)$ and\eqref{lipcor5.4} follows.

Further, \eqref{deltauepcor5.4} implies \eqref{harmoniccor5.4}. 
\end{proof}

The next lemma says that in a $\rho$-strip of size 1 of support of the $f_j$'s, the function $u^\ep_i$, $i\neq j$, decays to 0 exponentially. 
\begin{lem}\label{uj=0closfi}   Assume \eqref{mainassumpts}. Let $(u_1^\ep,\ldots,u_K^\ep)$ be a
viscosity solution of the problem \eqref{eq:stmt3}.  For $j=1,\ldots,K$, $\sigma>0$,  let $\overline{\Gamma}_j^\sigma:=\{f_j\ge\sigma\}\subset\Om^c.$ Then  on the sets 
$$\{x\in\Om\,:\,d_\rho(x,\overline{\Gamma}_j^\sigma)\leq 1-r\},\quad 0<r<1$$
we have 
$$u_i^\ep\leq Ce^{-\frac{c\sigma^\alpha r^\beta }{\ep}},\quad\text{for }i\neq j,$$
for some positive $\alpha$ and $\beta$ depending on the structure of $H$ ($p$ and $q$) and the modulus of continuity of $f_j$. 
\end{lem}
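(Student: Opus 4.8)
The plan is to follow the two-step scheme of Lemma \ref{uj=0closui}, with the interior ``bump'' $\Gamma_i^{\sigma,r}$ replaced by the boundary set $\overline{\Gamma}_j^\sigma$, on which $u_j^\ep=f_j\ge\sigma$. Fix $x\in\Om$ with $d_\rho(x,\overline{\Gamma}_j^\sigma)\le 1-r$ (we may assume $\overline{\Gamma}_j^\sigma\ne\emptyset$, hence $\sigma\le\sup f_j$, otherwise there is nothing to prove). I would show
\begin{equation*}
\Delta u_i^\ep\ \ge\ \frac{c\,\sigma^{\overline\alpha}r^{\overline\beta}}{\ep^2}\,u_i^\ep\qquad\text{in }\B_{r/2}(x)
\end{equation*}
for suitable $\overline\alpha,\overline\beta>0$, and then apply Lemma \ref{lem:gen2} on $\B_{r/2}(x)$ together with the a priori bound $u_i^\ep\le\sup_{\partial\Om}f_i$ (valid since $u_i^\ep$ is subharmonic in $\Om$) to obtain $u_i^\ep(x)\le Ce^{-c\sigma^{\overline\alpha/2}r^{\overline\beta/2+1}/\ep}$, i.e.\ the statement with $\alpha=\overline\alpha/2$, $\beta=\overline\beta/2+1$.

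First the geometry. Pick $\bar x\in\overline{\Gamma}_j^\sigma$ with $d_\rho(x,\bar x)\le 1-r$; since $\bar x\in\supp f_j\subset(\partial\Om)_1$ we have $u_j^\ep(\bar x)=f_j(\bar x)\ge\sigma$. By \eqref{eq:stmt4}, $d_\rho(x,\supp f_i)\ge d_\rho(\supp f_i,\supp f_j)-d_\rho(x,\supp f_j)\ge 1-(1-r)=r$, so $\B_{r/2}(x)\cap\supp f_i=\emptyset$; moreover every point of $\B_{r/2}(x)\cap\Om^c$ lies within $\rho$-distance $r/2<1$ of $\partial\Om$, hence in $(\partial\Om)_1$, where $u_i^\ep=f_i\equiv0$ (and $\B_{r/2}(x)\subset\Om\cup(\partial\Om)_1$, so $u_i^\ep$ is defined on the whole ball). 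Finally, for $z\in\B_{r/2}(x)$ one has $\rho(z-\bar x)\le 1-r/2$, whence $\B_{r/4}(\bar x)\subset\B_1(z)$ and $1-\rho(z-y)\ge r/4$ for all $y\in\B_{r/4}(\bar x)$.

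Next, the lower bound on $H(u_j^\ep)$ in $\B_{r/2}(x)$. If $H$ is of the form \eqref{H2}, then for $z\in\B_{r/2}(x)$, $H(u_j^\ep)(z)=\sup_{\B_1(z)}u_j^\ep\ge u_j^\ep(\bar x)\ge\sigma$, so $\overline\alpha=1$, $\overline\beta=0$. If $H$ is of the form \eqref{H1}, the extra work is to produce a set of definite measure near $\bar x$ on which $u_j^\ep$ is comparable to $\sigma$: by the H\"older continuity of $f_j$ inside its support (exponent $\gamma$), $f_j\ge\sigma/2$ on $\B_\delta(\bar x)\cap\supp f_j$ for $\delta=\min\{c\,\sigma^{1/\gamma},r/4\}$, and the density hypothesis in \eqref{mainassumpts} gives $|\B_\delta(\bar x)\cap\supp f_j|\ge c\,\delta^n\gtrsim\sigma^{n/\gamma}r^n$ (using that $\sigma,r$ are bounded). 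Since $\B_\delta(\bar x)\subset\B_{r/4}(\bar x)\subset\B_1(z)$ and $1-\rho(z-y)\ge r/4$ there, \eqref{varphidecay} yields, for every $z\in\B_{r/2}(x)$,
\begin{equation*}
H(u_j^\ep)(z)\ \ge\ \int_{\B_\delta(\bar x)\cap\supp f_j}\Big(\tfrac{\sigma}{2}\Big)^p\,C\Big(\tfrac{r}{4}\Big)^q\,\mathrm{d}y\ \ge\ c\,\sigma^{\,p+n/\gamma}\,r^{\,q+n},
\end{equation*}
so $\overline\alpha=p+n/\gamma$, $\overline\beta=q+n$.

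It then remains to feed this into Lemma \ref{lem:gen2}. Since $u_i^\ep\ge0$ and $\Delta u_i^\ep=\frac1{\ep^2}u_i^\ep\sum_{k\ne i}H(u_k^\ep)\ge\frac{c\sigma^{\overline\alpha}r^{\overline\beta}}{\ep^2}u_i^\ep$ in $\Om\cap\B_{r/2}(x)$, while $u_i^\ep\equiv0$ on $\B_{r/2}(x)\cap\Om^c$, the function $u_i^\ep$ is a nonnegative viscosity subsolution of $\Delta\omega\ge\frac{c\sigma^{\overline\alpha}r^{\overline\beta}}{\ep^2}\omega$ in all of $\B_{r/2}(x)$ — at a point of $\partial\Om$ inside the ball a test function touching $u_i^\ep=0$ from above has a local minimum there, so its Laplacian is $\ge0$, which is exactly the required inequality; this is the same observation used in the proof of Lemma \ref{uj=0closui}. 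Lemma \ref{lem:gen2} then gives $u_i^\ep(x)/\sup_{\B_{r/2}(x)}u_i^\ep\le Ce^{-c\sigma^{\overline\alpha/2}r^{\overline\beta/2+1}/\ep}$, and since $\sup_{\B_{r/2}(x)}u_i^\ep\le\sup_{\partial\Om}f_i$ the claim follows, with $\alpha=1/2$, $\beta=1$ in the case \eqref{H2} and $\alpha=(p+n/\gamma)/2$, $\beta=(q+n)/2+1$ in the case \eqref{H1}. The main obstacle, and the only genuinely new point compared with Lemma \ref{uj=0closui}, is the lower bound on $H(u_j^\ep)$ in the integral case: no mean value inequality is available (the data $f_j$ is merely H\"older, not subharmonic), so one must combine the modulus of continuity of $f_j$ with the density of $\supp f_j$ to extract a set of definite measure on which $u_j^\ep\gtrsim\sigma$, while keeping it at $\rho$-distance $\le 1-cr$ from the center $z$ so as to absorb the polynomial vanishing of $\varphi$ at $\partial\B_1$; everything else is geometric bookkeeping.
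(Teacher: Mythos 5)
Your proof follows essentially the same route as the paper's: the boundary set $\overline{\Gamma}_j^\sigma$ plays the role of the interior bump, one shows $H(u_j^\ep)\gtrsim\sigma^{\overline\alpha}r^{\overline\beta}$ in $\B_{r/2}(x)$ (trivially via the sup for \eqref{H2}, and via H\"older continuity of $f_j$ plus the density hypothesis in \eqref{mainassumpts} for \eqref{H1}), and then Lemma~\ref{lem:gen2} delivers the exponential decay. The geometric verification that $u_i^\ep$ is an admissible subsolution on the whole $\rho$-ball (including the part that sticks out of $\Om$) is the same ``extend by zero'' observation used in Lemma~\ref{uj=0closui}, which the paper again invokes here; your rendering of it as a viscosity test-function check is a fine, slightly more explicit way to say the same thing.
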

\begin{proof}
Let $\overline{x}\in\Om$  and $y\in \overline{\Gamma}_j^\sigma$ be such that $d_\rho(\overline{x},y)\leq 1-r$. We want to estimate $H(u^\ep_j)(x)$, for any $x\in\B_\frac{r}{2}(\overline{x})$. Let $x\in\B_\frac{r}{2}(\overline{x})$, then 
\begin{equation}\label{firstequlemma4.5}
d_\rho(x,y)\leq 1-\frac{r}{2}.
\end{equation}
Let us first consider the case $H$ defined as in \eqref{H2}. We have
 $$H(u_j^\ep)(x)=\sup_{ \B_1(x)}u_j^\ep\geq f_j(y)\geq \sigma.$$ 
 Next, let us turn to the case $H$ defined as in \eqref{H1}. 
Let  $r_0:=\min\{\sigma^\gamma,r/4\}$, for some $\gamma$ depending on the  modulus of continuity of $f_j$, 
 then  $f_j\geq \sigma/2$ in the set 
$ \B_ {r_0}(y)\cap \supp\,f_j$.   Moreover, remark that from \eqref{firstequlemma4.5} and  $r_0\le r/4$, we have
$$ \B_ {r_0}(y)\cap \supp\,f_j\subset \B_\frac{r}{4}(y)\subset \B_\frac{r}{2}(y)\subset \B_1(x),$$
and for any $z\in  \B_ {r_0}(y)\cap \supp\,f_j$
$$\rho(x-z)\leq \rho(x-y)+\rho(y-z)\le1-\frac{r}{2}+r_0\leq 1-\frac{r}{4}.$$   Therefore, using in addition  \eqref{varphidecay}, and that,
by \eqref{mainassumpts}, $|\B_ {r_0}(y)\cap \supp\,f_j|\ge c|\B_ {r_0}(y)|$,  we get
\begin{equation*}\begin{split}H(u_j^\ep)(x)&=
\int_{ \B_1(x)}(u_j^\ep)^p(z)\varphi(\rho(x-z))dz\\&
\geq \int_{ \B_ {r_0}(y)\cap \supp\,f_j}(u_j^\ep)^p(z)(1-\rho(x-z))^qdz\\&
\geq \int_{ \B_ {r_0}(y)\cap \supp\,f_j}(f_j)^p(z)C\left(\frac{r}{4}\right)^q dz\\&
\geq C\sigma^pr_0^{\overline{\beta}},\end{split}\end{equation*} where $\overline{\beta}$ depends on $q$ and on the dimension $n$.

Then, for  $H$ defined as in  \eqref{H1} or 
\eqref{H2}, the function  $u_i^\ep$, $i\neq j$ (which is eventually zero in $B_\frac{r}{2}(\overline{x})\cap\Om^c$) is subsolution of $$\Delta u_i^\ep\ge u_i^\ep \frac{ C\sigma^pr_0^{\overline{\beta}}}{\ep^2}$$
in $B_\frac{r}{2}(\overline{x})$, where $p=1$ and $\overline{\beta}=0$ in the case \eqref{H2}. 
The conclusion follows as in Lemma \ref{uj=0closui}.
\end{proof}

The following corollary is a consequence of Lemma  \ref{uj=0closui}, Corollary \ref{convedisjointcor} and Lemma \ref{uj=0closfi}.

\begin{cor}\label{convergencecor} Assume \eqref{mainassumpts}.  Let $(u_1^\ep,\ldots,u_K^\ep)$ be a 
viscosity solution of the problem  \eqref{eq:stmt3}.
Then, there exists a subsequence $(u_1^{\ep_1},\ldots,u_K^{\ep_l})$ and continuous functions $ (u_1,\ldots, u_K)$ such that, 
\begin{equation*} (u_1^{\ep_l},\ldots,u_K^{\ep_l})\to  (u_1,\ldots, u_K)\quad\text{as }l\to+\infty,\quad\text{a.e. in }\Om\end{equation*} and the convergence of 
$u_i^{\ep_l}$ to $u_i$ is locally uniform  in the set  $\{x\in\Om\,:\,d_\rho(x,\text{supp}\,f_j)>1,\,j\neq i\}$.
Moreover, we have:
\begin{itemize}
\item[i)] the $u_i$'s  are  locally Lipschitz continuous in $\Om$ and have disjoint supports, in particular 
$$u_i\equiv 0\quad\text{in the set }\quad  \{x\in\Om\,|\,d_{\rho}(x,\supp\, u_j)\le1\}\quad\text{for any }j\neq i.$$
\item[ii)] $\Delta u_i =0$ when $u_i >0$.
\end{itemize}
\end{cor}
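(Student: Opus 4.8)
The plan is to deduce the corollary from two facts: \textbf{(A)} the a priori bounds $0\le u_i^\ep\le\phi_i$ and $\Delta u_i^\ep\ge 0$ furnished by Theorem \ref{thm:existence} (so that $\sup_\Om u_i^\ep\le\sup_{\partial\Om}f_i$ uniformly in $\ep$), which alone already give a uniform $W^{1,p}_{loc}(\Om)$ bound for the $u_i^\ep$ --- the Riesz mass of $\Delta u_i^\ep$ being locally bounded by the sub-mean-value inequality --- and hence an a.e.\ convergent subsequence $u_i^{\ep_l}\to u_i$ with $u_i$ subharmonic; and \textbf{(B)} a uniform-in-$\ep$ local Lipschitz bound for the $u_i^\ep$ on compact subsets of $\Om$. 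Granting (B), the family $\{u_i^\ep\}$ is equibounded and equicontinuous, so along a further subsequence $u_i^{\ep_l}\to u_i$ locally uniformly in $\Om$, the limits $u_i$ are locally Lipschitz in $\Om$, and in particular the convergence is locally uniform on $\supp u_i$. Properties i) and ii) are then read off by passing to the limit in Lemma \ref{uj=0closui} and in \eqref{harmoniccor5.4}.

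The core is thus the Lipschitz estimate (B). Fix a compact $\mathcal K\subset\Om$, let $2r_0\le d_\rho(\mathcal K,\partial\Om)$, fix $y\in\mathcal K$ and $i$, and set $\sigma=u_i^\ep(y)$, $m=\sigma/\sup_{\partial\Om}f_i$. I would split into regimes. \emph{(a) $\sigma$ not too small} (say $\sigma\ge\ep^\gamma$ for a fixed small $\gamma<\frac1{2\alpha}$): then $y$ must lie at $\rho$-distance $>1$ from every $\supp f_j$, $j\ne i$. Indeed, if $d_\rho(y,\supp f_j)\le1$ then either $y$ is at a definite distance from the unit $\rho$-sphere about $\supp f_j$, in which case Lemma \ref{uj=0closfi} forces $u_i^\ep(y)$ to be exponentially small; or $y$ lies in a thin shell around that sphere, in which case $H(u_j^\ep)(y)$ is bounded below by a universal constant, so \eqref{eq:stmt3} gives $\Delta u_i^\ep\ge c\,\ep^{-2}u_i^\ep$ on a fixed ball about $y$ and Lemma \ref{lem:gen2} again forces $u_i^\ep(y)$ exponentially small --- contradicting $\sigma\ge\ep^\gamma$ for $\ep$ small. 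Hence $d_\rho(y,\supp f_j)\ge1+\tau m r_0$ for all $j\ne i$, and since $\ep\le\sigma^{2\alpha}r_0^{2\beta}$ for $\ep$ small, Corollary \ref{convedisjointcor} yields $|\nabla u_i^\ep(y)|\le C_0/r_0$. \emph{(b) all the $u_k^\ep$ so small on a fixed ball $\B_{r_0/2}(y)$ that the right-hand side of \eqref{eq:stmt3} is $o(1)$ there}: then $\Delta u_i^\ep=o(1)$ and $u_i^\ep=o(1)$ on $\B_{r_0/2}(y)$, and interior $W^{2,p}$ estimates give $|\nabla u_i^\ep(y)|=o(1)$.

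The step I expect to be the main obstacle is the \emph{intermediate regime}: $u_i^\ep(y)$ small but not small enough to render $\ep^{-2}u_i^\ep H(u_j^\ep)$ negligible as in (b), yet too small for the hypothesis $\ep\le u_i^\ep(y)^{2\alpha}r_0^{2\beta}$ of Corollary \ref{convedisjointcor} to hold at the fixed scale $r_0$ (the thin shell $\{d_\rho(\cdot,\supp f_j)\approx1\}$ is part of the same difficulty). To handle it I would not use the lemmas at $y$ directly, but propagate the gradient bound from the point $z\in\overline{\B_{r_0}(y)}$ where $u_i^\ep$ attains its maximum $M$ over $\B_{r_0}(y)$: if $M$ is not too small, regime (a) applies at $z$ and gives $|\nabla u_i^\ep|\le C_0/r_0$ on $\B_{cMr_0}(z)$, hence $u_i^\ep\ge M/2$ there; iterating this (a Harnack/Carleson-type chaining, with the protected ball shrinking geometrically) carries $|\nabla u_i^\ep|\le C/r_0$ down to $\B_{r_0/2}(y)$, hence to $y$. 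Making this work requires two thresholds for ``small'' --- one far below $\ep^2$ (for regime (b)) and one comparable to $\ep^{1/(2\alpha)}$ (for regime (a)) --- which are compatible precisely because the exponent $\alpha=\bar\alpha/2+1\ge1$ of Lemma \ref{uj=0closui} satisfies $\frac1{2\alpha}<2$.

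Finally I would close the argument. For ii): on the open set $\{u_i>0\}$, $u_i^\ep$ is bounded below on compact subsets for $\ep$ small, so \eqref{harmoniccor5.4} gives $\Delta u_i^\ep\to0$ uniformly there and hence $\Delta u_i=0$ in $\{u_i>0\}$. For i): given $x_0\in\supp u_j$, choose $x_1$ near $x_0$ with $u_j(x_1)=s>0$; then $u_j^{\ep_l}(x_1)\ge s/2$ for $l$ large, and Lemma \ref{uj=0closui} applied with the roles of $i$ and $j$ exchanged gives $u_i^{\ep_l}\le Ce^{-c/\ep_l}\to0$ on a $\rho$-ball of radius $1+cs$ about $x_1$; letting $x_1\to x_0$ and invoking the continuity of $u_i$ yields $u_i\equiv0$ on $\{x\in\Om:d_\rho(x,\supp u_j)\le1\}$, which is exactly the claimed separation of the supports.
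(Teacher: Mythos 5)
Your plan diverges from the paper at the crucial step (B). The paper never proves a uniform-in-$\ep$ Lipschitz bound for $u_i^\ep$ on compacta of $\Om$; it sidesteps exactly the difficulty you flag by a truncation device. Specifically, after showing (via Lemma \ref{uj=0closfi}) that $u_i^\ep\to0$ pointwise on $B_i:=\Om\setminus\overline{\Omega_i}$, where $\Omega_i=\{d_\rho(\cdot,\supp f_j)>1,\ \forall j\neq i\}$, the paper fixes $\theta<\frac1{2\alpha}$, sets $\sigma_\ep=\ep^\theta$ and works with $v_i^\ep:=(u_i^\ep-\sigma_\ep)_+$. The exponent is tuned so that $\ep\le\sigma_\ep^{2\alpha}r^{2\beta}$ for small $\ep$, which makes Corollary \ref{convedisjointcor} applicable precisely at the level $\{u_i^\ep\ge\sigma_\ep\}$ (inside $\Omega_i^{r,\ep}$); below that level $v_i^\ep\equiv0$, so no gradient estimate is needed there. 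The $v_i^\ep$ are then uniformly Lipschitz on $\Omega_i^{r,\ep_0}$, converge (after diagonalization in $r$) to a locally Lipschitz $u_i$, and since $\sigma_\ep\to0$, the $u_i^\ep$ converge to the same $u_i$. The paper therefore never needs to control $|\nabla u_i^\ep|$ in the low-level and intermediate regimes that you correctly identify as the obstruction.

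There are two concrete gaps in your version. First, in regime (a) you dispose of the case $1-r<d_\rho(y,\supp f_j)<1$ by asserting that $H(u_j^\ep)(y)$ is then bounded below by a universal constant. This fails under the standing hypotheses: in \eqref{H1} the weight $\varphi$ only satisfies $\varphi(\rho)\ge C(1-\rho)^q$ and may vanish at $\partial\B_1$ to any polynomial order, so when $\B_1(y)$ barely penetrates $\supp f_j$, the overlap region is a thin lune on which $(1-\rho(x-y))^q$ is tiny and $H(u_j^\ep)(y)\to0$ as the penetration depth $\to0$. The correct handling of that shell is to apply Lemma \ref{uj=0closfi} with an $\ep$-dependent $r$ (which is, implicitly, what drives Claim~1 of the paper's proof), but you cannot replace the degenerate bound by a universal one. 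Second, the chaining in the intermediate regime is only sketched: the protected ball around the point $z$ where $u_i^\ep$ attains its max $M$ has radius $\sim\tau m r_0\sim M r_0$, so the chain radii shrink with $u_i^\ep$, and you do not justify that a geometrically shrinking chain covers the distance $\sim r_0$ from $z$ down to $y$ without terminating or without the two smallness thresholds colliding. Your conclusions for i) and ii) from Lemma \ref{uj=0closui} and \eqref{harmoniccor5.4} are fine once a locally uniformly convergent subsequence with locally Lipschitz limit is in hand, but the route you propose to obtain that subsequence has a genuine hole which the paper's $(u_i^\ep-\ep^\theta)_+$ device was designed to avoid.
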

\begin{proof}
Fix an index $i=1,\ldots,K$. Let us denote 
$$\Omega_i:=\{x\in\Om\,|\,d_\rho(x,\text{supp}\,f_j)>1\text{ for any }j\neq i\},$$ and 
$$B_i:=\Omega\setminus \overline{\Omega}_i.$$
{\em Claim 1:  $u_i^\ep(x)\to 0$ as $\ep\to0$ for any $x\in B_i$.}

Indeed, let $x_0$ belong to  $B_i$, then there exists $j\neq i $ such that $d_\rho(x_0,\text{supp}\,f_j)<1.$ 
Remark that 
$$\{x\in\Om\,|\,d_\rho(x,\text{supp}\,f_j)<1\}\subset \cup_{r,\sigma>0}\{x\in\Om\,|\,d_\rho(x,\overline{\Gamma}_j^\sigma)\leq1-r\},$$ where 
$\overline{\Gamma}_j^\sigma=\{f_j\ge\sigma\}.$
Therefore, there exist $r,\sigma>0$ such that $x_0\in \{x\in\Om\,|\,d_\rho(x,\overline{\Gamma}_j^\sigma)\leq1-r\},$ and by Lemma \ref{uj=0closfi} we have that 
$u_i^\ep(x_0)\leq Ce^{-\frac{c\sigma^\alpha r^\beta }{\ep}}$,  for some $\alpha,\beta>0$. Claim 1 follows.

{\em Claim 2: there exists a subsequence $(u_i^{\ep_l})_l$   locally uniformly convergent in $\Omega_i$ as $l\to+\infty$ to a locally Lipschitz continuous function $u_i$.}

Fix, $0<r<1$, 
and define 
$$\Omega^{r}_i:=\{x\in \Omega_i\,|\,d_\rho(x, \partial \Omega)>2r,\,d_\rho(x,\supp\,f_j)\geq 1+\tau r\text{ for any }j\neq i \},$$

Fix $\theta<\frac{1}{2\alpha}$ and set $\sigma_\ep=\ep^\theta>0$
and consider
$\tau,\,\alpha$  and $\beta$ as given by Lemma  \ref{uj=0closui}. Since $\ep=\sigma_\ep^{2\alpha}\sigma_\ep^{\frac{1}{\theta}-2\alpha}=\sigma_\ep^{2\alpha}\ep^{\theta(\frac{1}{\theta}-2\alpha)}$ and $\frac{1}{\theta}-2\alpha>0$, 
we can fix  $\ep_0=\ep_0(r)$ so small that  for any $\ep<\ep_0$ we have that $\ep\leq \sigma_\ep^{2\alpha}r^{2\beta}$. Then,  by Corollary \ref{convedisjointcor}, the functions 
$$v_i^\ep:=(u_i^\ep-\sigma_\ep)_+=(u_i^\ep-\ep^\theta)_+$$  are Lipschitz continuous in $\Omega_i^{r}$. 
Indeed, when $u_i^\ep (x)<\ep^\theta$, then $v_i^\ep(x)=0$. Next, let $x$ such that 
 $u_i^\ep(x)>\ep^\theta$, then $\nabla v_i^\ep(x)=\nabla u_i^\ep(x)$. Set $\sigma=u_i^\ep(x)$, then at those points $x$, we have that
$d_\rho(x,\text{supp}\,f_j)\geq 1+\tau r\geq  1+m \tau r$, where $m=\sigma/\sup_{\partial\Om}f_i\leq 1$. Moreover, $d_\rho(x,\partial\Om)>2r$ and 
$\ep\leq \sigma_\ep^{2\alpha}r^{2\beta}\leq  \sigma^{2\alpha}r^{2\beta}$. We can therefore apply Corollary \ref{convedisjointcor} and we get that
$$|\nabla u_i^\ep(x)|\leq \frac{C_0}{r}.$$
 This concludes the proof that the functions $v^\ep_i$ are Lipschitz continuous in $\Omega_i^{r}$. 
Therefore, we can extract a subsequence $(v_i^{\ep_l})_l$ uniformly convergent to a Lipschitz  continuous function 
$u_i$ in $\Omega_i^{r}$ as $l\to+\infty$. By the definition of the $v_i$'s, this implies that there exists a subsequence  $(u_i^{\ep_l})_l$ uniformly convergent to the same function $u_i$ in 
$\Omega_i^{r}$ as
$l\to+\infty$. Taking $r \rightarrow 0$ and using a diagonalization argument, we can find a subsequence of $(u_i^{\ep})_\ep$ converging locally uniformly to a Lipschitz function $u_i$ in $\Om_i$. 
This ends the proof of Claim 2.

Claims 1 and 2 yield the convergence, up to a subsequence, of $u_i^\ep$ to a  continuous function $u_i$ which is locally Lipchitz  in both $\Om_i$ and $B_i$.
The fact that the supports of the limit functions are at distance greater or equal than 1, is a consequence of Claims 1 and 2 and Lemma \ref{uj=0closui}. 
Moreover, from the proof of Claim 2 and Corollary \ref{convedisjointcor}, we infer that the limit function $u_i$ is harmonic inside its support, i.e. (ii). 
To conlude the proof of (i), we just need to prove that $u_i$ is Lipschitz in a neighborhood of points belonging to $\p B_i=\p \Om_i \cap \Om$. Let $x_0\in\partial\Om_i\cap\Om$, then from Claim 1, $u_i(x_0)=0$. If $x_0\not\in\partial \{u_i>0\}$, then in a neighborhood 
of $x_0$, $u_i\equiv 0$ and of course  it is Lipschitz there. On the other hand, if $x_0\in\partial \{u_i>0\}$, then, since there exists an exterior $\rho$-tangent ball
of radius 1 at any point of  $\partial\Om_i\cap\Om$ and $u_i$ is harmonic inside its support, a barrier argument implies that there exist $r_0,\,C>0$ such that 
$0\leq u_i(x)=u_i(x)-u_i(x_0)\leq C|x-x_0|$ for any $x\in B_{r_0}(x_0)$. This concludes the proof of (i).

This  concludes the proof of the corollary.

\end{proof}


\section{A semiconvexity property of the free boundaries}
Let $(u_1,\ldots,u_K)$ be the limit of a convergent subsequence of $(u^\ep_1,\ldots,u^\ep_K)$, whose existence is guaranteed by Corollary \ref{convergencecor}.
For $i=1,\ldots,K$, let us denote 
\begin{equation}
\label{Si}
S(u_i):=\{x \in \Omega : u_i>0\}.
\end{equation}
(In the next sections, for simplicity  this set will be represented by $S_i.$) Then the sets $S(u_i)$ have the following semiconvexity property:
\begin{lem}
\label{semiconvexproplem}
Given $S(u_i)$ consider 
$$T(u_i) = \big\{ x\in\Om\,:\, d_\rho(x,S(u_i)) \geq 1\big\}$$
and 
$$S^* (u_i) = \big\{ x\in\Om\,:\, d_\rho(x,T(u_i))> 1\big\}$$
Then 
$\partial S(u_i)\subset \partial S^* (u_i).$
\end{lem}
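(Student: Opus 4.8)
The plan is to show the two inclusions $S^*(u_i)\subseteq S(u_i)$ and $S(u_i)\subseteq S^*(u_i)$, the second of which is essentially trivial. First note that for any set $E$, the set $\{x\in\Om: d_\rho(x,E)>1\}$ is disjoint from $\{x:d_\rho(x,E)\le 1\}$; applying this with $E=T(u_i)$ we see that $S^*(u_i)$ is the complement (in $\Om$) of a $\rho$-neighborhood of $T(u_i)$. For the easy inclusion $S(u_i)\subseteq S^*(u_i)$: if $x\in S(u_i)$ and $y\in T(u_i)$, then by definition of $T(u_i)$ we have $d_\rho(y,S(u_i))\ge 1$, hence $\rho(x-y)\ge d_\rho(y,S(u_i))\ge 1$; actually I need strict inequality, which follows because $S(u_i)$ is open, so $y\notin\overline{S(u_i)}$ forces $\rho(x-y)>1$ for $x$ ranging over the open set — more carefully, $d_\rho(y,S(u_i))\ge 1$ with $S(u_i)$ open means $d_\rho(y,\overline{S(u_i)})\ge 1$ is false in general, so I should instead argue: since $x\in S(u_i)$ which is open, a small $\rho$-ball $\B_\delta(x)\subseteq S(u_i)$, and then $d_\rho(y,S(u_i))\le \rho(y-x')$ for $x'\in\B_\delta(x)$ gives $\rho(y-x)\ge d_\rho(y,S(u_i))+\delta> 1$. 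Taking the infimum over $y\in T(u_i)$ yields $d_\rho(x,T(u_i))\ge 1+\delta>1$, so $x\in S^*(u_i)$.

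The substantive direction is $S^*(u_i)\subseteq S(u_i)$, equivalently: if $x_0\in\Om$ and $u_i(x_0)=0$, then $d_\rho(x_0,T(u_i))\le 1$, i.e. there exists $y\in T(u_i)$ with $\rho(x_0-y)\le 1$. Here is where I would use the segregation property from Corollary \ref{convergencecor}(i): since $u_i(x_0)=0$, the point $x_0$ lies in the complement of $S(u_i)$, and I claim it lies in the closed $\rho$-unit-neighborhood of some $T(u_i)$-point. The idea is to walk from $x_0$ toward $S(u_i)$: let $z$ be a point of $\overline{S(u_i)}$ nearest to $x_0$ in the $\rho$-distance (exists by properness of $\rho$ and since $S(u_i)\ne\emptyset$ as $f_i\not\equiv 0$, though one must be slightly careful if $S(u_i)$ fails to reach near $x_0$ — but $\Om$ bounded handles compactness). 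If $\rho(x_0-z)\ge 1$, then along the $\rho$-geodesic (straight segment, since $\rho$ is a norm) from $x_0$ to $z$, pick the point $y$ at $\rho$-distance exactly $1$ from $z$... but that does not obviously lie in $T(u_i)$. Instead the cleaner route: I want to produce $y$ with $d_\rho(y,S(u_i))\ge 1$ and $\rho(x_0-y)\le 1$. Take $y=x_0$ itself: if $d_\rho(x_0,S(u_i))\ge 1$ then $x_0\in T(u_i)$ and trivially $\rho(x_0-x_0)=0\le 1$, done. Otherwise $d_\rho(x_0,S(u_i))<1$; but then by the segregation property there is no $u_j$ with $j\ne i$ having $x_0$ within $\rho$-distance $1$ of $S(u_j)$ forced... hmm, this needs the complementary structure. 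The key point must be: the closed set $\{u_i=0\}\cap\Om$ decomposes, up to the $\rho$-unit-strip of $S(u_i)$, into the region $T(u_i)$, and every point of the strip $\{0<d_\rho(\cdot,S(u_i))<1\}$ is within $\rho$-distance $1$ of $T(u_i)$ because... one can push outward along a ray from a point of $\partial S(u_i)$ and exit into $T(u_i)$ within unit distance — this uses that $\Om$ is bounded and, crucially, the exterior-tangent-ball/geometric control, OR simply that one can reach $T(u_i)$ within distance $<1$ by going away from $S(u_i)$.

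I expect the main obstacle to be handling boundary effects: a point $x_0$ in the $\rho$-strip near $S(u_i)$ might be such that moving away from $S(u_i)$ quickly exits $\Om$ before reaching $T(u_i)$, or $T(u_i)$ could be empty. So the real content is a geometric lemma: for $x_0\in\Om\setminus S(u_i)$ with $0<d_\rho(x_0,S(u_i))=:t<1$, the point $y$ obtained by moving from the nearest point $z\in\partial S(u_i)$ through $x_0$ and continuing to $\rho$-distance $\min(1,\text{something})$ from $z$ satisfies $d_\rho(y,S(u_i))\ge 1$ (by convexity of $\rho$-balls and $z$ being the nearest point, the whole ray stays ``radial'' so distances add) while $\rho(x_0-y)= 1-t\le 1$ — provided $y\in\Om$, which may require invoking that $\Om$ contains such $y$ or extending the argument to $y$ possibly in $\overline\Om$; I would reconcile this using \eqref{eq:stmt4} (boundary supports are $\rho$-distance $\ge 1$ apart) and the definition of $(\partial\Om)_1$ to ensure $T(u_i)$ is populated near $\partial\Om$. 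Modulo that boundary bookkeeping, both inclusions follow and the lemma is proved. \finedim
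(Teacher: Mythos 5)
Your proposal treats the lemma as a purely geometric fact about the open set $S(u_i)$, and that cannot work. For a general open set $S$, the operation $S\mapsto S^*$ is a Minkowski opening, and one has $S^*\supsetneq S$ whenever $S$ has a thin slit removed or an interior ``cavity'' of $\{u_i=0\}$ that stays at $\rho$-distance less than $1$ from $S$. In that second case any $x_0$ in the cavity has $d_\rho(x_0,T(u_i))>1$, so $x_0\in S^*(u_i)\setminus S(u_i)$, and no amount of walking along rays, convexity of $\rho$-balls, or boundary bookkeeping will make such a $y\in T(u_i)$ appear within unit distance. Your own argument of ``push outward along a ray from the nearest point $z\in\partial S(u_i)$'' also fails in general: even ignoring $\partial\Om$, the function $t\mapsto d_\rho(z+t\nu,S(u_i))$ need not reach $1$, because $S(u_i)$ may curl back around that ray. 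You sense some of this (``I expect the main obstacle to be handling boundary effects'') but the real obstruction is not a boundary effect and is not geometric at all.

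The content of the lemma is the PDE: what one must show is that $u_i$, which a priori is only known to be harmonic and positive on $S(u_i)$, is in fact harmonic and positive on the possibly larger set $S^*(u_i)$. The paper does this on the level sets $S_\sigma(u_i)=\{u_i>\sigma\}$. The key observation is the identity
\[
\bigcup_{x\in S_\sigma(u_i)}\B_1(x)=\big(T_\sigma(u_i)\big)^c=\bigcup_{x\in S^*_\sigma(u_i)}\B_1(x),
\]
so the exponential decay of $u_j^\ep$ (from Lemma~\ref{uj=0closui}, applied around points of $S_\sigma$) already covers every $\rho$-ball $\B_1(x)$ with $x\in S^*_\sigma(u_i)$, giving $\ep^{-2}H(u_j^\ep)\to 0$ and hence $\Delta u_i\equiv 0$ throughout $S^*_\sigma(u_i)$. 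Then, since every connected component of $S^*_\sigma(u_i)$ contains a component of $S_\sigma(u_i)$ where $u_i>0$, the strong maximum principle forces $u_i>0$ on all of $S^*_\sigma(u_i)$, i.e.\ $S^*_\sigma\subset S(u_i)$; letting $\sigma\to 0$ finishes. Without invoking the equation and the maximum principle, the lemma is simply false, so the proposal has a genuine gap rather than a different route.
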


\begin{proof}
We have that $S^* (u_i) \supset S(u_i)$. 
To prove the 
desired inclusion, for $\sigma>0$ consider the sets
\begin{gather*}
S_\sigma (u_i): = \{ x \in \Omega: u_i >\sigma\}\ ,\\
T_\sigma (u_i): = \{ x\in\Om : d_\rho(x,S_\sigma (u_i )) \geq1\}\\
\noalign{\vskip-6pt}
\intertext{and}
\noalign{\vskip-6pt}
S^*_\sigma (u_i): = \{x\in\Om\,:\, d_\rho(x,T_\sigma  (u_i ) ) >1\}.
\end{gather*}
 Notice that, the union of $\rho$-balls centered at points in  $S_\sigma (u_i)$ coincides with the union of $\rho$-balls centered at points in $S_\sigma^* (u_i)$, i.e.

a) $ (T_\sigma (u_i))^c = \cup\, \B_1 (x)$ for $x\in S_\sigma (u_i)$ and 

b) $(T_\sigma (u_i))^c=\cup\, \B_1 (x)$ for $x\in S_\sigma^* (u_i)$.\\
If $x\in S_\sigma (u_i)$,   from (i) of Corollary \ref{convergencecor} we have that $d_\rho(x,\text{supp}f_j)>1$ for $j\neq i$, and  the locally uniform convergence of $u^\ep_i$ to $u_i$ and Lemma \ref{uj=0closui} imply that, up to subsequences,    $u^\ep_j\le Ce^{-\frac{c\sigma^\alpha r^\beta}{\ep}}$ in $\B_1 (x)$, where $2r=\min\{d_\rho(x,\text{supp}f_i),C(d_\rho(x,\text{supp}f_j)-1)\}$.
 Now, the set where $u^\ep_j$ decays is the same if we had considered $x\in S^*_\sigma(u_i)$, since from (a) and (b) we have
$$\cup_{x\in S_\sigma (u_i)} \B_1 (x)=\cup_{x\in S^*_\sigma(u_i)} \B_1 (x).$$ Therefore  $\frac{H(u_j^\ep)}{\ep^2}$ goes to zero as $\ep$ goes to zero in $S^*_\sigma (u_i)$.
It follows that $\Delta u_i\equiv 0$ in $S_\sigma^* (u_i)$, if $S_\sigma^* (u_i)$ is not empty.  
Now, if $A$ is a connected component of $S_\sigma(u_i)$, then there exists a connected component $A^*$ of $S^*_\sigma (u_i)$ such that $A\subset A^*$. 
Since 
$u_i$ is harmonic and non-negative in $A^*$, 
the strong maximum principle implies that $ u_i>0$ in all  $A^*$, 
that is $A^*\subset A$. 
 We conclude that $A= A^*$. Therefore, any connected component of $S_\sigma(u_i)$ is equal to a connected component of $S^*_\sigma(u_i)$.
 Passing to the limit as $\sigma\to0$, we obtain that   any connected component of $S(u_i)$ is equal to a connected component of $S^*(u_i)$. In particular, 
$\partial S(u_i)\subset \partial S^*(u_i)$.
\end{proof}
From  Lemma \ref{semiconvexproplem} we can conclude that the sets $S(u_i)$ have a tangent $\rho$-ball   of radius 1 from outside at any point of the boundary, as stated in the following corollary. 
\begin{cor}\label{tangebtballcor}
If $x_0 \in \partial S(u_i)\cap\Om$ there is an exterior  tangent ball, $\B_1 (y)$ at $x_0$, in the sense that for  $x \in  \B_1 (y) \cap \B_1 (x_0)$,  all $u_j(x)\equiv 0$ (including $u_i$).
\end{cor}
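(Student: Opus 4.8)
The plan is to extract the tangent ball directly from the equality $S^*(u_i)=S(u_i)$ established in Lemma \ref{semiconvexproplem}, reading the statement through the distance function $d_\rho(\cdot,T(u_i))$. Indeed, by definition $S(u_i)=S^*(u_i)=\{x\in\Om:\,d_\rho(x,T(u_i))>1\}$, so its complement in $\Om$ (relative to the closure of $\Om$) is $\{d_\rho(\cdot,T(u_i))\le 1\}$. Hence if $x_0\in\partial S(u_i)\cap\Om$, then $d_\rho(x_0,T(u_i))=1$: it cannot be $>1$ since $x_0\notin S(u_i)$, and it cannot be $<1$ since points arbitrarily close to $x_0$ lie in $S(u_i)$ and $d_\rho(\cdot,T(u_i))$ is continuous. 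The set $T(u_i)$ is closed, so the infimum defining $d_\rho(x_0,T(u_i))$ is attained at some $y^*\in T(u_i)$ with $\rho(x_0-y^*)=1$.

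Next I would locate the center of the tangent ball. Since $\rho$ is a smooth uniformly convex norm and $\rho(x_0-y^*)=1=d_\rho(x_0,T(u_i))$, the point $y^*$ realizes the $\rho$-distance from $x_0$ to the closed set $T(u_i)$; I claim the correct center is $y := x_0 + \tfrac12(y^*-x_0)$, i.e. the $\rho$-midpoint, so that $\B_1(y)$ passes through both $x_0$ and $y^*$ and is internally tangent to $\partial\B_1(x_0)$ at $x_0$. More useful is the elementary geometric fact that for a uniformly convex norm, the open ball of radius $1$ around the $\rho$-geodesic point at distance $1$ from $x_0$ toward $y^*$ is contained in $\{z:\,d_\rho(z,T(u_i))>1\}=S(u_i)^c{}^c$... let me instead argue it the clean way: because $d_\rho(x_0,T(u_i))=1$, the open ball $\B_1(y)$ with $y$ the point at $\rho$-distance $1$ from $x_0$ past $x_0$ away from $T(u_i)$ satisfies $\B_1(y)\subset (T(u_i))^c$ is the wrong direction; the right statement is that there is $y$ with $x_0\in\partial\B_1(y)$ and $\B_1(y)\cap T(u_i)=\emptyset$, equivalently every point of $\B_1(y)$ is within $\rho$-distance $<1$ of $x_0$...

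The genuine mechanism — and the step I expect to be the only real point to check — is the following \emph{dual} reformulation. Unwind the definitions: $x\in T(u_i)$ iff $\B_1(x)\cap S(u_i)=\emptyset$. Therefore, a ball $\B_1(y)$ with $y\notin S(u_i)$ and more: consider the contrapositive of Lemma \ref{semiconvexproplem}. Since $x_0\notin S(u_i)=S^*(u_i)$, we have $d_\rho(x_0,T(u_i))\le 1$, so there is $y^*\in T(u_i)$ with $\rho(x_0-y^*)\le 1$; combined with $d_\rho(x_0,T(u_i))=1$ shown above, $\rho(x_0-y^*)=1$. Now $y^*\in T(u_i)$ means $\B_1(y^*)\cap S(u_i)=\emptyset$, i.e. $u_j\equiv 0$ on $\B_1(y^*)$ for every $j$ (using Corollary \ref{convergencecor}(i): the $u_j$'s with $j\ne i$ vanish on a $\rho$-neighborhood of $\{u_i>0\}$, and the statement for $u_i$ itself is the definition of $S(u_i)$). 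Thus the exterior tangent ball claimed in the corollary is $\B_1(y^*)$: it is a $\rho$-ball of radius $1$, its boundary contains $x_0$ (as $\rho(x_0-y^*)=1$), and all $u_j$ (including $u_i$) vanish identically on it. The phrase ``for $x\in\B_1(y)\cap\B_1(x_0)$, all $u_j(x)\equiv 0$'' in the statement then holds a fortiori, since already $u_j\equiv 0$ on all of $\B_1(y^*)$. The one routine verification underlying the whole argument is the continuity of $x\mapsto d_\rho(x,T(u_i))$ and the attainment of the infimum on the closed set $T(u_i)$, both of which follow from \eqref{eq:stmt2} and the properties of $\rho$ recorded in Section 2; the uniform convexity \eqref{eq:stmt1} is what guarantees the tangency is genuine (a unique supporting point $x_0$) rather than a flat contact. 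I would write this up in two short paragraphs: first the identity $d_\rho(x_0,T(u_i))=1$ with its center $y^*$, then the translation of $y^*\in T(u_i)$ into the vanishing of all $u_j$ on $\B_1(y^*)$.
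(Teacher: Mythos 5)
Your route is the one the paper has in mind: the paper offers no explicit proof of the corollary beyond the remark that it follows from the distance-function manipulations of Lemma~\ref{semiconvexproplem}, and your reduction to $d_\rho(x_0,T(u_i))=1$ with a realizing point $y^*$ and candidate tangent ball $\B_1(y^*)$ is the right way to unpack that remark. One small imprecision: $T(u_i)$ is closed only relative to $\Om$, so $y^*$ should be produced as a limit of a minimizing sequence and may land on $\partial\Om$; this is harmless, since the only property of $y^*$ you actually use, namely $d_\rho(y^*,S(u_i))\ge 1$, i.e.\ $\B_1(y^*)\cap S(u_i)=\emptyset$, passes to the limit by continuity of $d_\rho(\cdot,S(u_i))$.

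The last step is wrong, and your closing ``a fortiori'' remark flags it. You claim that $y^*\in T(u_i)$ together with Corollary~\ref{convergencecor}(i) yields $u_j\equiv 0$ on \emph{all} of $\B_1(y^*)$ for every $j$. From $y^*\in T(u_i)$ you get only $u_i\equiv 0$ on $\B_1(y^*)$. Corollary~\ref{convergencecor}(i) gives $u_j\equiv 0$ ($j\ne i$) on $\{x:d_\rho(x,\supp u_i)\le 1\}$, but $\B_1(y^*)$ is \emph{not} contained in that set: $\B_1(y^*)$ is disjoint from $S(u_i)$ by construction, and its far side (the part outside $\B_1(x_0)$) can be at $\rho$-distance greater than $1$ from $\supp u_i$. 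Nothing in the hypotheses prevents $S(u_j)$ from lying at $\rho$-distance exactly one from $x_0$, in which case $y^*$ may sit on $\partial S(u_j)$ and $\B_1(y^*)$ genuinely meets $S(u_j)$, so $u_j\not\equiv 0$ there. The lens $\B_1(y^*)\cap\B_1(x_0)$ in the statement is therefore not a weakening of your claim but the whole point, and it is used with a split by index: for $x$ in the lens, the factor $x\in\B_1(y^*)$ together with $\B_1(y^*)\cap S(u_i)=\emptyset$ gives $u_i(x)=0$, while the factor $x\in\B_1(x_0)$ gives $d_\rho(x,\supp u_i)\le\rho(x-x_0)<1$ (since $x_0\in\overline{S(u_i)}=\supp u_i$), after which Corollary~\ref{convergencecor}(i) gives $u_j(x)=0$ for all $j\ne i$. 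Each half of the intersection handles a different set of indices; you need both.
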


The following two lemmas about the distance function may be known in the literature and we provide the proof here for the reader's convenience.
\begin{lem}\label{deltadlem}
Let $S$ be a closed set. Let $d_\rho(\cdot, S)$ denote the $\rho$-distance function from $S$. Then, in the set $\{x: d_\rho(x, S)>0\}$, $d_\rho(\cdot, S)$ satisfies in the viscosity sense
\begin{equation*}\Delta d_\rho(\cdot, S)\leq\frac{C}{d_\rho(\cdot, S)}, 
\end{equation*} 
where $C$ is a  constant depending on $n$, $\|D d_\rho(\cdot, S) \|_{L^\infty}$ and the constant $A$ given in \eqref{eq:stmt1}.
\end{lem}
\begin{proof}
We first prove that there exists a smooth tangent function from above at any point of the graph of $d_\rho(\cdot, S)$ in the set $\{d_\rho(\cdot, S)>0\}$. For simplicity we will write $d_S(\cdot)$ instead of $d_\rho(\cdot, S)$.
Let $y_0$ be a point in the complementary of $S$.   Let $x\in\partial S$ be a point where $y_0$ realizes the distance from $S$. 
Assume, without loss of generality, that $x=0$. Then $d_\rho(y_0,0)=\rho(y_0)=d_S(y_0)$. In particular, the ball $\B_{\rho(y_0)}(y_0)$ is contained in $S^c$ and tangent to $S$ at 0.
For any $y\in \B_{\rho(y_0)}(y_0)$, we have that $d_S(y)\leq d_\rho(y,0)=\rho(y)$, therefore the cone,  graph of the function $y\to\rho(y)$, is tangent from above 
to the graph of  $d_S(\cdot)$ at $(y_0, d_S(y_0))$.

Next, let $\psi$ be a test function touching from below $d_S(\cdot)$ at $y_0$, then $\psi$ touches  from below the function $ \rho(y)$ at $y_0$. In particular,
$\Delta \psi(y_0)\leq \Delta \rho(y_0)$. Let us compute $\Delta \rho$. Using  \eqref{eq:stmt1}, we get
$$D^2(\rho)=\frac{1}{\rho}D^2\left(\frac12 \rho^2\right)-\frac{D\rho\otimes D\rho}{\rho}\leq\frac{1}{\rho}(AI_n-D\rho\otimes D\rho),$$
which gives $$\Delta \rho\leq \frac{C}{\rho}.$$ 
In particular,
$$\Delta \psi(y_0)\leq  \frac{C}{\rho(y_0)}=\frac{C}{d_S(y_0)}.$$
This concludes the proof.
\end{proof}

\begin{lem}\label{finiteperlem}
Let $S$ be a closed and bounded set.  Let us denote by $d_\rho(\cdot, S)$ the $\rho$-distance function from $S$  and by 
$(S)_1$ the set at $\rho$-distance 1 from $S$. Then 
 $ (S)_1$ has finite perimeter.
 \end{lem}

\begin{proof}
For simplicity we will write $d_S(\cdot)$ instead of $d_\rho(\cdot, S)$, as in the previous lemma and first we present an heuristic proof  integrating  $\Delta d_S^2$ over the set  $\{0<d_S<1\}$. Since $|D d_S|$ is bounded, from Lemma \ref{deltadlem}, we see that
$$\Delta d_S^2=2|D d_S|^2+2d_S\Delta d_S\leq C.$$
Therefore, integrating $\Delta d_S^2$, we get
\begin{equation*}\begin{split}
C&\geq \int_{\{0<d_S<1\}} \Delta d_S^2dx=\int_{\{d_S=0\}}2d_S Dd_S\cdot n\, d\mathcal{H}^{n-1}+\int_{\{d_S=1\}} 2d_S Dd_S\cdot n\, d\mathcal{H}^{n-1}
\\&=\int_{\{d_S=1\}} 2 Dd_S\cdot n \,d\mathcal{H}^{n-1}\geq c\int_{\{d_S=1\}}d{\mathcal{H}}^{n-1}=c\mathcal{H}^{n-1}(\{d_S=1\}),
\end{split}
\end{equation*}
where $n=Dd_S/|Dd_S|$ is the unit exterior normal. This provides un upper bound for $\mathcal{H}^{n-1}(\{d_S=1\})$ and concludes the heuristic  proof.

To make the argument precise, we need  to correct the regularity problem over the boundary. For that, consider a smooth function $\eta$ with compact support in $(0,1)$ such that $0\leq \eta(\xi)\leq\xi$ for any $\xi\in[0,1]$,
$\eta(\xi)=\xi$ for $\xi\in[\delta,1-\delta]$, $|\eta'|\leq c$  on $(0, 1-\delta)$ and $\eta'(\xi)\leq- c/\delta$ for $\xi\in (1-\delta,1)$,
where $\delta>0$ is a small parameter.  Then, in a weak sense we have
\begin{equation}\label{lap distance}\text{div}(\eta(d_S)Dd_S)=\eta'(d_S)|Dd_S|^2+\eta(d_S)\Delta d_S.\end{equation}
Moreover, from Lemma \ref{deltadlem}, in the set $\{0<d_S<1\}$ we have $$\eta(d_S)\Delta d_S\leq \eta(d_S)\frac{C}{d_S}\leq C$$ in the viscosity sense and therefore in the distributional sense (see, e.g., 
\cite{ishii_equivalence_1995} for the equivalence between viscosity solutions and weak solutions).
Therefore, since $\eta (d_S)$ is a function with compact support in  $\{0<d_S<1\}$, we get
\begin{equation}\label{dist finite perim}\begin{split}
0&=\int_{\{0<d_S<1\}}\text{div}(\eta(d_S)Dd_S)\leq \int_{\{0<d_S<1\}}\eta'(d_S)|Dd_S|^2dx+C\\&
= \int_{\{0<d_S<1-\delta\}}\eta'(d_S)|Dd_S|^2dx+ \int_{\{1-\delta<d_S<1\}}\eta'(d_S)|Dd_S|^2dx+C\\&
\leq \int_{\{1-\delta<d_S<1\}}\eta'(d_S)|Dd_S|^2dx+C\\&
\leq -\frac{c}{\delta} \int_{\{1-\delta<d_S<1\}}|Dd_S|^2dx+C.
\end{split}
\end{equation}
 Now, using the coarea formula and the inequalities above, we get
$$\frac{1}{\delta}\int_{1-\delta}^1\mathcal{H}^{n-1}(\{d_S=t\})dt=\frac{1}{\delta} \int_{\{1-\delta<d_S<1\}}|Dd_S|^2dx\leq C.$$
Finally, taking the limit as $\delta\to0^+$ and using the lower semicontinuity of the perimeter with respect to the convergence in measure, we infer that 
$$\text{Per}(\{d_S=1\})\leq \liminf_{\delta\to0^+}\frac{1}{\delta}\int_{1-\delta}^1\mathcal{H}^{n-1}(\{d_S=t\})dt\leq C.$$
This concludes the proof of the lemma.
\end{proof}

\begin{cor}\label{hausmescor}
The sets $ S(u_i)$, $i=1,\ldots,K$ have finite perimeter. 
\end{cor}
\begin{proof} The corollary is an immediate consequence of Lemma \ref{semiconvexproplem} and Lemma \ref{finiteperlem}.
 \end{proof}

\section{A sharp characterization of the interfaces}
\label{section: strip}
In Section \ref{Lipestsection} we proved that the supports of the limit functions $u_i$'s are at distance at least 1,  one from  each other (see Corollary \ref{convergencecor}). In this section we will prove that they are exactly at distance 1, as stated in the following theorem. 
\begin{thm}\label{lem:6.1} Assume \eqref{mainassumpts} with  $p=1$ in \eqref{H1}.  
Let $(u_1^\ep,\ldots,u_K^\ep)$ be a 
viscosity solution of the problem \eqref{eq:stmt3}  and $(u_1,\ldots, u_K)$ the limit as $\ep\to0$  of a convergent subsequence.  
Let $x_0 \in \partial \{u_i >0\}\cap\Om$, 
then there exists $j\ne i$ such that 
\begin{equation}\label{supportdistex1}\overline{\B_1 (x_0)} \cap \partial \{u_j >0\} \ne \emptyset\ .\end{equation}
\end{thm}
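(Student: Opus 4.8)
The plan is to argue by contradiction: suppose $x_0 \in \partial\{u_i>0\}\cap\Om$ but $\overline{\B_1(x_0)} \cap \partial\{u_j>0\} = \emptyset$ for every $j \neq i$. Since the supports of the $u_j$'s are at $\rho$-distance at least $1$ from the support of $u_i$ (Corollary \ref{convergencecor}(i)), and since $x_0$ is a limit of points where $u_i>0$, we know $\overline{\B_1(x_0)}$ cannot \emph{contain} points of $\{u_j>0\}$ in its interior --- but the failure of \eqref{supportdistex1} says more: a whole $\rho$-neighborhood of $\overline{\B_1(x_0)}$ is free of $\partial\{u_j>0\}$, hence (since near $x_0$ the set $\{u_j>0\}$ is empty for all $j\neq i$) there is some $\delta>0$ such that $u_j \equiv 0$ on $\B_{1+\delta}(x_0)$ for all $j\neq i$. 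I would also need to rule out the boundary data: since $d_\rho(x_0,\supp f_i)$ could be small I should treat separately the case where $x_0$ is close to $\supp f_i$ --- but near such points $u_i$ itself is controlled by $f_i$ and one checks $u_i>0$ there, so $x_0$ cannot be a free boundary point close to $\supp f_i$; thus $d_\rho(x_0,\supp f_i) \ge 2r_0$ for some fixed $r_0>0$, and similarly $\B_{1+\delta}(x_0)$ stays away from $\supp f_j$.

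The key step is then to show that $u_i$ is harmonic in a full neighborhood of $x_0$, which contradicts $x_0$ being on $\partial\{u_i>0\}$ (a nonnegative harmonic function vanishing at an interior point is identically zero near that point, so the interface cannot pass through $x_0$). To get harmonicity near $x_0$ I would run the same mechanism as in Corollary \ref{convedisjointcor}: for $z$ in a small ball around $x_0$, the value $H(u_j^\ep)(z)$ samples $u_j^\ep$ only in $\B_1(z) \subset \B_{1+\delta}(x_0)$, and on that region I must show $u_j^\ep \to 0$ fast enough (like $e^{-c/\ep}$) that $\frac{1}{\ep^2}H(u_j^\ep) \to 0$ uniformly. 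The decay of $u_j^\ep$ away from $\supp f_j$ is exactly what Lemma \ref{uj=0closui} provides, \emph{provided} there is an interior point $y$ with $u_i^\ep(y) = \sigma$ for a fixed $\sigma>0$ whose $\rho$-distance to $\B_{1+\delta}(x_0)$ is positive; this gives the exponential smallness $u_j^\ep \le Ce^{-c\sigma^\alpha r^\beta/\ep}$ on the relevant strip, hence $\Delta u_i^\ep \to 0$ uniformly on a neighborhood of $x_0$, and passing to the limit $\Delta u_i = 0$ there.

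To produce such a point $y$: since $x_0 \in \partial\{u_i>0\}$, there are points $x_k \to x_0$ with $u_i(x_k)>0$; fixing one such $x_1$ with $u_i(x_1) = \sigma_1 > 0$, and noting $x_1$ can be taken with $d_\rho(x_1, \B_{1+\delta/2}(x_0))$ as close to $0$ as we like but we actually want it at controlled positive distance --- here is where I expect the \textbf{main obstacle}: I need the harmonicity region to reach \emph{all the way to} $x_0$, yet the anchor point $y = x_1$ where $u_i$ is bounded below lies near $x_0$ too, so Lemma \ref{uj=0closui}'s strip $\Sigma_{i,j}^{\sigma,r}$ (of radius roughly $1 + \tau m r/2$ around $\Gamma_i^{\sigma,r}$) must be shown to cover a genuine neighborhood of $x_0$ while keeping $d_\rho(\cdot,\supp f_j)$ bounded below. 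The resolution is that $p=1$ in \eqref{H1} is used: with $p=1$, the key monotonicity/semiconvexity structure from Section 6 (or a direct mean-value argument on $u_i^\ep$, which is subharmonic near $x_0$ since $\B_{1+\delta}(x_0)$ avoids $\supp f_i$) lets one propagate a lower bound on $u_i^\ep$ from $x_1$ into a fixed-size ball reaching within $\rho$-distance $1$ of a neighborhood of $x_0$, exactly as in the proof of Lemma \ref{uj=0closui} via Lemma \ref{lem:gen1}. Once $\Delta u_i = 0$ in a neighborhood of $x_0$ is established, $u_i(x_0)=0$ and the strong maximum principle force $u_i \equiv 0$ near $x_0$, so $x_0 \notin \partial\{u_i>0\}$ --- contradiction --- and the theorem follows. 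I would also double-check the edge case $K$ populations versus two: the argument is the same since only $u_i$ and the collective $\sum_{j\neq i}H(u_j^\ep)$ enter.
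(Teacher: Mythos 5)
Your plan is genuinely different from the paper's, and its central step does not close. The paper never tries to prove $u_i$ harmonic near $x_0$; it compares Laplacian masses instead. For $H$ as in \eqref{H1} with $p=1$, Fubini applied to the symmetric kernel $u_i^\ep(x)\varphi(\rho(x-y))u_j^\ep(y)$ produces inequality \eqref{boundlaplacian}, which in the limit gives $\int_{\B_{1+S}(x_0)}\sum_{j\ne i}\Delta u_j\ge\int_{\B_S(x_0)}\Delta u_i>0$; the right-hand side is positive because $x_0$ is a free boundary point and $u_i$ is therefore not harmonic on $\B_S(x_0)$. Hence some $u_j$ is not identically zero in $\B_{1+S}(x_0)$, and letting $S\to0$ gives \eqref{supportdistex1}. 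For $H$ as in \eqref{H2} a radial comparison of $\inf_{\partial\B_r}u_i^\ep$ and $\sup_{\partial\B_{1+r}}u_j^\ep$ achieves the same transfer of mass. Thus $p=1$ enters through the bilinearity that enables Fubini, not via any monotonicity or semiconvexity mechanism as you suggest.

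The ``main obstacle'' you flag is a real gap, and your sketched resolution does not remove it. To obtain $\Delta u_i=0$ on $\B_\eta(x_0)$ you need $\ep^{-2}H(u_j^\ep)(z)\to0$ uniformly for $z\in\B_\eta(x_0)$, i.e.\ exponential smallness of $u_j^\ep$ on $\B_{1+\eta}(x_0)$; the limiting fact that $u_j\equiv0$ there is far weaker and does not give this. The only quantitative decay mechanism in the paper is Lemma~\ref{uj=0closui}, whose decay region $\Sigma_{i,j}^{\sigma,r}$ extends only to $\rho$-distance $1+\tau m r/2$ from the level set $\{u_i^\ep=\sigma\}$, where $m=\sigma/\sup_{\partial\Om}f_i$. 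The overshoot past the unit ball is therefore of order $\sigma r$, proportional to the anchoring height $\sigma$. Any anchor point $y$ with $u_i(y)\approx\sigma$ satisfies $\rho(y-x_0)\ge\sigma/L$ (Lipschitz bound together with $u_i(x_0)=0$), so covering $\B_{1+\eta}(x_0)$ would require $\rho(y-x_0)+\eta\le\tau m r/2$, hence $\sigma/L+\eta\le\tau r\,\sigma/(2\sup_{\partial\Om}f_i)$; this demands both a nondegeneracy $u_i(y)\gtrsim\rho(y-x_0)$ and a favorable constant relation that nothing in Section~5 supplies --- such nondegeneracy is in fact downstream of the theorem you are proving. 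Pushing $y$ toward $x_0$ makes $\sigma$, and hence the overshoot, vanish, so no fixed $\eta>0$ is ever reached. Neither Lemma~\ref{lem:gen1} nor Lemma~\ref{uj=0closui} yields a fixed positive lower bound on $u_i^\ep$ at points lying within $\rho$-distance $1$ of an entire neighborhood of $x_0$, and $u_i(x_0)=0$ precludes it. This obstruction is exactly why the paper argues with Laplacian masses rather than with pointwise harmonicity near the free boundary.
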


\begin{proof} It is enough to prove the theorem for a point $x_0$ for which $\partial S(u_i)$ has a tangent $\rho$-ball 
from inside, since such points are dense on $\partial S(u_i)$.  Indeed, let $x$ be any point of  $\partial S(u_i)$. Let us consider a sequence of points $(x_k)$ contained in 
$S(u_i)$ and converging to $x$ as $k\to \infty$. Let $d_k$ be the $\rho$-distance of $x_k$ from $\partial S(u_i)$. Then the $\rho$-balls $\B_{d_k}(x_k)$ are contained in 
$S(u_i)$  and there exist points $y_k\in \partial S(u_i)\cap \B_{d_k}(x_k)$ where the $x_k$'s realize the distance from $\partial S(u_i)$. The sequence $(y_k)$ is a sequence of  points  of $\partial S(u_i)$ that have a tangent $\rho$-ball from the inside  and converges to $x$.

Next, remark that from (b) in Corollary \ref{convergencecor}, we have that $d_\rho(x_0,\text{supp}\,f_j)\geq 1$ for any $j\neq i$. If there is a $j$ such that 
$d_\rho(x_0,\text{supp}\,f_j)=1$,  then \eqref{supportdistex1} is obviously true.
Therefore, we can assume that  $d_\rho(x_0,\text{supp}\,f_j)> 1$ for any $j\neq i$. Then, for small $S>0$ we have that $\B_{1+S}(x_0)\cap\supp\,f_j=\emptyset$ and from \eqref{eq:stmt3}, we know that $$\Delta u_j^\ep \geq \frac1{\ep^2} u_j^\ep  \sum_{k\ne j} H(u_k^\ep) \quad\text{in }\B_{1+S}(x_0).$$

We divide the proof in two cases.

a) $\ds H(u) (x) = \int_{\B_1 (x)} u(y) \varphi \big( \rho (x-y)\big) \, \mbox{\rm d}y$

and 

b) $\ds H (u )(x) = \sup_{y\in\B_1 (x)} u(y)\ .$
\medskip

{\em Proof of case} a):    
 Let $S(u_i) = \{x\in \Omega: u_i >0\}$ as in (\ref{Si}). Let $\B_S$ be a small  $\rho$-ball centered at $x_0 \in \partial S(u_i)$.
Then, as a measure, as $\ep\rightarrow0$, up to subsequence
$$\Delta u_i^\ep \big |_{ \B_S (x_0)} \longrightarrow 
\Delta u_i\big|_{ \B_S (x_0)} $$
(that has strictly positive mass, since $u_i$ is not harmonic in $\B_S (x_0)$).

We bound by below 
$$\int_{\B_{1+S}(x_0)} \sum_{j\ne i} \Delta u^\ep_j dx\quad\text{ by }\quad 
\int_{\B_S (x_0)} \Delta u_i^\ep dx.$$
Indeed

\begin{equation}\label{boundlaplacian}
\begin{split}
 \ep^2 \int_{\B_S (x_0)} \Delta u^\ep_i (x)dx&= \sum_{j\ne i}\int_{\B_S (x_0)} \int_{\B_1(x)} u_i^\ep (x) \varphi \big( \rho (x-y)\big) u_j^\ep (y)dydx\\
&=\sum_{j\ne i}\int\int_{\B_S (x_0)\times \B_{1+S} (x_0)} u_i^\ep (x)\chi_{[0,1]} \big( \rho (x-y)\big) \varphi \big( \rho (x-y)\big) u_j^\ep (y)dxdy\\
&\leq \sum_{j\ne i}\int\int_{\B_{2+S} (x_0) \times \B_{1+S} (x_0)} u_i^\ep (x) \chi_{[0,1]}  \big( \rho (x-y)\big)\varphi \big( \rho (x-y)\big) u_j^\ep (y)dxdy\\
&=\sum_{j\ne i}\int_{\B_{1+S} (x_0)}\int_{\B_1(y)}u_i^\ep (x) \varphi \big( \rho (x-y)\big) u_j^\ep (y)dxdy\\
&\le\ep^2\sum_{j\ne i}\int_{\B_{1+S} (x_0)}\Delta u^\ep_j(y)dy,
\end{split}
\end{equation} 
where $\chi_{[0,1]}$ is the indicator function of the set $[0,1]$.

%

Therefore, for any small positive $S$, taking the limit in $\ep$ we get 
$$\int_{\B_{1+S}(x_0)} \sum_{j\ne i}\ \Delta u_j \ge \int_{\B_{S}(x_0)}\Delta u_i>0 $$
which implies that there exists $j\neq i$ such that $u_j$ cannot be identical equal to zero in $\B_{1+S}(x_0)$.  Since $S$ small is arbitrary, the result follows.

The case b) is more involved. We may assume $x_0=0$. 
Let $y_0$ be such that $\B_\mu (y_0) \subset S(u_i)$ and $0\in \partial \B_\mu (y_0)$. 
By Corollary \ref{tangebtballcor}
we know that there exists a $\rho$-ball $\B_1 (y_1)$ such that $\B_1 (y_1) \cap S(u_i) = \emptyset$ and 
$0\in \partial \B_1 (y_1)$.

Let us first prove two claims.

{\em Claim 1:} 
There exists $\mu' < \mu$ and $C_1 >0$ such that in the annulus $\{ \mu' < \rho (x-y_0) <\mu\}$ 
we have 
$$u_i (x) \ge C_1 d_\rho \big( x,\partial \B_\mu (y_0)\big)\ .$$
Since any  $\rho$-ball $\B$ satisfies the uniform interior ball condition, for any point $\bar{x} \in \partial \B_\mu (y_0)$ 
there exists an Euclidian  ball $B_{R_0} (z_0)$ of radius $R_0$ independent of $\bar{x} $ contained in 
$\B_\mu (y_0)$ and tangent to $\partial \B_\mu (y_0)$ at $\bar{x} $. 
Let $m>0$ be the infimum of $u_i$ on the set $\{ x\in \B_\mu (y_0) \mid d(x,\partial \B_\mu (y_0)) \ge R_0/2\}$, where $d$ is the Euclidian distance function,
and let $\phi$ be the solution of 
$$\begin{cases} 
\Delta \phi =0&\text{in }\quad \ds \Big\{ \frac{R_0}2 < |x-z_0| < R_0\Big\}\\
\noalign{\vskip6pt}
\phi =0&\text{on }\quad \partial B_{R_0} (z_0)\\
\noalign{\vskip6pt}
\phi = m&\text{on }\quad \partial B_{\frac{R_0}2} (z_0)
\end{cases}$$
i.e., for $n\ge 3$,
$$\phi (x) = C(n) m \Big( \frac{R_0^{n-2}}{|x-z_0|^{n-2}} -1\Big)\ .$$
Since $u_i$ is harmonic in $\B_\mu (y_0)$ and $u_i\ge \phi$ on $\partial B_{R_0} (z_0)\cup \partial B_{\frac{R_0}2} (z_0)$,  by comparison principle $u_i \ge \phi$ in $\{ \frac{R_0}2 < |x-z_0|<R_0\}$.
In particular, for any $x\in\{ \frac{R_0}2 < |x-z_0|<R_0\}$ and belonging to  the segment between $z_0$ and $\bar{x} $, using that $\phi$ is convex 
in the radial direction,

$$
\frac{\partial \phi}{\partial \nu_i}|_{\partial B_{R_0}(z_0)}=\frac{C(n)(n-2) m}{R_0}
$$
 where $\nu_i$ is the interior normal at $\partial B_{R_0}(z_0)$, and (2.2), we get 
 $$
 u_i(x) \ge \frac{C(n)(n-2) m}{R_0} d (x,\partial B_{R_0} (z_0)) =C(n,R_0) md (x,\partial \B_\mu (y_0)) \ge C_1 d_\rho (x,\partial \B_\mu (y_0))\ .
 $$
 
\noindent Therefore, letting $\bar{x} $ vary in $\partial \B_\mu (y_0)$ we get 
$$u_i(x) \ge C_1 d_\rho (x,\partial \B_\mu (y_0))\quad\text{for any }x\in \B_\mu (y_0)\text{ with }d(x,\partial \B_\mu (y_0))\leq\frac{R_0}{2}.$$
Using  \eqref{eq:stmt2}, 
 Claim~1 follows.
\medskip

%

Next, let $e_0 = y_0/\rho (y_0)$ and fix $\sigma <\mu$ so small that $\B_{\sigma} (\sigma e_0) 
\subset \{ \mu' < \rho (x-y_0) < \mu\} \cap \B_{1+\delta} (y_1)$.
For $r\in [\sigma -\upsilon, \sigma +\upsilon]$  and small $\upsilon<\sigma$, let us define 
$$\underline{u}_i^\ep ¨ : = \inf_{\partial \B_r (\sigma e_0)} u_i^\ep \quad\text{ and }\quad 
\underline{u}_i ¨ := \inf_{\partial \B_r (\sigma e_0)} u_i\ .$$
Since for $r\in [\sigma,\sigma+\upsilon]$, $\partial \B_r (\sigma e_0)\cap (S(u_i))^c\neq\emptyset$ and $u_i\equiv 0$ on $(S(u_i))^c$,  we have 
\begin{equation}\label{ui=zeror>sigma}\underline{u}_i ¨ = 0\ \text{ for }\ r\in [\sigma,\sigma+\upsilon]\ .\end{equation}
By Claim 1, we know that in $B_\sigma (\sigma e_0)$ we have
\begin{align*}
u_i(x) & \ge C_1 d_\rho (x,\partial \B_\mu (y_0))\\&
\geq C_1d_\rho (x,\partial \B_\sigma (\sigma e_0))\\&
=C_1(\sigma-\rho(x-\sigma e_0)).
\end{align*}
We deduce that for $r\in [\sigma -\upsilon, \sigma ]$
$$\underline{u}_i ¨ = \inf_{\partial \B_r (\sigma e_0)} u_i\ge \inf_{\partial \B_r (\sigma e_0)}C_1(\sigma-\rho(x-\sigma e_0))=C_1(\sigma-r).$$
From the previous inequality and \eqref{ui=zeror>sigma}, we infer that
\begin{equation}\label{eq:6.1}
\underline{u}_i ¨ \ge C_1 (\sigma -r)^+, \quad r\in [\sigma -\upsilon, \sigma +\upsilon].
\end{equation}
Next, for $j\ne i$, $r \in [\sigma -\upsilon,\sigma +\upsilon]$, let us define 
$$\bar u_j^\ep ¨ : = \sup_{\partial \B_{1+r}(\sigma e_0)} u_j^\ep \quad \text{ and }\quad 
\bar u_j ¨ : = \sup_{\partial \B_{1+r} (\sigma e_0)} u_j\ .$$
The functions $\underline{u}_i^\ep ¨$ and $\bar u_j^\ep ¨$ are respectively solutions of 
\begin{equation}\label{supinfinethm6.1}\begin{split}
\Delta_r \underline{u}_i^\ep \le \frac1{\ep^2} \underline{u}_i^\ep \sum_{i\ne j} \sup_{\B_1 (\underline{z}_r^i)}
u_j^\ep \\
\Delta_r \overline{u}_j^\ep \ge \frac1{\ep^2} \overline{u}_j^\ep \sup_{\B_1 (\bar z_r^j)} u_i^\ep 
\end{split}\end{equation}
where
 $$\Delta_r u =u_{rr} +\frac{(n-1)}{r}u_r= \frac1{r^{n-1}} \ \frac{\partial}{\partial r} \Big( r^{n-1} \frac{\partial u}{\partial r}\Big)$$
and $\underline{z}_r^i$ and $\bar z_r^j$ are respectively the points where the infimum of $u_i^\ep$ 
on $\partial \B_r (\sigma e_0)$ and the supremum of $u_j^\ep$ on $\partial \B_{1+r} (\sigma e_0)$ 
are attained.  Note that in spherical coordinates $$\Delta u =\Delta_r u + \Delta_{\theta} u$$ and that if we are on a point where $u$ attains a minimum value in the $\theta$  for a fixed $r$ then $\Delta_{\theta} u\geq 0$ and the opposite inequality holds  if we are on a maximum point.
We also remark that 
$$\overline{y}_r^j := \sigma e_0 + \frac{r}{r+1} (\bar z_r^j - \sigma e_0) \in \partial \B_r 
(\sigma e_0) \cap \partial \B_1 (\bar z_r^j)\ ,$$
therefore
\begin{equation}\label{supinfinethm6.12}\sup_{\B_1 (\bar z_r^j)} u_i^\ep \ge u_i^\ep (\bar y_r^j) \ge \underline{u}_i^\ep ¨\ .
\end{equation}
Moreover, since $\B_1 (\underline{z}_r^i) \subset \B_{1+r} (\sigma e_0)$ and $u_j^\ep$ is a 
subharmonic function, we have 
\begin{equation}\label{supinfinethm6.13}\begin{split}
\sup_{\B_1 (\underline{z}_r^i)} u_j^\ep 
& \le \sup_{\B_{1+r}(\sigma e_0)} u_j^\ep\\
& = \sup_{\partial \B_{1+r}(\sigma e_0)} u_j^\ep\\
& = \bar u_j^\ep ¨\ .
\end{split}
\end{equation}
From \eqref{supinfinethm6.1},  \eqref{supinfinethm6.12} and \eqref{supinfinethm6.13}, we conclude that 
\begin{equation}
\label{eq: sup case estimate}
\Delta_r \underline{u}_i^\ep \le \Delta_r \bigg( \sum_{j\ne i} \bar u_j^\ep\bigg)\ .
\end{equation}
In other words, for any $\phi \in C_c^\infty (\sigma -\upsilon,\sigma+\upsilon)$, $\phi \ge 0$, we have 
$$\int_{\sigma-\upsilon}^{\sigma +\upsilon} \underline{u}_i^\ep ¨ \frac{\partial}{\partial r} 
\left( r^{n-1} \frac{\partial}{\partial r} \Big( \frac1{r^{n-1}} \phi ¨\Big) \right) dr
\le \int_{\sigma-\upsilon}^{\sigma +\upsilon} \sum_{j\ne i} 
\bar u_j^\ep¨ \frac{\partial}{\partial r} 
\left( r^{n-1} \frac{\partial}{\partial r} \Big( \frac1{r^{n-1}} \phi ¨\Big) \right) dr\ .$$
Passing to the limit as $\ep \to 0$ along a   uniformly converging subsequence, we get 
$$\int_{\sigma-\upsilon}^{\sigma +\upsilon} \underline{u}_i ¨ \frac{\partial}{\partial r} 
\left( r^{n-1} \frac{\partial}{\partial r} \Big( \frac1{r^{n-1}} \phi ¨\Big) \right) dr
\le \int_{\sigma-\upsilon}^{\sigma +\upsilon} \sum_{j\ne i} 
\bar u_j ¨\frac{\partial}{\partial r} 
\left( r^{n-1} \frac{\partial}{\partial r} \Big( \frac1{r^{n-1}} \phi ¨\Big) \right) dr\ .$$
The linear growth of $u_i$ away from the free boundary given by \eqref{ui=zeror>sigma} and \eqref{eq:6.1}, implies that $\Delta_r \underline{u}_i ¨$ develops a Dirac mass at 
$r= \sigma$ and 
$$ \int_{\sigma-\upsilon}^{\sigma+\upsilon} \underline{u}_i ¨ 
\frac{\partial}{\partial r} \left( r^{n-1} \frac{\partial}{\partial r} \Big( \frac1{r^{n-1}} \phi ¨\Big)\right) dr > 0,$$ for $\upsilon$ small enough.
Hence, $\Delta_r (\sum_{j\ne i} \bar u_j)$ is a positive measure in $(\sigma-\upsilon,\sigma+\upsilon)$  and therefore there exists $j\ne i$ such that $u_j$ cannot be identically equal to 
zero in the ball $\B_{1+\sigma} (\sigma e_0)$. 
Since $\sigma$ small is arbitrary, the result follows.
\end{proof}


\section{Classification of singular points and Lipschitz regularity in dimension 2 }\label{singpoints2dsec}

In this section we study singular points in dimension 2.  We will always assume \eqref{mainassumpts} with  $p=1$ in \eqref{H1}. 
 From the results of the previous sections we know that the solutions $u_1^\ep,\ldots, u_K^\ep$ of system \eqref{eq:stmt3}, through a subsequence, converge as $\ep\rightarrow 0$ to  functions $u_1,\ldots, u_K$ which are locally  Lipschitz continuous in $\Om$ and harmonic inside their support.
For $i=1,\ldots, K$,  let us denote  the interior of the support of $u_i$ by $S_i$ as in (\ref{Si}) and  the union of the interior of the supports of all the other functions by
\begin{equation}\label{Ci}
C_i:=\cup_{j\neq i} S_j.
\end{equation}
Since  the sets $S_i$ are disjoint we have $\partial C_i=\cup_{j\neq i} \partial S_j.$  
 From Theorem \ref{lem:6.1}  we know that $S_i$ and $C_i$ are at $\rho$-distance 1, therefore for any point $x\in\partial S_i$
there is a point $y\in \partial  C_i$  such that $\rho(x-y)=1$. We say that $x$ realizes at $y$ the distance from $  C_i$. 

\begin{defn} A point $x\in\partial S_i$  is a {\em singular} point if it realizes the distance from $C_i$   to at least two points in 
$\partial  C_i$. We say that $x\in\partial S_i$ is a {\em regular} point if it is not singular.
\end{defn}

 \begin{figure}
\begin{center}
\includegraphics[scale=0.4]{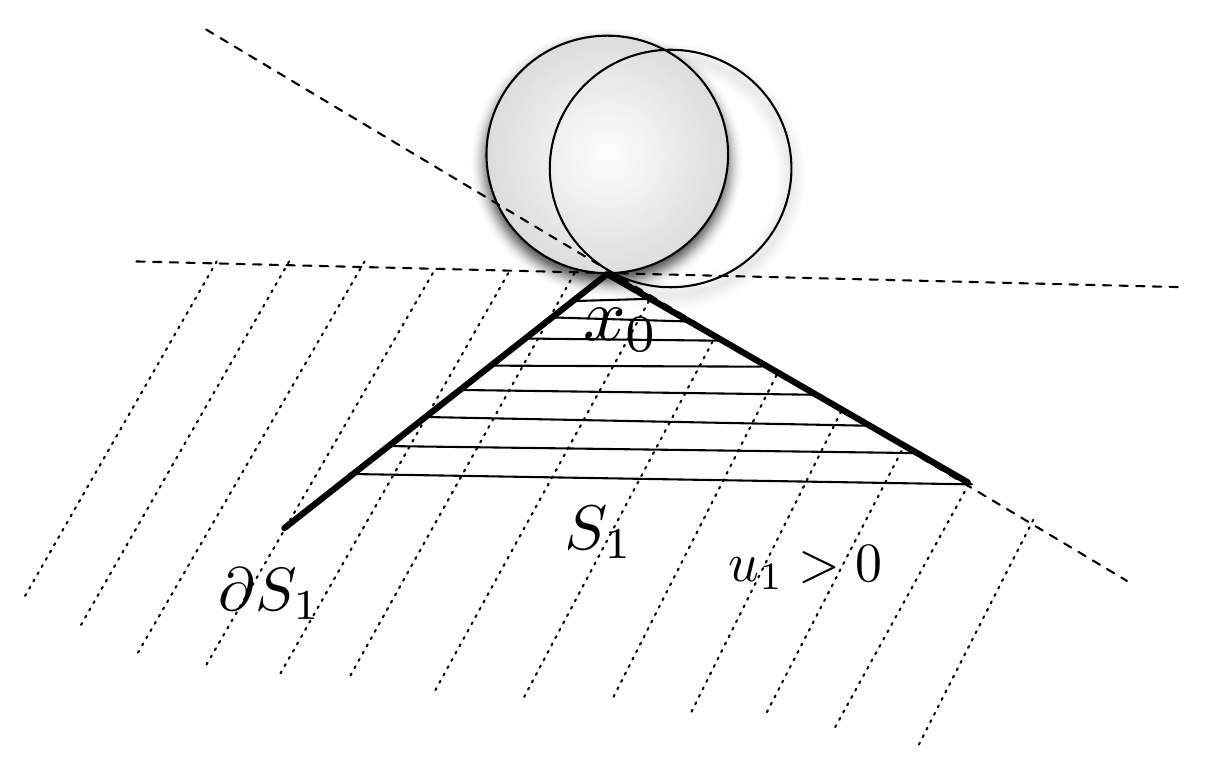}
\caption{Asymptotic cone at $x_0$}
\label{fig:cone}
\end{center}
\end{figure}

 Geometrically, we can describe regular and singular points as follows. Let $x\in \partial S_i$ be a singular point and $y_1,y_2\in\partial C_i$ points where $x$ realizes the distance from $C_i$. Then the balls $\B_1(y_1)$ and  $\B_1(y_2)$ are tangent to $\partial S_i$ at  $x$.  Consider the convex cone determined by the  two tangent lines to the two tangent $\rho$-balls  $\B_1(y_1)$ and  $\B_1(y_2),$ which does not intersect the two $\rho$-balls. The intersection of all cones generated by all  $\rho$-balls of radius 1,  tangent  at $x$ and with center in $C_i$ defines a convex asymptotic cone centered at $x$, see Figure \ref{fig:cone}. 
 If $x \in \partial S_i $ is a regular point, then there is only one point $y\in\partial  C_i$ where $x$ realizes the distance from $ C_i$. In this case, 
 the two tangent balls coincide and therefore, by definition the asymptotic   cone at $x\in\partial S_i$ is an  half-plane.
 We will show that at regular points $\partial S_i$ is the graph of a differentiable function.
 If $\theta\in[0,\pi]$ is the opening of the cone at $x$, we say that $S_i$ has an angle $\theta$ at $x$. Regular points correspond to
  $\theta=\pi$.  When $\theta=0$ the tangent cone is actually a semi-line and $S_i$ has a cusp at $x$. We will show, later on in this section, that, assuming additional hypothesis on the boundary data and the domain $\Om$,  the case $\theta=0$ never occurs and therefore   the free boundaries are Lipschitz curves of the plane. 

\begin{lem}\label{tangentballstangcone}
Let $\C=\{(x_1,x_2): x_2 \geq \alpha |x_1| \}$, $\alpha\geq 0$,  be the asymptotic cone of $S_i$ at $0\in\partial S_i$. Then there exist $y_1,\,y_2\in\partial C_i$ such that 
the balls $\B_1(y_1)$ and 
$\B_1(y_2)$ are tangent respectively to the lines $x_2=\pm  \alpha x_1$ at 0.
\end{lem}
\begin{proof}
Let $y_1, y_2 \in \B_1(0)$ be such that $x_2= \alpha x_1$ is a tangent line to $\B_1(y_1)$ at 0 and $x_2= -\alpha x_1$ is a tangent line to $\B_1(y_2)$ at 0. Suppose  by contradiction that  $y_1, y_2 \notin \p C_i$. Then,  any $y \in C_i$ such that $\rho(y-0)=1$ must lie in the smaller arc in $\p \B_1(0)$ between $y_1$ and $y_2$.  Moreover, there exists  $\delta>0$ such that all  $\rho$-balls $\B_1(y)$ have at most as  tangent lines  at 0 the lines $x_2=\pm  (\alpha-\delta) x_1$. Then the asymptotic cone at 0 must contain the cone $\{(x_1,x_2): x_2 \geq (\alpha -\delta) |x_1| \}$, which is not possible. 
%
\end{proof}
\begin{lem}\label{freeboudarygraphlem}
Assume that $S_i$ has an angle $\theta\in(0,\pi]$ at $x_0\in\partial S_i$. Then, there exists a neighborhood $U$ of $x_0$, a system of coordinates $(x_1,x_2)$ and a locally Lipschitz function $\psi:(-r,r)\to\real$, for some $r>0$, such that in the system of coordinates $(x_1,x_2)$, we have that  $x_0=(0,0)$ and
$$\partial S_i\cap U=\{(x_1,\psi(x_1)\,:\,x_1\in (-r,r)\}.$$
If in addition $\theta=\pi$, then $\varphi$ is differentiable at 0.
\end{lem}
\begin{proof}
Let $\C$ be the convex asymptotic cone of $S_i$ at $x_0$. Let us fix a system of coordinates $(x_1,x_2)$ such that the $x_2$ axis coincides with the axis of the cone and 
is oriented such that the cone is above the $x_1$ axis. Then we have that $x_0=(0,0)$ and $\C=\{(x_1,x_2): x_2 \geq \alpha |x_1| \}$ with $\alpha=\mathrm{tan(\frac{\pi-\theta}{2})}$. To prove that in this system of coordinates, $\partial S_i$ is the graph of a function  in a small neighborhood of $x_0$, it suffices to show that there exists a small $r>0$ such that, for any $|t|<r$, the vertical line $\{x_1=t\}$, intersects $\partial S_i\cap B_r(0)$ at only one point.
Suppose by contradiction that there exists a sequence $(t_n)$ such that $t_n\to0$ as $n\to+\infty$, and the  line $\{x_1=t_n\}$ intersects $\partial S_i\cap B_r(0)$ at
two distinct points $(t_n,a_n)$ and $(t_n,b_n)$ with $b_n>a_n$. Assume, without loss of generality, that $t_n>0$ for any $n$. 
By Lemma \ref{tangentballstangcone} there exist   $y_1$, $y_2 \in \p C_i$ that realize the distance from 0, and such that $\B_1(y_1)$ is tangent to the line $\{(x_1,x_2): x_2=\alpha x_1\}$  at 0 and  $\B_1(y_2)$ is tangent to $\{(x_1,x_2): x_2=-\alpha x_1\}$  also at 0. For instance, in the particular case of the  Euclidean norm, we would have $y_1= \left(\sqrt{\ds \frac{1}{1+\alpha^2}}, - \alpha \sqrt{\ds \frac{1}{1+\alpha^2}}\right)$ and $y_2= \left(-\sqrt{\ds \frac{1}{1+\alpha^2}},- \alpha \sqrt{\ds \frac{1}{1+\alpha^2}}\right) $. In general, what we can say is that the $x_2$ coordinate of $y_1$ and $ y_2$ is a negative value $-c$. We have that   $\B_1(y_1) \cap \B_1(y_2) \neq \emptyset$, since $\theta >0$. Moreover, 
  $S_i \cap(\B_1(y_1)\cup\B_1(y_2))=\emptyset$. Then, both points $(t_n,a_n)$ and $(t_n,b_n)$ must be above 
$\B_1(y_1)\cup\B_1(y_2)$ for $n$ large enough.  Next, let $y^a_n,y^b_n \in \partial C_i$ be points where $(t_n,a_n)$ and $(t_n,b_n)$, respectively, realize the distance from $C_i$. Then the $\rho$-balls 
$\B_1(y^a_n)$ and $\B_1(y^b_n)$ are exterior tangent balls to $\partial S_i$ at $(t_n,a_n)$ and $(t_n,b_n)$, respectively.  Recall  that 
the $\rho$-distance between the points $(t_n,a_n)$ and $(t_n,b_n)$ converges to 0 as $n\to+\infty$, and so, the point $y^a_n$  has to belong to the lower half $\rho$-ball  $\partial \B_1(t_n,a_n)\cap \{x_2<a_n\}$ 
for $n$ large enough.
Indeed, if not the tangent $\rho$-ball $\B_1(y^a_n)$ would contain $(t_n,b_n)$ for $n$ large enough. Similarly,  $y^b_n$  has to belong to the upper half $\rho$-ball
 $\partial \B_1(t_n,b_n)\cap\{x_2>b_n\}$ for $n$ large enough. This implies that the tangent $\rho$-ball $\B_1(y^b_n)$ will converge to a tangent ball to $S_i$ at 0,  $\B_1(y^b)$, with
  $y^b\in \{x_2\geq0\}$. On the other hand, by the definition of the asymptotic cones, all the centers of the  tangent balls  at 0 must belong to the set 
 $\partial \B_1(0)\cap\{x_2\leq -c\}$, where $-c<0$ is the $x_2$ coordinate of the points $y_1,\,y_2$ defined above.
 Therefore, we have  reached a contradiction. We infer that there exists $r>0$ such that  $\partial S_i$ is the graph of a function $\psi:(-r,r)\to\real$. 
 Since $\partial S_i$ is a closed set, $\psi$ is continuous.
 
 Let us prove that $\psi$ is Lipschitz continuous at 0. If $\C=\{x_2\geq \alpha|x_1|\}$ is the tangent cone of $S_i$ at $x_0$ in the system of coordinates $(x_1,x_2)$, 
 then for $r>0$ small enough we have
 $$\{x_2\geq 2\alpha|x_1|\}\subset S_i\cap B_r(0)\subset \left\{x_2\geq \frac{\alpha}{2}|x_1|\right\},$$
that is, for $|x_1|<r$, 
$$\frac{\alpha}{2}|x_1|\leq \psi(x_1)=\psi(x_1)-\psi(0)\leq  2\alpha|x_1|.$$
Therefore, $\psi$ is Lipschitz at 0.

Next, assume that $\theta=\pi$.  Then, we have that $y_1=y_2$, and  $x_0$ is a regular point. Therefore,  $\B_1(y_1)\subset \{x_2<0\}$   is the unique tangent ball to the graph of $\psi$ at $x_0=(0,0)$. Moreover, the tangent cone is the half plane  $\{x_2\geq 0\}$. Let us show that $\psi$ is differentiable at 0.
Assume by contradiction that there exists a sequence of positive points $(x_1^n) \in (-r,r)$ such that $x_1^n\to0$ as $n\to+\infty$ and 
\begin{equation}\label{psistregulpointderiv}\lim_{n\to+\infty}\frac{\psi(x_1^n)}{x_1^n}=\beta\neq0.\end{equation}
Since there exists a tangent ball from below to the graph of $\psi$ at 0 contained in $\{x_2<0\}$, we must have $\beta>0$.
For any point $(x_1^n,\psi(x_1^n)) \in \p S_i$ there exists a point $y_n\in\partial C_i$ such that $\B_1(y_n)$ is tangent to $S_i$ at $(x_1^n,\psi(x_1^n))$.
Let $y_2\in \p C_i$ be the limit of a converging subsequence of $(y_n)$. Then the $\rho$-ball $\B_1(y_2)$ is an exterior  tangent ball at $S_i$ at 0.
Equation \eqref{psistregulpointderiv} gives $\psi(x_1^n)\geq \frac{\beta}{2}x_1^n$ for $n$ large enough, i.e., the points $(x_1^n,\psi(x_1^n))$ of the free boundary are above the line $\{x_2=\beta/2|x_1|\}$. This implies that $y_1\neq y_2$, that is the limit $\rho$-ball $\B_1(y_2)$ must be different from $\B_1(y_1)$.  This is in contradiction with the fact that $x_0$ is a regular point. Therefore we must have
$$\lim_{x_1\to0^+}\frac{\psi(x_1)}{x_1}=0.$$
Similarly, one can prove that 
$$\lim_{x_1\to0^-}\frac{\psi(x_1)}{x_1}=0.$$ We conclude that $\psi$ is differentiable at 0 and $\psi'(0)=0$.

\end{proof}
\begin{lem}\label{freeboudarygraphlemC1}
Assume that there exists an open set $U$ of $\real^2$ such that any point of $U\cap \partial S_i$ is regular. Then $U\cap \partial S_i$ is a $C^1$-curve of the plane.
\end{lem}
\begin{proof}
Let $y_0\in\partial S_i\cap U$.  By Lemma \ref{freeboudarygraphlem}, 
there exists a differentiable function $\psi$ and a small $r>0$,  such that, 
 in the  system of coordinates $(x_1,x_2)$ centered at $y_0$ and with the $x_2$ axis in the direction of the inner normal of $\partial S_i$ at $y_0$, 
$\partial S_i\cap B_r(y_0)$ is the graph of  $\psi$. Moreover, in this system of coordinates,  $\psi(y_0)=\psi'(y_0)=0$.
By Corollary \ref{tangebtballcor}, there exists a tangent ball from below, with uniform radius,  at any point of the graph of $\psi$. 
This implies that for any $|x_1^0|<r$, there exists a $C^2$ function $\varphi_{x_1^0}$ tangent from below to  the graph of $\psi$ at $x_1^0$ and such that 
$|\varphi_{x_1^0}''|\leq C$, for some $C>0$ independent of $x_1^0$.
Therefore we have, for any $|x_1|<r$, 
\begin{equation*}\begin{split}\psi(x_1)&\geq \varphi_{x_1^0}(x_1)\geq\varphi_{x_1^0}(x_1^0)+\varphi_{x_1^0}'(x_1^0)(x_1-x_1^0)-C|x_1-x_1^0|^2
\\&=\psi(x_1^0)+\psi'(x_1^0)(x_1-x_1^0)-C|x_1-x_1^0|^2.\end{split}\end{equation*}
Now, let us show that $\psi$ is of class $C^1$. Fix a point $x_1^0$ and consider a sequence $(x_1^l)$ converging to $x_1^0$ as $l\to+\infty$. 
Let $p$ be the limit of a convergent subsequence of $(\psi'(x_1^l))$. Passing to the limit in $l$ the inequality,
$$\psi(x_1)\geq \psi(x_1^l)+\psi'(x_1^l)(x_1-x_1^l)-C|x_1-x_1^l|^2,$$ we get
$$\psi(x_1)\geq \psi(x_1^0)+p(x_1-x_1^0)-C|x_1-x_1^0|^2,$$ for any $|x_1|<r$. 
Since $\psi$ is differentiable at $x_1^0$, we must have $p=\psi'(x_1^0)$. 
\end{proof}

\begin{lem}\label{finiteconncompslem}
Assume  that the supports of the boundary data, $f_i$'s,  on $(\partial \Om)_1$ have a finite number of connected components. Then the sets $S_i$'s  have a finite number of connected components.
\end{lem}
\begin{proof}
Consider all the connected components of $S_i$, $S_i^j$, $i=1,\ldots,K$ and $j=1,2,3,\ldots$. Remark that for any $i$ and $j$
 $$\partial S_i^j\cap \{x\in (\partial\Om)_1\,:\,f_i(x)>0\}\ne \emptyset.$$
 Indeed, if not we would have $u_i= 0$ on  $\partial S_i^j$ and $\Delta u_i\ge 0$ in $S_i^j$. The maximum principle then would imply $u_i\equiv 0$ in $S_i^j$, which is not possible. Moreover, by continuity, 
 $\partial S_i^j$ must contain one connected component of the set  $\{x\in(\partial\Om)_1\,:\,f_i(x)>0\}$. 
 For this reasons we say that the components of $S_i$  reach the boundary of $\Om$. This implies that the connected components of $S_i$  are finite.  
 \end{proof}

\subsection{Properties of singular points}
We start by proving  three lemmas that will allow to estimate the growth of the solutions near the singular points. The first lemma claims that  positive functions which are superharmonic (subharmonic)  in  a cone and vanish on its boundary, have at least (at most) linear   growth  away from the  boundary of the cone  far from the vertex, with a slope that degenerate 
in a H\"older fashion approaching the vertex.  The  power just  depends on the opening of the cone. The second and third lemmas generalize these estimates to domains which are sets of points at $\rho$-distance greater than 1 from a closed bounded set. 
 Then we prove that  the set of singularities is a set of isolated points and we give a characterization. For the set  $ S_i$  which has finite perimeter, we denote by
 $\p^*S_i$  the reduced boundary, that  is the set of points whose blow-ups converge to half-planes and the essential boundary,  $\p_* S_i$, are all points except points of Lebesgue density zero and one. Moreover, $\mathcal{H}^{1} (\p_* S_i \setminus \p^*S_i)=0$. For more details see \cite{ambrosio_functions_2000, evans_measure_2015}. 

\begin{lem}\label{precious}
Let $v$ be a nonnegative Lipschitz function defined on $B_1\subset \real^n$, such that $\Delta v$ is locally a Radon measure on $B_1$ and such that  $v$ smooth on $S = \{v>0\}$. Assume that $S$ is a set of finite perimeter. Then, for every smooth $\phi $ with compact support contained in $B_1$
$$
\int_{B_1}  \Delta v \, \phi =\int_{S}  \Delta v \, \phi dx - \int_{\p^* S}  \frac{\p v}{\p \nu_{S}}\, \phi \,d\mathcal{H}^{n-1}
$$
where $\nu_{S}$ is the measure-theoretic outward unit normal and $\p^*S$ is the reduced boundary. \end{lem}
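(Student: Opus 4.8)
The plan is to interpret the left-hand side in the distributional sense, $\int_{B_1}\Delta v\,\phi=-\int_{B_1}\nabla v\cdot\nabla\phi$, and first to cut it down to an integral over $S$. Since $v$ is Lipschitz, nonnegative, and vanishes on $B_1\setminus S=\{v=0\}$, its gradient vanishes a.e.\ on $\{v=0\}$; hence $-\int_{B_1}\nabla v\cdot\nabla\phi=-\int_{S}\nabla v\cdot\nabla\phi$, and the lemma reduces to the Gauss--Green identity
\[
-\int_{S}\nabla v\cdot\nabla\phi\,dx=\int_{S}\phi\,\Delta v\,dx-\int_{\p^*S}\phi\,\frac{\p v}{\p\nu_{S}}\,d\mathcal{H}^{n-1}
\]
for the function $v$, which is smooth on the open finite-perimeter set $S$.

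To prove this I would approximate $S$ from inside by the superlevel sets $S_t:=\{v>t\}$, $t>0$. Because $v$ is smooth on $S$, Sard's theorem gives that a.e.\ $t>0$ is a regular value of $v|_S$; for such $t$ the level set $\{v=t\}$ is a $C^1$ hypersurface, $\p S_t\cap B_1=\{v=t\}\cap B_1$, the outward unit normal of $S_t$ along it equals $-\nabla v/|\nabla v|$, and (for a.e.\ $t$, by the coarea formula) $\mathcal{H}^{n-1}(\{v=t\}\cap\supp\phi)<\infty$. Applying the classical divergence theorem to the field $\phi\nabla v$ on $S_t$ then gives, for a.e.\ $t>0$,
\[
-\int_{S_t}\nabla v\cdot\nabla\phi\,dx-\int_{S_t}\phi\,\Delta v\,dx=\int_{\{v=t\}}\phi\,|\nabla v|\,d\mathcal{H}^{n-1}=:F(t).
\]
The left-hand side is an absolutely continuous function of $t$ (since $|S\setminus S_t|\to0$, $|\nabla v|$ is bounded, $\supp\phi\Subset B_1$, and $\Delta v$ is locally integrable on $S$ --- in all our applications $v$ is harmonic there, so this term is simply $0$); hence $F$ coincides a.e.\ with an absolutely continuous function whose limit as $t\to0^+$ is $I_0:=-\int_{S}\nabla v\cdot\nabla\phi\,dx-\int_{S}\phi\,\Delta v\,dx$. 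Thus $-\int_{S}\nabla v\cdot\nabla\phi=\int_{S}\phi\,\Delta v+I_0$, and everything reduces to identifying $I_0$ with $-\int_{\p^*S}\phi\,\frac{\p v}{\p\nu_{S}}\,d\mathcal{H}^{n-1}$.

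This identification is the crux. By the coarea formula $\frac1\delta\int_0^\delta F(t)\,dt=\frac1\delta\int_{\{0<v<\delta\}}\phi\,|\nabla v|^2\,dx$, so $I_0=\lim_{\delta\to0^+}\frac1\delta\int_{\{0<v<\delta\}}\phi\,|\nabla v|^2\,dx$. On the other hand, since $S$ has finite perimeter, $\p^*S$ carries the whole perimeter measure, is $\mathcal{H}^{n-1}$-rectifiable, and at $\mathcal{H}^{n-1}$-a.e.\ $x_0\in\p^*S$ the rescalings of $S$ converge to the half-space with outward normal $\nu_S(x_0)$; together with the smoothness of $v$ on $S$ this gives that $v$ has, from the interior, an approximate normal derivative $\beta(x_0):=-\frac{\p v}{\p\nu_{S}}(x_0)\ge0$ there (the weak normal derivative in the paper's convention), that $|\nabla v|\to\beta(x_0)$ along the shrinking collar, and that near $x_0$ the slab $\{0<v<\delta\}$ has normal thickness $\sim\delta/\beta(x_0)$. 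A Besicovitch covering argument on the rectifiable set $\p^*S$ then turns $\frac1\delta\int_{\{0<v<\delta\}}\phi\,|\nabla v|^2\,dx$ into $\int_{\p^*S}\phi\,\beta\,d\mathcal{H}^{n-1}+o(1)$, which yields $I_0=\int_{\p^*S}\phi\,\beta\,d\mathcal{H}^{n-1}$ as desired; equivalently, one may invoke directly the Gauss--Green theorem for the bounded vector field $\phi\nabla v$ on the set of finite perimeter $S$, whose interior normal trace on $\p^*S$ is precisely $-\beta\,\phi$. The hard part is exactly this last step --- controlling $\nabla v$ on the vanishing collar $\{0<v<\delta\}$ near the non-reduced part $\p S\setminus\p^*S$, where a priori $v$ need not even be $C^1$ up to $\p S$; this is where the hypotheses that $v$ be smooth on $S$, that $S$ have finite perimeter, and that the weak normal derivative exist are genuinely used.
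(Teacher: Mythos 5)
Your proof takes a genuinely different route from the paper's, and I think it is worth separating your two sub-arguments, since only the second is actually complete.

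The paper integrates by parts twice to write $\int_{B_1}\Delta v\,\phi=\int_S v\,\Delta\phi$ (using $v\equiv 0$ off $S$), then applies the pointwise identity $v\,\Delta\phi=\mathrm{div}(v\nabla\phi)-\mathrm{div}(\phi\nabla v)+\phi\,\Delta v$ and invokes the generalized Gauss--Green theorem on the finite-perimeter set $S$; the $v\nabla\phi$ boundary term dies because $v=0$ on $\partial S$, and the $\phi\nabla v$ boundary term produces $\int_{\partial^*S}\phi\,\partial v/\partial\nu_S$. That is the whole proof. You instead stop after one integration by parts, observe (correctly, by Rademacher plus minimality) that $\nabla v=0$ a.e.\ on $\{v=0\}$ so the bulk integral localizes to $S$, and then try to establish the Gauss--Green identity for $\phi\nabla v$ on $S$ from scratch by exhausting $S$ with the superlevel sets $S_t=\{v>t\}$, Sard, and the coarea formula. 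This exhaustion and the resulting representation $I_0=\lim_{\delta\to0^+}\delta^{-1}\int_{\{0<v<\delta\}}\phi|\nabla v|^2$ are fine. The gap is in the step you yourself flag as the hard part: passing from this collar average to $\int_{\partial^*S}\phi\,\beta\,d\mathcal H^{n-1}$. The hypotheses (Lipschitz on $B_1$, smooth only on the open set $S$) do not guarantee that $\nabla v$ has a limit from inside along the normal at $\mathcal H^{n-1}$-a.e.\ point of $\partial^*S$; already in one variable a Lipschitz function vanishing for $x\le0$ and smooth for $x>0$ can fail to have a one-sided derivative at $0$ (e.g.\ with bounded oscillating derivative). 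So the ``normal thickness $\sim\delta/\beta(x_0)$'' picture and the Besicovitch covering step are not justified by what you have assumed; they need the measure-theoretic normal-trace machinery of the generalized Gauss--Green theorem, not a pointwise limit. Your closing sentence, which invokes the Gauss--Green theorem for the bounded vector field $\phi\nabla v$ on the finite-perimeter set $S$ directly, is the correct way to close the argument, and is in fact a slight streamlining of the paper's proof (one application of Gauss--Green to a single field, rather than the two implicit in the paper's decomposition). If you present that as the proof and relegate the superlevel-set computation to a heuristic motivation, you are fine; as written, the primary argument is incomplete at the last step.
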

\begin{proof}
As a distribution and integrating by parts 
$$
 \int_{B_1}  \Delta v \, \phi=\int_{S}  v \Delta\phi dx = \int_{S} \mathrm{div} (v \nabla \phi)-\mathrm{div} (\nabla v \phi) +\Delta v \phi dx.$$
 Applying the generalized Gauss-Green theorem  (see \cite{chen_gauss_2009}, and also \cite{ambrosio_functions_2000, evans_measure_2015} for more details) we obtain the result.
\end{proof}
\begin{lem}\label{growthcond} Let $\theta_0\in(0,\pi]$. 
Let $\C$ be the cone defined in polar coordinates by $$\C=\{(\varrho,\theta)\,|\,\varrho\in[0,+\infty),\, 0\le\theta \le\theta_0\}.$$ 
Let $u_1$ and $u_2$  be respectively a superharmonic  and subharmonic 
positive function in the interior of  $\C \cap B_{2r_0}$, such that $ u_1\ge u_2=0 $ on $\partial \C \cap B_{2r_0}$. Then for any $r<r_0/3$ there exist  $R=R(\theta_0,r),$ and constants $c, \,C>0$   depending on respectively  $(\theta_0,u_1, r_0)$ and $(\theta_0,u_2, r_0)$, but independent of $r$, such that $\text{for any }x\in [r,3r]\times \left[0,R\right]$ we have
\begin{enumerate}
\item $
u_1(x)\geq  cr^{\alpha}d(x,\partial \C)
$
\item $u_2(x)\leq Cr^{\alpha}d(x,\partial \C)$
\end{enumerate} where $\alpha$ is given by
\begin{equation*}1+\alpha=\frac{\pi}{\theta_0}.\end{equation*}
\end{lem}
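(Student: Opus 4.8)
The plan is to reduce the statement to the classical fact that positive harmonic functions in a planar cone of opening $\theta_0$, vanishing on the two sides, grow exactly like $\varrho^{\pi/\theta_0}$ times the angular eigenfunction $\sin(\pi\theta/\theta_0)$ — equivalently, up to boundary Harnack, like $\varrho^{\alpha}\,d(x,\partial\C)$ with $1+\alpha=\pi/\theta_0$. I would phrase everything in two dimensions (the statement only concerns $n=2$, since $\theta_0\in(0,\pi]$ and we speak of an ``angle''), where separation of variables gives the homogeneous harmonic barrier $w(\varrho,\theta)=\varrho^{\alpha}\sin(\tfrac{\pi}{\theta_0}\theta)$ explicitly. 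The whole argument is then a comparison-principle sandwich between suitable rescalings of $w$.

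For part (1): fix the superharmonic $u_1>0$ in $\C\cap B_{2r_0}$ with $u_1\ge 0=u_2$ on $\partial\C\cap B_{2r_0}$. By the strong maximum principle $u_1$ is bounded below by a positive constant $m$ on, say, $\{\varrho=r_0\}\cap\{d(x,\partial\C)\ge \delta_0 r_0\}$ for suitable fixed $\delta_0$; using a fixed interior barrier one upgrades this to $u_1\ge c_0 m$ on the arc $\{\varrho=r_0/2,\ \theta\in[\theta_0/4,3\theta_0/4]\}$, and then to the linear-in-$d$ lower bound $u_1\ge c_1 d(x,\partial\C)$ on $\{\varrho=r_0/2\}$ by comparison with a harmonic function on a fixed small disk tangent to each side — exactly the Hopf/barrier argument already used for Claim 1 in the proof of Theorem \ref{lem:6.1}. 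Now compare $u_1$ with $\lambda w(\varrho,\theta)=\lambda \varrho^\alpha\sin(\tfrac{\pi}{\theta_0}\theta)$ on the truncated cone $\C\cap(B_{r_0/2}\setminus\{0\})$: $w$ is harmonic, vanishes on $\partial\C$, and on $\{\varrho=r_0/2\}$ we have $w\le C(r_0)d(x,\partial\C)$, so choosing $\lambda=\lambda(\theta_0,u_1,r_0)$ small makes $\lambda w\le u_1$ on $\{\varrho=r_0/2\}$ and on $\partial\C$; since $u_1$ is superharmonic and $w$ harmonic, $u_1\ge\lambda w$ throughout, and for $x$ with $\varrho=|x|\in[r,3r]$ and $d(x,\partial\C)$ small one reads off $u_1(x)\ge \lambda w(x)\gtrsim \varrho^{\alpha-1}d(x,\partial\C)\gtrsim r^{\alpha}d(x,\partial\C)$ (using $\sin(\tfrac{\pi}{\theta_0}\theta)\sim \tfrac{\pi}{\theta_0}\theta\sim d(x,\partial\C)/\varrho$ for $\theta$ near $0$, and symmetrically near $\theta_0$). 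This fixes $c=c(\theta_0,u_1,r_0)$ and the threshold $R=R(\theta_0,r)$ is the height below which the approximation $\sin\sim\text{linear}$ is valid, e.g. $R\sim r\theta_0$.

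For part (2): the subharmonic $u_2>0$ in the interior, $u_2=0$ on $\partial\C\cap B_{2r_0}$. Here I'd instead invoke Lemma \ref{precious} / the fact that $u_2$ vanishing on the Lipschitz sides of the cone has a well-defined (weak) normal derivative and, being subharmonic, satisfies an interior-$L^\infty$/mean-value bound: $\sup_{\C\cap B_{r_0}}u_2=:M<\infty$. Then compare from above with $\Lambda w$: on $\{\varrho=r_0\}$ we have $u_2\le M\le \Lambda w$ if $\Lambda=\Lambda(\theta_0,r_0)M$ is large enough (since $w\ge c(\theta_0)r_0^\alpha d(x,\partial\C)/r_0$ there, bounded below by a positive constant times $M^{-1}\cdot$nothing — more precisely pick $\Lambda$ so that $\Lambda w\ge M$ on the whole arc $\{\varrho=r_0\}$, possible because $w$ is bounded below by a positive constant on that arc away from the corners, and near the corners one uses that $u_2\le L\,d(x,\partial\C)$ with $L$ the Lipschitz constant, matching the linear vanishing of $w$). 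On $\partial\C$ both vanish. Since $u_2$ is subharmonic and $w$ harmonic, $u_2\le\Lambda w$ on $\C\cap B_{r_0}$, and again evaluating at $\varrho\in[r,3r]$, $d(x,\partial\C)\le R$ gives $u_2(x)\le \Lambda w(x)\lesssim r^{\alpha}d(x,\partial\C)$, i.e. $C=C(\theta_0,u_2,r_0)$.

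The main obstacle is bookkeeping the dependence of the constants: we must guarantee $c,C$ are independent of the small radius $r$ (they come only from the comparison on the fixed sphere $\{\varrho=r_0\}$ and from $\theta_0$, while all the $r$-dependence is carried by the homogeneity $\varrho^\alpha$ of $w$), and we must pin down $R=R(\theta_0,r)$ as the scale on which $d(x,\partial\C)$ and $\varrho\sin(\tfrac{\pi}{\theta_0}\theta)$ are comparable up to a universal factor — this is where the cone being two-dimensional with straight sides is used, so that $d(x,\partial\C)$ is exactly $\varrho\sin(\operatorname{dist}_{S^1}(\theta,\{0,\theta_0\}))$ near each side. A secondary technical point is the lower Hopf-type estimate $u_1\ge c_1 d(x,\partial\C)$ on $\{\varrho=r_0/2\}$ when the cone is only Lipschitz at the vertex, but away from the vertex the sides are smooth (flat), so the standard interior-ball barrier applies verbatim, exactly as in the proof of Claim 1 above; and for $u_2$ the Lipschitz bound $u_2\le L\,d(x,\partial\C)$ near the flat sides likewise follows from a flat-boundary barrier. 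With these in hand the two comparisons are routine.
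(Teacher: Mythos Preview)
Your strategy is exactly the paper's: compare $u_1$ from below and $u_2$ from above with the explicit homogeneous harmonic function in the cone, establishing the boundary inequality on the arc $\{\varrho=r_0\}$ via a Hopf-type barrier at the flat sides, and then read off the $r^\alpha d(x,\partial\C)$ behavior from homogeneity. The paper does precisely this, with the barrier written as $v(\varrho,\theta)=\varrho^{1+\alpha}\sin((1+\alpha)\theta)$ and $R=r\min\{1,\tan(\theta_0/2)\}$.

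There is, however, a real slip in your barrier. You set $w(\varrho,\theta)=\varrho^{\alpha}\sin(\tfrac{\pi}{\theta_0}\theta)$, but since $\tfrac{\pi}{\theta_0}=1+\alpha$, one computes in polar coordinates $\Delta w=(\alpha^2-(1+\alpha)^2)\varrho^{\alpha-2}\sin((1+\alpha)\theta)<0$: your $w$ is strictly \emph{super}harmonic, not harmonic. This breaks the comparison for part~(1): with $u_1$ superharmonic and $\lambda w$ superharmonic, $u_1-\lambda w$ has no sign for its Laplacian and the maximum principle does not give $u_1\ge\lambda w$ from boundary data. (For part~(2) the comparison $u_2\le\Lambda w$ would survive, since sub $\le$ super on the boundary does propagate, but the conclusion you'd extract is only $u_2\lesssim r^{\alpha-1}d$, not the claimed $r^\alpha d$.) Your own chain $w\gtrsim\varrho^{\alpha-1}d\gtrsim r^\alpha d$ signals the inconsistency: the middle exponent is off by one.

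The fix is simply to use the correct homogeneity $v=\varrho^{1+\alpha}\sin((1+\alpha)\theta)$, which is genuinely harmonic (the imaginary part of $z^{1+\alpha}$). Then $v\sim\varrho^{1+\alpha}\cdot\theta\sim\varrho^{\alpha}d(x,\partial\C)$ near the sides, and for $\varrho\in[r,3r]$ this gives exactly $r^\alpha d$ with $r$-independent constants. With this correction your argument and the paper's coincide.
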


\begin{proof}
Let us introduce the function 
\begin{equation}\label{harm cone}v(\varrho,\theta):=\varrho^{1+\alpha}\sin((1+\alpha)\theta).\end{equation}
Notice that  $v$ is harmonic in the interior of $\C$, since it   is the imaginary part of the function $z^{1+\alpha}$, where $z=x+iy$, which is holomorphic in the set 
$ \mathbb C\setminus (- \infty, 0]$. Moreover $v$ is positive inside $\C$ and vanishes  on its boundary.  
By a barrier argument,  $u_1$ has at least linear growth away   from the boundary of $\C$, meaning for $\rho\in[r_0/2,3r_0/2]$ (far from the vertex and from $\partial B_{2r_0}$) $$u_1(x)\geq k d(x,\partial \C),$$  
for $k=c_0\min_{x\in\C\atop d(x,\partial \C)\ge s_0} u_1,$ and for $x\in \{x\in \C: r_0/2<|x|<3r_0/2, \: d(x,\partial\C)\leq s_0\}$
where $c_0$ and $s_0$ depend on $r_0$ and $\theta_0$. 
Therefore, we can find a constant $c>0$ depending on  $u_1$,  $r_0$ and $\theta_0$, such that 
$$ u_1\ge cv \quad\text{on }\C\cap \partial B_{r_0}.$$
Since in addition $u_1\geq cv=0$  on $\partial C\cap B_{r_0}$,  the comparison principle implies
\begin{equation}
\label{uconenehavior}
 u_1\ge cv\quad\text{in }\C\cap B_{r_0}.\end{equation} 
Since $v$ is increasing  in the radial direction and if we are near $\partial \C$ it is  also increasing in the $\theta$ direction,
%
for $r\le |x| \le 3r$, with $r$ such that $r\le \frac{r_0}{3}$ and $d(x, \C)\le R$
with $ R=r\min\left\{1,\tan\left(\frac{\theta_0}{2}\right)\right\}$,
\begin{equation*} u_1(x)\geq cv(x)\geq Cr^{\alpha}d(x,\partial \C) 
\end{equation*} 
and (a) follows.

To prove (b) similarly, we have 
 \begin{equation}\label{uconenehavioru2}u_2\le C v\quad\text{in }\C\cap B_{r_0},\end{equation} 
where  $C$ depends on $(\theta_0,u_2,r_0)$ but it is independent of $r$.
In particular, 
for $r\le |x| \le 3r$ and $d(x, \C)\le \frac{R}{2}$
$$u_2(x) \leq Cv(x)\leq \tilde{C} r^\alpha d(x,\partial \C).$$


\end{proof}

\begin{lem}
\label{general cone}
Let $\Om$ be an open set, $C$ be a closed subset of $\Om$ and $S=\{x\in\Om\,|\,d_\rho(x,C)\geq 1\}$. 
Let $S_1$ be a connected component of $S$.  Assume that $\partial S_1=\Gamma_1\cup\Gamma_2$, with $\Gamma_1\cap\Gamma_2=\{0\}$  and $S_1$ has an angle 
$\theta_0 \in (0,\pi]$ at $0\in\partial S_1$.   Let $u_1$ be  a superharmonic  
positive function in  $S_1 \cap B_{2r_0}(0)$,  with $u_1=0 $ on $\p S_1  \cap B_{2r_0}(0).$ 
 Then, there exists a sequence $(x_h)\subset \Gamma_1$ of regular points convergent to zero,    $x_h\to0\:\text{as }h\to0,$ and 
 there exist balls $B_{R_h}(z_h)\subset S_1$ tangent to $\partial S_1$ at $x_h$, where $R_h\geq c|x_h|$, such that 
 $$u_1(x) \ge c R_h^{\alpha_\delta}d(x,\partial B_{R_h}(z_h)) \qquad \mbox{for any}\quad  x\in B_{R_h}(z_h) \setminus B_{\frac{R_h}{4}}(z_h),$$
 where $\alpha_\delta$ is  given by
$$ 1+\alpha_\delta=\frac{\pi}{\theta_0-\delta}.$$
\end{lem}
\begin{proof}
Since $\theta_0 \in(0,\pi]$  for any  $0<\delta<\theta_0$, there exist
$r_\delta>0$ and a cone $\C^1_\delta$ centered at 0  with opening $\theta_0-\delta$ such that
\begin{equation*}\C^1_\delta\cap B_{ r_\delta}(0)\subset  S_1\cap B_{ r_\delta}(0).\end{equation*}
Take a sequence of points $t_h\in \partial\C^1_\delta\cap B_{ r_\delta}(0)$ converging to 0 as $h\to0$. Let
$$r_h:=d(t_h,0)\qquad  \mbox{and} \qquad
R_h:=r_h\min\left\{1,\tan\left(\frac{\theta_0-\delta}{2}\right)\right\}.$$
Then, for $h$ small enough, 
there exist balls $B_{R_h}(s_h)\subset \C^1_\delta\cap B_{ r_\delta}(0) $ such that $t_h\in\partial B_{R_h}(s_h)$. Consider a system of polar coordinates $(\varrho,\theta)$ centered at 0.
Moving the balls $B_{R_h}(s_h)$ along the $\theta$ direction until it touches $\Gamma_1$, we can find a sequence of regular points $x_h$ in that region, such that 
$d(x_h,0)\leq cr_h$ and balls 
$B_{R_h}(z_h)\subset  S_1\cap B_{ r_\delta}(0)$ such that $x_h\in\partial B_{R_h}(z_h)$.  Observe that the center of the ball, $z_h$, remains inside the cone $\C_\delta^1$, that is, for $h$ and $\delta $ small enough, we have that  $z_h\in\C_\delta^1$ and $d(z_h, \partial\C_\delta^1)\geq \frac{R_h}{2}$.  
Let us introduce the barrier function
$$\phi(x):=\frac{m}{\log 4}\log\left(\frac{R_h}{|x-z_h|}\right),\quad  \mbox{where}\quad
m=\inf_{ \partial B_\frac{R_h}{4}(z_h)}u_1. $$
Then $\phi$ satisfies
\[
\begin{cases}
  \Delta \phi=0&\text{in } B_{R_h}(z_h) \setminus B_{\frac{R_h}{4}}(z_h)  \\
  \noalign{\vskip6pt}
  \phi=0&   \text{on }  \partial B_{R_h}(z_h)\\
  \noalign{\vskip6pt}
  \phi=m&    \text{on }  \partial B_\frac{R_h}{4}(z_h).
\end{cases}
\]
Since $ u_1\geq \phi$ on $ \partial B_{R_h}(z_h)\cup \partial B_\frac{R_h}{4}(z_h)$ the comparison principle then implies 
\begin{equation*}u_1\geq \phi\qquad \text{in } B_{R_h}(z_h) \setminus B_{\frac{R_h}{4}}(z_h).\end{equation*}
If $\nu_1$ is the inner normal vector of $B_{R_h}(z_h)$,  then for  $x \in \partial B_{R_h}(z_h),$  
$$\frac{\partial \phi}{\partial\nu_1}(x)= \frac{m}{R_h\log 4 },$$
and the convexity of $\phi$ in the radial direction gives, for any $x\in B_{R_h}(z_h) \setminus B_{\frac{R_h}{4}}(z_h)$ 
 \begin{equation*}u_1(x)\geq \frac{m}{R_h\log 4} d(x,\partial B_{R_h}(z_h)).\end{equation*}
 Let us estimate $m$. Since $d(z_h,\partial C_\delta^1)\geq\frac{R_h}{2}$, we have that $d(x,\partial C_\delta^1)\geq\frac{R_h}{4}$ for any $x\in B_{\frac{R_h}{4}}(z_h)$. As in Lemma~\ref{growthcond}, consider the harmonic function $v(x) $, introduced in \eqref{harm cone},  defined on the cone $\C^1_\delta$ ($\alpha=\alpha_\delta$) and the comparison principle result stated in \eqref{uconenehavior}. Then
  \begin{equation*}
 m\geq c \min_{ \partial B_{\frac{R_h}{4}}(z_h)} v\geq \min\left\{v\left(r_h-\frac{R_h}{4},\frac{\theta_0-\delta}{8}\right), v\left(\frac{3r_h}{4},\frac{\pi}{16} \right)\right\}= c_1 \left(\frac{3r_h}{4}\right)^{\alpha_\delta +1} \end{equation*} where $c_1=c_1(u_1, r_\delta,\theta_0-\delta)$. 
Then, since $\frac{r_h}{R_h}\geq 1$ we conclude that for any   $x\in B_{R_h}(z_h) \setminus B_{\frac{R_h}{4}}(z_h)$,
 $$u_1(x) \ge c R_h^{\alpha_\delta}d(x,\partial B_{R_h}(z_h)).$$
 This concludes the proof of the lemma.
\end{proof}

\begin{lem}
\label{general conebis}
Let $\Om$ be an open set, $C$ be a closed subset of $\Om$ and $S=\{x\in\Om\,|\,d_\rho(x,C)\geq 1\}$.  Let $S_1$ be a connected component of $S$. 
Assume that $S_1$ has an angle 
$\theta_0 \in [0,\pi]$ at $0\in\partial S_1$.   Let $u_2$ be  a subharmonic  
positive function in  $S_1 \cap B_{2r_0}(0)$,  with $u_2=0 $ on $\p S_1  \cap B_{2r_0}(0).$ 

  Then, for any  $0<\delta<\theta_0$, there exists $r_\delta>0$ such that for any $r<r_\delta/5$ there exist  $R=R(\theta_0,r),$ and  a constant $C>0$   depending on  $(\theta_0+\delta,u_2, r_\delta)$, but independent of $r$, such that 
\begin{equation}
u_2(x)\leq Cr^{\beta_\delta}d(x,\partial S_1)\quad  \mbox{for any} \quad x\in \left(B_{3r}(0)\setminus B_r(0)\right) \cap  \left\{ x\in S_1: d(x , \p S_1)\leq \frac{R}{4}  \right\}\\
\end{equation}
where  $\beta_\delta$ is given by
\begin{equation*} \begin{split}
1+\beta_\delta=\frac{\pi}{\theta_0+\delta}.
\end{split}
\end{equation*}
\end{lem}
\begin{proof}
 For any   $\delta>0$, there exist
$r_\delta>0$,   a cone  $\C^2_\delta$ centered at  $0$   and with opening $\theta_0+\delta$ such that
\begin{equation*} S_1\cap B_{ r_\delta}(0)\subset \C^2_\delta\cap  B_{ r_\delta}(0).\end{equation*} 
Take any $r<r_\delta$ and let $y \in \p S\cap (B_{3r}(0)\setminus B_r(0))$ and $r_y:=d(y,0)\in(r,3r)$.
\noindent  Since $S$ is at $\rho$-distance 1 from $C$, for any point of the boundary of $S_1$ there exists an exterior tangent $\rho$-ball of radius 1.
This implies that  for $r$ small enough,  there exists  $w_y$ such that  the Euclidian ball $B_{R_y}(w_y)$ is contained in the complement of $S$ and 
$y\in \partial B_{R_y}(w_y)$, where $R_y$ is defined by
$$
R_y=r_y\min\left\{1,\tan\left(\frac{\theta_0+\delta}{2}\right)\right\}.
$$ 
 Let us take now  as barrier the function 
 $$\psi(x):=\frac{M}{\log \frac{3}{2}}\log\left(\frac{|w_y-x|}{R_y}\right) \quad \mbox{with}\quad M=\sup _{ \partial B_{\frac{3}{2}R_y}(w_y)}u_2. $$ 
Then $\psi$ satisfies
 \[
\begin{cases}
  \Delta \psi=0&\text{in } B_{\frac{3R_y}{2}}(w_y)\setminus B_{R_y}(w_y)  \\
  \noalign{\vskip6pt}
  \psi=M &   \text{on }  \partial B_{\frac{3R_y}{2}}(w_y)\\
  \noalign{\vskip6pt}
  \psi=0&    \text{on }  \partial B_{R_y} (w_y).
\end{cases}
\]
Using the comparison principle with $u_2$, the concavity of $\psi$ in the radial direction gives that for any $x \in B_{\frac{3R_y}{2}}(w_y)\setminus B_{R_y} (w_y)$
  $$
  u_2 \leq \frac{M}{R_y\log (\frac{3}{2})}d(x, \p B_{R_y}(w_y)).
  $$
Let us estimate $M$.
Consider again a system of polar coordinates $(\varrho,\theta)$ centered at 0 and the harmonic function $v(x) $, introduced in \eqref{harm cone},   defined on the
 cone $\C^2_\delta$ ($\alpha=\beta_\delta$).       By definition of $v$, $R_y$, and taking into account  \eqref{uconenehavioru2}, for $\delta$, $r$ small enough, 
  \begin{equation*}\label{max v}M\leq C\max_{ \partial B_{\frac{3R_y}{2}}(w_y)} v\leq C v\left(4r_y, \frac{\theta_0+\delta}{2} \right)=C_1(4r_y)^{\beta_\delta+1} =\tilde{C_1}r_y^{\beta_\delta} \frac{R_y}{\min\{1,\tan(\frac{\theta_0 +\delta}{2})\}} \end{equation*}
we see that for any $x\in B_{\frac{3R_y}{2}}(w_y)\setminus B_{R_y}(w_y)$ and belonging to the segment $y+s(y-w_y)$, $s\in\left(0,\frac{1}{2}\right)$, we have 
  \begin{equation}
  \label{bound uj}u_2(x)\leq CM d(x,\partial B_{R_y}(w_y))= CM d(x,\partial S_1)\leq Cr_y^{\beta_\delta}d(x,\partial S_1).\end{equation}
   
 Letting the tangent ball moving along  $\partial S_1\cap (B_{3r_y}(0) \setminus B_{r_y}(0))$, we get (b).

\end{proof}

\begin{lem}\label{singisolated}
 Assume    \eqref{mainassumpts} with  $n=2$ and $p=1$ in \eqref{H1}. Assume  in addition that the supports on $\partial \Om$ of the boundary data $f_i$'s  have a finite number of connected components. Let  $(u_1^\ep,\ldots, u_K^\ep)$ be a viscosity solution of the problem \eqref{eq:stmt3} and $(u_1,\ldots, u_K)$ the limit as $\ep\to0$  of a convergent subsequence. The set of singular points of $\Om$ is a set of isolated points.
\end{lem}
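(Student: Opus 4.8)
The plan is to argue by contradiction: suppose there is a sequence of distinct singular points $x_h \in \partial S_i$ accumulating at a point $x_0$, which after a rotation and translation we may take to be $0$. Since $\partial S_i$ is semiconvex (an exterior tangent $\rho$-ball of radius $1$ at every boundary point by Corollary~\ref{tangebtballcor}) and $n=2$, the blow-up of $S_i$ at $0$ is a convex cone $\mathcal{C}$ of some opening $\theta_0 \in [0,\pi]$; call this the asymptotic cone. Each singular point $x_h$ carries at least two points $y_h^1, y_h^2 \in \partial C_i$ at which it realizes the $\rho$-distance $1$, so each $x_h$ has a genuine corner (its own asymptotic angle is strictly less than $\pi$). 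The strategy is to derive quantitatively incompatible growth rates for $u_i$ near $0$ from (i) the corner structure at $0$ and (ii) the presence of infinitely many corners $x_h$ along the free boundary approaching $0$, using Lemma~\ref{general cone} and the sharp interface characterization of Theorem~\ref{lem:6.1}.

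First I would set up the growth dichotomy. Near $0$, by Lemma~\ref{general cone}(a) applied with a slightly smaller opening $\theta_0 - \delta$, the harmonic function $u_i$ satisfies a lower bound $u_i(x) \ge c\, r^{\alpha_\delta}\, d(x,\partial S_i)$ on dyadic annuli $B_{3r}(0)\setminus B_r(0)$ near the boundary, with $1+\alpha_\delta = \pi/(\theta_0-\delta)$. On the other side, I would use the corners at the $x_h$: near each $x_h$ the set $S_i$ lies inside a cone of opening $\theta(x_h) < \pi$, so by Lemma~\ref{general cone}(b) the function $u_j$ — for the competitor $j$ whose support is tangent at the point $y_h$ associated to $x_h$ (existence of such $j$ with a free-boundary point on $\overline{\B_1(x_h)}$ is exactly Theorem~\ref{lem:6.1}) — decays like $d(\cdot,\partial S_j)^{\beta}$ with $\beta > 0$ near $y_h$. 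The point is then to transport this via the interface: the map $x \mapsto y(x)$ sending a free-boundary point of $\partial S_i$ to the tangency point on $\partial S_j$ at $\rho$-distance $1$ is, by uniform convexity of $\rho$ (equation~\eqref{eq:stmt1}), bi-Lipschitz, so corners of $\partial S_i$ correspond to corners of $\partial S_j$ with comparable openings, and the "mass" $\Delta u_i$ concentrated near a corner of $S_i$ is comparable (via the integral identity \eqref{boundlaplacian} from the proof of Theorem~\ref{lem:6.1}, or its sup-version) to the mass $\Delta u_j$ near the corresponding corner of $S_j$.

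The mechanism for the contradiction is a summability/iteration argument along the accumulating corners. Because the $x_h \to 0$, infinitely many of them sit in disjoint dyadic annuli $A_k = B_{2^{-k}}(0)\setminus B_{2^{-k-1}}(0)$; in each such annulus where a corner lives, the corner forces a definite amount of harmonic measure / Laplacian mass of the competing function, and hence — pushing back through the interface map — a definite lower bound on $u_i$ that is \emph{larger} than what a corner of opening $\theta_0$ at the origin can support. Concretely, an angle $\theta(x_h)$ bounded away from $\pi$ at scale $2^{-k}$ produces growth exponent $\pi/\theta(x_h) > 1$ locally, and summing these "extra" growth contributions over $k$ forces $u_i$ to vanish to infinite order at $0$ (or to violate its own linear lower bound from the tangent interior ball at regular points near $0$), contradicting that $u_i$ is harmonic and not identically zero in $S_i$ near $0$ — equivalently contradicting the Hopf-type linear growth $u_i(x) \gtrsim d(x,\partial S_i)$ that holds at each regular boundary point (Claim~1 in the proof of Theorem~\ref{lem:6.1}). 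Alternatively, and perhaps more cleanly, one shows each accumulating corner of $\partial S_i$ forces, through the interface map and \eqref{boundlaplacian}, a corner of some $\partial S_j$ inside a ball of radius $\sim 2^{-k}$ about the corresponding $y_h$; but the $y_h$ also accumulate (at a point $y_0$ with $\rho(y_0) = 1$ if $x_0=0$), giving infinitely many corners of $\bigcup_j \partial S_j$ accumulating at $y_0$, and iterating this "each corner spawns a corner" step contradicts the finite $(n-1)$-Hausdorff measure of $\bigcup_j \partial S_j$ from Corollary~\ref{hausmescor}, since corners of opening bounded away from $\pi$ each contribute a fixed deficit.

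\textbf{Main obstacle.} The hard part is making rigorous the transfer of the "corner at $x_h$" information to a quantitative defect that survives summation over the infinitely many scales $2^{-k}$. Two subtleties must be handled: first, the opening angles $\theta(x_h)$ might degenerate to $\pi$ as $h \to \infty$ (the corners getting flatter), so one needs a lower bound on the angular deficit $\pi - \theta(x_h)$ — this should come from the fact that a singular point has \emph{two} distinct tangent $\rho$-balls, whose centers $y_h^1 \ne y_h^2$ both satisfy $\rho(x_h - y_h^{1,2}) = 1$, so uniform convexity forces $|y_h^1 - y_h^2|$, and hence the deficit, to be bounded below by the geometry of $\mathcal{C}$ once the $x_h$ are deep inside the cone. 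Second, one must control the interaction between the self-growth bound of $u_i$ near $0$ (exponent $\pi/\theta_0$) and the induced bounds from each corner, ensuring the contributions genuinely add rather than being absorbed; this is where Lemma~\ref{general cone}, with its careful $r$-independent constants on dyadic annuli, is essential, and where I expect the bulk of the technical work — choosing $\delta$ and the scales $r_h$ compatibly, and verifying that the tangent-ball-sliding construction in Lemma~\ref{general cone}(b) can indeed be localized near each $x_h$ because "the only singular point is the vertex" fails here and must be replaced by a local count.
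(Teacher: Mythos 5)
Your proposal correctly identifies the high-level strategy (argue by contradiction from an accumulating sequence, use the growth exponents from Lemma~\ref{general cone}, transport information across the unit-width interface) and it correctly pinpoints the crux: one must rule out the possibility that the opening angles of the accumulating singular points degenerate to $\pi$. But your mechanism for handling that crux, and your final step to a contradiction, are both different from the paper and — as stated — do not close.

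On the angular-deficit lower bound: you assert that the uniform convexity of $\rho$ should force the two tangency points $y_h^1,y_h^2$ of a singular $x_h$ to be separated ``once the $x_h$ are deep inside the cone,'' but this does not follow from convexity of $\rho$ alone; two tangent $\rho$-balls of radius $1$ can be arbitrarily close. The paper instead proves this separation by a Laplacian-mass comparison done \emph{at a single singular point}, with no reference to the accumulating sequence. Concretely: if a singular $y_k\in\p S_j$ has its two tangency points $x_1^k,x_2^k\in\p C_j$ at $\rho$-distance $<1$, then both lie on the same $\p S_i$, and by Theorem~\ref{lem:6.1} the entire arc of $\p\B_1(y_k)$ between them lies in $\p S_i$. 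Picking the arc's midpoint $x_k$, which is a smooth (circular) boundary point of $S_i$, Hopf's lemma gives $\int_{D_\sigma(x_k)}\Delta u_i\gtrsim\sigma$, while the corner at $y_k$ plus Lemma~\ref{precious}, Lemma~\ref{general cone}(b), and the finite-perimeter bound from Corollary~\ref{hausmescor} give $\int_{(D_\sigma(x_k))_1}\Delta u_j\lesssim\sigma^{1+\beta_\delta}$. Combining with inequality~\eqref{boundlaplacian} (your ``transport across the interface'') yields $\sigma\lesssim\sigma^{1+\beta_\delta}$, false for small $\sigma$. This is the step you are missing: the comparison pairs a \emph{regular} point on the tangency arc of $\p S_i$ with the \emph{singular} point $y_k$ of $\p S_j$, and it is this regular-versus-singular mismatch (linear versus superlinear) that gives the uniform deficit, not convexity.

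On the closing argument: neither of your two proposed endgames works. Summing ``extra growth contributions'' across dyadic scales need not force infinite-order vanishing at the accumulation point, since the growth exponent $\pi/\theta(x_h)$ governs behavior near $x_h$, not the global rate at $0$, and between consecutive corners $u_i$ recovers linear growth; an example with accumulating bounded-angle corners and a genuine power-law zero at the limit point is not obviously excluded. Likewise, infinitely many corners with opening bounded away from $\pi$ accumulating at one point are compatible with finite $\mathcal{H}^1$ measure (the wiggles occur at ever-smaller scales and contribute geometrically decaying length), so no contradiction with Corollary~\ref{hausmescor} results. The paper's actual closing step is topological: once every singular point has angle $\leq\theta_{\max}<\pi$, accumulating singular points of $\p S_j$ would pinch $S_j$ off into an isolated island not touching $\p\Om$, which is impossible since $u_j$ is harmonic, vanishes on the island's boundary, and is not identically zero. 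You would need to replace your analytic summation with this topological reach-the-boundary argument.
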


\proof

  Suppose by contradiction that there exists a sequence of distinct singular points $(y_k)_{k \in \mathbf{N}} $ such that $y_k \in \partial S_j$ and 
  $y_k\to y\in\Om$ as $k\to+\infty$. 
  Since by Lemma \ref{finiteconncompslem} the connected components of the sets $S_i$, $i=1,\ldots,K$ are finite, we may assume without loss of generality that the points 
  $y_k$ belong to the same connected component  of $S_j$, which we denote by $S_j^1$. If there exists $\theta_{max}<\pi$ such that  $S_j^1$  has an  angle  smaller than $\theta_{max}$ at $y_k$ for any $k$, then, there exists 
  $\overline{k}$ such that starting from $y_{\overline{k}}$, after a finite number of singular points $S_j^1$ would be an isle and not reach the boundary. 
  Therefore we would have  $u_j=0$ on $\partial S_j^1$ and $\Delta u_j=0$  in $S_j^1$, and the  maximum principle would imply $u_j\equiv 0$ in  $S_j^1$, which is a contradiction.  We infer that there exists a $k\in \mathbf{N}$ such that the angle at $y_k$ is close to $\pi$. 
   In particular, if  $x_1^k$ and $x_2^k$ are  points in $C_j$ that realize the $\rho$-distance  from $S_j$ at $y_k,$  then $\rho$-distance between $x_1^k$ and $x_2^k$ is less than one.
  
  Next, suppose that $x_i^k$ and $x_2^k$ belong to the same connected component of $S_i$, for some $i\neq j$.  Then, by Theorem \ref{lem:6.1} we know that $\partial S_i\cap \overline{\B_1(y_k)}$ has to contain the arc of the unit $\rho$-ball between $x_1^k$ and $x_2^k$. If not, there would be points in the curve connecting $x_1^k$ and $x_2^k$ which do not realize the distance from $C_i$.  Any point inside this arc is a regular point at $\rho$-distance 1 from $y_k$. Consider any of them, for instance 
  the middle point of the arc, denoted by  $x_k$. We want to compare the mass of the Laplacian of $u_i$ at $x_k$ with the mass of the Laplacian at $u_j$ at $y_k$, across the free boundaries. Let us first assume $H$ defined as in \eqref{H1}. For  $\sigma<\frac{1}{8} d_\rho(x_1^k,x_2^k)$ let us define
\begin{equation*}
D_\sigma(x_k):=\{x\in \B_{\sigma}(x_k)\,|\,d(x,\partial \C_i)\leq \sigma^2\},\end{equation*}
where $\C_i$ is the asymptotic  cone to $S_i^1$ at $x_k.$ Note that, since $x_k$ is a regular point,   $\partial \C_i$ is  the tangent line to $\partial S_i^1$ at $x_k$, and so $\C_i$ has opening  $\pi$. 
Let  $(D_\sigma(x_k))_1$  be the set of points at $\rho$-distance less than 1 from $D_\sigma(x_k)$,   then we have  that
\begin{equation}\label{mass laplacian D }
\int_{D_\sigma(x_k)}\Delta u_i\leq \sum_{j\neq i} \int_{(D_\sigma(x_k))_1}\Delta u_j
\end{equation}
as in \eqref{boundlaplacian} with $(D_\sigma(x_k))_1$ in place of $\B_{1+S}(x_0)$.
By the Hopf Lemma, we obtain
  \begin{equation}
  \label{arc side bound}
 \int_{D_\sigma(x_k)}\Delta u_i= \int_{\partial S_i\cap D_\sigma(x_k) }\frac{\partial u_i}{\partial \nu_i} \:d\mathcal{H}\geq c\: \mathcal{H}(\partial S_i\cap D_\sigma(x_k) ) =\tilde{C}\sigma \end{equation}
 where $\nu_i $ is the inner normal vector.

Now we estimate $ \int_{(D_\sigma(x_k))_1}\Delta u_j $. 
From Corollary \ref{hausmescor} we know that $S_j$ is a set of finite perimeter. Therefore  by Lemma \ref{precious} and Lemma \ref{general conebis}  we obtain the following estimate
   \begin{equation}
  \label{funny side}
 \int_{(D_\sigma(x_k))_1}\Delta u_j= \int_{\partial^* S^1_j \cap (D_\sigma(x_k))_1 }\frac{\partial u_j}{\partial \nu_{S^1_j}} \:d\mathcal{H}\leq C \sigma^{\beta_\delta}\: \mathcal{H}(\partial^* S^1_j\cap (D_\sigma(x_k) )_1) 
  \end{equation}
  where $\nu_{S_j}$ is the measure-theoretic inward unit normal to $S^1_j$ and $\beta_\delta>0$.
Since, for some constant $c$
  $$
  \partial S^1_j\cap (D_\sigma(x_k) )_1 \subset \partial S^1_j \cap \B_{ c\sigma}(y_k),
   $$
  by \eqref{eq:stmt2}, there exists $ \tilde{c}_2$, that for simplicity we will still name $c$, such that $\partial S^1_j\cap (D_\sigma(x_k) )_1 \subset \partial S^1_j \cap B_{ c\sigma}(y_k)$. Then
  \begin{equation}\label{final touch} \mathcal{H}(\partial^* S^1_j\cap (D_\sigma(x_k) )_1) \leq  \text{Per}(\partial S^1_j \cap B_{ c \sigma}(y_k) ) .\end{equation}
  To estimate $ \text{Per} (\partial S^1_j \cap B_{c\sigma}(y_k))$, 
consider   \eqref{lap distance} in the distributional sense. Then, take    a smooth function $0\leq \phi\leq 1$ with compact support  contained in
 $B_{c\sigma}(y_k) \cap \{x: 0< d(x,S_i) <1 \}$ and such that $\phi\equiv 1$ on the set $B_{c\sigma}(y_k) \cap \{x: 1-\delta< d(x,S_i) <1-\epsilon \}$, for $0<\ep<\delta$ and $\delta$ as introduced in the definition of  $\eta $  in the proof of Lemma \ref{finiteperlem}. Then, for $d_{S_i}(\cdot)=d_\rho(\cdot, S_i)$ we have that 

\begin{equation*}
\begin{split}
0=\int_{B_{c\sigma}(y_k) \cap \{x: 0< d_{S_i} <1\}} \text{div}(\eta(d_{S_i})Dd_{S_i}) \phi 
&= \int_{B_{c\sigma}(y_k) \cap \{x: 0< d_{S_i} <1\}} \eta'(d_{S_i})|Dd_{S_i}|^2\phi dx\\
&+\int_{B_{c\sigma}(y_k) \cap \{x: 0< d_{S_i} <1\}} \eta(d_{S_i})\Delta d_{S_i}   \phi \\
&\leq \int_{B_{c\sigma}(y_k) \cap \{x: 0< d_{S_i} <1\}} \eta'(d_{S_i})|Dd_{S_i}|^2\phi dx+ C \sigma.\\
  \end{split}
  \end{equation*}
Proceeding as in Lemma \ref{finiteperlem} we obtain that 
  \begin{equation}\label{funnybound}
  \text{Per} (\partial S^1_j\cap B_{c\sigma}(y_k))\leq  C\sigma.
  \end{equation}
 Putting together \eqref{mass laplacian D }, \eqref{arc side bound}, \eqref{funny side},   \eqref{final touch} and \eqref{funnybound} we obtain
$$
 C \sigma^{1+\beta_\delta}\geq\tilde{C}\sigma
 $$ and  we get a contradiction for $\sigma$ small enough.  
  %
%
 In the case \eqref{H2} the proof follows the same steps using \eqref{eq: sup case estimate}.

 Therefore we must have that $x_1^k$ and $x_2^k$ belong to different components  of $C_j$ for any $k\geq\overline{k}$. In particular, since the  distance between them is less than one, they must belong to two different components of the same population. 
 Suppose that $x_1^{\overline{k}}\in S_i^1$ and $x_2^{\overline{k}}\in S_i^2$,  for $i\neq j$.  Consider the consecutive two points $x_1^{\overline{k}+1}$
 and $x_2^{\overline{k}+1}$ which realize the distance at  $y_{\overline{k}+1}$, and again belong to two different components of $C_j$.  Since $S_j^1$ (to which $y_{\overline{k}}$ belongs)  and $S_i^2$  reach the boundary of $\Om$, the point $x_2^{\overline{k}+1}$
 must belong to a connected component  different from $S_i^1$. Iterating the procedure, we construct a sequence of distinct points  belonging  to connected components, each 
  different from the others. This is in contradiction with Lemma \ref{finiteconncompslem}.
  We conclude that singular points are isolated.

\endproof


\begin{thm}\label{equalanglethm} Assume    \eqref{mainassumpts} with  $n=2$ and $p=1$ in \eqref{H1}. Let  $(u_1^\ep,\ldots, u_K^\ep)$ be a  viscosity solution of the problem \eqref{eq:stmt3} and $(u_1,\ldots, u_K)$ the limit as $\ep\to0$  of a convergent subsequence. For $i\neq j$, let $x_0\in\partial S_i\cap\Om$ and $y_0\in \partial S_j\cap\Om$ be points such that $S_i$ has an angle $\theta_i\in[0,\pi]$ at  $x_0$,  
$S_j$ has an angle $\theta_j\in[0,\pi]$ at  $y_0$ and $\rho(x_0-y_0)=1$. Then we have
\begin{equation}\label{angleeq}\theta_i=\theta_j.\end{equation}
If $x_0\in\partial S_i\cap\partial \Om$ and $y_0\in \partial S_j\cap\Om$, then
\begin{equation}\label{angleatboundineq}\theta_i\leq \theta_j.\end{equation}
\end{thm}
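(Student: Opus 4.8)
The plan is to establish the following one-sided statement, from which the theorem follows at once: \emph{whenever $x_0\in\partial S_i$ (interior or on $\partial\Om$) and $y_0\in\partial S_j\cap\Om$ satisfy $\rho(x_0-y_0)=1$, one has $\theta_i\le\theta_j$}. Indeed, when also $x_0\in\Om$ the pair $(y_0,x_0)$ is of the same type --- $\rho$ is symmetric, and by Corollary~\ref{convergencecor}(i) and Corollary~\ref{tangebtballcor} the point $y_0$ realizes the distance to $S_i$ at $x_0$ --- so applying the one-sided statement to $(y_0,x_0)$ gives $\theta_j\le\theta_i$ and hence \eqref{angleeq}; when $x_0\in\partial\Om$ this swap is unavailable and we only obtain \eqref{angleatboundineq}. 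The engine is the mass-balance inequality of \eqref{boundlaplacian}--\eqref{mass laplacian D }, run between a thin region near $x_0$ and its unit $\rho$-inflation near $y_0$, combined with the growth dichotomy of Lemmas~\ref{growthcond}--\ref{general cone} and the distributional identity of Lemma~\ref{precious}; for $H$ of the form \eqref{H2} one uses the radial comparison \eqref{eq: sup case estimate} instead, exactly as in Lemma~\ref{singisolated}.

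First I would fix a small $\delta\in(0,\theta_i)$ and, for small $\sigma>0$, set $D_\sigma(x_0):=\{x\in\B_\sigma(x_0):d(x,\partial S_i)\le\sigma^2\}$, a $\sigma^2$-thin neighborhood of $\partial S_i\cap\B_\sigma(x_0)$, taken (when $x_0$ is singular) around the branch of $\partial S_i$ cut out near $x_0$ by $\overline{\B_1(y_0)}$. Since $u_i$ is harmonic in $S_i$ --- which remains true at boundary points $x_0\in\partial\Om$, where $S_i=\{u_i>0\}\cap\Om$ --- Lemma~\ref{precious} gives
\[
\int_{D_\sigma(x_0)}\Delta u_i=\int_{\partial^* S_i\cap\B_\sigma(x_0)}\frac{\partial u_i}{\partial\nu_i}\,d\mathcal{H}^1 .
\]
A barrier argument with the harmonic corner function $v$ of \eqref{harm cone}, placed in the tangent cone of opening $\theta_i-\delta$ inside $S_i$ as in Lemma~\ref{general cone}(a) --- noting that if $x_0\in\partial\Om$ one edge of that cone lies on $\partial\Om$, where $u_i=f_i\ge0$ still dominates $v\equiv0$, so the comparison $u_i\ge cv$ persists --- yields $\frac{\partial u_i}{\partial\nu_i}(x)\ge c\,|x-x_0|^{\alpha_\delta}$ on $\partial S_i$ near $x_0$ with $1+\alpha_\delta=\pi/(\theta_i-\delta)$. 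As $\partial S_i$ is a curve through $x_0$, it has $\mathcal{H}^1$-measure $\sim t$ in $\B_t(x_0)$ at every scale, so summing over dyadic annuli gives the lower bound $\int_{D_\sigma(x_0)}\Delta u_i\ge c\,\sigma^{1+\alpha_\delta}$.

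Next I would bound $\int_{(D_\sigma(x_0))_1}\Delta u_j$. The key geometric point is that $(D_\sigma(x_0))_1\cap\partial S_j\subset B_{C\sigma}(y_0)$: every $w\in\partial S_j$ lies outside $\B_1(x_0)$ (the supports of $u_i,u_j$ are at $\rho$-distance $\ge1$, so $\rho(w-x_0)\ge1$), and $\partial S_i$ leaves $x_0$ only in directions making angle $\ge\pi/2$ with $y_0-x_0$ --- the supporting hyperplane of $\B_1(y_0)$ at $x_0$ separates $S_i$ from $y_0$ --- so the nearest point of the $\sigma^2$-tube $D_\sigma(x_0)$ to any point lying essentially in the direction of $y_0$ is within $C\sigma^2$ of $x_0$; hence $\rho(w-x_0)\le1+C\sigma^2$ for $w\in(D_\sigma(x_0))_1$, and since $\partial S_j$ touches the uniformly convex sphere $\partial\B_1(x_0)$ at $y_0$ this forces $|w-y_0|\le C\sigma$. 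On $\partial S_j\cap B_{C\sigma}(y_0)$ --- where, by Lemma~\ref{singisolated}, $y_0$ is the only singular point --- Lemma~\ref{general cone}(b) applied to the subharmonic $u_j$ with $S_j$ of angle $\theta_j$ at $y_0$ gives $\frac{\partial u_j}{\partial\nu_{S_j}}(x)\le C\,|x-y_0|^{\beta_\delta}$ with $1+\beta_\delta=\pi/(\theta_j+\delta)$, while $\mathcal{H}^1(\partial S_j\cap B_t(y_0))\le Ct$ by Corollary~\ref{hausmescor} (as in \eqref{funnybound}); via Lemma~\ref{precious} this yields $\int_{(D_\sigma(x_0))_1}\Delta u_j\le C\,\sigma^{1+\beta_\delta}$. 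Any $S_k$, $k\ne i,j$, met by $(D_\sigma(x_0))_1$ is treated identically near the corresponding realizing point of $x_0$, and the localization of $D_\sigma(x_0)$ guarantees the $\theta_j$-term dominates the right-hand side of \eqref{mass laplacian D }.

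Feeding the two estimates into \eqref{mass laplacian D } gives $c\,\sigma^{1+\alpha_\delta}\le C\,\sigma^{1+\beta_\delta}$ for all small $\sigma$, hence $1+\alpha_\delta\ge1+\beta_\delta$, i.e.\ $\pi/(\theta_i-\delta)\ge\pi/(\theta_j+\delta)$, i.e.\ $\theta_i\le\theta_j+2\delta$; letting $\delta\to0$ gives $\theta_i\le\theta_j$, and the interior-case equality follows from the symmetry noted above (the degenerate cases $\theta_i=0$ or $\theta_j=0$ being trivial, or handled directly via super-polynomial decay of the slower-growing solution). I expect the main obstacle to be the geometric bookkeeping of the unit $\rho$-inflation: verifying that $(D_\sigma(x_0))_1\cap\partial S_j$ really sits at scale $\sigma$ (not $\sqrt{\sigma}$) around $y_0$, and isolating the specific realizing pair $(x_0,y_0)$ when either point is singular; a secondary point is checking that the lower barrier at a boundary point $x_0\in\partial\Om$ is not spoiled by the prescribed datum $f_i$.
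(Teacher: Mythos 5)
Your proposal is correct and follows essentially the same path as the paper's proof: compare the mass of $\Delta u_i$ near $x_0$ with that of $\Delta u_j$ near $y_0$ via the transport inequality \eqref{mass laplacian D }/\eqref{boundlaplacian}, extract the growth exponents $\alpha_\delta,\beta_\delta$ from Lemma~\ref{general cone} (using \eqref{eq: sup case estimate} in the case \eqref{H2}), deduce $\theta_i\le\theta_j$, and swap roles for equality when $x_0\in\Om$. The only cosmetic difference is that you center the thin tube $D_\sigma$ at the singular vertex $x_0$ and sum over dyadic annuli, whereas the paper places $D_\sigma(x_h)$ at a nearby regular point $x_h$ at scale $r_h\gg\sigma$ and then sends $r_h\to 0$; both give the same exponent comparison $1+\alpha_\delta\geq 1+\beta_\delta$.
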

\begin{proof}
By  Lemma \ref{finiteconncompslem}, the connected components of the sets $S_i$'s are finite. 
Assume $x_0\in\overline{\Om}$ and $y_0\in\Om$. 
Without loss of generality we can assume that $x_0=0$.  It suffices to show the theorem for $y_0$  belonging to a region that is side by side with $S_i$, in the sense that 0 is the limit as $h\to 0$ of interior regular points $x_h\in\partial S_i\cap\Om$ with the property that  $x_h$  realizes the distance from $S_j$ at $y_h\in\partial S_j\cap\Om$  interior  points, with $y_h\to y_0$ as $h\to 0$.  
Let $\C_i$  be the asymptotic cone at $0$. 
Let us first suppose for simplicity that $\partial S_i$ and $\partial S_j$ are  locally a cone around 0 and $y_0$ respectively. In particular, $\theta_i,\,\theta_j>0$. We will explain later on how to handle the general case. 
\medskip

\noindent{\em  Proof of Theorem \ref{equalanglethm} when $\partial S_i$ and $\partial S_j$ are  locally  cones .}
We assume that  there exists $r_0>0$ such that  
$\partial S_i\cap B_{2r_0}= \C_i \cap B_{2r_0}$, where $B_{2r_0}$ is the Euclidian ball centered at 0 of radius $2r_0$. When  $x_0 \in \partial \Omega$ we are just interested in the side of the cone $ \C_i$ contained in $\Omega.$

 If $(\varrho,\theta)$ is a system of 
polar coordinates in the plane centered at zero, we may assume that $\C_i$ is the cone given by
 \begin{equation*}\C_i=\{(\varrho,\theta)\,|\,\varrho\in[0,+\infty),\, 0\leq \theta \leq \theta_i\}.\end{equation*}
Let us first consider the case \eqref{H2}.
 Let us assume that $x_h=(2r_h,0)$, with $r_h>0$.
We know that $r_h\to 0$ as $h\to 0$, then we can  fix $h$ so small that $r_h<r_0/3$.  By Lemma \ref{growthcond} applied to  $u_1=u_i,$ we have 
 \begin{equation}\label{uigreaterthanlinearal}u_i(x)\geq  cr_h^{\alpha}d(x,\partial S_i)\quad\text{for any }x\in [r_h,3r_h]\times \left[0,R_h\right] ,\end{equation}
where 
\begin{equation}\label{alphadelta}1+\alpha=\frac{\pi}{\theta_i}\geq 1.\end{equation}

Now, we repeat an argument similar to the one in the proof of Theorem \ref{lem:6.1}. We look at $\inf u_i$ in small circles of radius $r$ that go across the free boundary of $u_i$ and we look at $\sup u_j$ in circles of radius $r+1$ across the free boundary of $u_j$, then  we compare the mass of the correspondent  Laplacians. Precisely, there exists a small $\sigma>0$ and $e\in S_i$ such that $\B_\sigma(e)\subset  [r_h,3r_h]\times \left[0,R_h\right]$ and $x_h\in\partial \B_\sigma(e)$. In particular,  in $\B_\sigma(e)$ the function $u_i$ satisfies \eqref{uigreaterthanlinearal}.
For $\upsilon<\sigma$ and $r\in[\sigma-\upsilon,\sigma+\upsilon]$,  we define 
\begin{equation}
\label{bounds}
\underline{u}_i ¨ := \inf_{\partial \B_r (e)} u_i\, \quad\text{and} \quad 
\bar u_j¨ : = \sup_{\partial \B_{1+r}(e) } u_j\ .
\end{equation}
In what follows we denote by $C$ and $c$ several constants independent of $h$.
For $r\in[\sigma-\upsilon,\sigma]$, by \eqref{uigreaterthanlinearal}  we have 
\begin{equation*}\begin{split} \underline{u}_i&\ge  \inf_{\partial \B_r(e)} cr_h^{\alpha} d(x,\partial S_i)
\geq  \inf_{\partial \B_r(e)}Cr_h^{\alpha} d_\rho(x,\partial S_i)\ge Cr_h^{\alpha}(\sigma-r).
\end{split}\end{equation*}
For $r\in[\sigma,\sigma+\upsilon]$,  the ball $\B_r(e)$ goes across $\partial S_i$, therefore we have $\underline{u}_i =0$.
Hence
\begin{equation}\label{uibarbehavior}\begin{split}& \underline{u}_i(r) \ge Cr_h^{\alpha}(\sigma-r)\quad\text{for  }r\in[\sigma-\upsilon,\sigma]\\&
  \underline{u}_i(r)=0\quad\text{for }r\in[\sigma,\sigma+\upsilon].
\end{split}\end{equation}

\noindent Next, let us study the behavior of $\bar u_j$. First of all, let us show that 
\begin{equation}\label{distanceeyh}d_\rho(e,\partial S_j)=\rho(e-y_h)=1+\sigma.\end{equation} Since $d_\rho(e,\partial S_i)=\sigma$ and $d_\rho(S_i,S_j)\geq 1$, it is easy to see  that $d_\rho(x,\partial S_j)\geq 1+\sigma$.
The function  $\rho$ is also called a Minkowski norm and from  known results about  Minkowski norms,  if we denote by $T$ the Legendre transform $ T:\real^n\to\real^n$ defined by $T(y)=\rho(y)D\rho(y)$, then 
$T$ is a bijection with inverse $T^{-1}(\xi)=\rho^*(\xi)D\rho^*(\xi)$, where $\rho^*$ is the dual norm defined by $\rho^*(\xi):=\sup\{y\cdot\xi\,|\,y\in\B_1\}$.
Now, the ball $\B_1(y_h)$ is tangent to $\partial S_i$ at $x_h$ and therefore is also tangent to $\B_\sigma(e)$ at  $x_h$. This implies that $D\rho(e-x_h)=-D\rho(x_h-e)=D\rho(x_h-y_h)$.
 Consequently we have 
\begin{equation*}\begin{split} e-x_h&=T^{-1}(T( e-x_h))=T^{-1}(\sigma D\rho(e-x_h))=T^{-1}(\sigma D\rho(x_h-y_h))\\&
=\sigma T^{-1}(T(x_h-y_h)=\sigma(x_h-y_h).
\end{split}\end{equation*}
We infer that 
\begin{equation}\label{eyhrelation}e=x_h+\sigma(x_h-y_h)\end{equation} and 
$$\rho(e-y_h)=(1+\sigma)\rho(x_h-y_h)=1+\sigma,$$
which proves \eqref{distanceeyh}. 
As a consequence 
 $\partial\B_{1+r}(e)\cap S_j=\emptyset$ for $r\in[\sigma-\upsilon,\sigma)$, while  if  $r\in(\sigma,\sigma+\upsilon]$ then 
$\partial\B_{1+r}(e)\cap S_j\neq\emptyset$ and   $\partial \B_{1+r}(e)$ enters inside $S_j$ at $\rho$-distance at most $r-\sigma$ from the boundary of  $S_j$.
In particular we have 
\begin{equation}\label{ujoutsideSj}\bar u_j ¨ =0\quad\text{for } r\in[\sigma-\upsilon,\sigma].\end{equation}

\noindent Next, if $\theta_j$ is the angle of $S_j$ at $y_0$, let  $\beta$ be defined by
\begin{equation}\label{betadelta}1+\beta=\frac{\pi}{\theta_j}\geq1.\end{equation}
Remark that $y_h$ is at $\rho$-distance $2r_h$ from $y_0$. Again by Lemma \ref{growthcond} applied to  $u_2=u_j,$ (after a rotation and a translation), we have
the following estimate 
$$u_j(x)\le C r_h^{\beta}d(x,\partial S_j)\le C r_h^{\beta}d_\rho(x,\partial S_j),$$  in a neighborhood of $y_h$. As a consequence, recalling in addition that  the ball  $\B_{1+r}(e)$ enters in $S_j$ at $\rho$-distance $r-\sigma$ from the boundary,
for $r\in[\sigma,\sigma+\upsilon]$ we get
 $$\bar u_j= \sup_{\partial \B_{1+r}(e) } u_j\leq Cr_h^{\beta}(r-\sigma).$$  The last estimate and \eqref{ujoutsideSj} imply

\begin{equation}\label{ujbarbehavior}\bar u_j(r)\le Cr_h^{\beta}(r-\sigma)^+,\quad\text{for }r\in[\sigma-\upsilon,\sigma+\upsilon].
\end{equation}

\noindent Now, we want to compare the mass of the Laplacians of $\underline{u}_i$ and $\bar u_j$. Define as in (\ref{bounds})
$$\underline{u}_i^\ep ¨ : = \inf_{\partial \B_r(e)} u_i^\ep, \quad\quad \bar u_k^\ep ¨ : = \sup_{\partial \B_{1+r}(e)} u_k^\ep,\,k\neq i.$$
For $\sigma$ and $\upsilon$ small enough, the ball $\B_{r}(e)$ is contained in $\Om$ for any $r\leq\sigma+\upsilon$, and thus 
$$\Delta u_i^\ep=\frac1{\ep^2} u_i^\ep \sum_{k\ne i} H(u_k^\ep)\quad\text{in }\B_{r+\sigma}(e).$$
On the other hand, since $x_h$ is an interior  regular point that realizes its distance from $S_j$ at an interior point, $y_h$, its distance from the support of the boundary data $f_k$ is greater than 1, for any $k\neq i$. We infer that, for $\sigma$ and $\upsilon$ small enough and $r\leq\sigma+\upsilon$,
$$\Delta u_k^\ep\geq \frac1{\ep^2} u_k^\ep \sum_{l\ne k} H(u_l^\ep)\quad\text{in }\B_{1+r}(e).$$
Hence, arguing as in the proof of Theorem \ref{lem:6.1}, we see that 
\begin{equation}\label{laplacinec1reg}\Delta_r \underline{u}_i^\ep \leq \sum_{k\neq i}\Delta_r\bar u_k^\ep\quad\text{in }(\sigma-\upsilon,\sigma+\upsilon),\end{equation}
where $\Delta_r u=\frac{1}{r}\frac{\partial}{\partial r}\left(r\frac{\partial u}{\partial r}\right)$. Since $x_h$ is a regular point of $\partial S_i$ that realizes the distance
 from $S_j$ at $y_h\in\partial C_i$, the ball $\B_{1+\sigma+\upsilon}(e)$ does not intersect the support of the functions $u_k$ for $k\neq j$ and small $\upsilon$ and $\sigma$. 
Therefore, multiplying inequality \eqref{laplacinec1reg} by a positive test function $\phi\in C_c^\infty \left(\sigma-\upsilon,\sigma+\upsilon\right)$, integrating by parts in $\left(\sigma-\upsilon,\sigma+\upsilon\right)$ and passing to the limit as $\ep\rightarrow 0$ along a converging subsequence, the only surviving function on the right-hand side is $\bar u_j$ and we get
\begin{equation}\label{laplacinec1reg2}\int_{\sigma-\upsilon}^{\sigma+\upsilon}\underline{u}_i¨\frac{\partial}{\partial r}\left(r\frac{\partial}{\partial r}\left(\frac{1}{r}\phi¨\right)\right)dr\leq \int_{\sigma-\upsilon}^{\sigma+\upsilon} \bar u_j¨\frac{\partial}{\partial r}\left(r\frac{\partial}{\partial r}\left(\frac{1}{r}\phi¨\right)\right)dr.\end{equation}
Let us choose a function $\phi$ which is increasing and $(\sigma-\upsilon,\sigma)$ and decreasing in $(\sigma,\sigma+\upsilon)$ and hence with maximum at $r=\sigma$, and let us estimates the left and the right hand-side of the last inequality. Estimates  \eqref{uibarbehavior} imply that $\frac{\partial \underline{u}_i}{\partial r}(\sigma^-)\le -Cr_h^\alpha$. Therefore, for small $\upsilon$ we have
\begin{equation*}\begin{split}\int_{\sigma-\upsilon}^{\sigma+\upsilon}\underline{u}_i¨\frac{\partial}{\partial r}\left(r\frac{\partial}{\partial r}\left(\frac{1}{r}\phi¨\right)\right)dr&
=-\int_{\sigma-\upsilon}^{\sigma} \frac{\partial \underline{u}_i}{\partial r}r\frac{\partial}{\partial r}\left(\frac{1}{r}\phi ¨\right)dr\\&
=-\int_{\sigma-\upsilon}^{\sigma} \left(\frac{\partial \underline{u}_i}{\partial r}(\sigma^-)+o_{\sigma-r}(1)\right)r\frac{\partial}{\partial r}\left(\frac{1}{r}\phi ¨\right)dr\\&
\ge-\int_{\sigma-\upsilon}^{\sigma} \frac{\partial \underline{u}_i}{\partial r}(\sigma^-)\left(\frac{\partial \phi}{\partial r}-\frac{1}{r}\phi¨\right)dr\\&
-o_{\upsilon}(1)\int_{\sigma-\upsilon}^{\sigma} \left(\frac{\partial \phi}{\partial r}+\frac{1}{r}\phi¨\right)dr\\&
\ge-\frac{\partial \underline{u}_i}{\partial r}(\sigma^-) \left[\phi(\sigma)-\phi(\sigma)\log\left(\frac{\sigma}{\sigma-\upsilon}\right)\right]\\&
-o_{\upsilon}(1) \left[\phi(\sigma)+\phi(\sigma)\log\left(\frac{\sigma}{\sigma-\upsilon}\right)\right]\\&
\ge (Cr_h^\alpha-o_{\upsilon}(1))\phi(\sigma).
\end{split}\end{equation*}
Similarly, using \eqref{ujbarbehavior} and integrating by parts, we get
\begin{equation*}\int_{\sigma-\upsilon}^{\sigma+\upsilon} \bar u_j¨\frac{\partial}{\partial r}\left(r\frac{\partial}{\partial r}\left(\frac{1}{r}\phi¨\right)\right) dr\leq 
(Cr_h^\beta+o_{\upsilon}(1))\phi(\sigma).\end{equation*}
From the previous estimates and \eqref{laplacinec1reg2}, letting $\upsilon$ go to 0, we obtain
$$r_h^\alpha\leq C r_h^\beta,$$
and therefore, for $h$ small enough
$$\beta\le \alpha.$$
Recalling the definitions \eqref{alphadelta} and  \eqref{betadelta} of $\alpha$ and $\beta$ respectively, we infer that
$$\theta_i\leq \theta_j.$$ This proves \eqref{angleatboundineq}. If $x_0=0$ is an interior point of $\Om$, 
exchanging the roles of $u_i$ and $u_j$, we get the opposite inequality
$$\theta_j\leq \theta_i,$$
and this proves \eqref{angleeq}  for $H$ defined as in \eqref{H2}.

Next, let us turn to the case  \eqref{H1}. 
Again we compare the mass of Laplacians of $u_i$ and $u_j$ across the free boundaries. For $\sigma<r_h$ let us define
\begin{equation}
\label{eq: set Dsigma}
D_\sigma(x_h):=\{x\in \B_{\sigma}(x_h)\,|\,d(x,\partial S_i)\leq \sigma^2\}.\end{equation}
Then, if we denote by $(D_\sigma(x_h))_1$ 
 the sets of points at $\rho$-distance 
 less than 1,
 we have  that
\begin{equation} \label{ineqmasslap0}
\int_{D_\sigma(x_h)}\Delta u_i\leq \sum_{k\neq i} \int_{(D_\sigma(x_h))_1}\Delta u_k,
\end{equation}
as in \eqref{boundlaplacian} with $(D_\sigma(x_h))_1$ in place of $\B_{1+S}(x_0)$.
By Lemma \ref{growthcond} the normal derivative of $u_i$  with respect to the inner normal $\nu_i $, at any point on the boundary $\partial \C_i $ with distance to the vertex between $r_h$ and $3r_h$ is greater than $ cr_h^\alpha$, then  
$$
\int_{D_\sigma(x_h)}\Delta u_i = \int_{\partial \C_i \cap D_\sigma(x_h)} \frac{\partial u_i}{\partial \nu_i}  d\mathcal{H}\geq c  \int_{2r_h -c\sigma}^{2r_h +C\sigma} r_h^{\alpha} dr = Cr_h^{\alpha}\sigma.
$$
Remark that
$$(D_\sigma(x_h))_1\cap\partial S_j\subset \B_{c \sigma}(y_h)\cap\partial S_j$$ therefore, for $\sigma$ small enough, again from Lemma \ref{growthcond} we have  
 $$
\int_{(D_\sigma(x_h))_1}\Delta u_j   \leq C r_h^{\beta}\sigma.
$$
 Then for $r_h$ small enough we obtain  that
$$\beta\le \alpha$$ and therefore
$$\theta_i\leq \theta_j.$$
If $x_0=0$ is an interior point of $\Om$, exchanging the roles of $u_i$ and $u_j$ we get the opposite inequality
$$\theta_j\leq \theta_i.$$
This concludes the proof of the  theorem in the particular case in which $\partial S_i$ and $\partial S_j$ are  locally a cone around 0 and $y_0$ respectively. 

We are now going to explain how to adapt the proof in the general case. 
\medskip

\noindent{\em  Proof of Theorem \ref{equalanglethm} in the general case.}
If $\theta_i=0$, then $\theta_i\leq\theta_j$. Assume
$\theta_i \in (0,\pi]$ and $\theta_j \in [0,\pi]$, then  for any  $0<\delta<\theta_i$, there exist
$r_\delta>0$,  a cone $\C^i_\delta$ centered at 0 and with opening $\theta_i-\delta$, and a cone $\C^j_\delta$ centered at  $y_0$   and with opening $\theta_j+\delta$ such that
\begin{equation*}\C^i_\delta\cap B_{ r_\delta}(0)\subset  S_i\cap B_{ r_\delta}(0)\quad\text{and}\quad  S_j\cap B_{ r_\delta}(y_0)\subset \C^j_\delta\cap  B_{ r_\delta}(y_0).\end{equation*}
Let $(x_h)_h$ be the sequence of regular points on $\partial S_i\cap\Om$ given by Lemma \ref{general cone} (consider $\Gamma_1$ the closest side to $S_j$) and let $r_h=d(0,x_h)$. Denote by $y_h$ the point on $\partial S_j\cap\Om$
at $\rho$-distance 1 from $x_h$. Then, $d_\rho(y_h,y_0)\leq c r_h$. 
Now, the proof of the theorem proceeds like in the previous case and we can compare the mass of the laplacians across the free boundaries of $u_i$ and $u_j$.

Let us first consider the case \eqref{H1}.  For $\sigma>0$  take $D_\sigma(x_h)$ and $(D_\sigma(x_h))_1$ as defined as in \eqref{eq: set Dsigma}. 
For $\sigma$ small enough, by Lemma  \ref{singisolated} $\partial S_i \cap D_\sigma(x_h)$ does not contain singular points and by Lemma \ref{freeboudarygraphlemC1}
it is a $C^1$ curve of the plane. 
  
By Lemma \ref{general cone}
$$
\int_{D_\sigma(x_h)}\Delta u_i = \int_{\partial S_i \cap D_\sigma(x_h)} \frac{\partial u_i}{\partial \nu_i}  d\mathcal{H}\geq  Cr_h^{\alpha_\delta}\sigma.
$$
Remark that
$$(D_\sigma(x_h))_1\cap\partial S_j\subset \B_{c \sigma}(y_h)\cap\partial S_j$$ therefore, for $\sigma$ small enough, from Lemma \ref{general conebis}, 
 as in the proof of  Lemma  \ref{singisolated},  
 we have  
 $$
\int_{(D_\sigma(x_h))_1}\Delta u_j   \leq \tilde{C} r_h^{\beta_\delta}\sigma.
$$
 Then for $h$ small enough, we obtain  that
$$\beta_\delta\le \alpha_\delta$$ and therefore
$$\theta_i\leq \theta_j.$$
If $x_0=0$ is an interior point of $\Om$, exchanging the roles of $u_i$ and $u_j$ we get the opposite inequality
$$\theta_j\leq \theta_i.$$

Next, let us turn to the case \eqref{H2}.
Then, we define, for $r\in[R_h-\upsilon,R_h+\upsilon]$, 
 \begin{equation*}
\underline{u}_i ¨ := \inf_{\partial \B_{r} (z_h)} u_i\, \quad\text{and} \quad 
\bar u_j¨ : = \sup_{\partial \B_{1+r}(z_h) } u_j\ .
\end{equation*}
 Arguing as before, and using the  Lemma   \ref{general cone}  we get 
$$\beta_\delta\leq\alpha_\delta,$$ and therefore, letting $\delta$ go to 0, we finally obtain
$$\theta_i\leq\theta_j.$$

Remark in particular that if $\theta_i>0$ then $\theta_j>0$. If $x_0=0$ is an interior point of $\Omega$, exchanging the roles of $u_i$ and $u_j$ we get the opposite inequality $\theta_j\leq\theta_i$.
  
\end{proof}

An immediate  corollary of Theorem \ref{equalanglethm} is the $C^1$-regularity of the free boundaries when $K=2$  and under the following additional assumptions on 
  $\Om$, $f_1$ and $f_2$:
\begin{equation}\label{c1regass1}\Omega:=\{(x_1,x_2)\in\real^2\,|\,g(x_2)\leq x_1\leq h(x_2),\,x_2\in[a,b]\},\quad b-a\ge4\end{equation}
where 
\begin{equation}\label{c1regass2}
\begin{cases}g,h: [a,b]\rightarrow\real \text{ are Lipschitz functions with }\\
-m_2\leq g\leq -m_1\leq M_2\leq h\leq M_1,\quad M_2\geq -m_1+4;
\end{cases}
\end{equation}
  the boundary data are such that
\begin{equation}\label{c1regass3}
\begin{cases} f_1\equiv 1,\,f_2\equiv 0\quad\text{on } \{x_1\leq g(x_2)\},\\
f_1\equiv 0,\,f_2\equiv 1\quad\text{on } \{x_1\geq h(x_2)\},\\
f_1\text{  is monotone decreasing  in }x_1\text{ on }\{x_2\leq a\}\cup\{x_2\geq b\},\\
 f_2\text{ is monotone  increasing in }x_1\text{ on }\{x_2\leq a\}\cup\{x_2\geq b\}.
 \end{cases}
 \end{equation}
These assumptions  imply that   $-u_1$ and $u_2$ are  monotone increasing in the $x_1$ direction.
Then we have the following
\begin{cor}\label{corc1reg}Assume    \eqref{mainassumpts} with  $p=1$ in \eqref{H1}. Assume in addition  $K=n=2$,  \eqref{c1regass1}, \eqref{c1regass2} and  \eqref{c1regass3}.
  Then the sets $\partial S_i$, $i=1,2$, are of class $C^1$.
\end{cor}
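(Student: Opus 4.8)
The plan is to first convert the monotonicity hypotheses into a graph description of $S_1$ and $S_2$, and then to rule out corners of $\partial S_i$ by feeding the geometry of the contact balls into Theorem \ref{equalanglethm}.

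\emph{Graph structure.} Since \eqref{c1regass3} makes $-u_1$ and $u_2$ monotone nondecreasing in the $x_1$ direction (as recorded just before the statement), the sets $S_1,S_2$ are of sub- and super-graph type:
$$S_1=\{(x_1,x_2)\in\Om:\ x_1<\gamma_1(x_2)\},\qquad S_2=\{(x_1,x_2)\in\Om:\ x_1>\gamma_2(x_2)\},$$
for functions $\gamma_1\le\gamma_2$ on $[a,b]$. Reading the exterior tangent $\rho$-ball of Corollary \ref{tangebtballcor} in these coordinates shows $\gamma_1$ is semiconcave and $\gamma_2$ is semiconvex, hence both are locally Lipschitz on $(a,b)$ and one-sided differentiable up to the endpoints; using \eqref{c1regass3} and the separation $d_\rho(S_1,S_2)\ge 1$ of Corollary \ref{convergencecor} one checks $\gamma_1,\gamma_2$ are single-valued and that $\partial S_1,\partial S_2$ are Lipschitz curves joining the two horizontal sides of $\partial\Om$. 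Moreover the two bounding rays of the asymptotic cone at any point of $\partial S_i$ have slope controlled by the Lipschitz constant of $\gamma_i$, so $\theta_i$ is bounded away from $0$ and no cusp occurs. It remains to show $\theta_i\equiv\pi$, which is exactly the assertion that $\gamma_i\in C^1([a,b])$.

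\emph{No interior corners.} Suppose, for contradiction, that $x_0\in\partial S_1\cap\Om$ is a singular point with opening $\theta_1(x_0)=\theta<\pi$. By Theorem \ref{lem:6.1}, $d_\rho(x_0,S_2)=1$, and since $u_1\equiv0$ on $\{d_\rho(\cdot,\supp u_2)\le1\}$ we get $\B_1(x_0)\cap S_2=\emptyset$; hence every point realizing the distance from $x_0$ to $S_2$ lies on $\partial\B_1(x_0)\cap\partial S_2$, and there are at least two such points $y_1\ne y_2$. Because $S_2$ is a supergraph disjoint from the convex ball $\B_1(x_0)$, both $y_k$ lie on the smooth, strictly concave arc of $\partial\B_1(x_0)$ facing $S_2$, and between their heights the semiconvex graph $\gamma_2$ is squeezed against this arc. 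Two outcomes are possible: either this forces a nondegenerate subarc $\Gamma\subset\partial S_2$ to coincide with an arc of the smooth $\rho$-sphere $\partial\B_1(x_0)$, so that $\theta_2\equiv\pi$ on the interior of $\Gamma$; or the contacts $y_1,y_2$ are isolated, in which case (since $\theta_2=\theta<\pi$ at a singular point) Lemma \ref{singisolated} and the equal-angle Theorem \ref{equalanglethm} force them to be \emph{regular} points of $\partial S_2$. In either case there is a regular $y'\in\partial S_2\cap\Om$ with $\rho(x_0-y')=1$, and Theorem \ref{equalanglethm} gives $\theta=\theta_1(x_0)=\theta_2(y')=\pi$, a contradiction. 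Exchanging the roles of $S_1$ and $S_2$, neither free boundary has a singular point in $\Om$.

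\emph{Boundary points and conclusion.} A point of $\partial S_i\cap\partial\Om$ can only lie on $\{x_2=a\}$ or $\{x_2=b\}$ (on the lateral sides the data in \eqref{c1regass3} preclude a free boundary), where $\gamma_i$ is one-sided differentiable; combined with the absence of interior corners (so that $\gamma_i'$ is monotone near the endpoint and has a limit there), this gives $\gamma_i\in C^1([a,b])$. Hence $\partial S_i$ admits a tangent line at every point, varying continuously, i.e. $\partial S_i$ is a $C^1$ curve for $i=1,2$.

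\emph{Main obstacle.} The delicate step is the geometric claim in the second paragraph: that a convex corner of one phase necessarily faces a regular (indeed smooth) piece of the other free boundary at $\rho$-distance exactly $1$. Making this rigorous requires combining the graph/semiconvexity structure of $\partial S_2$ with the sharp separation of Theorem \ref{lem:6.1} and the isolation (and finiteness) of singular points from Lemma \ref{singisolated}, for instance by showing that an accumulation of forced corners would, as in the proof of Lemma \ref{singisolated}, turn the support of some $u_j$ into an island. Once this is available, the equal-angle theorem yields the contradiction immediately, which is why the corollary is ``immediate'' from Theorem \ref{equalanglethm}.
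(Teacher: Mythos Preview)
Your overall strategy coincides with the paper's: assume a singular point $x_0\in\partial S_1$ with opening $\theta<\pi$, produce a \emph{regular} point of $\partial S_2$ at $\rho$-distance exactly $1$ from $x_0$, and invoke Theorem~\ref{equalanglethm} to force $\theta=\pi$. The passage from ``every point is regular'' to $C^1$ (via semiconvexity of a local graph and continuity of supporting lines) is also the same as in the paper.

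The difference is in how you produce the regular contact point. The paper does not split into cases: using the monotonicity (so that $\partial S_2$ is a graph over $x_2$) together with the sharp characterization Theorem~\ref{lem:6.1}, it argues directly that the \emph{entire} arc of $\partial\B_1(x_0)$ between the two contact points $z,w\in\partial S_2$ must lie on $\partial S_2$. Interior points of that arc are then automatically regular points of $\partial S_2$ at $\rho$-distance $1$ from the singular $x_0$, and Theorem~\ref{equalanglethm} gives the contradiction. Your ``first outcome'' is exactly this, so the dichotomy you set up is unnecessary.

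Your ``second outcome'' (isolated contacts with $y_1,y_2$ forced to be regular) is where the argument becomes unclear. You write that ``(since $\theta_2=\theta<\pi$ at a singular point) Lemma~\ref{singisolated} and the equal-angle Theorem~\ref{equalanglethm} force them to be regular points of $\partial S_2$,'' but neither of these results rules out $y_k$ being singular with the same angle $\theta$; isolatedness of singular points does not preclude a singular $y_k$. You correctly flag this as the ``main obstacle,'' but the resolution is simpler than the island-type argument you propose: the monotone graph structure of $\partial S_2$, squeezed between $\overline{\B_1(x_0)}$ and its own exterior tangent $\rho$-balls of the \emph{same} radius $1$, forces $\gamma_2$ to coincide with the arc of $\partial\B_1(x_0)$ between the heights of $z$ and $w$. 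In other words, your first case is the only one, and once you argue it cleanly the corollary really is immediate from Theorem~\ref{equalanglethm}.
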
 
\begin{proof}
We know that the sets $\partial S_i$
	 are curves of the plane at $\rho$-distance 1, one from  each other. 
	  Suppose by contradiction that  
$\partial S_1$ has an angle $\theta<\pi$ at $y_0$. In particular, there exist two $\rho$-balls  of radius 1, centered at two points  
$z,w\in \partial S_2$ that are tangent to $\partial S_1$ at $y_0$. Then, by the monotonicity property of the $u_i$'s and Theorem \ref{lem:6.1}, the arc of the $\rho$-ball of radius 1 centered at $y_0$ between the points $z$ and $w$ must be all in $\partial S_2$. This means that any point inside this arc, which is a regular point of $\partial S_2$, is at $\rho$-distance 1 from the singular point $y_0\in\partial S_1$. This  contradicts  Theorem \ref{equalanglethm}. 
We have shown that any point of the free boundaries is regular. Then by Lemma \ref{freeboudarygraphlemC1}   the free boundaries are  of class $C^1$.  This concludes the proof. 
 \end{proof}
 Another corollary of Theorem \ref{equalanglethm} is that the number of singular points is finite.

\begin{cor}\label{sigulapoints2dthm}Assume    \eqref{mainassumpts} with  $n=K=2$ and $p=1$ in \eqref{H1}. 
Assume in addition  that the supports on $\partial \Om$ of the boundary data $f_1$ and $f_2$ have a finite number of connected components. Then 
 singular points form a finite set. \end{cor}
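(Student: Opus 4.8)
The plan is to establish the two assertions in order, the second relying on the first.

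\emph{Step 1: finitely many components.} I would first show, by the maximum principle, that every connected component $U$ of $S_i$ reaches $\supp f_i$ on $\partial\Om$. Since $U$ is a component of the open set $S_i$, $\partial U\subset\partial S_i$, so $u_i$ is harmonic in the bounded set $U$ and vanishes on $\partial U\cap\Om$, while on $\partial U\cap\partial\Om$ it equals $f_i$; hence, if $\overline U$ either missed $\partial\Om$ entirely or met it only where $f_i\equiv0$, we would get $u_i\equiv0$ in $U$, contradicting $U\subset\{u_i>0\}$. Next, wherever $f_i>0$ on $\partial\Om$, continuity of $u_i$ --- combined with Lemma \ref{uj=0closfi}, which shows that near $\supp f_i$ only the population $u_i$ survives --- forces $S_i$ to occupy a full relative neighbourhood, so the ``cores'' $\overline U\cap\{f_i>0\}\cap\partial\Om$ of distinct components are pairwise disjoint relatively open subsets of $\partial\Om$ contained in $\supp f_i$. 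As $\supp f_i\cap\partial\Om$ has finitely many connected components by hypothesis, $S_i$ has finitely many components; and since $K=2$ forces $\partial C_i=\partial S_j$, all the free boundaries $\partial S_1,\partial S_2$ are finite unions of rectifiable curves (finite length by Corollary \ref{hausmescor}), each carrying an exterior tangent $\rho$-ball of radius $1$ by Corollary \ref{tangebtballcor}.

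\emph{Step 2: finitely many singular points.} By Lemma \ref{singisolated} the singular set is discrete, so were it infinite it would accumulate at some $p\in\overline\Om$; by Step 1 I may assume all the accumulating singular points $y_k$ lie on a single curve $\Gamma\subset\partial S_i$. If $p\in\Om$, the Laplacian--mass comparison across the free boundary used in Lemma \ref{singisolated} both gives a contradiction directly and pins the angle of $\Gamma$ at each $y_k$ to a fixed value $\theta_{\max}<\pi$, so $\Gamma$ has an honest corner at each $y_k$ turning by at least $\pi-\theta_{\max}>0$ towards $S_i$; the exterior tangent $\rho$-ball property of Corollary \ref{tangebtballcor} controls the cumulative turning of $\Gamma$ from the $S_i$-side, so after finitely many such corners $\Gamma$ would close up and bound a bounded component of $S_i$ --- an island, excluded by the maximum principle exactly as in Step 1. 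If instead $p\in\partial\Om$, then by the support separation \eqref{eq:stmt4} and Lemmas \ref{uj=0closui}--\ref{uj=0closfi} only one population is active near each portion of $\partial\Om$; the partner points $z_k\in\partial S_j$ with $\rho(y_k-z_k)=1$ then accumulate on $\partial\B_1(p)\cap\partial S_j$, and since by Theorem \ref{lem:6.1} they bound arcs of $\partial S_j$ and by Theorem \ref{equalanglethm} carry the same angles as the $y_k$, feeding this back into the turning argument (or else exhausting the finitely many components of $\supp f_j$) again yields a cascade of convex corners forming an island. In every case we reach a contradiction, so the singular set is finite.

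\emph{Main obstacle.} The delicate points are (i) turning ``exterior unit $\rho$-ball, hence bounded cumulative turning, hence finitely many convex corners before an island forms'' into a rigorous statement in the uniformly convex norm $\rho$ --- one must relate the one-sided curvature bound coming from Corollary \ref{tangebtballcor} to a Gauss--Bonnet--type count of corners on each boundary curve --- and (ii) ruling out accumulation of singular points at $\partial\Om$, where the uniform interior estimates of Section \ref{Lipestsection} degenerate, so that one must replace the interior mechanism of Lemma \ref{singisolated} by the structure of the boundary data together with Theorem \ref{equalanglethm}.
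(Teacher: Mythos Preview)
Your Step~1 is essentially the paper's argument and is fine: each component of $S_i$ must meet $\{f_i>0\}\cap\partial\Om$ by the maximum principle, and distinct components meet it in disjoint relatively open pieces, so finiteness of the components of $\supp f_i$ on $\partial\Om$ gives finiteness of the components of $S_i$.

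Your Step~2, however, takes a genuinely different route from the paper and has real gaps. You try to argue by accumulation: if there were infinitely many singular points they would accumulate at some $p\in\overline\Om$, and you then invoke a Gauss--Bonnet-type ``cumulative turning'' argument to force an island. The $p\in\Om$ case is already excluded by Lemma~\ref{singisolated}, so the only issue is accumulation at $\partial\Om$; but your treatment of that case (``feeding this back into the turning argument \dots yields a cascade of convex corners forming an island'') is not an argument, and you yourself flag the turning count in a general uniformly convex norm as the main unresolved obstacle. There is also a subtler point: the uniform angle bound $\theta_{\max}<\pi$ you want to use at \emph{every} singular point is established in the proof of Lemma~\ref{singisolated} only via the Laplacian-mass comparison near an arc of $\partial\B_1(y_k)\cap\partial S_i$, which in turn requires that the two partner points be on the \emph{same} component of $S_i$; once you know (as the paper shows) that they are on different components, this mechanism no longer produces a uniform angle bound, and your turning count loses its driving inequality.

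The paper's proof of finiteness avoids all of this with a clean topological argument you should try instead. Fix a singular point $x_0\in\partial S_1$ with partner points $y_1,y_2\in\partial S_2$. Theorem~\ref{equalanglethm} forces $S_2$ to have a genuine corner at each $y_l$, so the arc of $\partial\B_1(x_0)$ between $y_1$ and $y_2$ is \emph{not} contained in $\partial S_2$; hence near $y_1$ there are points of $\partial S_2$ lying outside $\overline{\B_1(x_0)}$, and these must realize their distance to $S_1$ at some $x_1\neq x_0$. Because every component reaches $\partial\Om$ (Step~1), the components of $S_2$ through $y_1$ and $y_2$ separate the plane, forcing $x_0$ and $x_1$ to lie on \emph{different} components of $S_1$, and likewise $y_1,y_2$ on different components of $S_2$. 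Thus each singular point on a given component $S_1^1$ is paired with a singular point on a \emph{distinct} component $S_1^2$, and the separating $S_2$-components prevent a second such pairing between $S_1^1$ and $S_1^2$. Since there are only finitely many components, this bounds the number of singular points directly---no accumulation argument, no curvature bookkeeping, and no boundary analysis needed.
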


 \begin{proof}
 From Lemma \ref{finiteconncompslem}, $S_1$ and $S_2$ have a finite number of connected components. Moreover, we recall that any connected component has to reach the boundary.
  
Let $x_0$ be a singular point belonging to  the boundary of the support of one of the limit functions $u_i$. W.l.o.g. let us assume $x_0\in\partial  S_1$.
 Let  $y_1,y_2 \in \partial S_2$ two different points where $x_0$ realizes the distance from 
  $S_2$, ($y_1,y_2\in\partial\B_1(x_0)\cap \partial  S_2$, see Figure \ref{fig:forbidenarc}).
We can   choose $y_1$  such that $\B_1(x_0)$ is the limit as $k\rightarrow+\infty$ of balls $\B_1(x_k)$   with $x_k\in\partial S_1$, tangent to points
 $y_k\in\partial  S_2$  with $y_k\to y_1$ and $x_k\to x_0$ as $k\rightarrow+\infty$. 
  Theorem \ref{equalanglethm}  implies that $S_2$ has an angle at $y_1$ and $y_2$ and the intersection of the arc  on  $\partial\B_1(x_0)$ between $y_1$ and $y_2$ with $\partial C_1$ must have empty interior. 
  This means that near $y_1$ there are points on $\partial  S_2$ outside $\overline{\B_1(x_0)}$. 
  These points are at distance greater than 1 from $x_0$
  and from any other point of $\partial S_1$ close to $x_0$ and must realize the distance from $S_1$ outside $\B_1(y_1)$, see Figure \ref{fig:forbidenarc}.  Therefore if we take a sequence $z_k$ of such points  converging to $y_1$ and we consider the corresponding tangent balls centered at points that are in $\partial S_1$ where the $z_k$'s realize the distance, we obtain a second tangent ball $\B_1(x_1)$ for $y_1$  with $x_1\neq x_0$.

 \begin{figure}
\begin{center}
\includegraphics[scale=0.4]{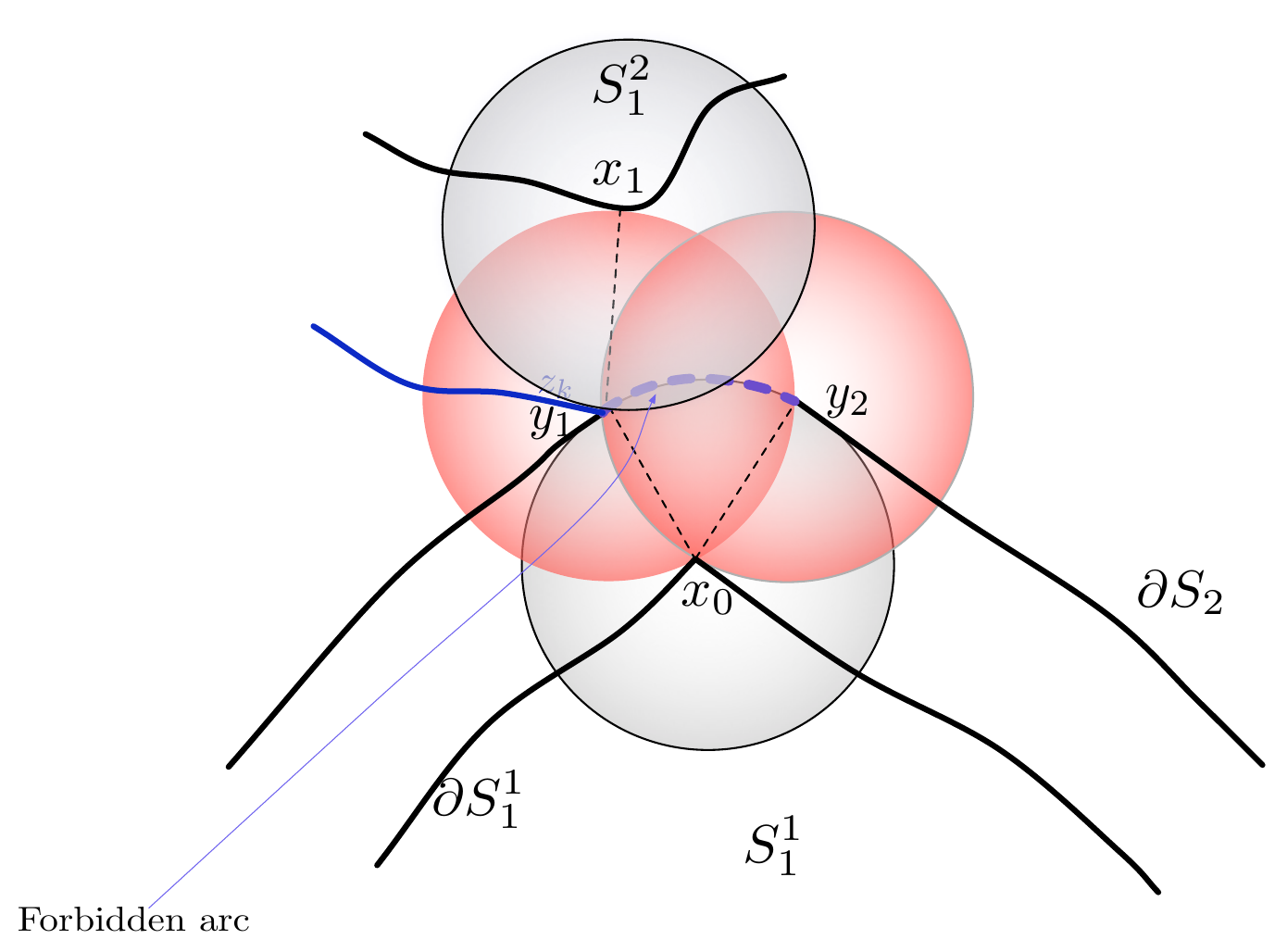}
\caption{Forbidden arc}
\label{fig:forbidenarc}
\end{center}
\end{figure}
 Now, let us denote by $S_1^1$ the connected component of $S_1$ whose boundary contains $x_0$.  Remember that since $S_1$ and $S_2$ are at $\rho$-distance 1, we have $u_1\equiv 0$  in $\overline{\B_1(y_1)}\cup\overline{\B_1(y_2)}$.  Moreover, since  the connected components of  $S_2$  whose boundaries  contain $y_1$ and $y_2$ must reach  the boundary of $\Om$, they separate the components of $S_1$  whose boundaries contain $x_0$ and $x_1$. Therefore $x_1$ must belong to the boundary of different components of $S_1$. 
 The same argument that we have used for $x_1$ and $x_0$ proves also that $y_1$ and $y_2$ must belong to the boundary of different components of $C_1$. 

 We conclude that a singular point  $x_0$ of $S_1$ involves at least four different connected components and there correspond to it another singular point, $x_1$, belonging  to a different component of $S_1$ (see Figure \ref{fig:hastobethis}).
\begin{figure}
\begin{center}
\includegraphics[scale=0.4]{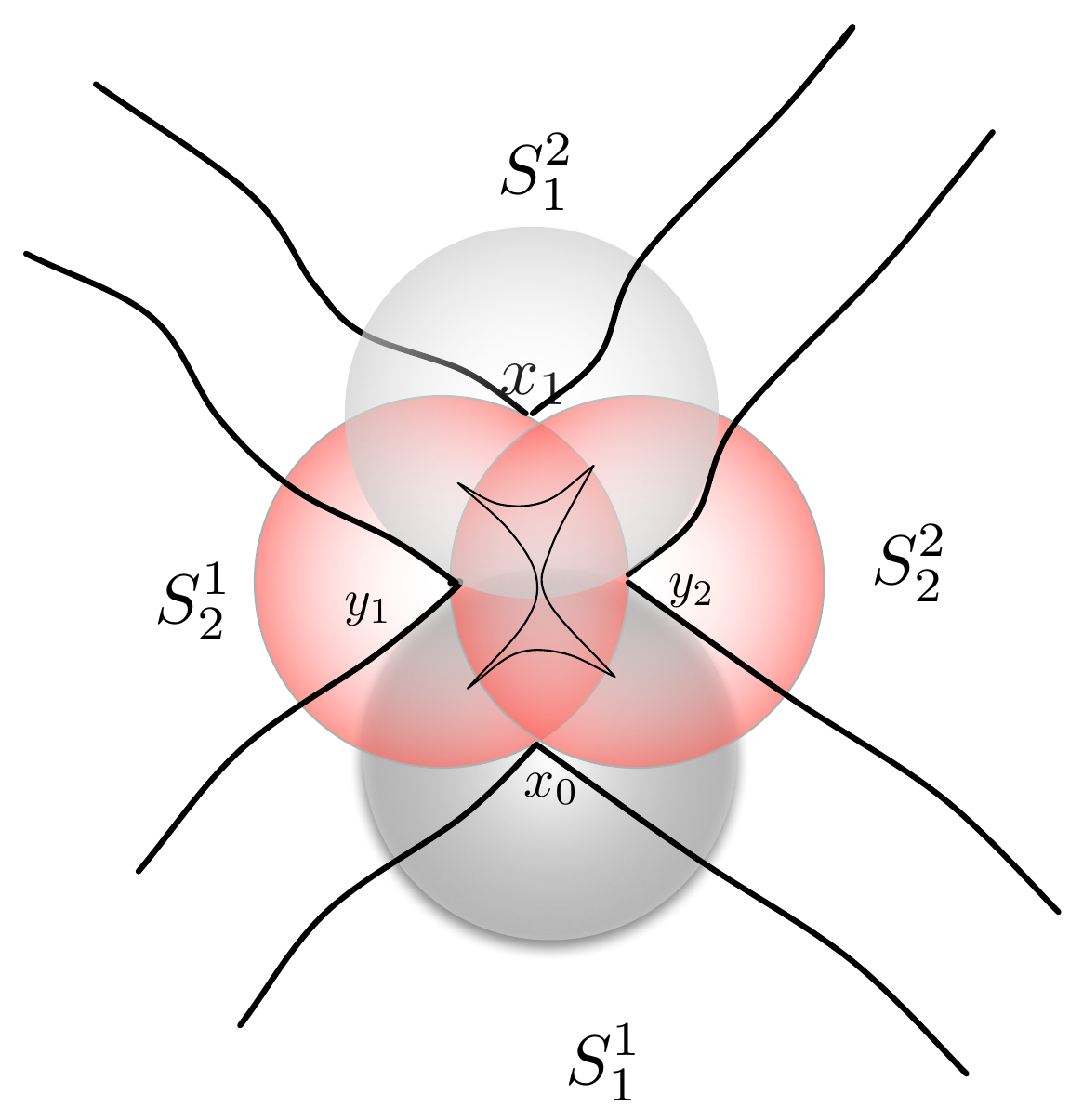}
\caption{A singular point involving  four components}
\label{fig:hastobethis}
\end{center}
\end{figure}
Assume w.l.o.g. that $x_1\in\partial S_1^2$. 
 Since all the connected components  must reach the boundary of $\Om$, $x_1$ is the only singular point of $S_1^2$ corresponding to a singular point of $S_1^1$.  Since the connected component of $S_1$ are finite, we infer that there is a finite number of singular points on $\partial S_1^1$. This argument applied to any connected component of $S_1$ shows that singular points of $S_1$ are finite. This concludes the proof of the theorem.  
 
   \end{proof}



\subsection{Lipschitz regularity of the free boundaries}
In this section, we will show, under some additional assumptions on the domain $\Om$ and the boundary data $f_i$, that we can construct a solution of  problem   \eqref{eq:stmt3} such that 
the free boundaries   $S_i$  of the limiting functions have the following properties: if  $S_i$ has an angle $\theta$ at a singular point, then $\theta>0$. 
This result can be rephrased by saying that the free boundaries are Lipschitz curves of the plane.  
Let us make the assumptions precise. We assume that the domain $\Om$  has the property that for any point of the boundary there are tangent 
$\rho$-balls of radius 
$1+\eta$, with $\eta>0$   contained  in $\Om$ and in its complementary. Precisely:
\begin{equation}
\label{Om2dasslip}
\begin{cases}
\Om \text{ is a bounded domain of $\real^2$;}\\
\text{$\exists\eta>0$ such that  $\forall\,x\in\partial\Om$,  $\exists\,\B_{1+\eta}(y),\,\B_{1+\eta}(z)$ such that}\\ 
\text{\quad$x\in\partial \B_{1+\eta}(y)\cap \partial \B_{1+\eta}(z)$, $\B_{1+\eta}(y)\subset \Om$, and $\B_{1+\eta}(z)\subset \Om^c$}.\\
\end{cases}
\end{equation}
On the boundary data $f_i$, $i=1,\ldots,K$, we assume, 
\begin{equation}
\label{fidasslip}
\begin{cases} 
 f_i\equiv 1\text{ in }\text{supp}\,f_i;\\
 \exists\text{ $c>0$ s. t.   }\forall x\in\partial\Om\cap\supp\,f_i,\, |\B_r(x)\cap \supp\,f_i|\ge c|\B_r(x)|,\\
 d_\rho(\text{supp}\,f_i,\text{supp}\,f_j)\ge1,\,i\neq j, \\
 d_\rho(\text{supp}\,f_i\cap\partial\Om,\text{supp}\,f_{i+1}\cap\partial\Om)=1,\text{ where  }f_{K+1}:=f_1;\\
 \Gamma_i:=\text{supp}\,f_i\cap\partial \Om\text{ is a connected  ($C^2$-) curve of }\partial\Om.\\
 \end{cases}
 \end{equation}
We are going to build a solution of  \eqref{eq:stmt3} such that  the support of any limiting function $u_i$ contains a full neighborhood of $\Gamma_i$ in $\Om$   
with Lipschitz boundary. Then we prove that the free boundaries are Lipschitz. 
  In order to do it, we first prove the existence of a solution $(u_1^\ep,\ldots,u_K^\ep)$ of an obstacle problem associated to system \eqref{eq:stmt3}. Then we show that the functions $u_i^\ep$'s never touch the obstacles, implying that $(u_1^\ep,\ldots,u_K^\ep)$ is actually solution of  
  \eqref{eq:stmt3}. We consider obstacle functions $\psi_i$, for $i=1,\ldots,K$ defined as follows.
  Let $y_1^i,\,y_2^i$ be the endpoints of the curve $\Gamma_i$. For $0<\mu<\lambda<1$, we set:
  $$\Gamma_i^\mu:=\{x\in\Om^c\,|\,d(x,\Gamma_i)=\mu\},$$
  $$\Gamma_i^{\mu,\lambda}:=\{x\in\Gamma_i^\mu\,|\,d(x,y_1^i),\,d(x,y_2^i)\geq \lambda\}.$$
  For $\mu$ and $\lambda$ small enough, $\Gamma_i^{\mu,\lambda}$ is a $C^{1,1}$ curve of $\Om^c$ with endpoints $z_1^i$, $z_2^i$ such that $d(z_l^i,y_l^i)=\lambda$, $l=1,2$. We finally set
  \begin{equation}\label{Ai}A_i:=\{x\in\Om\,|\,d(x,\Gamma_i^{\mu,\lambda})< \lambda\}=\Om \cap\left(\displaystyle\cup_{x\in\Gamma_i^{\mu,\lambda}}B_\lambda(x)\right).\end{equation} 
  
\begin{figure}
{\includegraphics[trim={0 3.7cm 0 0cm}, clip=true, width=13cm]{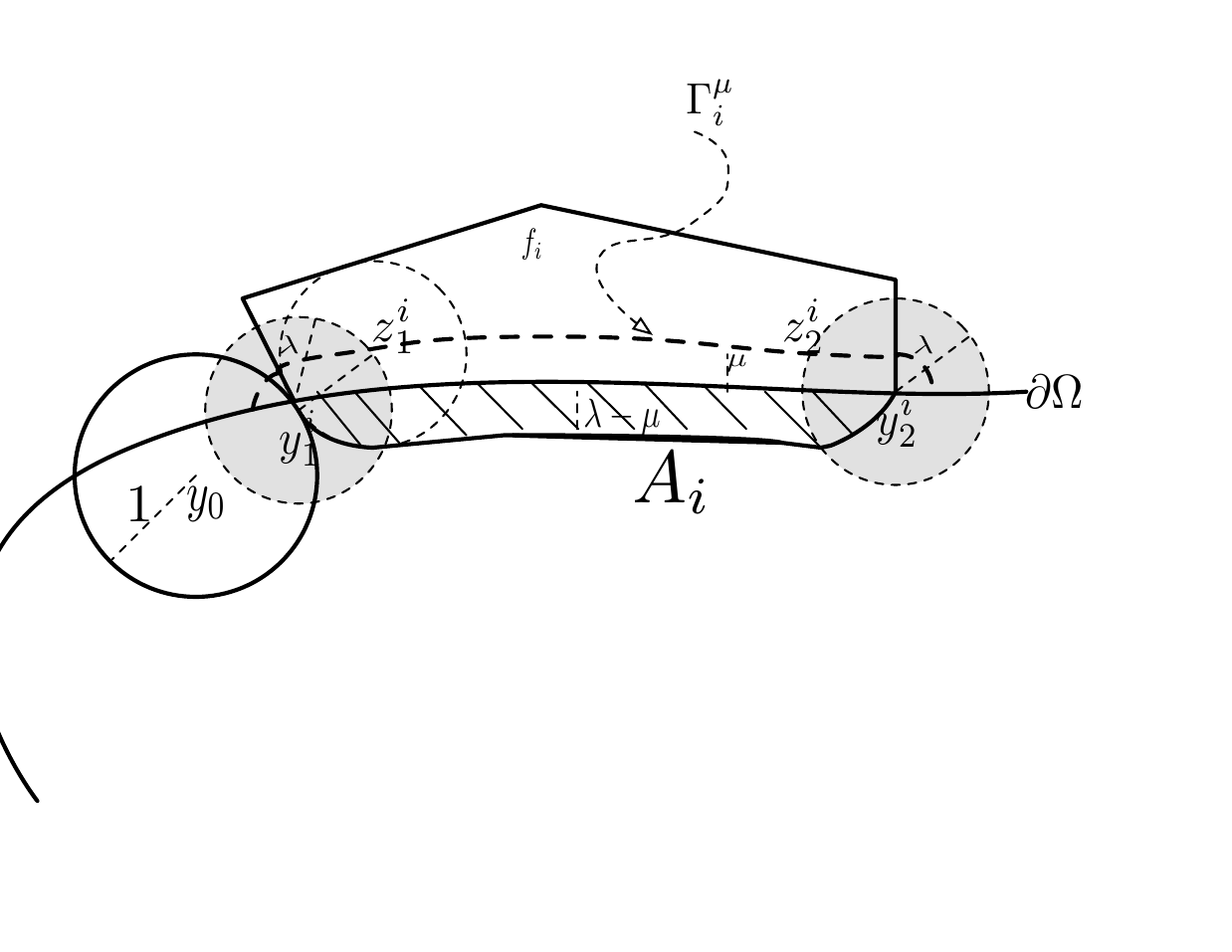}}
\caption{Construction of obstacle}
\label{figure:obstacleconst}
\end{figure}

  Remark that 
 $$\partial A_i=\Gamma_i\cup ( \partial A_i\cap\Om),$$ where 
 $\partial A_i\cap\Om$ is given by the union of two arcs contained respectively in the balls $B_\lambda(z_1^i)$ and $B_\lambda(z_2^i)$, and a curve contained in the set of points of $\Om$ at distance $\lambda-\mu$ from $\Gamma_i$, (see Figure \ref{figure:obstacleconst}). 
Denote by $\alpha_l^i$ the angle of $A_i$  at $y_l^i$, $l=1,2$. Remark that  
\begin{equation}\label{alphabehavior}
\begin{cases}
\alpha_l^i\to\frac{\pi}{2}+o_\lambda(1)&\text{if }\mu\to 0\\
\alpha_l^i\to 0&\text{if }\mu\to\lambda,
\end{cases}
\end{equation}
where $o_\lambda(1)\to0$ as $\lambda\to0$.

We take as obstacles the functions   $\psi_i:(\Om)_1\to\real$ defined as the solutions of the following problem, for $i=1,\ldots,K$,
\begin{equation}\label{psi_i}\begin{cases}\Delta \psi_i=0&\text{in }A_i\\
\psi_i=f_i&\text{on } (\partial \Om)_1\\
\psi_i=0&\text{in }\Omega\setminus A_i.
\end{cases}
\end{equation}
In this section we deal with the solution $(u_1^\ep,\ldots,u_K^\ep)$ of the following obstacle system problem: for $i=1,\ldots,K$,

\begin{equation}\label{obstaclepb}
\begin{cases}
u_i^\ep\geq \psi_i&\text{in }\Omega,\\
\ds \Delta u_i^\ep (x) \leq \frac1{\ep^2} u_i^\ep (x) \sum_{j\ne i} H(u_j^\ep) (x)&\text{in }\Omega,\\
\ds \Delta u_i^\ep (x) = \frac1{\ep^2} u_i^\ep (x) \sum_{j\ne i} H(u_j^\ep) (x)&\text{in }\{u_i^\ep>\psi_i\}\\
\noalign{\vskip6pt}
\ds u_i^\ep = f_i &\text{on }(\partial\Omega)_1.\\
\end{cases}
\end{equation}
 In the whole section we make the following assumptions:
 \begin{equation}\label{finalassmpt2dlip}\begin{cases}\ep>0,\\
 \text{\eqref{Om2dasslip} and  \eqref{fidasslip} hold true},\\
  \text{$H$ is either of the form (\ref{H1}) with $p=1$, or (\ref{H2}) and 
(\ref{varphidecay}) holds true};\\
\text{For $i=1,\ldots,K$,  $A_i$ and $\psi_i$ are defined by \eqref{Ai} and 
\eqref{psi_i} respectively.} 
\end{cases}
\end{equation}

\begin{thm}\label{thm:existenceobstacle}Assume \eqref{finalassmpt2dlip}. 
Then, there exist continuous positive functions $u_1^\ep,\ldots,u_K^\ep$, depending on the pa\-ra\-meter $\ep$, 
viscosity solutions of the problem \eqref{obstaclepb}. In particular 
\begin{equation}\label{usatisfyoutssupppsi}\ds \Delta u_i^\ep (x) = \frac1{\ep^2} u_i^\ep (x) \sum_{j\ne i} H(u_j^\ep) (x)\quad\text{in }\Om\setminus A_i.\end{equation}
 Moreover, for $i=1,\ldots,K$,  
\begin{equation}\label{uisubostc}\Delta u_i^\ep\geq 0\quad \text{in }\Om,\end{equation}
in the viscosity sense. 
\end{thm}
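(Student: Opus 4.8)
The plan is to reproduce the Schauder fixed point argument of Theorem~\ref{thm:existence}, with the linear Dirichlet problems \eqref{Tepproblem} replaced by the corresponding obstacle problems. I would work in the Banach space $B$ of bounded continuous vector-valued functions on $\Om$ with $\|(u_1,\dots,u_K)\|_B=\max_i\sup_\Om|u_i|$, and, as in \eqref{phibarrier}, let $\phi_i$ be harmonic in $\Om$ with $\phi_i=f_i$ on $\partial\Om$. Since $\psi_i\equiv 0$ on $\Om\setminus A_i$, while in $A_i$ both $\psi_i$ and $\phi_i$ are harmonic with $\psi_i=f_i=\phi_i$ on $\Gamma_i$ and $\psi_i=0\le\phi_i$ on $\partial A_i\cap\Om$, the comparison principle yields $0\le\psi_i\le\phi_i$ in $\Om$; hence
$$\Theta':=\bigl\{(u_1,\dots,u_K)\;:\;u_i\in C(\Om),\ \psi_i\le u_i\le\phi_i\ \text{in }\Om,\ u_i=f_i\ \text{on }(\partial\Om)_1\bigr\}$$
is a nonempty (it contains $(\psi_1,\dots,\psi_K)$), closed, convex subset of $B$. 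For $(u_1,\dots,u_K)\in\Theta'$ I would put $T^\ep((u_1,\dots,u_K)):=(v_1^\ep,\dots,v_K^\ep)$, where $v_i^\ep$ solves the obstacle problem for the uniformly elliptic operator $v\mapsto\Delta v-c_i v$ with $c_i:=\tfrac{1}{\ep^2}\sum_{j\ne i}H(u_j)\ge 0$, i.e.
$$\min\Bigl(-\Delta v_i^\ep+c_i\,v_i^\ep,\ v_i^\ep-\psi_i\Bigr)=0\ \text{in }\Om,\qquad v_i^\ep=f_i\ \text{on }(\partial\Om)_1.$$
Because $0\le u_j\le\phi_j$, the coefficient $c_i$ is a nonnegative, bounded, \emph{continuous} function of $x$ (one uses here that $H$ of the form \eqref{H1} with $p=1$, or \eqref{H2}, sends continuous functions to continuous functions); so the classical theory of the obstacle problem for a linear operator with bounded nonnegative zero-order term provides a unique continuous solution $v_i^\ep$, attaining the boundary datum, which is a viscosity supersolution of $-\Delta v+c_iv=0$ in $\Om$ and solves this equation in the non-coincidence set $\{v_i^\ep>\psi_i\}$.

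Next I would verify the three Schauder hypotheses. \emph{(1) $T^\ep(\Theta')\subset\Theta'$:} $v_i^\ep\ge\psi_i$ by construction, and for the upper bound observe that $\phi_i$ satisfies $-\Delta\phi_i+c_i\phi_i=c_i\phi_i\ge 0$, $\phi_i\ge\psi_i$, and $\phi_i=f_i$ on $\partial\Om$, so $\phi_i$ is a supersolution lying above the obstacle with the prescribed data; since $v_i^\ep$ is the least such supersolution, $v_i^\ep\le\phi_i$. \emph{(2) $T^\ep$ is continuous:} if $(u_j)_m\to u_j$ in $L^\infty$ for each $j$, then $c_i^{(m)}\to c_i$ uniformly ($H$ being continuous in the sup norm: linear and bounded in case \eqref{H1}, $1$-Lipschitz in case \eqref{H2}), and with $\omega_m:=(v_i^\ep)_m-v_i^\ep$ I would rerun the barrier argument of step~(2) in the proof of Theorem~\ref{thm:existence}. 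The only new point is that at a point $x_0$ realizing a large positive maximum of $\omega_m$ one has $(v_i^\ep)_m(x_0)>v_i^\ep(x_0)\ge\psi_i(x_0)$, so $x_0$ lies in the non-coincidence set of $(v_i^\ep)_m$, where $\Delta(v_i^\ep)_m=c_i^{(m)}(v_i^\ep)_m$, while $-\Delta v_i^\ep+c_i v_i^\ep\ge 0$ holds in the viscosity sense everywhere; the concave radial comparison function $h_m=\gamma(r^2-|x|^2)$, $\gamma=D_\ep\max_j\|(u_j)_m-u_j\|_{L^\infty}$, then forces a contradiction for $D_\ep$ large, and symmetrically for a large negative minimum, so that $\|(v_i^\ep)_m-v_i^\ep\|_{L^\infty}\le C_\ep\max_j\|(u_j)_m-u_j\|_{L^\infty}$. \emph{(3) $T^\ep(\Theta')$ is precompact:} the $c_i$ are bounded uniformly over $\Theta'$ by $\tfrac{K-1}{\ep^2}\sup_\Om\sum_{j\ne i}H(\phi_j)$, and the obstacle $\psi_i$ and the boundary datum are fixed, so obstacle-problem regularity gives a modulus of continuity for $v_i^\ep$ independent of the input, hence precompactness in $B$. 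By the Schauder fixed point theorem, $T^\ep$ has a fixed point $(u_1^\ep,\dots,u_K^\ep)\in\Theta'$, which is precisely a solution of \eqref{obstaclepb}.

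To finish, I would derive the two further assertions. Since $f_i\not\equiv 0$ on $\partial\Om$ and $c_i\ge 0$, the strong maximum principle applied to the supersolution $u_i^\ep$ of $-\Delta u+c_i u\ge 0$ gives $u_i^\ep>0$ in $\Om$; in particular $u_i^\ep>0=\psi_i$ on $\Om\setminus A_i$, so $\Om\setminus A_i$ lies in the non-coincidence set and the equation holds there with equality, which is \eqref{usatisfyoutssupppsi}. Moreover $\psi_i$ is subharmonic in $\Om$ in the viscosity sense: it is harmonic in $A_i$, identically $0$ in the interior of $\Om\setminus A_i$, and on $\partial A_i\cap\Om$ it attains its minimum value $0$, so any smooth function touching $\psi_i$ from above there has a local minimum and therefore nonnegative Laplacian. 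Consequently, on $\{u_i^\ep>\psi_i\}$ one has $\Delta u_i^\ep=c_i u_i^\ep\ge 0$, while at a coincidence point a test function touching $u_i^\ep=\psi_i$ from above also touches $\psi_i$ from above and hence has nonnegative Laplacian; combining the two cases gives $\Delta u_i^\ep\ge 0$ in $\Om$ in the viscosity sense, which is \eqref{uisubostc}.

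The bulk of the work, and the main obstacle, is checking that the comparison and barrier arguments of Theorem~\ref{thm:existence} survive in the presence of the obstacle: each time one must use that wherever the relevant function strictly exceeds the obstacle it again solves the equation with equality, and that $\phi_i$ is itself admissible for the obstacle problem. The single genuinely new ingredient relative to Theorem~\ref{thm:existence} is the subharmonicity of $\psi_i$ — which is what yields \eqref{uisubostc} — and this is immediate once one notices that $\psi_i$ is nonnegative and vanishes on $\partial A_i\cap\Om$.
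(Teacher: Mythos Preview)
Your argument is correct and follows the paper's approach for the existence part: the paper simply says ``the proof of the existence of a solution of \eqref{obstaclepb} is a slight modification of the proof of Theorem~\ref{thm:existence}'' with $\Theta$ replaced by exactly your $\Theta'$, and you have spelled out precisely those modifications (in particular, the observation that at a positive maximum of $\omega_m$ the larger function is strictly above the obstacle, so the equation holds there).

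Where you genuinely differ is in the proof of \eqref{uisubostc}. The paper argues in three pieces: $\Delta u_i^\ep\ge 0$ on $\{u_i^\ep>\psi_i\}$, $\Delta u_i^\ep=\Delta\psi_i=0$ on the interior of $\{u_i^\ep=\psi_i\}$, and then it invokes regularity of the obstacle problem (the free boundaries $\partial\{u_i^\ep>\psi_i\}$ have locally finite $(n-1)$-Hausdorff measure, citing \cite{caffarelli_obstacle}) to conclude that nothing bad happens across $\partial\{u_i^\ep>\psi_i\}$. Your route is more elementary: you observe that $\psi_i$ itself is a viscosity subsolution of $\Delta=0$ in $\Om$ (harmonic in $A_i$, zero outside, and at points of $\partial A_i\cap\Om$ it attains its global minimum $0$, so any test function touching from above has a local minimum there), and then at any coincidence point a test function touching $u_i^\ep$ from above also touches $\psi_i$ from above. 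This avoids the appeal to obstacle-problem free boundary regularity entirely; the paper's argument, on the other hand, would still go through if $\psi_i$ were merely harmonic on its support without being globally subharmonic.
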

\begin{proof}
The proof of the existence of a solution $(u_1^\ep,\ldots,u_K^\ep)$ of \eqref{obstaclepb} is a slightly modification of the proof of Theorem~\ref{thm:existence}. Here \begin{equation*}\begin{split}
\Theta = \Big\{ &(u_1, u_2,\ldots, u_K)\,|\,u_i:\Om\rightarrow\real \text{ is continuous, }\, \psi_i\le u_i \le \phi_i\ \text{ in }\ \Omega,\,
u_i = f_i\ \text{ on }\ (\partial\Omega)_1\Big\}.
\end{split}
\end{equation*} 
In the set $\Om\setminus A_i$, we have that $u_i^\ep>0=\psi_i$ which implies \eqref{usatisfyoutssupppsi}. Inequality \eqref{uisubostc} is a consequence of the following facts: 
in the set $\{u_i^\ep>\psi_i\}$ we have $\Delta u_i^\ep  = \frac1{\ep^2} u_i^\ep  \sum_{j\ne i} H(u_j^\ep) \geq 0$; 
in the interior of the set $\{u_i^\ep=\psi_i\}$, $\Delta u_i^\ep=\Delta \psi_i=0$;  the free boundaries 
$\partial \{u_i^\ep>\psi_i\}$ have locally finite $n-1$-Hausdorff  measure, see \cite{caffarelli_obstacle_1998}.
\end{proof}
 \begin{thm}\label{convergencecor2dlip} Assume  \eqref{finalassmpt2dlip}.  Let $(u_1^\ep,\ldots,u_K^\ep)$ be
viscosity solution of the problem  \eqref{obstaclepb}.
Then, there exists a subsequence $(u_1^{\ep_l},\ldots,u_K^{\ep_l})$ and continuous functions $ (u_1,\ldots, u_K)$ defined on $\overline{\Omega}$, such that 
\begin{equation*} (u_1^{\ep_l},\ldots,u_K^{\ep_l})\to  (u_1,\ldots, u_K)\quad\text{as }l\to+\infty,\quad\text{a.e. in }\Om\end{equation*} and the convergence of 
$u_i^{\ep_l}$ to $u_i$ is locally uniform in the support of $u_i$. 
Moreover, we have:
\begin{itemize}
\item[i)] the $u_i$'s  are  locally Lipschitz continuous in $\Om$, in particular, there exists $C_0>0$ such that, if $d_\rho(x,\partial\Om)\ge r$, then
\begin{equation}\label{ulipestim2d}|\nabla u_i(x)|\le\frac{C_0}{r}.
\end{equation}
 \item[ii)]the $u_i$'s have disjoint supports, more precisely:
$$u_i\equiv 0\quad\text{in the set }\quad  \{x\in\Om\,|\,d_{\rho}(x,\supp\, u_j)\le1\}\quad\text{for any }j\neq i.$$
\item[iii)] $\Delta u_i =0$ when $u_i >0$.
\item[iv)]$u_i\geq \psi_i$ in $\Om$.
\item[v)]$u_i=f_i$ on $\partial\Om$.
\end{itemize}
\end{thm}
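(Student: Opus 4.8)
The plan is to run, almost line for line, the proof of Corollary \ref{convergencecor}, using that by \eqref{usatisfyoutssupppsi} the obstacle problem \eqref{obstaclepb} is the genuine system \eqref{eq:stmt3} on $\Om\setminus A_i$, that $u_i^\ep$ is globally subharmonic by \eqref{uisubostc}, and that $\psi_i\le u_i^\ep\le\phi_i$ in $\Om$. The sets $A_i$ are fixed (independent of $\ep$) and, by \eqref{Ai}, are thin neighbourhoods of $\Gamma_i\subset\supp f_i$ attached to $\partial\Om$; choosing the parameters $\mu,\lambda$ small we may keep them pairwise $\rho$-far apart, so the only modification forced on Section~\ref{Lipestsection} is that the various balls on which the exponential-decay barriers are built must stay inside $\Om\setminus A_j$ when estimating $u_j^\ep$, which is exactly where equality holds in \eqref{obstaclepb}.

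First I would record the obstacle analogues of Lemma \ref{uj=0closui}, Corollary \ref{convedisjointcor} and Lemma \ref{uj=0closfi}. Their proofs are unchanged: the normalization $u_i^\ep/\sup_{\partial\Om}f_i\le1$ still holds because $u_i^\ep\le\phi_i$ and $\phi_i$ is harmonic; the subsolution property $\Delta u_j^\ep\ge\ep^{-2}u_j^\ep\sum_{k\ne j}H(u_k^\ep)$ used through Lemmas \ref{lem:gen1}--\ref{lem:gen2} is valid on the balls in question because they sit in $\Om\setminus A_j$; and $\Delta u_i^\ep\ge0$ everywhere supplies the subharmonic mean-value input. In this way one obtains, away from $\partial\Om$, the interior bound \eqref{lipcor5.4} and $0\le\Delta u_i^\ep\le o(1)$ uniformly on compact subsets of $S(u_i)$, exactly as in Corollary \ref{convedisjointcor} (where $u_i^\ep=\psi_i$ one even has $\Delta u_i^\ep=\Delta\psi_i=0$).

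The genuinely new input is the behaviour at $\partial\Om$. Since $\psi_i\le u_i^\ep\le\phi_i$ with $\psi_i$ and $\phi_i$ continuous on $\overline\Om$ and $\psi_i=\phi_i=f_i$ on $\partial\Om$, for $x_0\in\partial\Om$ and $x$ near it one has $\psi_i(x)-\psi_i(x_0)\le u_i^\ep(x)-u_i^\ep(x_0)\le\phi_i(x)-\phi_i(x_0)$; hence $u_i^\ep=f_i$ on $\partial\Om$ for every $\ep$, and $u_i^\ep$ has a modulus of continuity at $\partial\Om$ that is independent of $\ep$. Combined with the interior Lipschitz estimates of the previous step --- applied, as in Claim~2 of Corollary \ref{convergencecor}, to $(u_i^\ep-\ep^\theta)_+$ on the analogues of the sets $\Om^{r,\ep}_i$ and then letting $r\to0$ by a diagonal argument --- this gives equicontinuity of $(u_i^\ep)$ on $\overline\Om$. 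Arzel\`a--Ascoli then yields a subsequence converging in $C(\overline\Om)$, and locally uniformly on $S(u_i)$, to continuous functions $(u_1,\dots,u_K)$; passing to the limit in $\psi_i\le u_i^\ep$ and in $u_i^\ep|_{\partial\Om}=f_i$ gives (iv) and (v). Properties (ii) and (iii) then follow as in Corollary \ref{convergencecor}: strip-$1$ separation of the supports from the obstacle analogue of Lemma \ref{uj=0closui} together with the local uniform convergence on supports, and harmonicity of $u_i$ on $S(u_i)$ from $0\le\Delta u_i^\ep\le o(1)$.

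The remaining point, and the main obstacle, is the quantitative estimate \eqref{ulipestim2d}, which must hold at every $x$ with $d_\rho(x,\partial\Om)\ge r$ --- not only on the sets $\Om^{r,\ep}_i$ far from the $\supp f_j$ where Corollary \ref{convedisjointcor} applies directly. For such an $x$: if a $\rho$-ball $\B_{cr}(x)$ lies in $S(u_i)$, the interior gradient estimate for the harmonic function $u_i$ (bounded by $\sup_{\partial\Om}f_i$) gives $|\nabla u_i(x)|\le C_0/r$; otherwise $x$ is within $\rho$-distance $\le cr$ of $\partial S(u_i)$ (note $u_i$ is harmonic throughout $S(u_i)$, which contains the interior of $A_i$ because $u_i\ge\psi_i>0$ there), and one combines the exterior tangent $\rho$-ball of radius $1$ at the nearest free-boundary point (Corollary \ref{tangebtballcor}), which forces $u_i$ to vanish on that ball and hence to have at most linear growth off $\partial S(u_i)$, with the interior estimate on $\B_{cr}(x)$, supplementing this near $\partial\Om$ with the barriers $\psi_i,\phi_i$. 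Making these pieces fit together so that the constant is uniformly of order $1/r$, including inside the obstacle sets and up to the free boundary, is the delicate part of the argument.
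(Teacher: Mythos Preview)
Your proposal is correct and follows the paper's approach: rerun Lemmas \ref{uj=0closui}, \ref{uj=0closfi} and Corollary \ref{convedisjointcor} with $\supp f_i$, $\supp f_j$ replaced by $A_i=\supp\psi_i$, $A_j=\supp\psi_j$ (and $\overline\Gamma_j^\sigma$ by $\{\psi_j\ge\sigma\}$), then use the sandwich $\psi_i\le u_i^\ep\le\phi_i$ with $\psi_i=\phi_i=f_i$ on $\partial\Om$ to get (iv) and (v). The only place you diverge is \eqref{ulipestim2d}: the paper simply says it follows from \eqref{lipcor5.4} (with the $A_j$ substitution), since at points with $d_\rho(x,\partial\Om)\ge r$ where the hypotheses of Corollary \ref{convedisjointcor} fail the limit $u_i$ vanishes identically by the analogue of Claim~1 in Corollary \ref{convergencecor}; your detour through the exterior tangent $\rho$-ball is unnecessary and, as written, forward-references a property that the paper establishes only after the present theorem.
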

\begin{proof}
The convergence theorem is again a consequence of  Lemma  \ref{uj=0closui}, Corollary \ref{convedisjointcor} and Lemma \ref{uj=0closfi} which hold true with 
$\supp\, f_i$ and $\supp\, f_j$ replaced respectively by $\supp\,\psi_i=A_i$ and 
$\supp\,\psi_j=A_j$ (in Lemma  \ref{uj=0closui} and Corollary \ref{convedisjointcor}), and $\overline{\Gamma}_j^\sigma$ defined as the set $\{\psi_j\ge\sigma\}$ (in Lemma \ref{uj=0closfi}).
Estimates \eqref{lipcor5.4} of Corollary \ref{convedisjointcor} in particular imply   \eqref{ulipestim2d}.
Property (iv) is an immediate  consequence of $u^\ep_i\geq \psi_i$ in $\Om$. Finally, (v) is implied by the fact that $\psi_i\leq u^\ep_i\leq \phi_i$  in $\Om$, and 
$\phi_i=\psi_i=f_i$ on $\partial\Om$, where $\phi_i$ is given by \eqref{phibarrier}.
\end{proof}
As proven in Corollary \ref{tangebtballcor}, one can show that the free boundaries satisfy the exterior $\rho$-ball condition with radius 1, that they have  finite 1-Hausdorff dimensional measure and that the distance between the support of two different functions is precisely one.
We are now going to prove that, if $\lambda-\mu$ is small enough, then any  solution of the obstacle problem  \eqref{obstaclepb} never touches the obstacles inside the domain $\Om$. To this aim, we first need the following lemma:
\begin{lem}
Assume  \eqref{finalassmpt2dlip}. Then, there exists $c>0$ such that, for $i=1,\ldots, K$, we have
\begin{equation}\label{normalderivativespsi}\frac{\partial\psi_i}{\partial \nu_i}(x)\leq -\frac{c}{\lambda-\mu}\quad\text{for any }x\in \partial A_i\cap\Om,\end{equation}
where $\nu_i$ is the exterior normal vector to the set $A_i$.
\end{lem}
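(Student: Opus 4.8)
The plan is to establish \eqref{normalderivativespsi} by a Hopf-type barrier argument. The geometric point is that, for $\mu<\lambda$ small, $A_i$ is a thin layer hugging $\Gamma_i$: from its definition \eqref{Ai} as $\Om\cap\bigcup_{x\in\Gamma_i^{\mu,\lambda}}B_\lambda(x)$, and the fact that $\Gamma_i^{\mu,\lambda}$ is a $C^{1,1}$ curve lying at Euclidean distance $\mu$ from the $C^2$ curve $\Gamma_i$, one gets $A_i\subset\{x\in\Om:\;d(x,\partial\Om)<\lambda-\mu\}$, that the nearest point of $\partial\Om$ to any point of $A_i$ lies on $\Gamma_i$, and that $A_i$ satisfies an interior ball condition of radius $c_1\min\{\lambda-\mu,\,d(x_0,\partial\Om)\}$ at each $x_0\in\partial A_i\cap\Om$, for a constant $c_1>0$ depending only on $\Om$ and the curves $\Gamma_i$ (in particular not on $\mu,\lambda,\ep$). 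On this layer $\psi_i$ is harmonic, nonnegative, $\equiv1$ on $\Gamma_i\subset(\partial\Om)_1$ and $\equiv0$ on $\partial A_i\cap\Om$, so it must fall from values of universal size down to $0$ over a distance $\lesssim\lambda-\mu$, which forces $\partial\psi_i/\partial\nu_i\lesssim-1/(\lambda-\mu)$ along the inner wall; the task is to make this quantitative.

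The crux is a uniform lower bound for $\psi_i$ at a point sitting well inside $A_i$ near a given $x_0\in\partial A_i\cap\Om$. Claim: there is a universal $c_0>0$ such that, for every such $x_0$ outside fixed-size neighbourhoods of the endpoints $y_1^i,y_2^i$ of $\Gamma_i$, there is a ball $B_{r_0}(\xi_0)\subset A_i$ with $r_0:=c_1(\lambda-\mu)$ and $x_0\in\partial B_{r_0}(\xi_0)$ such that $\psi_i(\xi_0)\ge c_0$. To prove the claim I would dilate the configuration by the factor $1/(\lambda-\mu)$ about $x_0$: since $\Gamma_i$ is $C^2$ and $A_i$ has thickness $\asymp\lambda-\mu$, the rescaled domain converges, uniformly in $\mu<\lambda$, to a fixed model region — a slab of width $\asymp1$ one of whose walls is the image of a sub-arc of $\Gamma_i$ of length $\asymp1$ (this uses \eqref{fidasslip}: $\Gamma_i$ is a connected curve and $\supp f_i$ has positive density at its points). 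The values of $\psi_i$ do not change, $\psi_i$ stays harmonic and $\ge0$, and is $\equiv1$ on that sub-arc; the harmonic measure of the sub-arc, in a fixed Lipschitz subdomain that contains the rescaled $\xi_0$ and meets $\partial A_i$ only along the sub-arc, is a universal positive constant, giving $\psi_i(\xi_0)\ge c_0$. Granting the claim, one concludes: by Harnack's inequality on $B_{3r_0/4}(\xi_0)$, $\min_{\partial B_{r_0/2}(\xi_0)}\psi_i\ge c_0/C_n$; comparing $\psi_i$ from below in the annulus $B_{r_0}(\xi_0)\setminus\overline{B_{r_0/2}(\xi_0)}\subset A_i$ with the explicit harmonic function $\Phi(x)=\dfrac{c_0}{C_n\log2}\log\dfrac{r_0}{|x-\xi_0|}$ — which vanishes on $\partial B_{r_0}(\xi_0)\ni x_0$, equals $c_0/C_n$ on $\partial B_{r_0/2}(\xi_0)$, and lies below $\psi_i$ on the boundary of the annulus since $\psi_i\ge0$ — the maximum principle gives $\psi_i\ge\Phi$ in the annulus, and since $\psi_i(x_0)=\Phi(x_0)=0$, differentiating at $x_0$ in the direction $\nu_i(x_0)$ (the outward radial direction of $B_{r_0}(\xi_0)$, understood one-sidedly where $\partial A_i$ is not $C^1$) yields
\[
\frac{\partial\psi_i}{\partial\nu_i}(x_0)\le\frac{\partial\Phi}{\partial\nu_i}(x_0)=-\frac{c_0}{C_n\,r_0\log2}=-\frac{c_0}{C_n c_1\log2}\cdot\frac{1}{\lambda-\mu}=:-\frac{c}{\lambda-\mu}.
\]

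The step I expect to be the main obstacle is the behaviour near the endpoints $y_1^i,y_2^i$ of $\Gamma_i$, where $A_i$ forms a corner of opening $\alpha_l^i\in(0,\tfrac{\pi}{2}+o_\lambda(1))$ (see \eqref{alphabehavior}), so the dilation by $1/(\lambda-\mu)$ no longer normalises the geometry and the lower bound $\psi_i\ge c_0$ above is not available. For $x_0$ in a fixed neighbourhood of such a corner I would instead argue directly in the sector of opening $\alpha_l^i$ with vertex $y_l^i$: sandwiching $\psi_i$ between the harmonic functions with boundary values $1$ on the edge contained in $\Gamma_i$, $0$ on the opposite edge, and $0$, respectively $1$, on a fixed arc $\{|x-y_l^i|=\rho_0\}$, and using that $\alpha_l^i$ is bounded away from $\pi$ so that the corresponding Poisson kernel decays strictly faster than linearly, one obtains $\psi_i\approx1-\theta/\alpha_l^i$ at scales below the transition scale $\asymp(\lambda-\mu)/\alpha_l^i$; reading off the derivative along the edge $\theta=\alpha_l^i$ then gives a bound of order $-1/(d(x_0,y_l^i)\,\alpha_l^i)$, which is $\le-c/(\lambda-\mu)$ throughout that range, whereas for $x_0$ farther from $y_l^i$ the corner has already been truncated by the far wall and the slab estimate of the previous paragraph applies. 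Pinning down the universal constants, the uniform convergence of the rescaled domains, and this sector analysis are the only non-routine points; the remainder is a routine comparison.
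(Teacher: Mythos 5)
Your overall strategy (a Hopf-type barrier/comparison) is the right kind of argument, but it is not the one the paper uses, and as written it has two genuine gaps that the paper's construction is designed to avoid. Your route requires (i) an interior lower bound $\psi_i(\xi_0)\geq c_0$ at a point $\xi_0$ at distance $\asymp \lambda-\mu$ from $\partial A_i$, justified only heuristically by a rescaling/harmonic-measure argument, and (ii) a separate analysis near the corners $y_l^i$, which you yourself flag as ``the main obstacle'' and resolve only by a sketched sector computation. Neither of these steps is carried out, and (i) in particular is not routine, since the constant must be uniform as $\lambda-\mu\to 0$ and also uniform up to the endpoints where your interior-ball radius $c_1\min\{\lambda-\mu,d(x_0,\partial\Om)\}$ degenerates.

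The paper sidesteps both difficulties by exploiting directly the definition \eqref{Ai} of $A_i$ as $\Om\cap\bigcup_{z\in\Gamma_i^{\mu,\lambda}}B_\lambda(z)$. For any $x_0\in\partial A_i\cap\Om$ there is a $z\in\Gamma_i^{\mu,\lambda}$ with $x_0\in\partial B_\lambda(z)$ and $B_\lambda(z)\cap\Om\subset A_i$; moreover $d(z,\partial\Om)=\mu$, so $B_\mu(z)\subset\Om^c$ and $\partial\Om\cap B_\lambda(z)\subset\Gamma_i$, where $\psi_i\equiv 1$. Taking the annular harmonic function $\phi$ on $\{\mu<|x-z|<\lambda\}$ with $\phi=1$ on $\partial B_\mu(z)$ and $\phi=0$ on $\partial B_\lambda(z)$, one has $\psi_i\geq\phi$ on $\partial(B_\lambda(z)\cap\Om)$ --- namely $\psi_i=1\geq\phi$ on $\partial\Om\cap B_\lambda(z)$ and $\psi_i\geq 0=\phi$ on $\partial B_\lambda(z)\cap\Om$ --- so $\psi_i\geq\phi$ throughout $B_\lambda(z)\cap\Om$ and the bound $\partial_{\nu_i}\psi_i(x_0)\leq\partial_r\phi(\lambda)=-\frac{1}{\lambda\log(\lambda/\mu)}\leq -\frac{\mu}{\lambda(\lambda-\mu)}$ follows. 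Notice this barrier reaches height $1$ by touching $\partial\Om$, so no Harnack inequality, no interior estimate, and no rescaling limit are needed, and the argument is identical for $z$ an endpoint $z_l^i$ (the corner case), since $\partial\Om\cap B_\lambda(z_l^i)\subset\Gamma_i$ continues to hold by the choice $d(z_l^i,y_l^i)=\lambda$. You should replace your small tangent ball $B_{r_0}(\xi_0)$, $r_0\asymp\lambda-\mu$, with the much larger ball $B_\lambda(z)$ provided by the definition of $A_i$, and the annulus $B_{r_0}\setminus B_{r_0/2}$ with $B_\lambda(z)\setminus B_\mu(z)$; the comparison on $B_\lambda(z)\cap\Om$ then closes the proof without further work.
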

\begin{proof}
Fix any point $x_0\in \partial A_i\cap\Om$. Then, by definition of $A_i$, there exists a point $z\in\Om^c$ such that $d(z,\partial\Om)=\mu$, $B_\lambda(z)\cap\Om\subset A_i$ 
 and $x_0\in\partial B_\lambda(z)$. Consider now the ring $\{x\,|\,\mu<|x-z|<\lambda\}$ and the barrier function $\phi$ solution of 
 $$\begin{cases}\Delta\phi=0&\text{in }\{x\,|\,\mu<|x-z|<\lambda\}\\
 \phi=1&\text{on }\partial B_\mu(z)\\
 \phi=0&\text{on }\partial B_\lambda(z).\\
 \end{cases}
$$
The function $\psi_i$ is harmonic in $B_\lambda(z)\cap\Om$, $\psi_i\ge0=\phi$ on $\partial B_\lambda(z)\cap\Om$ and  $\psi_i=1\geq\phi$ on $\partial\Om\cap  B_\lambda(z)$. 
Therefore by the comparison principle, we have that
$\psi_i(x)\ge\phi(x)$ for any $x\in B_\lambda(z)\cap\Om$, and this implies  \eqref{normalderivativespsi} at $x=x_0$.
\end{proof}

  \begin{thm}\label{u>psi} Assume  \eqref{finalassmpt2dlip}. Let $(u_1,\ldots,u_K)$ be the limit of a converging subsequence of $(u_1^\ep,\ldots,u_K^\ep)$, solution of  \eqref{obstaclepb}. Set $a:=\lambda-\mu$. Then, there exists $a_0>0$ such that for any $a<a_0$, we have, for $i=1,\ldots,K$, 
  \begin{equation}\label{u>psoeq}u_i>\psi_i\quad\text{in }\overline{A}_i\cap\Om.
  \end{equation}
\end{thm}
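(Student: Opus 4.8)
The plan is to argue by contradiction using the free boundary condition across supports together with the gradient estimate \eqref{normalderivativespsi} for the obstacles. Suppose that for a sequence $a_k\to 0$ there is a limit $(u_1,\ldots,u_K)$ and an index $i$ with $\{u_i=\psi_i\}\cap\overline A_i\cap\Om\neq\emptyset$. Since $u_i\geq\psi_i$ and both are continuous, the coincidence set $\{u_i=\psi_i\}$ is closed; take a point $x_0$ in this set that lies on $\partial A_i\cap\Om$ (one first checks that the coincidence set must reach $\partial A_i\cap\Om$: inside $A_i$ the obstacle $\psi_i$ is harmonic while $u_i$ is superharmonic there in the region where the competitors are exponentially small, and in the interior of $A_i$ a strict touching point would contradict the strong maximum principle applied to $u_i-\psi_i$, which is superharmonic and $\geq 0$; so the touching propagates out to $\partial A_i\cap\Om$). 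At such an $x_0$, since $\psi_i\equiv 0$ in $\Om\setminus A_i$ and $u_i\geq 0$ with $u_i(x_0)=\psi_i(x_0)=0$, the point $x_0$ is a free boundary point of $u_i$, i.e.\ $x_0\in\partial\{u_i>0\}$.

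Next I would extract geometric information at $x_0$ from the results already proved. By Corollary~\ref{tangebtballcor} there is an exterior tangent $\rho$-ball of radius $1$ at $x_0$, and by Theorem~\ref{lem:6.1} there is $j\neq i$ with $\overline{\B_1(x_0)}\cap\partial\{u_j>0\}\neq\emptyset$; pick $y_0$ in that intersection, so $\rho(x_0-y_0)=1$. The key point is a comparison-of-masses argument, exactly in the spirit of \eqref{boundlaplacian} and of the proof of Lemma~\ref{singisolated}: on a small set $D_\sigma(x_0)$ straddling $\partial A_i$ near $x_0$, one has, from \eqref{ineqmasslap0}-type reasoning,
$$\int_{D_\sigma(x_0)}\Delta u_i\ \leq\ \sum_{k\neq i}\int_{(D_\sigma(x_0))_1}\Delta u_k.$$
On the left-hand side, however, $u_i$ stays \emph{above} the obstacle $\psi_i$, which by \eqref{normalderivativespsi} leaves $\partial A_i\cap\Om$ with normal derivative of size at least $c/a$; hence $u_i$ itself has normal derivative bounded below by a quantity comparable to $1/a$ in a definite fraction of $\partial A_i\cap D_\sigma(x_0)$ adjacent to the touching point, giving a lower bound of order $\sigma/a$ for the left-hand side (using that $\partial\{u_i>0\}$ has locally finite $\mathcal H^{n-1}$ measure, Corollary~\ref{hausmescor}, with $n=2$ so this is an arclength). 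On the right-hand side, the competitor masses are controlled by the growth estimates of Lemma~\ref{growthcond} / Lemma~\ref{general cone} near $y_0$, giving an upper bound of order $C\sigma$ (with a constant independent of $a$, since it only depends on the fixed limiting configuration near $y_0$, not on the obstacle). Combining,
$$\frac{c}{a}\,\sigma\ \leq\ C\,\sigma,$$
which is impossible once $a<a_0:=c/C$. This contradiction proves \eqref{u>psoeq}.

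The main obstacle, and the step that needs the most care, is making the lower bound on $\int_{D_\sigma(x_0)}\Delta u_i$ rigorous: one must pass the obstacle gradient bound \eqref{normalderivativespsi}, which is a statement about $\psi_i$ on $\partial A_i\cap\Om$, to a lower bound on $\partial u_i/\partial\nu_i$ along the \emph{free boundary} of $u_i$ near $x_0$, and one must check that this free boundary actually runs alongside $\partial A_i\cap\Om$ in a set of arclength comparable to $\sigma$ (rather than, say, peeling away immediately). This uses that $u_i\geq\psi_i$ pins $\{u_i>0\}$ to contain $\{\psi_i>0\}=A_i$ up to the touching, together with the exterior $\rho$-ball condition of Corollary~\ref{tangebtballcor} to control the free boundary from the other side, so that on a fixed portion of $\partial A_i\cap D_\sigma(x_0)$ one genuinely has $\partial\{u_i>0\}$ accompanied by a definite one-sided harmonic barrier forcing $\partial u_i/\partial\nu_i\gtrsim 1/a$. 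Once that is in place, the scaling $\sigma$ cancels and the smallness of $a$ closes the argument. A parallel argument handles the case $H$ of the form \eqref{H2}, replacing \eqref{ineqmasslap0} by the one-dimensional comparison \eqref{eq: sup case estimate} as in the proof of Theorem~\ref{lem:6.1}.
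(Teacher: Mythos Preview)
Your overall strategy---reduce to a touching point on $\partial A_i\cap\Om$, compare Laplacian masses across the interfaces, and force a contradiction between a $1/a$ lower bound for $\partial u_i/\partial\nu_i$ and an $a$-independent upper bound for $\partial u_j/\partial\nu_j$---is exactly the paper's. However, your execution has a genuine gap on the upper-bound side and is more complicated than necessary on the lower-bound side.

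\textbf{The gap.} You bound the competitor mass $\sum_{k\ne i}\int_{(D_\sigma)_1}\Delta u_k$ by $C\sigma$ via Lemma~\ref{growthcond}/\ref{general cone} and assert that $C$ is independent of $a$ ``since it only depends on the fixed limiting configuration near $y_0$, not on the obstacle.'' But the limiting configuration $(u_1,\ldots,u_K)$ \emph{does} depend on $a$: the obstacles $\psi_i$ vary with $a=\lambda-\mu$, hence so do the solutions $u_i^\ep$ and their limits. The constants in Lemmas~\ref{growthcond} and~\ref{general cone} depend on the values of $u_j$ at a fixed scale near $y_0$, and you have not shown these are uniform in $a$. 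The paper closes this gap differently: it uses assumption~\eqref{Om2dasslip} (interior tangent $\rho$-balls of radius $1+\eta$ in $\Om$) to argue that $\B_1(y_0)$, being tangent to $A_i$ at $x_0$, lies entirely inside $\Om$, so $d_\rho(y_0,\partial\Om)\geq 1$. Then the interior Lipschitz estimate~\eqref{ulipestim2d} gives $|\nabla u_j(y_0)|\leq C_0$ with $C_0$ depending only on the boundary data, not on $a$. You never invoke~\eqref{Om2dasslip}, and without it the contradiction does not close.

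\textbf{The lower bound is easier than you fear.} Your last paragraph flags the passage from $\partial\psi_i/\partial\nu_i\leq -c/a$ on $\partial A_i$ to a lower bound on $\partial u_i/\partial\nu_i$ along the \emph{free boundary} as the delicate step. The paper sidesteps this: since $A_i$ has an interior tangent ball at $x_0$ and $\partial\{u_i>0\}$ has an exterior tangent $\rho$-ball (Corollary~\ref{tangebtballcor}), $x_0$ is a \emph{regular} free boundary point, and regular points are isolated from singular ones (Lemma~\ref{singisolated}). Thus near $x_0$ the free boundary is $C^1$, the normal derivative is well-defined, and $u_i\geq\psi_i$ with $u_i(x_0)=\psi_i(x_0)=0$ gives directly $\partial u_i/\partial\nu_i(x_0)\leq\partial\psi_i/\partial\nu_i(x_0)\leq -c/a$. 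The mass comparison then yields $-\partial u_j/\partial\nu_j(y_0)\geq \tilde c/a$ pointwise, which collides with the uniform Lipschitz bound above.
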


\begin{proof}
In order to prove \eqref{u>psoeq}, it is enough to show that
\begin{equation}\label{u>psibpooubndary}u_i(x)>\psi_i(x),\quad\text{for any }x\in\partial A_i\cap\Om.
\end{equation}
Indeed, if \eqref{u>psibpooubndary} holds true, since by \eqref{psi_i} and Theorem \ref{convergencecor2dlip}, both $u_i$ and $\psi_i$ are harmonic in $A_i$, the strong maximum principle  implies $u_i>\psi_i$ in $A_i$. This and \eqref{u>psibpooubndary} give \eqref{u>psoeq}.
Suppose by contradiction that there exists a point $x_0\in\partial A_i\cap\Om$ such that $u_i(x_0)=\psi_i(x_0)=0$. Then, by \eqref{normalderivativespsi}, we have that
\begin{equation}\label{normaledeivu}
\frac{\partial u_i}{\partial \nu_i}(x_0)\le \frac{\partial\psi_i}{\partial \nu_i}(x_0)\leq -\frac{c}{\lambda-\mu}=-\frac{c}{a}. 
\end{equation}
Assumptions  \eqref{Om2dasslip}  imply that if the angles $\alpha^i_l$  of $A_i$  at $y^i_l$, $l=1,2$,  are small enough,   the sets defined by 
 $$\Sigma_i=\{y:y=x+\nu_i(x), x\in \partial A_i\cap\Om\}$$ 
 and
 $$\Sigma^-_i=\{ y: y= x+t\nu_i(x), x\in \partial A_i\cap\Om, 0<t<1\}$$
 are compactly supported  in $\Omega$  and 
\begin{equation}\label{acondi 2}
d_\rho(x_0,\supp\,\psi_j)>1\quad\text{for any }j\ne i.
\end{equation} Therefore, by \eqref{alphabehavior}, we can choose $a$ so small that \eqref{acondi 2} holds true. Moreover,  from \eqref{acondi 2}, there exists a  small $\sigma>0$ 
 such that $\B_{1+\sigma}(x_0)\cap\supp\,\psi_j=\emptyset$, $j\neq i$,  and from \eqref{obstaclepb}, we know that  $$\Delta u_j^\ep \geq \frac1{\ep^2} u_j^\ep   H(u_i^\ep)\qquad \text{in }\B_{1+\sigma}(x_0)$$ 
(consider $u^\ep_j$ extended by zero if the ball falls out of $\Om$). When $H$ is defined as in \eqref{H1} with $p=1$, arguing as  in \eqref{ineqmasslap0} in proof of Theorem \ref{equalanglethm} we obtain that
\begin{equation*} 
\sum_{j\neq i} \int_{(D_\sigma(x_0))_1}\Delta u_j\geq \int_{D_\sigma(x_0)}\Delta u_i.
\end{equation*}
Now, since $u_i\geq\psi_i>0$ in $A_i$ and $u_i(x_0)=0$, the point $x_0$ belongs to  $\partial \{u_i>0\}\cap \partial A_i\cap\Om$. Since $\partial A_i\cap\Om$ has an interior  tangent ball and $\partial \{u_i>0\}$ has a exterior tangent ball,  we know  that $x_0$ is a regular point. Since the set of regular points is an open set, see Lemma \ref{singisolated}, for  $\sigma$ small enough we have
\begin{equation}\label{eq: leftestimation0}
\int_{D_\sigma(x_0)}\Delta u_i\geq - \int_{\partial \{u_i>0\} \cap D_\sigma(x_0)} \frac{\partial u_i}{\partial \nu_i}d\mathcal{H},
\end{equation}
where $\nu_i $ is still the exterior normal vector to $A_i$. On another hand, if $y_0$ is  the point  that realizes the distance one with $x_0$,  assume w.l.o.g. that $y_0 \in \partial \supp u_j$,  $y_0$ has to be in $\Sigma_i$ and $y_0$ has to be a regular point. Then,  for  $\rho$ small enough such that $\partial \{u_j>0\}\cap B_{\rho}(y_0)$ is $C^1$ we have\begin{equation*}
\begin{split}
 \int_{B_\rho(y_0)} \Delta u_j &= - \int_{\partial \{u_j>0\} \cap B_\rho(y_0)}  \frac{\partial u_j}{\partial \nu_j}   d\mathcal{H}.
\end{split}
\end{equation*}
Now, using the fact that  for $\sigma $ small enough such that $\rho >c \sigma$,  
$\supp u_j \cap (D_\sigma(x_0))_1 \subset \B_{c\sigma} (y_0)$, we have
\begin{equation}\label{eq: leftestimation} 
 \int_{B_{c\sigma}(y_0)} \Delta u_j \geq \int_{(D_\sigma(x_0))_1}\Delta u_i .
\end{equation}
Putting all together,  dividing  \eqref{eq: leftestimation0} and  \eqref{eq: leftestimation} respectively by $\mathcal{H}(\partial \{u_i>0\} \cap D_\sigma(x_0))$ and $\mathcal{H}(\partial \{u_j>0\}\cap B_{c\sigma}(y_0))$, and passing to the limit when $\sigma \rightarrow 0$
 we obtain
\begin{equation} \label{nucondu_j}
- \frac{\partial u_j}{\partial \nu_j}(y_0)\geq - c\frac{\partial u_i }{\partial \nu_i}(x_0) \geq\frac{\tilde{c}}{a}.
\end{equation}We are now going to show that \eqref{nucondu_j} yields a contradiction.
Indeed, the point $y_0$ realizes its distance from the set $\{u_i>0\}$ at $x_0$, therefore the ball $\B_1(y_0)$ is tangent to $\{u_i>0\}$ at $x_0$. Moreover, since $A_i\subset\{u_i>0\}$,
 the ball
$\B_1(y_0)$ is tangent to $A_i$ at $x_0$. On the other hand, for  $a$ small enough, by assumption \eqref{Om2dasslip}, $\B_1(y_0)$ is contained in $\Om$. In particular, the $\rho$-distance of $y_0$ from $\partial\Om$ is greater than $1$. Therefore, from \eqref{ulipestim2d}, we infer that $|\nabla u_j(y_0)|\leq C_0$, which is in contradiction with \eqref{nucondu_j} for $a$ small enough.


When $H$ is defined as in \eqref{H2}, we argue as in case (b)  in the proof of Theorem \ref{lem:6.1} and similarly, we get a contradiction for $a$ small enough.

\end{proof}

\begin{cor}\label{freeobstasolcor}
Under the assumptions of Theorem \ref{u>psi}, if $a<a_0$ then $(u_1^\ep,\ldots,u_K^\ep)$ is solution of the following problem
\begin{equation}\label{obstaclepbfree}
\begin{cases}
u_i^\ep\geq \psi_i&\text{in }\Omega,\\
\ds \Delta u_i^\ep (x) = \frac1{\ep^2} u_i^\ep (x) \sum_{j\ne i} H(u_j^\ep) (x)&\text{in }\Omega\\
\ds u_i^\ep = f_i &\text{on }(\partial\Omega)_1.\\
\end{cases}
\end{equation}
In particular, $(u_1^\ep,\ldots,u_K^\ep)$ is solution of \eqref{eq:stmt3}. 
\end{cor}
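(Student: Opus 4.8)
The plan is to show that, for $\ep$ small along the convergent subsequence of Theorem~\ref{convergencecor2dlip}, the obstacle-problem solution $u_i^\ep$ of \eqref{obstaclepb} never touches its obstacle inside $\Om$, i.e.\ $u_i^\ep>\psi_i$ in $\Om$; from this \eqref{obstaclepbfree}, and hence \eqref{eq:stmt3}, follows at once. The reduction goes as follows. Since $\psi_i\equiv0$ on $\Om\setminus A_i$ (in particular on $\partial A_i\cap\Om$) while $u_i^\ep>0$ in $\Om$ by Theorem~\ref{thm:existenceobstacle}, the contact set $\{u_i^\ep=\psi_i\}\cap\Om$ is contained in the open set $A_i$; so if $u_i^\ep>\psi_i$ on $A_i$ then $\{u_i^\ep>\psi_i\}=\Om$ and, by \eqref{obstaclepb}, $\Delta u_i^\ep=\frac1{\ep^2}u_i^\ep\sum_{j\ne i}H(u_j^\ep)$ in all of $\Om$. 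Together with $u_i^\ep\ge\psi_i$ in $\Om$ and $u_i^\ep=f_i$ on $(\partial\Om)_1$ this is exactly \eqref{obstaclepbfree}, and dropping the obstacle constraint gives \eqref{eq:stmt3}.

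To prove $u_i^\ep>\psi_i$ on $A_i$ I would argue by contradiction and compactness. If along a subsequence $\ep_l\to0$ there were points $x_l\in A_i$ with $u_i^{\ep_l}(x_l)=\psi_i(x_l)$, pass to a further subsequence so that $x_l\to x_0\in\overline{A_i}$. Suppose first that $x_0\in\overline{A_i}\cap\Om$. Then Theorem~\ref{u>psi} gives $u_i(x_0)>\psi_i(x_0)\ge0$, so $u_i>0$ on a neighborhood of $x_0$; in particular a small closed ball around $x_0$ is a compact subset of $\supp u_i$, on which $u_i^{\ep_l}\to u_i$ uniformly by Theorem~\ref{convergencecor2dlip}. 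Using the continuity of $\psi_i$ at $x_0$ we then get $\psi_i(x_0)=\lim_l\psi_i(x_l)=\lim_l u_i^{\ep_l}(x_l)=u_i(x_0)$, contradicting $u_i(x_0)>\psi_i(x_0)$.

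The remaining possibility, $x_0\in\Gamma_i\subset\partial\Om$, is the delicate point, and the main obstacle of the proof, because there $u_i(x_0)=\psi_i(x_0)=1$ and the strict inequality of Theorem~\ref{u>psi} degenerates as one approaches $\Gamma_i$. To rule it out I would exploit that near $\Gamma_i=\supp f_i\cap\partial\Om$ the competing densities are exponentially small: since $A_i$ lies within $\rho$-distance $\lambda+\mu$ of $\supp f_i$, a fixed collar of $\Gamma_i$ inside $\Om$ lies within $\rho$-distance $1-r$ of $\overline\Gamma_i^\sigma=\{f_i\ge\sigma\}=\supp f_i$ for suitable $\sigma,r>0$, so Lemma~\ref{uj=0closfi} applied with the roles of $i$ and $j$ exchanged yields $u_j^\ep\le Ce^{-c/\ep}$ on that collar for every $j\ne i$, hence $H(u_j^\ep)\le Ce^{-c/\ep}$ and $\Delta u_i^\ep\to0$ uniformly there. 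Consequently $u_i^\ep$ converges on the collar to a harmonic function equal to $1$ on $\Gamma_i$, which, by Theorem~\ref{u>psi}, is not identically $\psi_i$; since the harmonic obstacle $\psi_i$ is strictly below $1$ inside $A_i$ (strong maximum principle) and decays at least linearly away from $\Gamma_i$ (Hopf), a barrier comparison on the collar forces $u_i^\ep>\psi_i$ there for $\ep$ small, so the contact points $x_l$ stay at a fixed positive distance from $\Gamma_i$ and $x_0\in\Gamma_i$ is impossible. Both cases being excluded, $u_i^\ep>\psi_i$ on $A_i$ for $\ep$ small, which completes the proof.
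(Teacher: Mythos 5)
The paper gives no argument for this corollary---it is stated directly after Theorem~\ref{u>psi}---so there is no written proof to compare against; your attempt to supply one by compactness/contradiction is in the right spirit, and your Case~1 ($x_l\to x_0\in\overline{A_i}\cap\Om$) is correct: Theorem~\ref{u>psi} together with the locally uniform convergence on $\{u_i>0\}$ forces $u_i(x_0)=\psi_i(x_0)$, a contradiction.

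Case~2, which you correctly flag as the crux, contains a genuine gap. From $u_j^\ep\le Ce^{-c/\ep}$ on the strip $\{x\in\Om: d_\rho(x,\supp f_i)\le 1-r\}$ (Lemma~\ref{uj=0closfi}) you infer $H(u_j^\ep)\le Ce^{-c/\ep}$ and hence $\Delta u_i^\ep\to 0$ uniformly on a collar of $\Gamma_i$. This inference is invalid: $H(u_j^\ep)(x)$ is an integral (or a $\sup$) of $u_j^\ep$ over the full unit ball $\B_1(x)$, and for $x$ at $\rho$-distance of order $\lambda+\mu$ from $\Gamma_i$ the ball $\B_1(x)$ reaches out to $\rho$-distance roughly $1+\lambda+\mu$ from $\Gamma_i$, well outside the strip on which Lemma~\ref{uj=0closfi} gives any control on $u_j^\ep$. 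Near the endpoints $y_1^i,y_2^i$ of $\Gamma_i$ the ball $\B_1(x)$ even meets $A_j=\supp\psi_j$ (since $d_\rho(\Gamma_i,\Gamma_j)=1$), and there you have no decay estimate at all; so $H(u_j^\ep)(x)$ need not tend to zero and the claimed uniform smallness of $\Delta u_i^\ep$ up to $\Gamma_i$ is not established. Closing this gap would require also invoking Lemma~\ref{uj=0closui} on the outer part of $\B_1(x)$, using the lower bound $u_i^\ep\ge\psi_i>0$ in $A_i$ to produce the hypothesis ``$u_i^\ep=\sigma$'' at suitable points, and that step is not carried out. Finally, even granting $\Delta u_i^\ep\to0$ and $u_i^\ep\to v$ on the collar, the concluding barrier step is not quantitative enough: since $v=\psi_i=1$ on $\Gamma_i$, uniform convergence plus the obstacle constraint only give $u_i^\ep-\psi_i\ge c\,d(\cdot,\Gamma_i)-o_\ep(1)$, which leaves open contact points at distance $o_\ep(1)$ from $\Gamma_i$. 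One needs a boundary Hopf-type lower bound on $u_i^\ep-\psi_i$ that is uniform in $\ep$, and your sketch does not supply it.
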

We are now ready to show that free boundaries are Lipschitz.

\begin{thm}\label{lipfreeboundary}
Let $(u_1^\ep,\ldots,u_K^\ep)$ be the solution of  \eqref{eq:stmt3} given by Corollary \ref{freeobstasolcor}. Let $(u_1,\ldots,u_K)$ be the limit as $\ep\to0$ of a converging subsequence, then the free boundaries $\partial\{u_i>0\}$,  $i=1,\ldots,K$, are Lipschitz curves of the plane.
\end{thm}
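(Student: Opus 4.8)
The plan is to reduce the statement to the absence of cusps on the free boundaries and then to exclude cusps by combining the equal--angle theorem with the obstacle construction.

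\emph{Reduction.} The solution $(u_1^\ep,\ldots,u_K^\ep)$ is the one produced in Corollary~\ref{freeobstasolcor}, so its limit $(u_1,\ldots,u_K)$ satisfies the conclusions of Theorem~\ref{convergencecor2dlip}, and, reasoning exactly as in Corollary~\ref{tangebtballcor}, each free boundary $\partial S_i=\partial\{u_i>0\}$ has an exterior tangent $\rho$--ball of radius $1$ at every point; by \eqref{eq:stmt2} it also has an exterior tangent Euclidean ball of a fixed radius. Together with the sharp characterization of the interfaces (Theorem~\ref{lem:6.1}, valid since $p=1$), this means that at every $x_0\in\partial S_i$ the asymptotic cone of Section~\ref{singpoints2dsec} is well defined with opening $\theta=\theta(x_0)\in[0,\pi]$, and that $\partial S_i$ is, near $x_0$, the graph of a Lipschitz function precisely when $\theta(x_0)>0$ (the graph direction being the bisector of the cone, the Lipschitz constant controlled by $\cot(\theta/2)$); a cusp, $\theta(x_0)=0$, is the only way a point of $\partial S_i$ can fail to be a Lipschitz graph point. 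Hence it is enough to show $\theta(x_0)>0$ for every $x_0\in\partial S_i$ and every $i$.

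\emph{No cusp on $\partial\Om$.} By \eqref{fidasslip}, $f_i\equiv1$ on $\Gamma_i=\supp f_i\cap\partial\Om$ and $f_i\equiv0$ on $\partial\Om\setminus\Gamma_i$, so $\Gamma_i^\circ\subset S_i$ while $u_i=0$ on the rest of $\partial\Om$; thus $\partial S_i\cap\partial\Om\subset\{y_1^i,y_2^i\}$, the endpoints of $\Gamma_i$. By Theorem~\ref{u>psi}, for $a=\lambda-\mu$ small one has $u_i>\psi_i$ on $\overline{A_i}\cap\Om$, hence $A_i\subset S_i$ and $\partial S_i\cap\Om$ is disjoint from $\partial A_i\cap\Om$; in particular, near $y_l^i$ the set $S_i$ contains $A_i$, whose opening angle at $y_l^i$ is $\alpha_l^i$. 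By \eqref{alphabehavior}, keeping $\mu$ bounded away from $\lambda$ makes $\alpha_l^i$ bounded below by a positive constant, so the opening of $S_i$ at $y_l^i$ is $\geq\alpha_l^i>0$: there is no cusp on $\partial\Om$.

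\emph{No interior cusp.} Suppose $x_0\in\partial S_i\cap\Om$ with $\theta(x_0)=0$. By Theorem~\ref{lem:6.1} there is a point $y_0$ with $\rho(x_0-y_0)=1$ lying either on $\partial S_j\cap\Om$ or on some $\Gamma_j$; in the latter case the boundary inequality \eqref{angleatboundineq} of Theorem~\ref{equalanglethm}, applied with $y_0$ in the role of the boundary point, forces the opening of $S_j$ at $y_0$ to be $\leq\theta(x_0)=0$, contradicting the previous step, so necessarily $y_0\in\partial S_j\cap\Om$. Then \eqref{angleeq} of Theorem~\ref{equalanglethm} gives that $S_j$ has opening $0$ at $y_0$ as well: an interior cusp forces an interior cusp on a neighboring support. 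A cusp is in particular a singular point with opening $<\pi$, so the geometric analysis of Corollary~\ref{sigulapoints2dthm} applies to the local picture at $x_0$ (only the single support $S_i$ and the union $C_i=\cup_{k\ne i}S_k$ enter, so $K$ may be arbitrary): $x_0$ realizes the distance $1$ from $C_i$ at two points lying on the boundaries of two distinct connected components of $C_i$, and this forces a further cusp $x_1$ on a connected component of $S_i$ different from the one containing $x_0$. Iterating, and using the equal--angle propagation, one produces a chain of pairwise distinct connected components of the sets $S_1,\ldots,S_K$, each carrying a cusp. But every connected component of every $S_k$ must reach $\partial\Om$ --- otherwise its boundary would carry $u_k=0$ while $\Delta u_k\geq0$ inside, forcing $u_k\equiv0$ by the maximum principle --- and by \eqref{fidasslip} each component is anchored to one of the finitely many arcs $\Gamma_k$, so there are only finitely many components. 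The chain must therefore close into a cycle; threading the corresponding nested ``forbidden arcs'' of the unit $\rho$--balls around this cycle, exactly as in the collapsing argument at the end of the proof of Lemma~\ref{singisolated}, forces one of the supports to pinch off an island or to disconnect $\Om$, which is impossible. Hence $\theta>0$ everywhere and, by the reduction, every $\partial\{u_i>0\}$ is a Lipschitz curve of the plane.

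\emph{Main obstacle.} The delicate point is the last step: making precise how an interior cusp propagates through the neighbor relation and converting the anchoring and finiteness of the connected components into a genuine topological contradiction. Once a single free--boundary point with $\theta=0$ has been excluded, the Lipschitz regularity follows at once from the exterior--ball/semiconvexity structure established in the reduction.
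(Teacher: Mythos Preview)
Your reduction and your exclusion of cusps at boundary points (using $A_i\subset S_i$ and $\alpha_l^i>0$) are correct and match the paper. The gap is in the interior step, and you have correctly identified it yourself: the ``cycle'' argument is not a proof. The component--counting machinery of Corollary~\ref{sigulapoints2dthm} tells you that each singular point drags in four distinct components, but it does not by itself prevent a finite configuration of zero--angle cusps; there is no clear mechanism by which ``threading forbidden arcs around a cycle'' forces an island, and you have not used the regularity assumption \eqref{Om2dasslip} on $\partial\Om$ at all.

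The paper's proof rests on a geometric observation you are missing. At a cusp $x_0$ with opening $\theta=0$, the two tangent $\rho$--balls $\B_1(y_0)$, $\B_1(y_1)$ are tangent to $\partial S_i$ from \emph{opposite} sides of the cone axis, so the outward normals $\nu^l,\nu^r$ (the limits of $\nu(x_n)$ as $x_n\to x_0$ from the left and right branches) are \emph{antipodal} along the line $e_1$ perpendicular to the cusp axis. Hence $y_0=x_0+\nu^l$ and $y_1=x_0+\nu^r$ both lie on $e_1$. By the equal--angle theorem each $y_l$ is again a zero--angle cusp, so the same observation applies at $y_1$: the two points it realizes distance to are again on $e_1$, one of them being $x_0$ and the other a new point $y_2\in e_1$. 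Iterating produces a sequence of zero--angle cusps \emph{aligned on a single straight line}, marching with step~$1$ until the line meets $\partial\Om$. The contradiction is then obtained concretely at the last point $y_k$: if $y_k\in\partial\Om$ it must be an endpoint $y_l^i$ of some $\Gamma_i$, and \eqref{angleatboundineq} forces $\theta(y_l^i)=0$, contradicting $\theta(y_l^i)\ge\alpha_l^i>0$; if $y_k\in\Om$ with $d(y_k,\partial\Om)<1$ along $e_1$, the exterior $(1+\eta)$--ball condition in \eqref{Om2dasslip} guarantees $\partial\Om$ does not re--enter $\overline{\B_1(y_k)}$ along $e_1$, so the second point $\bar y$ realizing the distance from $y_k$ cannot lie on $e_1$, and then $\B_1(\bar y)$ necessarily intersects $S_i$, violating the $\rho$--distance--one separation. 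Replacing your cycle argument with this collinearity--plus--marching argument closes the gap.
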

\begin{proof}

By contradiction let's assume that the free boundaries are not Lipschitz. This would imply that there exists at least one singular point with asymptotic cone with zero opening. 

Let $ x_0$ be an  interior singular point   with asymptotic cone with zero angle. W.l.o.g. suppose $x_0 \in \partial \{u_1 >0\}$. Let $e_1$ be the  line perpendicular  to the cone axis and passing through $x_0$, in which we choose an orientation such that the cone is below the axis  $e_1$.  As we proved in Theorem \ref{equalanglethm} and Corollary \ref{sigulapoints2dthm} there exist $y_0 $ and $y_1$, with  $y_0, y_1 \in\cup_{j\neq1} \partial \{u_j>0\}$ singular points at distance one from $x_0$  with asymptotic cones with zero opening. Also, by Theorem \ref{lem:6.1} for any regular point $x \in \partial \{u_1 >0\}\cap B_1(x_0)$ there exists a correspondent $y \in \cup_{j\ne 1}\partial \{u_j>0\} $  such that
$$
y= x+\nu(x)
$$
with $\nu (x)$ the external normal vector to $\partial \{u_1 >0\}$   at $x$. Observe that $y_0, y_1 $ must lie on  $e_1$. In fact, let $x_n^l \in \partial \{u_1 >0\}$ be regular points converging to $x_0,$ $x_n^l \rightarrow x_0$ as $n\to+\infty$,  from the left side of the cone axis and  let $x_n^r \in \partial \{u_1 >0\}$ be the regular points such that
 $x_n^r \rightarrow x_0$  as $n\to+\infty$, from the right side of the cone axis. 
Then, the limit of the normal vectors $\nu (x_n^l ) \rightarrow \nu^l$ and $\nu (x_n^r) \rightarrow \nu^r$, are both on the direction $e_1$ since they are orthogonal to the cone axis. Let $y_0$ and $y_1$ be w.l.o.g.  the points defined by
$$
y_0=x_0 + \nu^l \qquad y_1=x_0 + \nu^r.
$$
So we have to have three singular points at distance one, all on the line $e_1$. Repeating the same argument and using $y_1$ as the reference singular point  now, we conclude that there must exist another singular point, $y_2$,
with 0 opening cone,  at distance one from $y_1$ and also on the axis $e_1$. Iterating, we will be able to proceed until the prescribed boundary of the domain stops us from finding the next point. We will have all singular points with cone with zero opening aligned on the axis $e_1$, until we reach the boundary $\partial \Omega$ and we cannot proceed with this process, i.e., until we cannot obtain the next point aligned in the direction of $e_1$ which implies that  $\partial \Om$ crosses the axis  $e_1$ and the distance of $y_k$ to the boundary of $\Om$ along $e_1$ is less or equal than 1.

   Now, there are two cases: either  $y_k\in\partial\Om$ or $y_k\in\Om$.  If $y_k\in\partial \Om$ assume w.l.o.g. that $y_k\in\partial \{u_1 >0\}$. Since   $u_1\geq \psi_1$ we have $A_1\subset \{u_1>0\}$ and that $y_k$ must coincide with one of the points $y_l^1$, $l=1,2$, endpoints of the curve $\Gamma_1$.  Indeed, by the forth  assumption in  \eqref{fidasslip},  no points of $\partial\{u_1>0\}$ are on $\partial \Omega$ between the curves $\Gamma_1$ and 
   $\Gamma_2$,  and $\Gamma_1$ and $\Gamma_K$. Assume w.l.o.g. that $y_k=y_1^1$.
Let $\theta$ be the angle of $\partial \{u_1>0\}$ at $y_1^1$. Then,  from \eqref{angleatboundineq} of Theorem \ref{equalanglethm} applied to $y_k=y_1^1$ and $y_0=y_{k-1}$, we get $\theta=0$. On the other hand, since  $A_1\subset \{u_1>0\}$ then $\theta\geq\alpha_1^1>0$, where $\alpha_1^1$ is the angle of $A_1$ at $y_1^1$. We have obtained  a contradiction. 
\begin{figure}
\label{fig:lips}
{\includegraphics[trim={0 0cm 0 0cm}, clip=true, width=12cm]{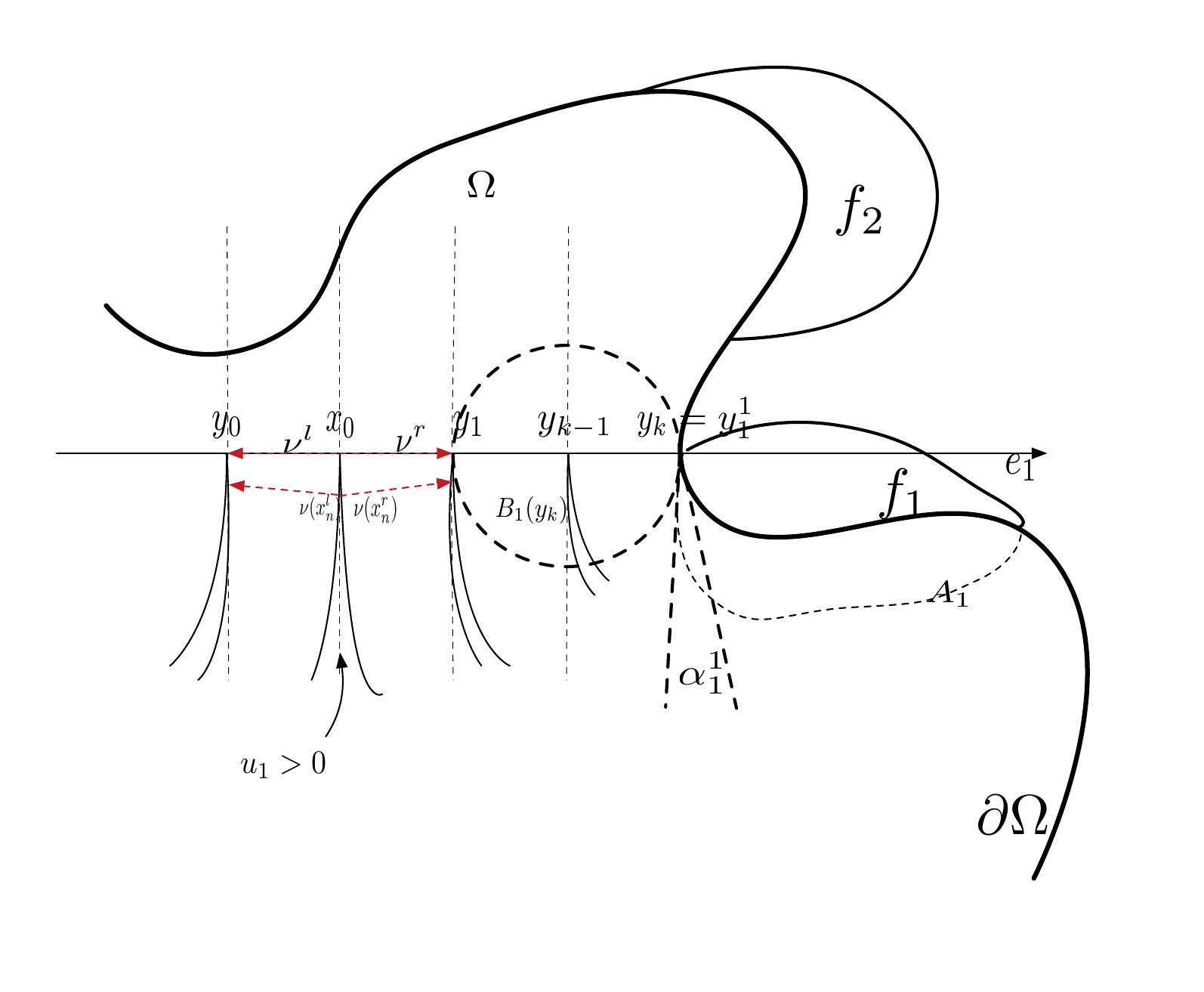}}
\caption{Contradiction in the case $y_k \in \partial  \Om$}
\end{figure}
Suppose now that $y_k$ is an interior point. Again, assume w.l.o.g. that $y_k\in\partial \{u_1 >0\}$.   Let $z_k \in \partial \Om$ be the closest point to $y_k$ in the direction $e_1$ and $d(y_k, z_k)=l<1$.   
Recall that  by (\ref{Om2dasslip}) there is an exterior tangent ball at $z_k$, $B_{1+\eta}$,  so once the axis $e_1$ is crossed, $\Om$  will remain outside of the tangent ball at $z_k$ and so $\partial \Om$ will not cross again $e_1$ in $\overline{\B}_1(y_k)$.
We know that $z_k $ cannot belong to $\partial \{u_j>0\}$ since it does not respect the distance one and also $A_j\subset \{u_j>0\}$. 
\begin{figure}
{\includegraphics[trim={0 0cm 0 0cm}, clip=true, width=12cm]{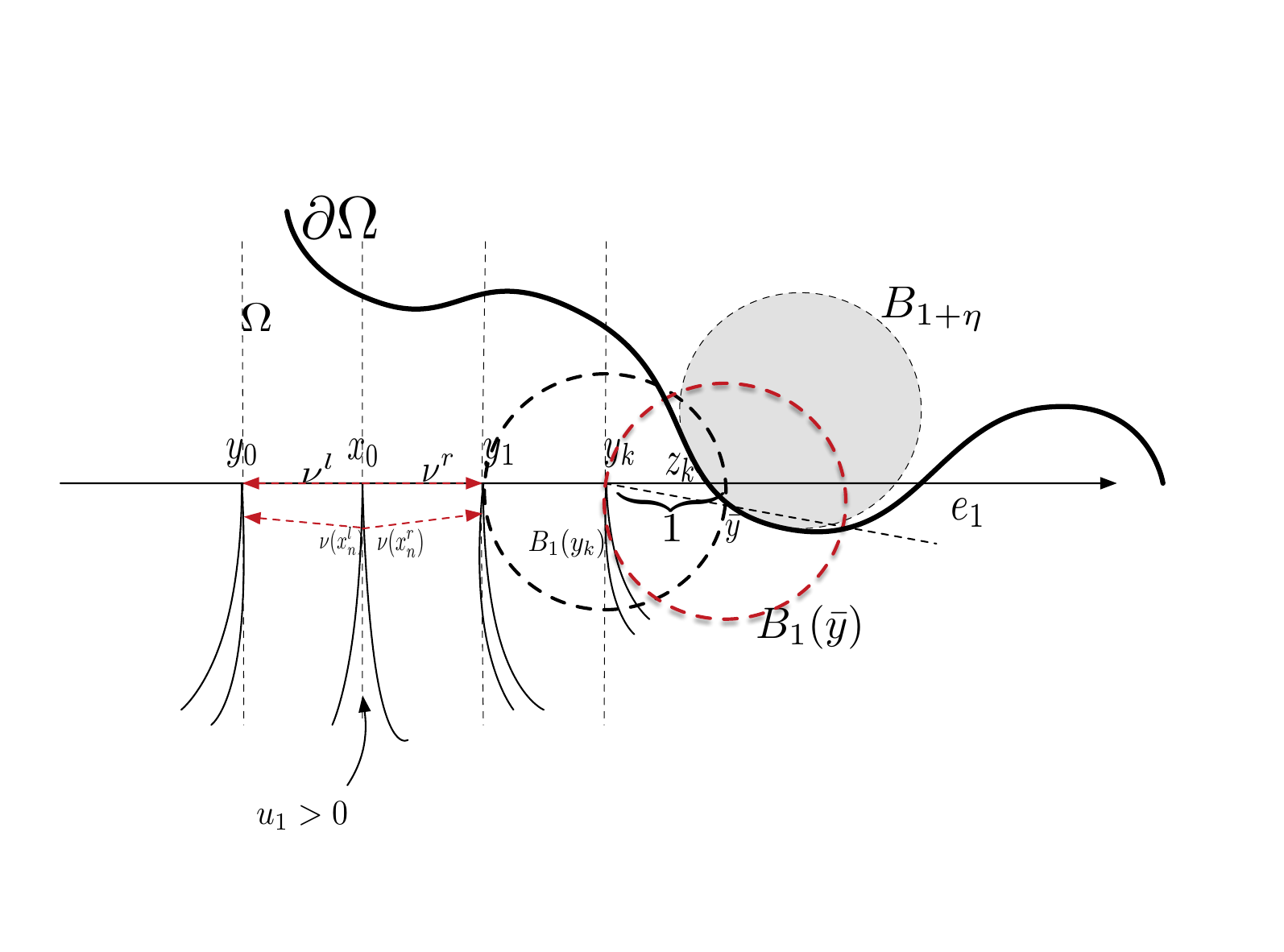}}
\caption{Contradiction in the case $y_k  \in \Om$}
\label{fig:lips}
\end{figure}
And by Theorem \ref{lem:6.1} for any point on the free boundary  there exists a correspondent point at distance one belonging to the support of another function. Taking in account the previous case, the only option is that the  point that realizes the distance from $y_k$, $ \bar{y}$, belongs to  $B_1(y_k)$ and it must be such that the angle between $e_1$ and the line that contains both $y_k$ and $\bar{y}$ is strictly positive, see Figure \ref{fig:lips}. Therefore,  we must conclude that $B_1(\bar{y})\cap \{u_1>0\}\neq \emptyset$.

We have obtained  a contradiction. We conclude that the free boundaries cannot have a zero angle at a singular point, therefore they are Lispschitz curves of the plane. \end{proof}
\section{ A relation between the normal derivatives at the free boundary}\label{normalderrelatsec}
In this section we restrict ourself to the following case: 
\begin{equation}\label{normderassum}
\begin{cases}K=2\\
H\text{ defined like in \eqref{H1}, with}\\
p=1,\,\varphi\equiv 1\text{ and }\rho\text{ the Euclidian norm}.\\
\end{cases}
\end{equation}
Therefore, the system \eqref{eq:stmt3} becomes
$$\Delta u_1^\ep (x) = \frac1{\ep^2} u_1^\ep (x) \int_{B_1 (x)} u_2^\ep (y)\, \text{d}y\quad\text{in }\Omega,$$
$$\Delta u_2^\ep (x) = \frac1{\ep^2} u_2^\ep (x) \int_{B_1 (x)} u_1^\ep (y)\, \text{d}y\quad\text{in }\Omega,$$
where we denote by $B_1 (x)$ the Euclidian ball of radius 1 centered at $x$. 
Let $(u_1,u_2)$ be the limit functions of a converging subsequence that we still denote $(u_1^\ep,u_2^\ep)$ and  for $i=1,2$ let
$$S_i:=\{u_i>0\}.$$
From Section \ref{section: strip} we know that the  $u_i$'s have disjoint support and that there is a strip of width  exactly one that separates  $S_1$ and $S_2$.
 Moreover, Corollary \ref{tangebtballcor} guarantees that at any point of the boundary of the two sets, the principal curvatures are less or equal 1. 
 For $i=1,2$, let $x_i\in\partial S_i$ be such that $x_1$ is at distance 1 from $x_2$, $\partial S_i$ is of class $C^2$ in a neighborhood of $x_i$, 
 and all the principal curvatures  of 
 $\partial S_i$ at $x_i$ are strictly less than 1. 
 Without loss of generality we can assume $x_1=0$ and $x_2=e_n$, where $e_n=(0,\ldots,1)$.
Let us denote by $u^1_\nu(0)$ and $u^2_\nu(e_n)$  the exterior normal derivatives of $u_1$ and $u_2$ respectively at 0 and $e_n$. Note that the two normals have opposite direction. 
We want to deduce a relation between $ u_\nu^1(0)$ and $u^2_\nu (e_n)$. Let us start by recalling some basic properties about the level surfaces of the distance function to a set.

\subsection{Level surfaces of the distance function to a set. Some basic Properties}\label{distancepropertiessub8}
Consider a bounded open set $S$ and its boundary $\partial S,$ of the class $C^2$.  
  Let $\varkappa_i(x)$  be the  principal curvatures of $\partial S$ at $x$ (outward is the positive direction). Assume 
  that for any point $x\in \partial S$ there exists a tangent ball  $B_R(z)$ to $\partial S$ at $x$ such that  $B_R(z)\subset S^c$. In particular the principle curvatures 
  satisfy $\varkappa_i(x)\leq1/R$, $i=1,\ldots,n-1$.  Then:
\begin{enumerate}
\item[a)] the distance function to $S,$ $d_S (x)=d(x,\overline{S}),$ is defined and is $C^{2}$ as long as $$0<d_S (x)<R.$$ 
In the following lemma, which may be known in the literature,  we provide a proof of the $C^{1,1}$-regularity for a more general set, which is not necessary $C^2$, it may have edges as well, but it has the property that for any tangent ball 
there exists a ``clean area", in the sense explained below.
For the $C^2$-regularity in the case of $C^2$-boundaries, see for instance Theorem 14.16 in \cite{gilbarg_elliptic_2001}.

Given a bounded closed set $F$, we say that $\Pi$ is a supporting hyperplane at $x\in\partial F$, if $x\in\Pi$ and there exists a ball $B\subset F^c$ such that $B$ is tangent to $\Pi$ 
at $x$.
\begin{lem}\label{didtancefunctionc11}
Let $F$ be a  bounded closed set.
Assume  that there exists $R>0$ such that, 
for any $x\in\partial F$ and any supporting hyperplane $\Pi$ at $x$, there is a ball $B_R(z)$ tangent to $\Pi$ at $x$ such that 
$B_R(z)\subset F^c$. Let us denote by $d_F(x)=d(x,F)$ the distance function from $F$. Then $d_F$ is of class $C^{1,1}$ in the set $\{0<d_F<R\}$. 
\end{lem}
\begin{proof}
Let $y_0\in \{0<d_F<R\}$. To prove that $d_F$ is of class $C^{1,1}$ at $y_0$, we show that there are  smooth 
functions whose graphs are tangent from below and above the graph of $d_F$ at $(y_0,d_F(y_0))$.  As proven in Lemma \eqref{deltadlem}, the distance function from a closed bounded set 
has always  a smooth tangent function from above. Indeed, 
let $x\in\partial F$ be a point where $y_0$ realizes the distance from $F$. 
Assume, without loss of generality, that $x=0$. Then $d(y_0,0)=|y_0|=d_F(y_0)$. Moreover, the ball $B_{|y_0|}(y_0)$ is contained in $F^c$ and tangent to $F$ at 0.
For any $y\in B_{|y_0|}(y_0)$, we have that $d_F(y)\leq d(y,0)=|y|$. Therefore the cone,  graph of the function $y\to|y|$ (which is smooth at $y_0\neq 0$) is tangent from above 
to the graph of  $d_F$ at $(y_0, d_F(y_0))$. 

Next, we prove the existence of a smooth function tangent from below.  Note that the tangent line to $B_{|y_0|}(y_0)$  at 0 is a supporting hyperplane to $F$ at $0$. Therefore, there exists a ball $B_R(z)$ tangent to $F$ at 0 such that 
$ B_R(z)\subset F^c$. We must have $z=Ry_0/|y_0|$. Moreover, since $ B_R\left(R\frac{y_0}{|y_0|}\right)\subset F^c$, for any $y\in B_R\left(R\frac{y_0}{|y_0|}\right)\cap \{0<d_F<R\}$, we have that 
$$d_F(y)\geq d\left(y, \partial  B_R\left(R\frac{y_0}{|y_0|}\right)\right)=R-d\left(y,R\frac{y_0}{|y_0|}\right)$$ and $d_F(y_0)=|y_0|=R-d\left(y_0,R\frac{y_0}{|y_0|}\right).$
That is to say,  the cone,  graph of the function $y\to R-d\left(y,R\frac{y_0}{|y_0|}\right)$ is tangent by below to the  graph of  $d_F$ at $(y_0, d_F(y_0))$. 
We conclude that $d_F$ is $C^{1,1}$ at $y_0$. 
\end{proof}
Let  $S(k)$ denote the surface that is at distance $k$ from $S$
$$
S(k):=\{x: d_S(x)=k\}, $$
then, for $k<1+\ep$ and $x \in S(k)$, there is a unique point $x_0 \in S(0)$, such that $x= x_0+ k  \nu (x_0)$ where $\nu (x_0) $ is the unit normal vector at $x_0$ in the positive direction.  More precisely, if we denote  $K:=\max \{ |\ka_i (x)|: 1\leq i \leq n-1, x \in \p S\}$ and $f(x,t):=x+ t \nu(x)$, then   $f$ is a diffeomorphism   between $\p S \times (-k, k)$ and the neighborhood of  $\p S,$ $N_k (S)= \{x+ t \nu(x): x\in\partial S,\, |t|< k    \}$ with $k < \frac{1}{K}.$ 
\item[b)] \label{trajectories} for all $x_0 \in \p S$ if we consider the linear transformation $x_t=x_0 + t \nu (x_0)$ we obtain $S(t).$ 
Hence, since the tangent plane for each $S(t)$ is always perpendicular to $\nu (x_0),$ the eigenvectors of the principal curvatures remain constant along the trajectories of $d_S,$ for $d_S<1+\ep.$
\item[c)] the curvatures of $S(k)$ satisfy, see Figure \ref{fig:curvatures}
$$ \ka_i(x_0+k\nu(x_0))=\frac{1}{\frac{1}{\ka_i (x_0)}-k}=\frac{\ka_i(x_0)}{1-\ka_i(x_0)k}, \qquad i=1, \ldots, n-1, \quad k<1+\ep$$ for $x_0\in\partial S$.
\begin{figure}
\begin{center}
\includegraphics[scale=0.4]{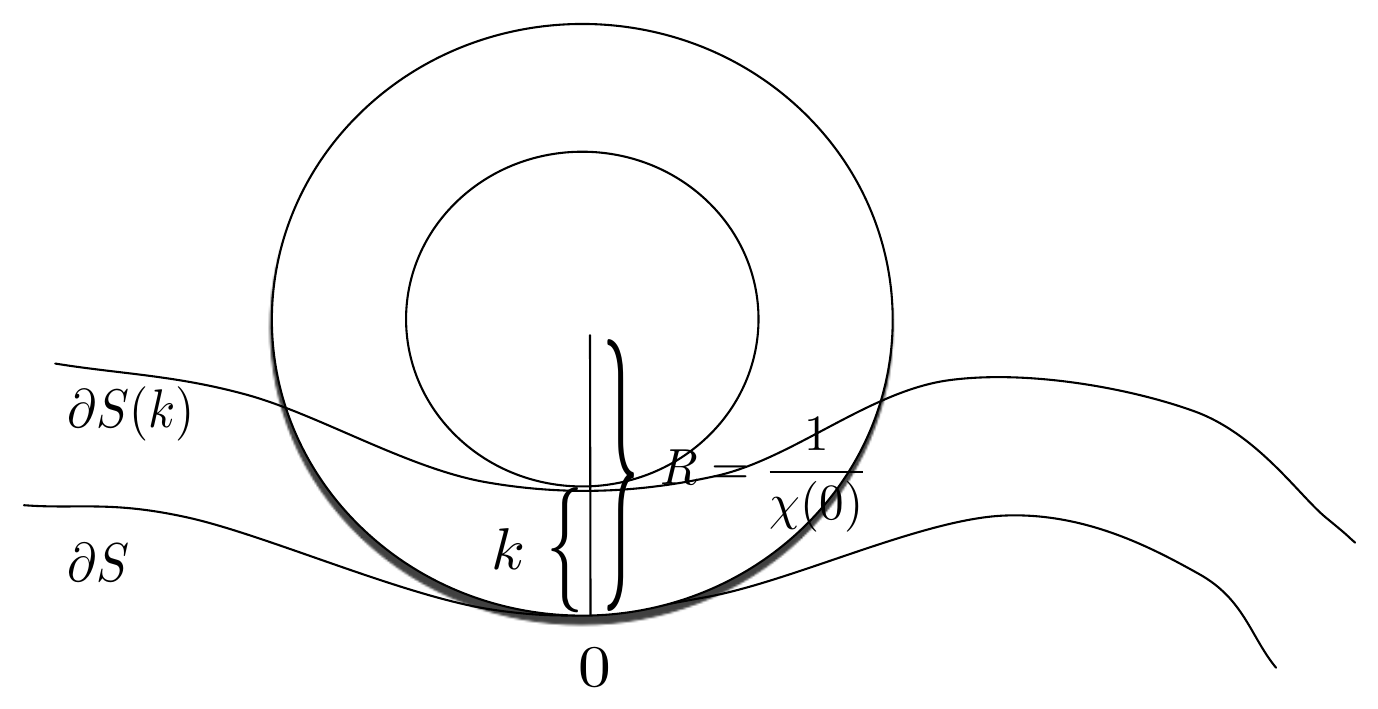}
\caption{Curvatures relation}
\label{fig:curvatures}
\end{center}
\end{figure}
%
%
%
%
\item[d)] for $x_0 \in \p S$, the ball $B_1(x_0)$ touches $S(1)$ at the point $x_0 + \nu(x_0),$ where $\nu$ is the outward normal. Moreover, it separates quadratically from $S(1),$ 
that is, for any small $r>0$ and for any $x\in B_r(x_0 + \nu(x_0))\cap \partial B_1(x_0)$, we have that $d(x,S(1))\leq Cr^2$, for some $C>0$.
\end{enumerate}

\subsection{Free boundary condition}
Following Subsection \ref{distancepropertiessub8}, we denote by $\ka_i(0)$ the principal curvatures of $\partial S_1$ at 0 where outward is the positive direction and by
$\ka_i(e_n)=\frac{\ka_i(0)}{1-\ka_i(0)}$, the principal curvatures of $\partial S_2$ at $e_n$. Remark that since the normal vectors to  $S_1$  and  $ S_2$ 
respectively at  0 and $e_n$, have opposite directions, for $ \ka_i(e_n)$  the inner direction of $S_2$ is the positive one.
The main result of this section is the following:
\begin{thm} 
\label{thm: fbcond}
Assume \eqref{normderassum}. Let $0\in \partial S_1$ and $e_n\in  \partial S_2$. Assume that   $\partial S_1$ is of class $C^2$   in
  $B_{4h_0}(0)$ and that the principal curvatures satisfy: $\ka_i(0)<1$ for any $i=1,\ldots,n-1$. 
Then, we have the following relation:
$$
\frac{u_\nu^1(0)}{u^2_\nu (e_n)}=\prod_{i=1\atop \ka_i(0)\neq 0 }^{n-1} \frac{\ka_i(0)}{\ka_i (e_n)}\quad\text{if }\ka_i(0)\neq 0\text{ for some }i=1,\ldots,n-1,
$$ and
\begin{equation*}u_\nu^1(0)=u^2_\nu (e_n)\quad\text{if }\ka_i(0)= 0\text{ for any }i=1,\ldots,n-1.\end{equation*}
\end{thm}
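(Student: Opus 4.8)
The plan is to run the usual ``compare the masses of $\Delta u_1^\ep$ and $\Delta u_2^\ep$ across the strip'' device of Theorems~\ref{lem:6.1} and \ref{equalanglethm}, but now sharply, keeping track of the exact area distortion produced by the normal map between the two free boundaries.

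First I would fix the geometric dictionary. Since $\ka_i(0)<1$ and $\p S_1$ is $C^2$ near $0$, Subsection~\ref{distancepropertiessub8} gives that $d_{S_1}$ is $C^2$ up to distance $1+\ep_0$ along $[0,e_n]$, that $S_1(1)$ is a $C^2$ hypersurface near $e_n$, and that the normal map $\Phi(x):=x+\nu_1(x)$ (with $\nu_1$ the exterior unit normal of $S_1$) is a $C^1$ diffeomorphism from $\p S_1\cap B_h(0)$ onto a relatively open piece of $S_1(1)$, with surface Jacobian $J_\Phi(x)=\prod_{i=1}^{n-1}(1-\ka_i(x))>0$ and $\Phi(0)=e_n$. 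On the other hand, applying Theorem~\ref{lem:6.1} with $K=2$ to points $x\in\p S_1$ near $0$, together with $d(S_1,S_2)=1$ and the smoothness of $\p S_1$ at $x$ (which forces a unique exterior tangent ball of radius $1$, hence a unique realizing point), shows that $x$ realizes its distance to $S_2$ exactly at $\Phi(x)\in\p S_2$; since $0$ is regular, $e_n$ is the only point of $\p S_2$ within distance $1$ of $0$, so $\p S_2=S_1(1)$ in a neighborhood of $e_n$, $\p S_2$ is $C^2$ there with curvatures $\ka_i(e_n)=\ka_i(0)/(1-\ka_i(0))$, and $\mathcal H^{n-1}$ on $\p S_2$ pulls back under $\Phi$ to $J_\Phi\,\mathcal H^{n-1}$ on $\p S_1$. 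Finally, by interior elliptic regularity up to the $C^2$ free boundaries and the Hopf lemma, the normal-derivative moduli $u_\nu^1$ near $0$ and $u_\nu^2$ near $e_n$ are continuous and strictly positive, and, as measures near those points, $\Delta u_1=u_\nu^1\, d\mathcal H^{n-1}$ on $\p S_1$ and $\Delta u_2=u_\nu^2\, d\mathcal H^{n-1}$ on $\p S_2$; since $u_i^\ep\to u_i$ in $L^1_{loc}$ and the $\Delta u_i^\ep\ge 0$ have locally bounded mass (Section~\ref{Lipestsection}), one has $\Delta u_i^\ep\rightharpoonup\Delta u_i$ weakly-$*$.

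Next I would carry out the matched mass balance. For $\phi\in C_c^\infty(B_h(0))$, $\phi\ge 0$, define the transported weight $\psi:=\phi\circ\Phi^{-1}\circ\pi$ on a thin tube around $\p S_2$ near $e_n$, with $\pi$ the nearest-point projection onto $\p S_2$, extended by $0$; then $\psi$ is Lipschitz, nonnegative, and $\psi\circ\Phi=\phi$ on $\p S_1$. Using $\varphi\equiv 1$, $p=1$, $\rho=|\cdot|$, Fubini, and the symmetry of $\mathbf{1}_{\{|x-y|<1\}}$,
\begin{equation*}
\ep^2\Big(\int_\Om\Delta u_1^\ep\,\phi-\int_\Om\Delta u_2^\ep\,\psi\Big)=\iint u_1^\ep(x)\,u_2^\ep(y)\,\mathbf{1}_{\{|x-y|<1\}}\,\big(\phi(x)-\psi(y)\big)\,dx\,dy.
\end{equation*}
The crucial claim, whose proof I expect to be the main obstacle, is that this right-hand side is $o(\ep^2)$. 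Granting it, passing to the limit and using the dictionary above yields $\int_{\p S_1}u_\nu^1\,\phi\,d\mathcal H^{n-1}=\int_{\p S_2}u_\nu^2\,\psi\,d\mathcal H^{n-1}=\int_{\p S_1}u_\nu^2(\Phi(x))\,J_\Phi(x)\,\phi(x)\,d\mathcal H^{n-1}(x)$; since $\phi\ge0$ is an arbitrary bump near $0$ and all integrands are continuous, $u_\nu^1(x)=u_\nu^2(\Phi(x))\prod_{i=1}^{n-1}(1-\ka_i(x))$ on $\p S_1$ near $0$, and at $x=0$ this is exactly $u_\nu^1(0)=u_\nu^2(e_n)\prod_{i=1}^{n-1}(1-\ka_i(0))$, i.e.\ the stated relation: the factors with $\ka_i(0)=0$ equal $1$, so they may be dropped and the product rewritten, using $1-\ka_i(0)=\ka_i(0)/\ka_i(e_n)$ when $\ka_i(0)\ne 0$, as $\prod_{\ka_i(0)\ne 0}\ka_i(0)/\ka_i(e_n)$; when all $\ka_i(0)=0$ it is the empty product $1$, giving $u_\nu^1(0)=u_\nu^2(e_n)$.

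It then remains to prove the $o(\ep^2)$ estimate, which is the heart of the matter. The total mass is $\iint u_1^\ep u_2^\ep\mathbf{1}_{\{|x-y|<1\}}=\ep^2\int_\Om\Delta u_1^\ep=O(\ep^2)$. I would pick $\omega=\omega(\ep)\to 0$ slowly (a small power of $\ep$): by the exponential decay estimates of Lemmas~\ref{uj=0closui}--\ref{uj=0closfi}, the contribution of $\{d(x,\p S_1)\ge\omega\}\cup\{d(y,\p S_2)\ge\omega\}$ is $\le Ce^{-c\omega^\beta/\ep}=o(\ep^2)$; moreover, contributions with $x$ deep inside $S_1$ are automatically absent because there $d(x,S_2)>1$, so $\mathbf{1}_{\{|x-y|<1\}}$ vanishes, and the triangle inequality together with $d(S_1,S_2)=1$ forces $1-C\omega\le|x-y|\le 1$ on the remaining region. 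On that region let $x^*\in\p S_1$, $y^*\in\p S_2$ be the nearest boundary points; then $1\le|x^*-y^*|\le 1+C\omega$, and since by item~(d) of Subsection~\ref{distancepropertiessub8} the ball $B_1(x^*)$ touches $\p S_2=S_1(1)$ at $\Phi(x^*)$ and separates quadratically, $|y^*-\Phi(x^*)|\le C\sqrt\omega$. Uniform continuity of $\phi$ and the bi-Lipschitz character of $\Phi$ then give $|\phi(x)-\psi(y)|\le C\sqrt\omega$ there, so the whole integral is $\le C\sqrt\omega\cdot O(\ep^2)+o(\ep^2)=o(\ep^2)$. The delicate point is precisely this ``confinement to thin collars plus quadratic separation'' step; it is where the strict bound $\ka_i(0)<1$ (which makes $\Phi$ a genuine diffeomorphism and the separation in \ref{distancepropertiessub8}(d) nondegenerate) and the sharp width-$1$ statement of Theorem~\ref{lem:6.1} enter.
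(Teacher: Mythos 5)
Your proposal is correct and follows essentially the same route as the paper: compare the masses of $\Delta u_1^\ep$ and $\Delta u_2^\ep$ across the unit strip via Fubini and the symmetry of the kernel $\mathbf{1}_{\{|x-y|<1\}}$, localize to thin collars around $\partial S_1$ and $\partial S_2$ using the exponential decay of Lemma~\ref{uj=0closui}, exploit the quadratic separation from Subsection~\ref{distancepropertiessub8}(d) to match the collars, and read off the curvature factor $\prod(1-\ka_i(0))$ as the Jacobian of the normal map (the paper's Lemma~\ref{lem: areas ratio}). The only difference is cosmetic: you test against a smooth bump $\phi$ and its pushforward $\psi=\phi\circ\Phi^{-1}\circ\pi$ and obtain the pointwise identity $u_\nu^1=u_\nu^2\circ\Phi\cdot J_\Phi$ near $0$ directly, whereas the paper integrates over the explicit sets $D_h$, $E_h$ of Lemma~\ref{lem: mass laplacian} and recovers the value at $0$ by sending $h\to0$.
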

In order to prove Theorem \ref{thm: fbcond}, we first prove
 a lemma that relates the mass of the Laplacians of the limit functions across the interfaces. For a point $x$ belonging to a neighborhood of $\partial S_1$ around 0,
 let us denote by $\nu(x)=\nu(x_0)$ the exterior  normal vector at $x_0\in\partial S_1$, where $x_0$ is the unique point such that $x=x_0+t\nu(x_0)$, for some small $t>0$. From (a) in 
  Subsection \ref{distancepropertiessub8}, $\nu(x)$ is well defined. 

\begin{lem}
\label{lem: mass laplacian} Under the assumptions of Theorem \ref{thm: fbcond}, for small $h< h_0$, let  
$$D_h:=B_h (0) \cap \{x: d(x,\partial S_1)\leq h^2\}$$ and 
$$E_h:=\{y\in\real^n\,|\,y=x+ \nu (x), x \in D_h\}.$$
Then
$$\int_{D_h}\Delta u_1  = \int_{E_h}\Delta u_2 .
$$
\end{lem}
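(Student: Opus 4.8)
The key identity to exploit is the same one used in the proof of Theorem~\ref{lem:6.1}, case (a): when $H(w)(x) = \int_{\B_1(x)} w(y)\,\mathrm{d}y$ with $p=1$ and $\varphi\equiv1$, the kernel $\chi_{\{\rho(x-y)<1\}}$ is symmetric in $x,y$, so that for any pair of disjoint measurable sets $A$ (near $\partial S_1$) and $B$ (near $\partial S_2$) one can write
$$
\ep^2\int_A \Delta u_1^\ep(x)\,\mathrm{d}x = \int\!\!\int_{A\times\real^n} u_1^\ep(x)\,\chi_{\{|x-y|<1\}}\,u_2^\ep(y)\,\mathrm{d}x\,\mathrm{d}y,
$$
and similarly for $\int_B \Delta u_2^\ep$. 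The plan is to choose $A=D_h$ and $B=E_h$ so that the two double integrals coincide \emph{exactly} rather than just up to an inequality. First I would observe, using property (a) of Subsection~\ref{distancepropertiessub8} and the fact that $d_\rho(S_1,S_2)=1$ (Theorem~\ref{lem:6.1}), that the map $x\mapsto x+\nu(x)$ is a diffeomorphism from a neighborhood of $\partial S_1$ around $0$ onto a neighborhood of $\partial S_2$ around $e_n$, carrying $D_h$ onto $E_h$; moreover for $x\in D_h$ and $y$ in the support of $u_2$, one has $|x-y|\le1$ if and only if $y\in \overline{B_1(x)}\cap S_2$, and the only such points $y$ lie inside $E_h$ (for $h$ small, by the quadratic separation in property (d) and the curvature bound $\ka_i<1$). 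Symmetrically, for $y\in E_h$ the points $x\in S_1$ with $|x-y|\le1$ all lie in $D_h$. This is the geometric heart of the matter.

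With that established, the computation is:
\begin{align*}
\ep^2\int_{D_h}\Delta u_1^\ep\,\mathrm{d}x
&= \int_{D_h}\int_{B_1(x)} u_1^\ep(x)\,u_2^\ep(y)\,\mathrm{d}y\,\mathrm{d}x
= \int\!\!\int_{D_h\times E_h} u_1^\ep(x)\,\chi_{\{|x-y|<1\}}\,u_2^\ep(y)\,\mathrm{d}x\,\mathrm{d}y\\
&= \int_{E_h}\int_{B_1(y)} u_2^\ep(y)\,u_1^\ep(x)\,\mathrm{d}x\,\mathrm{d}y
= \ep^2\int_{E_h}\Delta u_2^\ep\,\mathrm{d}y,
\end{align*}
where the middle equalities use that $u_2^\ep$ is exponentially small outside any neighborhood of $S_2$ (Lemma~\ref{uj=0closui}) — more precisely, the contributions from $y\notin E_h$ with $|x-y|<1$, and from $x\notin D_h$ with $|x-y|<1$, are $O(e^{-c/\ep})$ and vanish in the limit. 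Cancelling $\ep^2$ gives $\int_{D_h}\Delta u_1^\ep = \int_{E_h}\Delta u_2^\ep$ up to an $o(1)$ error. Finally I would pass to the limit $\ep\to0$: by Corollary~\ref{convergencecor} the measures $\Delta u_i^\ep$ converge weakly-$*$ to $\Delta u_i$, and since $\partial D_h$, $\partial E_h$ carry no $\Delta u_i$-mass for a.e.\ $h$ (the free boundaries are transverse to these hypersurfaces, and $\Delta u_i$ is supported on $\partial S_i$ which meets $\partial D_h$ in an $\mathcal H^{n-2}$-set), the convergence $\int_{D_h}\Delta u_1^\ep \to \int_{D_h}\Delta u_1$ and likewise for $u_2$ holds, yielding the claimed equality.

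\textbf{Main obstacle.} The delicate point is the \emph{exactness} of the set correspondence: one must verify that no mass is lost or double-counted, i.e.\ that for $x\in D_h$ the ball $B_1(x)$ meets $S_2$ only inside $E_h$ (and symmetrically). This requires the quadratic separation of $S_2$ from the unit sphere $\partial B_1(x_0)$ at the contact point (property (d)), together with the bound $h^2$ on the normal thickness of $D_h$ chosen precisely so that the "lens" $\overline{B_1(x)}\cap S_2$ stays within a $\sim h$-neighborhood of $e_n$, hence inside $E_h$. Handling the boundary-of-domain contributions (where balls may spill out of $\Om$, and $u_i^\ep$ is extended by zero) and checking that $\partial D_h, \partial E_h$ are $\Delta u_i$-null for a suitable sequence $h\to0$ are the remaining technicalities, but these are routine given Corollary~\ref{hausmescor}.
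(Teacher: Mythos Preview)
Your overall strategy---exploiting the symmetry of the kernel $\chi_{\{|x-y|<1\}}$ in the double integral representing $\ep^2\int\Delta u_i^\ep$, combined with the exponential decay of Lemma~\ref{uj=0closui} and the geometric correspondence $D_h\leftrightarrow E_h$ via $x\mapsto x+\nu(x)$---is exactly the paper's. The difference lies in how the error terms are controlled, and here your sketch has a real gap.

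Your claim that the off-diagonal contributions (those with $x\in D_h$, $y\in B_1(x)\setminus E_h$, or symmetrically) are $O(e^{-c/\ep})$ is too optimistic. Lemma~\ref{uj=0closui} gives exponential smallness of $u_2^\ep(y)$ only when $y$ lies in the $1$-neighbourhood of a point where $u_1^\ep$ is bounded below by some \emph{fixed} $\sigma>0$. For $x$ in a thin layer around $\partial S_1$, $u_1(x)$ itself is small, so neither factor $u_1^\ep(x)$ nor $u_2^\ep(y)$ is uniformly exponentially small; their product, integrated over that layer and divided by $\ep^2$, cannot be discarded---indeed this is precisely where the mass of $\Delta u_1$ concentrates. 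Likewise, your geometric assertion that the lens $\overline{B_1(x)}\cap S_2$ sits inside $E_h$ itself is borderline: the lens has tangential extent $\sim h$ and so does $E_h$, so containment depends on constants in the quadratic separation that need not cooperate.

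The paper resolves both issues by introducing an intermediate scale $s$ with $\ep^{1/(4\alpha)}<s<h$ and splitting $D_h=D_{h,s}^+\cup D_{h,s}^-\cup D_{h,s}$ according to whether $d(x,\partial S_1)>s^2$ (with $u_1>0$ or $u_1=0$) or $d(x,\partial S_1)\le s^2$. On $D_{h,s}^\pm$ the exponential decay does apply (one factor is $o(\ep^2)$ since $s^{2\alpha}>\ep^{1/2}$). On the thin strip $D_{h,s}$ one discards nothing; instead one uses property~(d) to obtain the containment $B_1(x)\cap\{d(\cdot,\partial S_2)\le s^2\}\subset E_{h+cs,s}$, which yields only the \emph{inequality} $\int_{D_h}\Delta u_1^\ep\le\int_{E_{h+cs}}\Delta u_2^\ep+o(1)$. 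The reverse inequality is obtained symmetrically, and letting first $\ep\to0$ and then $s\to0$ gives the claimed equality. This two-sided-inequality-plus-limit device is exactly what is missing from your outline.
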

\begin{proof}
Remark that the surface $E_h\cap\partial S_2$ is  of class $C^2$  for $h$ small enough, being $\ka_i(0)<1$ for  $i=1,\ldots,n-1$, see Subsection \ref{distancepropertiessub8}.
The Laplacians of the $u_i$'s are positive measures and 
$$ \int_{D_h}\Delta u_1  = \lim_{\ep \rightarrow 0}\int_{D_h}\Delta u^\ep_1(x)  \:\text{d}x 
 =
\lim_{\ep \rightarrow 0} \frac{1}{\ep^2} \int_{D_h} \int_{B_1 (x)} u^\ep_1  (x) u^\ep_2 (y)\: \text{d}y\text{d}x,
$$ and
$$ \int_{E_h}\Delta u_2= \lim_{\ep \rightarrow 0}\int_{E_h}\Delta u^\ep_2(y)  \:\text{d}y 
 =
\lim_{\ep \rightarrow 0} \frac{1}{\ep^2} \int_{E_h} \int_{B_1 (y)} u^\ep_1  (x) u^\ep_2 (y)\:\text{d}x \text{d}y.
$$
Let $s$ be such that $\ep^{\frac{1}{4\alpha}}<s <h$, where $\alpha$ is given by  Lemma \ref{uj=0closui}.
We split the set $D_h$ in the following way
$$D_h=D_{h,s}^+\cup D_{h,s}^-\cup D_{h,s},$$ where
\begin{equation*}D_{h,s}^+:=\{x \in D_h\,|\,d(x,\p S_1)>s^2\text{ and }u_1(x)>0\},\end{equation*}
\begin{equation*}D_{h,s}^-:=\{x \in D_h\,|\,d(x,\p S_1)>s^2\text{ and }u_1(x)=0\},\end{equation*}
\begin{equation*}D_{h,s}:=\{x \in D_h\,|\,d(x,\p S_1)\le s^2\}.\end{equation*}
Similarly
$$E_h=E_{h,s}^+\cup E_{h,s}^-\cup E_{h,s},$$ where
\begin{equation*}E_{h,s}^+:=\{x \in E_h\,|\,d(x,\p S_2)>s^2\text{ and }u_2(x)>0\},\end{equation*}
\begin{equation*}E_{h,s}^-:=\{x \in E_h\,|\,d(x,\p S_2)>s^2\text{ and }u_2(x)=0\},\end{equation*}
\begin{equation*}E_{h,s}:=\{x \in E_h\,|\,d(x,\p S_2)\le s^2\},\end{equation*}
see Figure \ref{fig:mass laplacian}.
Since $\partial S_1$ is a smooth surface around 0, and $\Delta u_1=0$ in $S_1$, we have that $u_1$ grows linearly away from the boundary in a neighborhood of 0.
This and the uniform convergence of $u_1^\ep$ to $u_1$, imply that there exists $c>0$ such that 
 $u^\ep_1(x)> cs^2$, for any $x\in D_{h,s}^+$ for $\ep$ small enough. Then,
by Lemma \ref{uj=0closui}, $u^\ep_2 (y) \leq a e ^{-\frac{b{(cs^2)}^{\alpha}}{\ep}},$ ($a,b$ positive constants),
 for $y \in B_1(x)$ and any  $x\in  D_{h,s}^+$. In an analogous way,
 if $y \in E_{h,s}^+$, we know that for $\ep$ small enough $u^\ep_2(y)> cs^2$ and
by Lemma \ref{uj=0closui},  $u^\ep_1 (x) \leq a e ^{-\frac{b{(cs^2)}^{\alpha}}{\ep}}$
for $x \in B_1(y). $ Since we have chosen $s$ such that $s^{2\alpha} >\ep^{\frac{1}{2}}$, we have that 
 $u^\ep_2(y)=o(\ep^2)$  uniformly in $y$, for any $ y\in\cup_{x \in D_{h,s}^+} B_1 (x)$ and $u^\ep_1(x)=o(\ep^2)$   uniformly in $x$, for any $x\in \cup_{y \in E_{h,s}^+} B_1 (y).$ 
 Remark that 
$$D_{h,s}^-  \subset \cup_{y \in E_{h,s}^+} B_1 (y).$$ 
Therefore we have
\begin{equation}\label{firstreduction}\begin{split}
 \frac{1}{\ep^2} \int_{x \in D_{h}} \int_{y \in B_1 (x)} u^\ep_1 (x) u^\ep_2 (y)\text{d}y\text{d}x
 &= \frac{1}{\ep^2}\int_{x \in D^+_{h,s}} \int_{y\in B_1 (x)} u^\ep_1 (x) \underbrace{u^\ep_2 (y)}_{negligible}  \text{d}y\text{d}x  \\
&+\frac{1}{\ep^2} \int_{x \in D_{h,s}}  \int_{ y\in B_1 (x)} u^\ep_1 (x) u^\ep_2 (y)  \text{d}y\text{d}x \\
&+\frac{1}{\ep^2} \int_{x \in D_{h,s}^-}  \int_{ y\in B_1 (x)} \underbrace{u^\ep_1 (x)}_{negligible} u^\ep_2 (y)  \text{d}y\text{d}x\\
&=\frac{1}{\ep^2} \int_{x \in D_{h,s}}  \int_{y\in  B_1 (x)} u^\ep_1 (x) u^\ep_2 (y) \text{d}y\text{d}x + o(1).
\end{split}\end{equation}
\begin{figure}
\begin{center}
\includegraphics[scale=0.5]{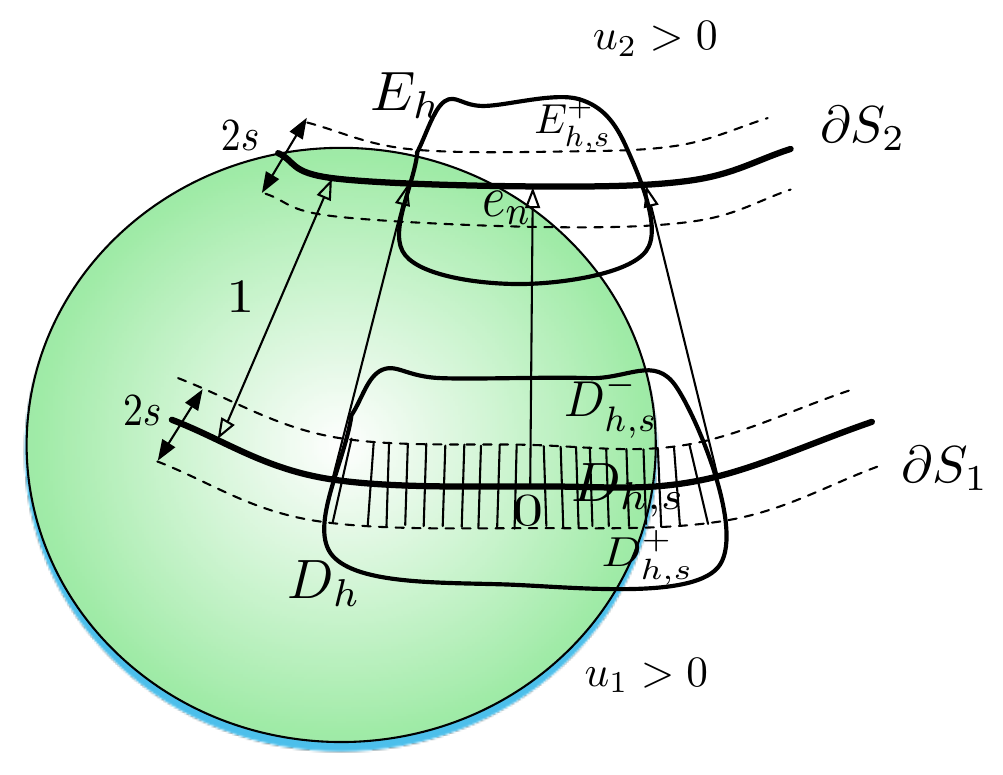}
\caption{Relation between the mass of the Laplacians}
\label{fig:mass laplacian}
\end{center}
\end{figure}
Analogously
\begin{equation}\label{firstreductionu2}
 \frac{1}{\ep^2}\int_{y \in E_h} \int_{x \in B_1(y)} u^\ep_1  (x) u^\ep_2 (y)  \:\text{d}x \text{d}y = \frac{1}{\ep^2}\int_{E_{h,s}} \int_{B_1(y)} u^\ep_1  (x) u^\ep_2 (y)  \:\text{d}x \text{d}y +o(1). 
 \end{equation}
  Next, for fixed $x\in D_{h,s}$,we have $$B_1(x)\cap\{y\,|\,d(y,\partial S_2)>s^2\}\subset  B_{1+h}(0)\cap\{y\,|\,d(y,\partial S_2)>s^2\}\cap\{u_2\equiv 0\}.$$
  Therefore for any $y\in B_1(x)\cap\{y\,|\,d(y,\partial S_2)>s^2\}$, the ball $B_1(y)$ enters in $S_1\cap B_{2h}(0)$ at distance at least $s^2$ from $\partial S_1$. Since 
  $\partial S_1\cap B_{4h}(0)$ is of class $C^2$, $u_1$ has linear growth away from the boundary in $\partial S_1\cap B_{2h}(0)$ and therefore there exists a point in 
  $ B_1(y)$ where $u_1\geq cs^2$ for some $c>0$.
  Like before, Lemma \ref{uj=0closui} implies that $u^\ep_2(y)=o(\ep^2)$. We infer that
  \begin{equation}\label{secondreduction}\frac{1}{\ep^2} \int_{x \in D_{h,s}}  \int_{y\in  B_1 (x)} u^\ep_1 (x) u^\ep_2 (y)  \text{d}y\text{d}x
  =\frac{1}{\ep^2} \int_{x \in D_{h,s}}  \int_{y\in  B_1 (x)\cap\{y\,|\,d(y,\partial S_2)\le s^2\}} u^\ep_1 (x) u^\ep_2 (y)  \text{d}y\text{d}x+o(1).
  \end{equation}   
  Finally, remark that (d) of Subsection \ref{distancepropertiessub8} implies that for $x\in D_{h,s}$
  \begin{equation}\label{thirdreduction}B_1(x)\cap\{y\,|\,d(y,\partial S_2)\le s^2\}\subset E_{h+cs,s} \end{equation} for some $c>0$.
  From \eqref{firstreduction}, \eqref{firstreductionu2}, \eqref{secondreduction} and  \eqref{thirdreduction}, we get
  \begin{equation*}
  \begin{split}
  \int_{D_{h}}\Delta u^\ep_1(x)dx&=
 \frac{1}{\ep^2} \int_{x \in D_{h}} \int_{y \in B_1 (x)} u^\ep_1 (x) u^\ep_2 (y) \text{d}y \text{d}x\\&=\frac{1}{\ep^2} \int_{x \in D_{h,s}}  \int_{y\in  B_1 (x)\cap\{y\,|\,d(y,\partial S_2)\le s^2\}} u^\ep_1 (x) u^\ep_2 (y)  \text{d}y \text{d}x+o(1)\\&
 \leq \frac{1}{\ep^2} \int_{x \in D_{h,s}}  \int_{y\in E_{h+cs,s}} u^\ep_1 (x) u^\ep_2 (y) \text{d}y \text{d}x+o(1)\\&
 \leq \frac{1}{\ep^2}\int_{y\in E_{h+cs,s}} \int_{x \in B_1(y)} u^\ep_1 (x) u^\ep_2 (y) \text{d}x \text{d}y+o(1)\\&
 =\int_{ E_{h+cs}}\Delta u^\ep_2(y)dy+o(1).
 \end{split}
 \end{equation*}
Similar computations give
  \begin{equation*}
   \int_{ E_{h}}\Delta u^\ep_2(y)dy\leq \int_{D_{h+cs}}\Delta u^\ep_1(x)dx+o(1).
 \end{equation*}
 Letting first $\ep$ and then $s$ go to 0, the conclusion of the lemma follows.
 
 \end{proof}

\begin{lem}
\label{lem: areas ratio}
 Under the assumptions of Theorem \ref{thm: fbcond}, let $\Gamma^1_h=\p S_1 \cap B_h(0)$ and let $\Gamma^2_h=\{x+\nu(x): x \in \Gamma^1_h\}.$
 Then we have the limits
\begin{equation}\label{boundarylimits1}
\lim_{h\to 0} \frac{\int_{\Gamma^2_h}  \:dA}{ \int_{\Gamma^1_h}  \:dA} =\prod_{i=1\atop \ka_i(0)\neq 0 }^{n-1} \frac{\ka_i(0)}{\ka_i (e_n)}
\quad\text{if }\ka_i(0)\neq 0\text{ for some }i=1,\ldots,n-1,\end{equation}
 and
\begin{equation}\label{boundarylimits2}
\lim_{h\to 0} \frac{\int_{\Gamma^2_h}  \:dA}{ \int_{\Gamma^1_h}  \:dA} =1\quad\text{if }\ka_i(0)= 0\text{ for any }i=1,\dots,n-1.
 \end{equation}  
\end{lem}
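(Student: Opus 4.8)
The plan is to read the area scaling straight off the parallel‑surface structure recorded in Subsection~\ref{distancepropertiessub8}. Write $g\colon\partial S_1\to\partial S_2$ for the map $g(x_0)=x_0+\nu(x_0)$, where $\nu$ is the exterior unit normal to $S_1$; by part (a) of Subsection~\ref{distancepropertiessub8} this is well defined and is a diffeomorphism of a neighborhood of $0$ in $\partial S_1$ onto a neighborhood of $e_n$ in $\partial S_2=S_1(1)$, since $\ka_i(0)<1$ for all $i$ forces $\max_i|\ka_i|<1$ on $\partial S_1\cap B_\delta(0)$ for $\delta$ small. By definition $\Gamma^2_h=g(\Gamma^1_h)$, so the whole question reduces to computing the surface Jacobian of $g$ at a point $x_0\in\Gamma^1_h$ and integrating.

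First I would compute this Jacobian. By part (b), along the trajectory $t\mapsto x_0+t\nu(x_0)$ the normal stays equal to $\nu(x_0)$ and the principal directions are carried onto those of $S_1(t)$; hence in an orthonormal basis of $T_{x_0}\partial S_1$ consisting of principal directions, the differential of $g$ restricted to $T_{x_0}\partial S_1$ is diagonal, with $i$‑th diagonal entry $1-\ka_i(x_0)$ — this is exactly the infinitesimal version of the curvature transformation law in part (c) (taking $k=1$), and the "outward positive" normalization there pins down the sign, so the factor is $1-\ka_i$ and not $1+\ka_i$. Since $\ka_i(0)<1$ and the curvatures are continuous on $\partial S_1\cap B_{4h_0}(0)$ (because $\partial S_1$ is $C^2$ there), we have $1-\ka_i(x_0)>0$ for all $i$ and all $x_0\in\Gamma^1_h$ with $h$ small, so the map $g$ is non‑degenerate and the area formula gives
$$\int_{\Gamma^2_h}\,dA=\int_{\Gamma^1_h}\ \prod_{i=1}^{n-1}\bigl(1-\ka_i(x_0)\bigr)\,dA(x_0).$$

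Then the ratio in the statement is the average of the continuous function $x_0\mapsto\prod_{i=1}^{n-1}(1-\ka_i(x_0))$ over the shrinking surface pieces $\Gamma^1_h$, so by continuity of the curvatures at $0$,
$$\lim_{h\to0}\frac{\int_{\Gamma^2_h}\,dA}{\int_{\Gamma^1_h}\,dA}=\prod_{i=1}^{n-1}\bigl(1-\ka_i(0)\bigr).$$
It remains to rewrite this using $\ka_i(e_n)=\ka_i(0)/(1-\ka_i(0))$, which is part (c) of Subsection~\ref{distancepropertiessub8} at $k=1$: for every index with $\ka_i(0)\neq0$ this gives $1-\ka_i(0)=\ka_i(0)/\ka_i(e_n)$, while for every index with $\ka_i(0)=0$ the factor $1-\ka_i(0)$ equals $1$ (and $\ka_i(e_n)=0$ as well). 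Dropping these trivial factors yields \eqref{boundarylimits1}, and if all the $\ka_i(0)$ vanish the whole product is $1$, which is \eqref{boundarylimits2}. The only genuinely technical point is the Jacobian computation in the second step — making sure the area scaling factor is $\prod_i(1-\ka_i)$ with the correct sign — and that is already forced by the curvature transformation law in part (c), so there is no real obstacle beyond bookkeeping.
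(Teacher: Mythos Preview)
Your proof is correct and follows essentially the same approach as the paper: both compute the surface Jacobian of the map $x_0\mapsto x_0+\nu(x_0)$ in the basis of principal directions to get $\prod_{i=1}^{n-1}(1-\ka_i(x_0))$, average over $\Gamma^1_h$, pass to the limit by continuity, and then rewrite using the curvature relation from part~(c). Your write-up is in fact slightly more careful than the paper's in justifying the positivity of the factors $1-\ka_i(x_0)$ near $0$ and in tracking the sign convention.
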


\begin{proof}
Consider the diffeomorphism $f_t(x)=f(x,t)=x+ t \nu(x).$ Then $\Gamma^2_h=f_1(\Gamma^1_h)$ and 
$$
\int_{\Gamma^2_h}  dA= \int_{\Gamma^1_h} |Jf_1(x)| dA,
$$
where $|Jf_1|$ is the determinant of the Jacobian of $f_1$. Taking as basis of the tangent space at 0 the principal directions, $\tau_i$, then the differential of $f_1$ at  $x$ is given by
$$
(df_1)(\tau_i)=\tau_i + (d \nu)(\tau_i) = \tau_i- \ka_i \tau_i. 
$$
So,
$$
|Jf_1(x)|= \prod_{i=1}^{n-1} (1-\ka_i(x))
$$
and 
$$
\frac{\int_{\Gamma^2_h}  dA }{\int_{\Gamma^1_h}  dA} =
\frac{1 }{\mbox{Area}\:(\Gamma^1_h) } \int_{\Gamma^1_h}  \prod_{i=1}^{n-1}   (1-\ka_i(x)) dA.
$$
Passing to the limit when h converges to zero, we obtain
$$
\lim_{h\to 0} \frac{\int_{\Gamma^2_h}  \:dA}{ \int_{\Gamma^1_h}  \:dA}=\prod_{i=1}^{n-1} (1-\ka_i(0)).$$
Now, if $\ka_i(0)\neq 0$ for some $i=1,\dots,n-1$, then
$$\prod_{i=1}^{n-1} (1-\ka_i(0))=\prod_{i=1\atop \ka_i(0)\neq 0}^{n-1} (1-\ka_i(0))=\prod_{i=1\atop \ka_i(0)\neq 0}^{n-1}\left( \frac{1-\ka_i(0) }{\ka_i(0) }\ka_i(0) \right)
= \prod_{i=1\atop \ka_i(0)\neq 0}^{n-1}\frac{  \ka_i(0)}{  \ka_i (e_n)},
$$
and \eqref{boundarylimits1} follows. 

If $\ka_i(0)= 0$ for any $i=1,\dots,n-1$, then 
$$\prod_{i=1}^{n-1} (1-\ka_i(0))=1$$ and we get  \eqref{boundarylimits2}. 

\end{proof}

{\em  Proof of Theorem \ref{thm: fbcond}.} 

Let $\Gamma^1_h=\partial S_1 \cap D_h$ and $\Gamma^2_h=\partial S_2 \cap E_h.$ 
The Laplacians $\Delta u_i,$  are jump measures along $\p S_i$, $i=1,2,$ and satisfy 
\begin{equation*}
\int_{D_h}\Delta u_1 =-\int_{\Gamma^1_h} u^1_\nu\:dA \quad\text{and}\quad
 \int_{E_h}\Delta u_2 =-\int_{\Gamma^2_h} u^2_\nu  \:dA.
\end{equation*}
Then, using Lemma \ref{lem: mass laplacian}  we get
$$
 1= \frac{\int_{D_h}\Delta u_1  }{ \int_{E_h}\Delta u_2  } =   \frac{\int_{\Gamma^1_h} u^1_\nu  \:dA }{ \int_{\Gamma^2_h} u^2_\nu \:dA }, 
 $$
 and so
 $$
  \frac{\fint_{\Gamma^1_h} u^1_\nu  \:dA }{ \fint_{\Gamma^2_h} u^2_\nu \:dA } =   \frac{\int_{\Gamma^2_h}   \:dA }{ \int_{\Gamma^1_h} \:dA }. 
 $$
 Since, when $h\rightarrow 0,$
 $$
 \frac{\fint_{\Gamma^1_h} u^1_\nu  \:dA }{ \fint_{\Gamma^2_h} u^2_\nu \:dA }\rightarrow \frac{u_\nu^1(0) \: }{u^2_\nu (e_n)},
 $$
by Lemma \ref{lem: areas ratio} the conclusion of Theorem \ref{thm: fbcond} follows.

\finedim

%
%
%

\bibliography{CPQ-joint-final}
\bibliographystyle{plain}
%
%
%

\end{document}